\newcommand{\Sp}{Sp}
\newcommand{\trictop}{\text{\lightning}}
\newcommand{\trietop}{{\text{\reflectbox{\lightning}}}}
\newcommand{\tridetop}{\text{ \rotatebox[origin=c]{180}{\lightning}}}
\newcommand{\tridctop}{\text{ \rotatebox[origin=c]{180}{\reflectbox{\lightning}}}}
\newcommand{\tricto}[1]{\text{\lightning}}
\newcommand{\trieto}[1]{{\text{\reflectbox{\lightning}}}}
\newcommand{\trideto}[1]{\text{ \rotatebox[origin=c]{180}{\lightning}}}
\newcommand{\tridcto}[1]{\text{ \rotatebox[origin=c]{180}{\reflectbox{\lightning}}}}
\newcommand\mtiny[1]{\mbox{\tiny\ensuremath{#1}}}
\newcommand{\coo}[1]{\bottominset{\mtiny{#1}}{${ \mathlarger{\mathlarger{ \mathlarger{\sqsubset}}}}$}{1pt}{1pt}}
\newcommand{\evo}[1]{\bottominset{\mtiny{#1}}{$\mathlarger{\mathlarger{ \mathlarger{\sqsupset}}}$}{1pt}{-1pt}}
\newcommand{\serredual}[3]{{^{\coo{#2}}{#1}_{\evo{#3}}}}
\renewcommand{\includegraphics}[2][]{\fbox{}}
\subjclass{ 19L10, 18N10, 16E40 }
\title{Riemann-Roch theorems in monoidal 2-categories}
\author[J. A. Campbell]{Jonathan A. Campbell}
\email{jonalfcam@gmail.com}
\address{Center for Communications Research, La Jolla} 
\author[K. Ponto]{Kate Ponto}
\email{kate.ponto@uky.edu}
\address{Department of Mathematics, University of Kentucky}
\date{\today}
\begin{document}

\begin{abstract}
Smooth and proper dg-algebras have an Euler class valued in the Hochschild homology of the algebra.  This Euler class is worthy of this name since it satisfies many familiar properties including compatibility with the familiar pairing on the Hochschild homology of the algebra and that of its opposite.  This compatibility is the Riemann-Roch theorems of \cite{shklyarov,petit}.

In this paper we prove a broad generalization of these Riemann-Roch theorems.  We generalize  from the bicategory of dg-algebras and their bimodules to symmetric monoidal bicategories and from Euler class to traces of non identity maps. 
Our generalization also implies spectral Riemann-Roch theorems.

We regard this result as an instantiation of a 2-dimensional generalized cobordism hypothesis.  This perspective draws the result close to many others that generalize results about Euler characteristics and classes
to bicategorical traces.
\end{abstract}
\maketitle
\setcounter{tocdepth}{1}
\tableofcontents

\section{Introduction}

The trace of a matrix is the first example of an additive invariant that is usually encountered in mathematics. Less familiar than additivity, though no less fundamental, is  the observation that trace is multiplicative: \[\tr(A \otimes B) = \tr(A) \tr(B)\] for square matrices $A, B$ of the same size. Euler characteristics and Euler classes are generalizations of the notion of trace to more general categories.  In this paper we prove a vast generalization to Riemann-Roch theorems that describe the compatibility between Euler classes and pairings, themselves generalizations of the multiplicativity of trace.

 For a case of our theorem that indicates the true generality but minimizes unfamiliar terminology, we consider  dg-algebras.  
If $A$ is a smooth and proper dg-$k$-algebra there are homomorphisms 
\[k\xto{}\HH(A)\otimes \HH(A^\op)\quad\HH(A)\otimes \HH(A^\op)\xto{}k\]  
that display $\hh(A^\op)$ as the dual (\S\ref{sec:duality_bicat}) of $\hh(A)$. 
Let $\evo{A}$ and $\coo{A}$ denote $A$ as an  $(A^\op\otimes A,k)$- and $(k,A\otimes A^\op)$-bimodule, respectively.  
Tensoring with the modules $\coo{A}$ and $\evo{B}$ defines an isomorphism between the categories of $(A,B)$-bimodules and $(B^\op,A^\op)$-bimodules.  For an $(A,B)$-bimodule $M$, let $^\sqsubset M_\sqsupset$ be the corresponding $(B^\op,A^\op)$-bimodule and use the same notation for bimodule homomorphisms. 

\begin{thm}[\cref{thm:pairing_var_1}\ref{item:pairing_var_1}, compare to \cref{thm:shklyarov}]\label{intro:shklyarov}
 Suppose $A$ and $B$ are smooth and proper dg-$k$-algebras.
 If $M$ is an $(A,B)$-bimodule that is finitely generated and projective as an $A$-module, $N$ is a $(B,A)$-bimodule that is finitely generated and projective as a $B$-module, and $f\colon M\to M$ and $g\colon N\to N$ are homomorphisms, then 
 the trace of
 \[
 \begin{tikzcd}
\HH(A; M \otimes_B N) \ar{rr}{\HH(A;f \otimes_B g)} && \HH(A; M \otimes_B N) 
 \end{tikzcd}
 \]
 coincides with the map 
\begin{equation}\label{eq:shklyarov}
k\xto{}\HH(A)\otimes \HH(A^\op)\xto{\tr(f)\otimes \tr(^\sqsubset g_\sqsupset)}\HH(B)\otimes \HH(B^\op)\xto{}k.
\end{equation}
\end{thm}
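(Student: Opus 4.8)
The plan is to reduce the statement to the functoriality of shadow traces under composition of $1$-cells together with the order-independence of iterated traces established in \cite{cp2}. The starting observation is that the right-hand composite \eqref{eq:shklyarov} is itself a trace. The pairing and copairing of \cref{fig:copairing_pairing_defn_fig} exhibit $\HH(A^\op)$ and $\HH(B^\op)$ as the duals of $\HH(A)$ and $\HH(B)$ in $\mathrm{Ch}(k)$ --- this is exactly where the hypotheses are used, smoothness to produce the copairing and properness to make $\HH(A),\HH(B)$ perfect --- and transposing a bimodule along $\coo{A}$, $\evo{B}$ dualizes its shadow trace, so that $\tr(^\sqsubset g_\sqsupset)$ is the dual map $\tr(g)^\vee\colon\HH(A)^\vee\to\HH(B)^\vee$ under these identifications. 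Feeding this through the mate and zig-zag identities for the two dualities collapses \eqref{eq:shklyarov} to the symmetric monoidal trace of the single endomorphism $\tr(g)\circ\tr(f)$ of $\HH(A)$. It therefore suffices to prove
\[
\tr\bigl(\HH(f\odot g)\bigr)=\tr\bigl(\tr(g)\circ\tr(f)\bigr),
\]
an equality of two symmetric monoidal traces in $\mathrm{Ch}(k)$.

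I would establish this in two moves. First, the projectivity hypotheses make $M$ and $N$ dualizable $1$-cells, so by functoriality of the shadow (i.e.\ $2$-cell) trace under composition of dualizable $1$-cells the bicategorical trace of $f\odot g$ on $M\odot N\in\mc{B}(A,A)$ factors as
\[
\tr_{M\odot N}(f\odot g)=\tr(g)\circ\tr(f)\colon\HH(A)\longrightarrow\HH(A).
\]
Second, because $A$ is smooth and proper, $\HH(A)$ is dualizable and $\HH(M\odot N)=\HH(A;M\odot N)$ carries a symmetric monoidal trace; I would then invoke the order-independence of iterated traces of \cite{cp2} to identify the symmetric monoidal trace of $\HH(f\odot g)$ with the symmetric monoidal trace of the induced endomorphism $\tr_{M\odot N}(f\odot g)$ of $\HH(A)$. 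Combining the two moves with the reduction of the first paragraph yields \eqref{eq:shklyarov}.

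The main obstacle is the second move: one must check that the symmetric monoidal dualizability of $\HH(M\odot N)$ is precisely the one obtained by threading the bicategorical dualizability of $M\odot N$ through the smooth-and-proper self-duality of $A$, so that the two orders of tracing genuinely fit the hypotheses of the iterated-trace theorem. This is where the compatibility of the shadow's shuffle map with all of the evaluation and coevaluation $2$-cells must be verified, and it is the analytic heart of the argument. A secondary, purely bookkeeping difficulty is handedness: since left and right duals cannot be interchanged (as stressed after \eqref{eq:triangle_id}), one must track carefully that the transpose ${}^\sqsubset(-)_\sqsupset$ places $\tr(g)$ on the correct dual factor, so that the identification $\tr(^\sqsubset g_\sqsupset)=\tr(g)^\vee$ has the right variance. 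Once these points are settled, the remaining manipulations are formal consequences of the triangle identities and the naturality of the shadow isomorphism $\sh{M\odot N}\cong\sh{N\odot M}$.
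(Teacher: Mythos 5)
Your architecture is genuinely different from the paper's, and in outline it is viable. The paper never forms the composite $\tr(g)\circ\tr(f)$ and never invokes the iterated-trace theorem of \cite{cp2}: instead it proves directly that the copairing--trace--pairing composite equals the bicategorical trace of a single 2-cell built from $f\otimes {}^\sqsubset g_\sqsupset$ and the identities of $\coo{A}$, $\Gamma$, $\evo{B}$ (\cref{thm:main_pairing}, proved via \cref{thm:composite}, \cref{lem:ec_e_and_c}, \cref{lem:trace_otimes_compatible}, \cref{lem:euler_char_gamma}), and then identifies that trace with $\tr(\HH(f\odot g))$ by Serre duality (\cref{prop:serre_dual_odot_trace}). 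Your proposal replaces this second step by the order-independence of iterated traces applied to $X=U_A$, $Y=M\odot N$, which is an attractive reorganization (it matches the toroidal pictures in the introduction), and your first move is exactly \cref{thm:composite}. If completed, your route buys the reuse of \cite{cp2} as a black box; the paper's route buys self-containedness.

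There is, however, a genuine gap in your opening reduction: the identification $\tr({}^\sqsubset g_\sqsupset)=\tr(g)^\vee$ is not handedness bookkeeping. It is precisely \cref{lem:serre_dual_trace}, and its proof (\cref{fig:serre_dual_maps_big}) requires the compatibility between the bicategorical Euler characteristics of the witnessing 1-cells $\coo{A},\evo{A}$ and the symmetric monoidal copairing/pairing maps --- that is, \cref{lem:ec_e_and_c} --- together with \cref{lem:trace_otimes_compatible} and \cref{thm:composite}. This compatibility is the technical core of the whole paper; indeed the definition of 1-dualizability had to be strengthened relative to \cite{cp2} (the added coherence conditions of \cref{fig:add_dualizable_0_cell}; see \cref{rmk:cp_1_dual_change}) exactly so that it could be proved. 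Without it, your collapse of \eqref{eq:shklyarov} to $\tr\bigl(\tr(g)\circ\tr(f)\bigr)$ is unjustified, and supplying it costs essentially the same diagrammatic work as the paper's \cref{thm:main_pairing}, so the apparent savings of your route largely evaporate. By contrast, the obstacle you single out as the ``analytic heart'' --- that $\sh{M\odot N}$ is dualizable so that both outer traces in the iterated-trace theorem are defined --- is the more routine of the two issues: it follows from \cref{lem:serre_dual_dualizable}, \cref{lem:duals_compose}, and the shadow isomorphism of \cref{prop:serre_dual_odot_shadow}, and (as you correctly note) the value of a symmetric monoidal trace does not depend on which duality data exhibits the dualizability.
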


The main results of \cite{shklyarov,petit} are special case of \cref{intro:shklyarov} where $A=k$ and $f$ and $g$ are identity maps.

\begin{rmk}As noted above, this is merely a special case of our main result.  In \cref{sec:serre_duality} we show this theorem holds for symmetric monoidal bicategories.    
Because spectral categories play the role of algebras in a particular monoidal bicategory
(see \cite{cp2}), \cref{intro:shklyarov} also holds for spectra and topological Hochschild homology ($\thh$). 
\end{rmk}

Though this statement, its generalizations \cref{thm:petit,thm:pairing_var_1,thm:shklyarov}, and predecessors \cite{shklyarov,petit} appear complicated, we show in \cref{sec:path} that
when phrased in the correct generality,  they become much more transparent.
This understanding is motivated by graphical reasoning that interprets \cref{intro:shklyarov,thm:petit,thm:pairing_var_1,thm:shklyarov}   
as results about toroidal traces, in the sense of \cite{cp2}. In the
pictures in \cref{fig:toroidal_no_framing}, the colored regions correspond to the algebras,
and the bimodules are to be read as living at the boundaries of the regions.
The torus in \cref{fig:toroidal_no_framing_1} is the trace of $\HH(f\otimes_B g)$
and the torus in \cref{fig:toroidal_no_framing_2} is the composite in \eqref{eq:shklyarov}.

\begin{figure}
\begin{subfigure}[b]{0.20\textwidth}
\resizebox{\textwidth}{!}{\begin{tikzpicture}

\newcommand\vshift{2}
\newcommand\hshift{2}

\coordinate (A) at (0,5*\vshift);
\coordinate (B) at (-2*\hshift,3*\vshift);
\coordinate (C) at (2*\hshift,3*\vshift);
\coordinate (D) at (2*\hshift,-1*\vshift);
\coordinate (E) at (0*\hshift,-3*\vshift);
\coordinate (F) at (-2*\hshift,-1*\vshift);
\coordinate (G) at (0*\hshift,4*\vshift);
\coordinate (H) at (-1*\hshift,3*\vshift);
\coordinate (I) at (3*\hshift,-1*\vshift);
\coordinate (J) at (0*\hshift,-4*\vshift);
\coordinate (K) at (-3*\hshift,-1*\vshift);
\coordinate (L) at (1*\hshift,3*\vshift);
\coordinate (M) at (0*\hshift,6*\vshift);
\coordinate (N) at (-3*\hshift,3*\vshift);

\coordinate (O) at (1*\hshift,-1*\vshift);
\coordinate (P) at (0*\hshift,-2*\vshift);

\coordinate (Q) at (-1*\hshift,-1*\vshift);

\coordinate (R) at (3*\hshift,3*\vshift);
\coordinate (S) at (0*\hshift,2*\vshift);
\coordinate (T) at (1.3*\hshift,1*\vshift);
\coordinate (U) at (-1.3*\hshift,1*\vshift);
\coordinate (V) at (0*\hshift,0*\vshift);
\coordinate (W) at (0*\hshift,1*\vshift);

\draw[fill=\et,color=\et] 
	(M)to  [looseness=.8,out=180, in=90] 
	(N)to  [looseness=.8,out=-90, in=135]
	(U)to  [looseness=.8,out=-45, in=135]
	(V)to  [looseness=.8,out=-45, in=90]
	(O)to  [looseness=.8,out=-90, in=0]
	(P)--
	(E)to  [looseness=.8,out=0, in=-90]
	(D)to  [looseness=.8,out=90, in=-45]
	(W)to  [looseness=.8,out=135, in=-90]
	(B)to  [looseness=.8,out=90, in=180]
	(A)--(M);

\draw[fill=\ep,color=\ep] 
	(A)to  [looseness=.8,out=180, in=90] 
	(B)to  [looseness=.8,out=-90, in=135]
	(W)to  [looseness=.8,out=-45, in=90]
	(D)to  [looseness=.8,out=-90, in=0]
	(E)--
	(J)to  [looseness=.8,out=0, in=-90]
	(I)to  [looseness=.8,out=90, in=-45]
	(T)to  [looseness=.8,out=135, in=-45]
	(S)to  [looseness=.8,out=135, in=-90]
	(H)to  [looseness=.8,out=90, in=180]
	(G)--(A);

\draw[fill=\et,color=\et] 
	(M)to  [looseness=.8,out=0, in=90]
	(R)to  [looseness=.8,out=-90, in=45] 
	(T)to  [looseness=.8,out=-135, in=45] 
	(V)to  [looseness=.8,out=-135, in=90] 
	(Q)to  [looseness=.8,out=-90, in=180]
	(P)--
	(E)to  [looseness=.8,out=180, in=-90]
	(F)to  [looseness=.8,out=90, in=-135]
	(W)to  [looseness=.8,out=45, in=90] 
	(C)to  [looseness=.8,out=90, in=0]
	(A)--(M);

\draw[fill=\ep,color=\ep]
	(A)to  [looseness=.8,out=0, in=90]
	(C)to  [looseness=.8,out=-90, in=45] 
	(W)to  [looseness=.8,out=-135, in=90] 
	(F)to  [looseness=.8,out=-90, in=180]
	(E)--
	(J)to  [looseness=.8,out=180, in=-90]
	(K)to  [looseness=.8,out=90, in=-135]
	(U)to  [looseness=.8,out=45, in=-135]
	(S)to  [looseness=.8,out=45, in=-90]
	(L)to  [looseness=.8,out=90, in=0]
	(G)--(A);

\draw
	(J)to  [looseness=.8,out=180, in=-90]
	(K)to  [looseness=.8,out=90, in=-135]
	(U)to  [looseness=.8,out=45, in=-135]
	(S)to  [looseness=.8,out=45, in=-90]
	(L)to  [looseness=.8,out=90, in=0]
	(G);

\draw
	(M)to  [looseness=.8,out=0, in=90]
	(R)to  [looseness=.8,out=-90, in=45] 
	(T)to  [looseness=.8,out=-135, in=45] 
	(V)to  [looseness=.8,out=-135, in=90] 
	(Q)to  [looseness=.8,out=-90, in=180]
	(P);

\draw 
	(V)to  [looseness=.8,out=-45, in=90]
	(O)to  [looseness=.8,out=-90, in=0]
	(P);

\draw
	(M)to  [looseness=.8,out=180, in=90] 
	(N)to  [looseness=.8,out=-90, in=135]
	(U);

\draw
	(J)to  [looseness=.8,out=0, in=-90]
	(I)to  [looseness=.8,out=90, in=-45]
	(T);
\draw
	(S)to  [looseness=.8,out=135, in=-90]
	(H)to  [looseness=.8,out=90, in=180]
	(G);

\draw[dotted,line width=0.5mm] (N)to[looseness = .5, out =-90, in =-90] (H);
\draw[dotted,line width=0.15mm](H)to[looseness = .5, out =90, in =90] (N);

\draw[dotted,line width=0.5mm] (R)to[looseness = .5, out =-90, in =-90] (L);
\draw[dotted,line width=0.15mm] (L)to[looseness = .5, out =90, in =90] (R);

\draw[dotted,line width=0.15mm]  (K)to[looseness = .5, out =90, in =90] (Q);
\draw[dotted,line width=0.5mm] (Q)to[looseness = .5, out =-90, in =-90] (K);
\draw[dotted,line width=0.15mm] (O)to[looseness = .5, out =90, in =90] (I);
\draw[dotted,line width=0.5mm](I)to[looseness = .5, out =-90, in =-90] (O);

\end{tikzpicture}}
\caption{$\tr(\HH(f\otimes_B g))$}\label{fig:toroidal_no_framing_1}
\end{subfigure}
\hspace{.1\textwidth}
\begin{subfigure}[b]{0.20\textwidth}
\resizebox{\textwidth}{!}{\begin{tikzpicture}

\newcommand\vshift{2}
\newcommand\hshift{2}

\coordinate (A) at (-3*\hshift,3*\vshift);
\coordinate (B) at (-2*\hshift,1*\vshift);
\coordinate (C) at (-1*\hshift,2.25*\vshift);
\coordinate (D) at (-4*\hshift,1.75*\vshift);
\coordinate (E) at (0*\hshift,7.5*\vshift);
\coordinate (F) at (0*\hshift,-6.5*\vshift);
\coordinate (G) at (2*\hshift,3*\vshift);
\coordinate (H) at (3*\hshift,1*\vshift);
\coordinate (I) at (1*\hshift,1.75*\vshift);
\coordinate (J) at (4*\hshift,2.25*\vshift);
\coordinate (K) at (0*\hshift,5*\vshift);
\coordinate (L) at (0*\hshift,-4*\vshift);
\coordinate (M) at (-4*\hshift,4*\vshift);
\coordinate (N) at (-4*\hshift,-3*\vshift);
\coordinate (O) at (-1*\hshift,4*\vshift);
\coordinate (P) at (-1*\hshift,-3*\vshift);
\coordinate (Q) at (4*\hshift,4*\vshift);
\coordinate (R) at (4*\hshift,-3*\vshift);
\coordinate (S) at (1*\hshift,4*\vshift);
\coordinate (T) at (1*\hshift,-3*\vshift);
\coordinate (U) at (-2*\hshift,-.75*\vshift);
\coordinate (V) at (2*\hshift,-.75*\vshift);
\coordinate (W) at (0*\hshift,.5*\vshift);
\coordinate (X) at (0*\hshift,-2*\vshift);

\draw[fill=\et] 
	(E)to  [looseness=1,out=180, in=90] 
	(M)to  [looseness=.8,out=-90, in=90] 
	(D)to  [looseness=.8,out=0, in=180] 
	(A)to  [looseness=.8,out=0, in=180] 
	(B)to  [looseness=.8,out=0, in=180] 
	(C)to  [looseness=.8,out=90, in=-90] 
	(O)to  [looseness=.8,out=90, in=180] 
	(K)to  [looseness=.8,out=0, in=90] 
	(S)to  [looseness=.8,out=-90, in=90] 
	(I)to  [looseness=.8,out=0, in=180] 
	(G)to  [looseness=.8,out=0, in=180] 
	(H)to  [looseness=.8,out=0, in=180] 
	(J)to  [looseness=.8,out=90, in=-90]
	(Q)to  [looseness=1,out=90, in=0] 
	(E);

\draw[fill=\ep,color=\ep] 
	(D)to  [looseness=.8,out=0, in=180] 
	(A)to  [looseness=.8,out=0, in=180] 
	(B)to  [looseness=.8,out=0, in=180] 
	(C)to  [looseness=.8,out=-90, in=135] 
	(W)to  [looseness=.8,out=-45, in=135] 
	(V)to  [looseness=.8,out=-45, in=90] 
	(R)to  [looseness=1,out=-90, in=0]  
	(F)to  [looseness=1,out=180, in=-90]
	(N)to  [looseness=.8,out=90, in=-135] 
	(U)to  [looseness=.8,out=45, in=-135] 
	(W)to  [looseness=.8,out=45, in=-90] 
	(I)to  [looseness=.8,out=0, in=180] 
	(G)to  [looseness=.8,out=0, in=180] 
	(H)to  [looseness=.8,out=0, in=180] 
	(J)to  [looseness=.8,out=-90, in=45]
	(V)to  [looseness=.8,out=-135, in=45]
	(X)to  [looseness=.8,out=-135, in=90]  
	(P)to  [looseness=.8,out=-90, in=180] 
	(L)to  [looseness=.8,out=0, in=-90]
	(T)to  [looseness=.8,out=90, in=-45]
	(X)to  [looseness=.8,out=135, in=-45]  
	(U)to  [looseness=.8,out=135, in=-90]  
(D)
;

\draw
	(C)to  [looseness=.8,out=-90, in=135] 
	(W)to  [looseness=.8,out=-135, in=45] 
	(U)to  [looseness=.8,out=135, in=-90]  
(D)
;

\draw
	(J)to  [looseness=.8,out=-90, in=45]
	(V)to  [looseness=.8,out=-135, in=45]
	(X)to  [looseness=.8,out=-135, in=90]  
	(P)to  [looseness=.8,out=-90, in=180] 
	(L)to  [looseness=.8,out=0, in=-90]
	(T)to  [looseness=.8,out=90, in=-45]
	(X)
;

\draw
	(W)to  [looseness=.8,out=45, in=-90] 
	(I)
;

\draw
	(V)to  [looseness=.8,out=-45, in=90] 
	(R)to  [looseness=1,out=-90, in=0]  
	(F)to  [looseness=1,out=180, in=-90]
	(N)to  [looseness=.8,out=90, in=-135] 
	(U)
;

\draw[dotted,line width=0.5mm] (M)to[looseness = .5, out =-90, in =-90] (O);
\draw[dotted,line width=0.15mm](O)to[looseness = .5, out =90, in =90] (M);
\draw[dotted,line width=0.5mm] (S)to[looseness = .5, out =-90, in =-90] (Q);
\draw[dotted,line width=0.15mm](Q)to[looseness = .5, out =90, in =90] (S);
\draw[dotted,line width=0.5mm] (N)to[looseness = .5, out =-90, in =-90] (P);
\draw[dotted,line width=0.15mm](P)to[looseness = .5, out =90, in =90] (N);
\draw[dotted,line width=0.5mm] (T)to[looseness = .5, out =-90, in =-90] (R);
\draw[dotted,line width=0.15mm](R)to[looseness = .5, out =90, in =90] (T);

\end{tikzpicture}}
\caption{\eqref{eq:shklyarov}}\label{fig:toroidal_no_framing_2}
\end{subfigure}
\caption{Toroidal traces without framing.  Compare to \cref{fig:toroidal_framing}.}\label{fig:toroidal_no_framing}
\end{figure}
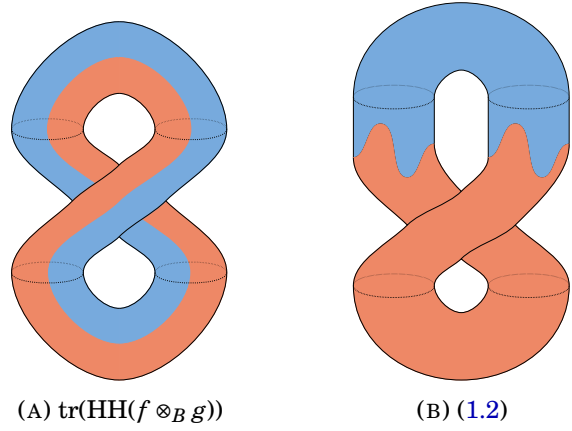

Indeed, one can see the same thing happening on both tori --- the only
reason that the theorem is not \textit{immediately} obvious
pictorially is the embedding of the torus in 3-space destroys the
obvious symmetry between its two constituent copies of $S^1$.

\begin{rmk}
If one were willing to prove full coherence theorems for this kind of
calculus and/or prove a kind of 2-dimensional cobordism hypothesis
\cite{schommer-pries, lurie_cobordism} that took bimodules into
account, this kind of graphical reasoning would actually
\textit{prove} the theorem. However, this kind of framed, bimodular
cobordism hypothesis is overkill for the task at hand, so we content
ourselves with translating the pictures into category theory, and
verifying the category theory directly. 
\end{rmk}

\subsection{Organization}

We begin in \cref{sec:duality_bicat} with a review of duality and bicategories and the formalism of shadows.  In \cref{sec:path} we outline the path to the main theorem --- the proof is quite formal and superficially does not much resemble the statements in the introduction. Nevertheless, it is this relatively innocuous reformulation which will imply the results in the introduction. 

\cref{sec:monoidal_bicat} is a review of the details of monoidal bicategories that we will need. 

\cref{sec:1_dualizable,sec:2_dualizable} are discussions of various flavors of duality that exist in monoidal bicategories, with the main content in \cref{sec:2_dualizable}. 

In \cref{sec:serre_duality} we prove the main result in categorical generality and in \cref{sec:motivation} we  relate the results in the literature to our statement of the theorem. 

\subsection{Acknowledgments} The second author was partially supported by NSF grants DMS-1810779 and DMS-2052905.  Many thanks to John Lind for his careful reading and helpful suggestions and to Nick Gurski for sharing his insights on coherence.  We also thank the referee for detailed comments that strengthened this paper.

\section{Duality in bicategories}\label{sec:duality_bicat}
Let $V$ be a finite dimensional vector space over a field $k$. A dual vector space $V^\ast$ can be defined 
without reference to the internal hom: a dual is specified by a unit $\epsilon: V^\ast \otimes V \to k$, a counit, $\eta: k \to V \otimes V^\ast$, and the fact that the following two composites are the identity map
\[
\begin{tikzcd}
  V \ar{r}{\eta \otimes \id} & V \otimes V^\ast \otimes V \ar{r}{\id \otimes \epsilon} & V
\end{tikzcd}
\qquad 
\begin{tikzcd}
  V^\ast \ar{r}{\id \otimes \eta} & V^\ast \otimes V \otimes V^\ast \ar{r}{\epsilon \otimes \id} & V^\ast 
\end{tikzcd}
\]
Concretely, the maps $\eta, \epsilon$ can be defined in terms of a basis $\{e_i\}^n_{i=1}$ by $\eta(1) = \sum e_i \otimes e^\ast_i$, $\epsilon(e^\ast_j \otimes e_i) = \delta_{ij}$ and extending by linearity.

A dual immediately gives very explicit ways of extracting invariants of vector spaces. For instance, the composite
\[
\begin{tikzcd}
k \ar{r}{\eta} & V\otimes V^\ast \ar{r}{\cong} & V^\ast \otimes V \ar{r}{\epsilon} & k 
\end{tikzcd}
\]
is multiplication by $\dim(V)$. For an endomorphism $f: V\to V$ the composite 
\[
\begin{tikzcd}
k \ar{r}{\eta} & V\otimes V^\ast \ar{r}{f \otimes \id} & V \otimes V^\ast \ar{r}{\cong} & V^\ast \otimes V \ar{r}{\epsilon} & k 
\end{tikzcd}
\]
is multiplication by $\tr(f)$. This also has the benefit that dimension and trace can be seen as \textit{morphisms}, and thus purely internal to the category.

Since the notion of dual, dimension, and trace can be expressed within a category with tensor product and symmetry, we can define all of these notions in a symmetric monoidal category $(\mc{C}, \otimes, \mbf{1})$ using the above definitions. The ``dimension'' of a dualizable object $C \in \mc{C}$ becomes the element $\chi(C) \in \hom_{\mc{C}}(\mbf{1}, \mbf{1})$ given by 
\[
\begin{tikzcd}
  \mbf{1} \ar{r} & C \otimes C^\ast \ar{r}{\cong} & C^\ast \otimes C \ar{r} & \mbf{1} 
\end{tikzcd}
\]
and similarly for the trace of an endomorphism $f: C \to C$.

While extracting invariants of objects required a symmetry isomorphism, the definition of duality did not. Thus, the definition of duality applies equally well in  a monoidal category. Monoidal categories can be viewed as a degenerate version of a bicategory\footnote{The weakness of a bicategory is necessary for the examples of interest here.} --- a monoidal category is a bicategory with one object, the objects of the monoidal category correspond to morphisms in a bicategory, and the tensor product structure corresponds to composition. 
With this context,  we think of a bicategory as a  monoidal category with many objects and the motivating example is the bicategory of rings, bimodules and bimodule maps.  We find  \cite{leinster} to be a good source for definitions of bicategories.

\begin{notn}  We denote the bicategorical composition in a bicategory $\mc{B}$ by $\odot$.  If $A$ is an object of $\mc{B}$ we denote the identity 1-cell for $A$ by $U_A$.  
\end{notn}

\begin{example}\label{ex:all_the_ex}\hfill 
\begin{enumerate}
\item\label{it:morita} In the category of bimodules, the unit 1-cell associated to a  ring $A$ is $A$ regarded as an $A-A$ bimodule and 
$\odot$ is the tensor product.
\item\label{it:morita_R}  For a commutative ring $R$, there is a  bicategory $\mathrm{Alg}(R)$ whose
objects are R-algebras, 
1-morphisms are algebra bimodules, and 
2-morphism are bimodule homomorphisms. 
 The bicategory composition $\odot$ is tensor product over the shared algebra.  

\item\label{it:prof} There is a  bicategory of  categories and profunctors (bimodules) enriched over a cocomplete closed symmetric monoidal category $\mathcal{V}$.  The bicategory composition $\odot$ is a coend. 

\item\label{it:dg_morita} There is a   bicategory of dg-algebras, and the derived category of bimodules and homomorphisms.   The bicategorical product is the tensor product.
\item\label{it:dg_prof} There is a   bicategory of dg-categories, where 0-cells are dg-categories and 1- and 2-cells are the derived category of bimodules.  We will abuse notation and also denote this category $\Mod(\Cat_{\dg})$.
\item\label{it:spec_prof} There is a   bicategory of spectral categories where 1- and 2-cells are the homotopy category of bimodules. We will  denote this category $\Mod(\Cat_{\Sp})$.
\end{enumerate}

See \cite[Thm. 8.39]{cp2} for an extended and explicit description of \ref{it:dg_morita} to \ref{it:spec_prof}.

Example \ref{it:morita} is a special case of example \ref{it:morita_R}  where $R=\mathbb{Z}$.  Example \ref{it:morita_R}  is a subbicategory of  example \ref{it:prof}  where we only consider the categories with a single object.  (The same for examples \ref{it:dg_morita}  and \ref{it:dg_prof}.) Finally there is  a spectral analog of example \ref{it:dg_morita}   where the objects are replaced by ring spectra. 
\end{example}

In this paper we will focus almost exclusively on bicategories that generalize the bicategory of bimodules.  This reflects the particular objective of this paper and not the underlying theory.  See \cite{ps:mult} for more topologically focused discussions. 

The following definition first appeared in \cite{may_sigurdson}.

\begin{defn}\label{defn:dualizable}
  Let $M$ be a 1-cell in a bicategory $\mc{B}(C, D)$. We say $M$ is \textbf{right dualizable} if there is a 1-cell $N$ together with 2-cells
  \[
  \eta\colon U_C \to M \odot N \qquad \epsilon\colon N \odot M \to U_D 
  \]
  such that the triangle identities \eqref{eq:triangle_id} hold. 
  \begin{equation}\label{eq:triangle_id}
  \begin{tikzcd}
    M \ar{r}{\cong} & U_C \odot M \ar{r}{\eta \odot \id}  & M \odot N \odot M\ar{r}{\id \odot \epsilon} & M \odot U_D \ar{r}{\cong} & M
\\[-15pt] 
    N\ar{r}{\cong} & N\odot  U_C \ar{r}{\id \odot \eta}  & N\odot M \odot N \ar{r}{\epsilon \odot \id} & U_D \odot  N\ar{r}{\cong} & N
  \end{tikzcd}
  \end{equation}
We say $N$ is \textbf{right dual} to $M$, $(M,N)$ is a \textbf{dual pair}, that $N$ is \textbf{left dualizable}, and that $M$ is its \textbf{left dual}.
\end{defn}

\begin{example}The examples in \cref{ex:all_the_ex} all have many dualizable 1-cells and they are all variations on projectivity. Those in examples \ref{it:prof}, \ref{it:dg_prof}, and \ref{it:spec_prof}, 
go under the name of \textbf{smooth, proper} categories and are 
 many object generalizations of finitely generated projective modules. For a beautiful introduction to this topic, see \cite{toen}. 
See also \cite{blumberg_gepner_tabuada} for a careful discussion of dualizability in categories. 
\end{example}

The following lemma is easy, but critical. 

\begin{lem}[Compare to \cref{lem:one_dual_composites}, {\cite[16.5.1]{may_sigurdson}}]\label{lem:duals_compose}
If $M_1 \in \mc{B}(A, B)$ and $M_2 \in \mc{B}(B, C)$ are right dualizable, then so is $M_1 \odot M_2$. 
\end{lem}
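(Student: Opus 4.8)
The plan is to establish the standard fact that right duals compose in the reverse order. Write $N_1 \in \mc{B}(B,A)$ for a right dual of $M_1$, with coevaluation $\eta_1\colon U_A \to M_1 \odot N_1$ and evaluation $\epsilon_1\colon N_1 \odot M_1 \to U_B$, and likewise $N_2 \in \mc{B}(C,B)$ for a right dual of $M_2$, with $\eta_2\colon U_B \to M_2 \odot N_2$ and $\epsilon_2\colon N_2 \odot M_2 \to U_C$. I claim that $N_2 \odot N_1 \in \mc{B}(C,A)$ is a right dual of $M_1 \odot M_2$. The candidate coevaluation is obtained by nesting $\eta_2$ inside $\eta_1$:
\[
\begin{tikzcd}
U_A \ar{r}{\eta_1} & M_1 \odot N_1 \ar{r}{\cong} & M_1 \odot U_B \odot N_1 \ar{r}{\id \odot \eta_2 \odot \id} & M_1 \odot M_2 \odot N_2 \odot N_1,
\end{tikzcd}
\]
where the middle isomorphism is the unitor. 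Dually, the candidate evaluation contracts the inner pair $(M_1,N_1)$ first and then the outer pair $(M_2,N_2)$:
\[
\begin{tikzcd}
N_2 \odot N_1 \odot M_1 \odot M_2 \ar{r}{\id \odot \epsilon_1 \odot \id} & N_2 \odot U_B \odot M_2 \ar{r}{\cong} & N_2 \odot M_2 \ar{r}{\epsilon_2} & U_C.
\end{tikzcd}
\]

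Next I would verify the two triangle identities \eqref{eq:triangle_id} for the pair $(M_1 \odot M_2,\, N_2 \odot N_1)$ with these 2-cells. Consider the first composite, run on $M_1 \odot M_2$. Substituting the definitions of $\eta$ and $\epsilon$ produces a zig-zag in which the $\eta_2$/$\epsilon_2$ cells act on the $M_2$ strand and the $\eta_1$/$\epsilon_1$ cells act on the $M_1$ strand. Using the interchange law (functoriality of $\odot$ in its two variables) these two zig-zags can be separated, so that one first applies the triangle identity for $(M_2,N_2)$ to straighten the $M_2$ strand and then the triangle identity for $(M_1,N_1)$ to straighten the $M_1$ strand, leaving the identity on $M_1 \odot M_2$. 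The second triangle identity, run on $N_2 \odot N_1$, is verified symmetrically. This is exactly the familiar string-diagram ``straightening'' of nested cups and caps, which is why the lemma is easy; drawing the diagram makes the two independent zig-zags visually obvious.

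The only genuine bookkeeping is the profusion of coherence 2-isomorphisms (associators and unitors), since in a bicategory $\odot$ is associative and unital only up to coherent isomorphism, so the diagrams above secretly carry associators I have suppressed. The clean way to dispatch this is to invoke the coherence theorem for bicategories: every diagram built from associators and unitors commutes, so one may argue in the strictification and treat $\odot$ as strictly associative and unital, whereupon the verification reduces precisely to the two triangle identities for $(M_1,N_1)$ and $(M_2,N_2)$. Thus the main (and only) obstacle is coherence bookkeeping rather than anything conceptual; once the coherence isomorphisms are absorbed, the proof is the two-line string-diagram argument sketched above.
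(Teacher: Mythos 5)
Your proof is correct and is exactly the argument the paper has in mind: the paper leaves this lemma to the citation \cite[16.5.1]{may_sigurdson} and later describes the proof as ``nesting'' the evaluation and coevaluation maps, which is precisely your construction of $(M_1\odot M_2, N_2\odot N_1)$ with nested units/counits, the triangle identities following from interchange (or, equivalently, straightening the string diagram). Your appeal to bicategorical coherence to suppress associators and unitors is the standard way to make this rigorous and matches the paper's implicit use of string-diagram calculus.
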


\begin{defn}\label{defn_shadow}\cite{p:thesis}
A \textbf{shadow} for a bicategory $\mc{B}$ consists of functors
 \[\sh{-}\colon \mc{B}(R, R)\to \mathbf{T}\]
for each object $R$ of $\mc{B}$ and some fixed category $\mathbf{T}$, equipped with a natural isomorphism
\[\theta\colon  \sh{ M \odot  N}\to \sh{N \odot M }\]
for $M \in \mc{B}( R, S)$  and $N \in \mc{B}( S,  R)$ such that the following diagrams commute whenever they make sense:
  \[\xymatrix@C=12pt{
    \sh{(M\odot N)\odot P} \ar[r]^\theta \ar[d]_{\sh{a}} &
    \sh{P \odot (M\odot N)} \ar[r]^{\sh{a}} &
    \sh{(P\odot M) \odot N}
    \\
    \sh{M\odot (N\odot P)} \ar[r]^\theta 
    & \sh{(N\odot P)\odot M} \ar[r]^{\sh{a}} 
   & \sh{N\odot (P\odot M)}\ar[u]_\theta 
  }\hfill \xymatrix@C=12pt{
    \sh{M\odot U_C} \ar[r]^\theta \ar[dr]_{\sh{r}} &
    \sh{U_C\odot M} \ar[d]^{\sh{l}} \ar[r]^\theta &
    \sh{M\odot U_C} \ar[dl]^{\sh{r}}
    \\
    &\sh{M}}\]
\end{defn}

\begin{example}\label{ex:shadow}
The bicategories in \cref{ex:all_the_ex} all have shadows and they can be interpreted as the bicategorical product with the diagonal module associated to a 0-cell.  In the bicategory of rings and modules this is Hochschild homology and in the spectral generalizations it is topological Hochschild homology.
\end{example}

Recent work of Hess and Rasekh \cite{hess_rasekh} has shown there is an equivalence between functors out of THH of a bicategory and shadows on that bicategory so the examples in \cref{ex:shadow} capture those we need to consider.

\begin{defn}\label{defn:twisted_trace}\cite{p:thesis}
  Let $\phi\colon P \odot M \to M \odot Q$ be a 2-cell where $M$ is right dualizable. The {\bf twisted trace} of $\phi$ is  the composite
  \[
  \sh{P} \cong \sh{P \odot U_A} \to \sh{P \odot M \odot N} \to \sh{M \odot Q \odot N} \cong \sh{N \odot M \odot Q} \to \sh{U_B \odot Q} \cong \sh{Q} 
  \]
\end{defn}
If $\phi$ is the identity map $M\to M$ we call the trace of $\phi$ the {\bf Euler characteristic} of $M$.  This is consistent with the Euler characteristic of spaces regarded as elements of the stable homotopy category.

\begin{example}\label{ex:all_the_ex_trace_1}
  Let $M$ be a right dualizable $(A, B)$-bimodule in the bicategory of rings and bimodules. Then the Euler characteristic is a map
  \[
  \begin{tikzcd}
    \chi(A)\colon \sh{U_A} = \hh(A) \ar{r} & \hh(B)  = \sh{U_B}
  \end{tikzcd}
  \]

\label{ex:all_the_ex_trace_2}
As a more explicit example of this,  
 let $V$ be a finite dimensional complex representation of a finite group $G$. Then $V$ is a $(\mathbb{C}[G], \mathbb{C})$-bimodule which is right dualizable. The induced map
  \[
  \begin{tikzcd}
    \hh_0 (\mathbb{C}[G]) \ar{r} & \hh_0 (\mathbb{C}) 
  \end{tikzcd}
  \]
  is the character of the representation. 
\end{example}

We end this section with another easy, but critical result that builds on \cref{lem:duals_compose}.

\begin{thm}[Compare to \cref{lem:trace_otimes_compatible}, {\cite[7.5]{ps:bicat}}]\label{thm:composite}
  Let $M_1 \in \mc{B}(A, B)$, $M_2 \in \mc{B}(B, C)$ be right dualizable and $Q_1 \in \mc{B}(A,A)$, $Q_2 \in \mc{B}(B, B)$ and $Q_3 \in \mc{B}(C, C)$. Let $f_1\colon Q_1\odot M_1\to M_1\odot Q_2$ and $f_2\colon Q_2\odot M_2\to M_2\odot Q_3$ be 2-cells. Then the trace of
  \[
  Q_1 \odot M_1 \odot M_2 \xrightarrow{f_1 \odot \id_{M_2}} M_1 \odot Q_2 \odot M_2 \xrightarrow{\id_{M_1} \odot f_2} M_1 \odot M_2 \odot Q_3
  \]
  is
  \[
  \sh{Q_1} \xrightarrow{\tr(f_1)} \sh{Q_2} \xrightarrow{\tr(f_2)} \sh{Q_3}. 
  \]
\end{thm}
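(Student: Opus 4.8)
The plan is to reduce the statement to \cref{lem:duals_compose} together with naturality and coherence of the shadow isomorphism $\theta$. Write $N_1 \in \mc{B}(B,A)$ and $N_2 \in \mc{B}(C,B)$ for right duals of $M_1$ and $M_2$, with coevaluations $\eta_i$ and evaluations $\epsilon_i$. By \cref{lem:duals_compose} the composite $M_1 \odot M_2$ is right dualizable with right dual $N_2 \odot N_1$, and the (co)evaluation witnessing this are the composites
\[
U_A \xrightarrow{\eta_1} M_1 \odot N_1 \cong M_1 \odot U_B \odot N_1 \xrightarrow{\id \odot \eta_2 \odot \id} M_1 \odot M_2 \odot N_2 \odot N_1
\]
and
\[
N_2 \odot N_1 \odot M_1 \odot M_2 \xrightarrow{\id \odot \epsilon_1 \odot \id} N_2 \odot U_B \odot M_2 \cong N_2 \odot M_2 \xrightarrow{\epsilon_2} U_C.
\]
The 2-cell $(\id_{M_1} \odot f_2)\circ(f_1 \odot \id_{M_2})$ is precisely a 2-cell $Q_1 \odot (M_1 \odot M_2) \to (M_1 \odot M_2) \odot Q_3$, so its trace is defined by \cref{defn:twisted_trace} relative to this dual pair. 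The first step is therefore to substitute these composite data into \cref{defn:twisted_trace} and record the trace as an explicit string of maps of shadows.

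The heart of the argument is to split this single trace into the two smaller ones. Morally, the closed cycle around $M_1 \odot M_2$ is pinched at the object $B$: the $\eta_2,\epsilon_2$ strands carve out the sub-cycle computing $\tr(f_2)$ while the $\eta_1,\epsilon_1$ strands carve out the sub-cycle computing $\tr(f_1)$, and the two are glued along $\sh{Q_2}$. To make this precise I would use naturality of $\theta$ and the interchange law --- the fact that $f_1\odot\id_{M_2}$ and $\id_{M_1}\odot f_2$, as well as $\eta_2$ and $\epsilon_1$, act on disjoint tensor factors and hence commute --- to arrange the string so that the pair $N_1, M_1$ produced by $\eta_1$ is contracted by $\epsilon_1$ \emph{as early as possible}. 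Concretely, after applying $\eta_1$, $\eta_2$, and $f_1$ one reaches $\sh{M_1 \odot Q_2 \odot M_2 \odot N_2 \odot N_1}$; a rotation via $\theta$ brings $N_1$ adjacent to $M_1$, and $\epsilon_1$ then lands the computation in $\sh{Q_2 \odot M_2 \odot N_2}$ up to the unit isomorphisms $\sh{l},\sh{r}$ of \cref{defn_shadow}.

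One then recognizes the composite up to this point as $\tr(f_1)$ followed by the coevaluation step $\eta_2$ of $\tr(f_2)$: pulling $\eta_2$ back past the rotation and past $\epsilon_1$ (using naturality of $\theta$ for the 2-cell $\eta_2$, and interchange for $\epsilon_1$) converts the ``early'' insertion into the ``middle'' insertion that appears in $\tr(f_2)\circ\tr(f_1)$. The remaining maps $f_2$, the rotation $\theta$, and $\epsilon_2$ are exactly the tail of $\tr(f_2)$. Concatenating yields $\tr(f_2)\circ \tr(f_1)$.

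I expect the main obstacle to be the bookkeeping of associativity and unit coherence, and in particular verifying that the single rotation $\theta$ applied to the long product decomposes into the two separate rotations used in the definitions of $\tr(f_1)$ and $\tr(f_2)$; this is forced by the coherence hexagon for $\theta$ in \cref{defn_shadow} but is tedious to check by hand. The conceptual content --- sliding the disjoint cells $f_1, f_2, \eta_2, \epsilon_1$ past one another and contracting each dual pair in turn --- is automatic in the string-diagram calculus, so the real work lies in translating that picture into a rigorous chase of commuting squares of shadows.
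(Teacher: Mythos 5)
Your proposal is correct and takes essentially the same route as the paper: the paper gives no written-out proof, deferring to \cite[7.5]{ps:bicat} and later characterizing the argument as ``sliding disconnected strings past each other,'' and your outline --- forming the composite dual pair from \cref{lem:duals_compose}, then rearranging via interchange of disjoint 2-cells, naturality of $\theta$, and the hexagon coherence that decomposes the single rotation into the two rotations appearing in $\tr(f_1)$ and $\tr(f_2)$ --- is precisely that argument made explicit. The ingredients you flag (in particular the coherence axiom of \cref{defn_shadow} as the crux of the bookkeeping) are the right ones, so no gap remains beyond the routine diagram chase you acknowledge.
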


\section{Path to the main theorem}\label{sec:path}
In this section we use the traces  of \cref{sec:duality_bicat} to state, but not yet prove, a bicategorical generalization of \cref{intro:shklyarov}.  This generalization requires hypotheses beyond a shadow on a bicategory and in this section we describe the additional conditions.  
When stating a theorem, there is often a choice between hypotheses that are easy to verify or state and those that  are most compatible with the proof technique.  The main theorem of this paper appears to be a case where there is a significant discrepancy between these two goals.  We resolve that tension by focusing on precisely what our proof requires in this section.  In \cref{sec:1_dualizable,sec:2_dualizable,sec:serre_duality} we verify that symmetric  monoidal bicategories with many dualizable 0-cells (\cref{sec:1_dualizable,sec:2_dualizable}) and, in particular, subbicategories of \cref{ex:all_the_ex}  satisfy the conditions described in this section.

\cref{ex:all_the_ex_trace_1,ex:shadow} begin the process of generalizing \cref{intro:shklyarov} by recognizing the Hochschild homology groups as shadows and the Euler or Hochschild class as a trace.  The next step is to appropriately interpret the pairing map.    A non degenerate pairing of $R$-modules for a commutative ring $R$ is a map 
\[M\otimes_R N\to L\] 
so that the adjoint $M\to \Hom(N, L)$ 
is an isomorphism.  
For pairings that take values in the ground ring $R$, this is a pair of modules $(M,N)$ and an isomorphism $M\to \Hom(N,R)$.

We will replace this assumption by the assumption that $(M,N)$ is a dual pair in a  monoidal category  $(\sC,\otimes, I)$.  
If $\sC$ is closed and $(M,N)$ is a dual pair,  there is an isomorphism
\[N\to \Hom(M,I).\] 
In the category of modules over a ring $R$, dualizability is not the same as a nondegenerate pairing, but working within the hypotheses of  \cref{intro:shklyarov} we don't lose any generality.  With this change, we gain an easy generalization of pairing to other  monidal categories and the coevaluations will be essential to our generalization of \cref{intro:shklyarov}.

Then the first additional condition on a bicategory with shadow is the following:
\begin{goal}[\cref{sec:1_dualizable}]\label{goal:smc}
The shadow on $\sB$ takes values in a  monoidal category.
\end{goal}

The Euler classes and Hochschild classes of \cref{intro:shklyarov} are traces in bicategories and so we think of them as maps $\sh{U_A}\to \sh{U_B}$ for 0-cells $A$ and $B$ of a bicategory $\sB$.  (In the motivating examples $U_A$ and $\sh{U_A}$ are the ground field.) 
Since $\sh{U_A}$ and $\sh{U_B}$ have to be compatible with \cref{goal:smc} we have the following condition:
\begin{goal}[\cref{sec:2_dualizable}]\label{goal:dualizable}
For each 0-cell $A$ of $\sB$, $\sh{U_A}$ is dualizable.  
\end{goal}

In \cref{sec:2_dualizable} we will produce dualizability for shadows of more 1-cells, but this is an artifact of the process and not essential for the most immediate generalizations of the result in the introduction.  It would be interesting to know if there are further generalizations that make use of the additional dualizability.  

The final goal almost slides under the radar since it is such a fundamental and familiar fact about modules.  It is the observation that 
an $(A,B)$-bimodule can be understood as an $(B^{\op},A^{\op})$-bimodule. 
In the generality here it is the following statement.

\begin{goal}[\cref{sec:serre_duality}]\label{goal:dualizable_functor}
  There is a  function $\zdual{-}$ from 0-cells of $\sB$ to itself that so that the dual of $\sh{U_A}$ is $\sh{U_{\zdual{A}}}$ and this function extends to a functor 
\[
^\sqsubset -_\sqsupset\colon \sB(A,B)\to \sB(\zdual{B}, \zdual{A})\]
that takes right dualizable 1-cells to right dualizable 1-cells. 
\end{goal}

\cref{goal:smc,goal:dualizable,goal:dualizable_functor} don't together imply \cref{thm:goal}, but they make it possible to understand the statement.

\begin{goal}[\cref{sec:euler_pairing,sec:serre_duality}]\label{thm:goal}
Suppose $M\in \sB(A,B)$ and  $N\in \sB(B,A)$ are right dualizable 1-cells. For 2-cells  $f\colon M\to M$ and $g\colon N\to N$  the following diagram commutes.
\[\xymatrix@C=50pt{\sh{U_A}\otimes \sh{U_{\zdual{A}}}\ar[r]^-{\tr(f)\otimes \tr(^\sqsubset g_\sqsupset
)}&\sh{U_B}\otimes \sh{U_{\zdual{B}}}\ar[d]
\\
I\ar[r]^-{\tr(f\odot g)}\ar[u]&I}
\]
The left vertical map is the coevaluation for the dual pair $(\sh{U_A},\sh{U_{\zdual{A}}})$ and the right vertical map is the evaluation for the dual pair $(\sh{U_B},\sh{U_{\zdual{B}}})$. 
\end{goal}

\begin{example}
\cref{intro:shklyarov} is a special case of this result under some simplifying assumptions. 
First suppose that   $\sh{U_A}\cong \sh{U_{\zdual{A}}}\cong I$ and the coevaluation for $I$ is the unit isomorphism.  Then the diagram in \cref{thm:goal} becomes the following diagram.
\[\xymatrix@C=50pt{I\otimes I\ar[r]^-{\tr(f)\otimes \tr(^\sqsubset g_\sqsupset
)}&\sh{U_B}\otimes \sh{U_{\zdual{B}}}\ar[d]
\\
I\ar[r]^-{\tr(f\odot g)}\ar[u]&I}
\]
If the target of the shadow is the category of $k$-vector spaces for a field $k$ then $I=k$.   Taking Hochschild homology to be the shadow, the traces pick out Hochschild homology classes.  Then the right vertical map is a nondegenerate pairing in the more conventional sense.  The maps from $I=k$ pick out elements in their targets.
\end{example}

\section{Monoidal bicategories}\label{sec:monoidal_bicat}
Our main theorem requires working in the generality of symmetric monoidal bicategories. In this section we review the necessary definitions and establish our graphical notation for manipulations. While the formal definition of a monoidal bicategory is unwieldy, the basic intuition, first examples and graphical descriptions are illuminating.  We consider those first before recalling the formal definitions.

Intuitively, a monoidal bicategory is a bicategory $\sB$ equipped with a functor 
\[\otimes \colon \sB\times \sB\to \sB\]
that is appropriately compatible with the bicategorical composition $\odot$ and is unital and associative.  A symmetric monoidal bicategory gains 1-cells $A\otimes B\to B\otimes A$ that satisfy the triangle identities for an adjoint pair.  See \cref{fig:dualizable_1_cell_symmetry}.

For motivating examples of monoidal bicategories we return to the motivating example of the previous section.

\begin{example}The examples in \cref{ex:all_the_ex} are (symmetric) monoidal bicategories.  See \cite{shulman_symmetric_2,shulman_symmetric,cp2}.
\begin{itemize}
\item For a commutative ring $R$, the  monoidal product $\otimes$  in the bicategory $\mathrm{Alg}(R)$ is tensor product over $R$.

\item The monoidal product in the  bicategory of  categories and profunctors (bimodules) enriched over a cocomplete closed symmetric monoidal category $\mathcal{V}$
is induced by the monoidal product on $\mathcal{V}$.
\end{itemize}
See \cite[Thm. 8.39]{cp2}  for more details.
\end{example}

\subsection{Circuit diagrams}\label{sec:circuit_diagram}

Symmetric monoidal bicategories have an impressive amount of structure and keeping track of all it is burdensome.  Fortunately, Gurski and Osorno \cite{gurski_osorno} proved a coherence theorem for symmetric monoidal bicategories 
that massively simplifies bookkeeping challenges.

\begin{thm} \cite[Theorem 1.23]{gurski_osorno}  In the free symmetric monoidal bicategory on a single object, every diagram of 2-cells commutes. Equivalently, between every pair of parallel 1-cells there is either a unique
invertible 2-cell or no 2-cells at all. Moreover, parallel 1-cells are isomorphic if and only if they
have the same underlying permutation.
\end{thm}

This alleviates the need to record associativity and unit maps, but leaves us with the challenge of keeping track of the bicategorical compositions $\odot$ and the monoidal products $\otimes$ of 1-cells.  We will resolve this by writing compositions graphically using the conventions of \cite{cp2}.  They are closely related to those in \cite{douglas_schommer_pries_snyder, schommer-pries,ps:indexed}.

We represent 0-cells by black horizontal lines with parallel colored shading.  
\begin{equation}
\begin{tikzpicture}
\ocalt{y}{}{Y}{4.5}{0}{0}{\en}
\ocalt{x}{}{X}{0}{0}{0}{\en}

  \begin{scope}[on background layer]
\alp{ ({x}t)--({y}t)}
\end{scope}
\end{tikzpicture}
\end{equation}
The default is that the shading is below the black lines, but in later sections we will flip the colored shading to the top to indicate the 1-dual of a 0-cell (see \cref{defn:1_dualizable}).   Vertically stacked shaded lines indicate the $\otimes$ product.

One cells are represented by colored boxes with any number of shaded edges entering on the left and right sides.  (Shaded edges do not  enter on the top and bottom.)  
\begin{equation}
\resizebox{.15\textwidth}{!}{
\begin{tikzpicture}
\ocalt{ta}{}{X}{0}{1}{-1}{\en}
\ocalt{t2}{}{Y}{4}{1}{-1}{\en}
\oc{t3}{N}{}{2}{1}{-1}{\et}
  \begin{scope}[on background layer]
\dlp{({t3}t)--({t2}t)}
\blp{({t3}m)--({t2}m)}
\alp{({t3}b)--({t2}b)}
\dblp{({ta}t)--({t3}t)}
\clp{({ta}m)--({t3}m)}
\end{scope}
\end{tikzpicture}}
\end{equation}
For example, a 1-cell from $A\otimes B$ to $C\otimes D\otimes E$ would have two edges entering on the left with the edge representing $A$ below that for $B$ and three edges on the right with that for $C$ at the bottom and $E$ at the top.    If there are no edges attached to a side of a 1-cell then the 0-cell on that side is the monoidal unit 0-cell. 

Vertical stacking of colored boxes represents $\otimes$ product and horizontal concatenation is the $\odot$ composition. 

Two-cells are represented by arrows between groups of edges and colored boxes.  In particular, we do not represent 2-cells with surfaces.

\begin{example}  As a limited first example, 
\cref{fig:dualizable_1_cell} represents the maps 
and compatibility for a dual pair of 1-cells in a bicategory.  
As all 1-cells are assigned to a single 0-cell on left and right the colored boxes each have 1-edge entering on the left and one edge on the right.  There is no vertical stacking since this is a definition in a bicategory rather than a monoidal bicategory.

\begin{figure}
    \begin{subfigure}[t]{0.2\textwidth}
        \centering
\resizebox{\textwidth}{!}{\begin{tikzpicture}

\node[draw] (A1) at (0, 2){\begin{tikzpicture}
\ocalt{y}{}{Y}{4.5}{0}{0}{\en}
\ocalt{x}{}{X}{0}{0}{0}{\en}

  \begin{scope}[on background layer]
\alp{ ({x}t)--({y}t)}
\end{scope}
\end{tikzpicture}
};
\node[draw] (A2) at (0, 0){\begin{tikzpicture}
\ocalt{y}{}{Y}{4.5}{0}{0}{\en}
\ocalt{x}{}{X}{0}{0}{0}{\en}
\oc{m}{M}{}{1.5}{0}{0}{\et}
\oc{n}{N}{}{3}{0}{0}{\ep}
  \begin{scope}[on background layer]
\blp{ ({m}b)--({n}t)}
\alp{ ({x}b)--({m}b)}
\alp{ ({n}t)--({y}t)}
\end{scope}

\end{tikzpicture}
};

\draw[ ->](A1)--(A2)node [midway , fill=white] {$\eta$};
\end{tikzpicture}}

        \caption{Coevaluation}
    \end{subfigure}%
\hspace{.5cm}
    \begin{subfigure}[t]{0.2\textwidth}
        \centering
\resizebox{\textwidth}{!}{
\begin{tikzpicture}

\node[draw] (A1) at (0, 2){\begin{tikzpicture}
\ocalt{y}{}{Y}{4.5}{0}{0}{\en}
\ocalt{x}{}{X}{0}{0}{0}{\en}
\oc{n}{N}{}{1.5}{0}{0}{\ep}
\oc{m}{M}{}{3}{0}{0}{\et}

  \begin{scope}[on background layer]
\alp{({m}b)--({n}t)}
\blp{ ({x}b)--({n}b)}
\blp{({m}t)--({y}t)}
\end{scope}
\end{tikzpicture}
};

\node[draw] (A2) at (0, 0){\begin{tikzpicture}
\ocalt{y}{}{Y}{4.5}{0}{0}{\en}
\ocalt{x}{}{X}{0}{0}{0}{\en}
  \begin{scope}[on background layer]
\blp{ ({x}t)--({y}t)}
\end{scope}
\end{tikzpicture}
};

\draw[ ->](A1)--(A2)node [midway , fill=white] {$\epsilon$};
\end{tikzpicture}}

        \caption{Evaluation}
    \end{subfigure}%
 \hspace{.5cm}
    \begin{subfigure}[t]{0.45\textwidth}
        \centering
\resizebox{\textwidth}{!}{
\begin{tikzpicture}

\node[draw] (A3) at (0, 4){\begin{tikzpicture}
\ocalt{y}{}{Y}{6}{0}{0}{\en}
\ocalt{x}{}{X}{0}{0}{0}{\en}
\oc{n}{N}{}{1.5}{0}{0}{\ep}

  \begin{scope}[on background layer]
\alp{({n}b)--({y}t)}
\blp{({x}b)--({n}b)}
\end{scope}
\end{tikzpicture}
};

\node[draw] (A1) at (0, 2){\begin{tikzpicture}
\ocalt{y}{}{Y}{6}{0}{0}{\en}
\ocalt{x}{}{X}{0}{0}{0}{\en}
\oc{n}{N}{}{1.5}{0}{0}{\ep}
\oc{m}{M}{}{3}{0}{0}{\et}
\oc{n2}{N}{}{4.5}{0}{0}{\ep}
  \begin{scope}[on background layer]
\alp{ ({m}b)--({n}t)}
\alp{ ({n2}b)--({y}t)}
\blp{ ({x}b)--({n}b)}
\blp{({m}t)--({n2}t)}
\end{scope}
\end{tikzpicture}
};

\node[draw] (A2) at (0, 0){\begin{tikzpicture}
\ocalt{y}{}{Y}{6}{0}{0}{\en}
\ocalt{x}{}{X}{0}{0}{0}{\en}
\oc{n2}{N}{}{4.5}{0}{0}{\ep}
  \begin{scope}[on background layer]
\alp{ ({n2}b)--({y}t)}
\blp{({x}t)--({n2}b)}
\end{scope}
\end{tikzpicture}
};
\draw[ ->](A3)--(A1)node [midway , fill=white] {$\id\odot \eta$};
\draw[ ->](A1)--(A2)node [midway , fill=white] {$\epsilon\odot \id$};
\end{tikzpicture}
\hspace{1cm}
\begin{tikzpicture}

\node[draw] (A3) at (0, 4){\begin{tikzpicture}
\ocalt{y}{}{Y}{6}{0}{0}{\en}
\ocalt{x}{}{X}{0}{0}{0}{\en}
\oc{m}{M}{}{4.5}{0}{0}{\et}
  \begin{scope}[on background layer]
\blp{ ({m}b)--({y}t)}
\alp{ ({x}b)--({m}b)}
\end{scope}
\end{tikzpicture}
};

\node[draw] (A1) at (0, 2){\begin{tikzpicture}
\ocalt{y}{}{Y}{6}{0}{0}{\en}
\ocalt{x}{}{X}{0}{0}{0}{\en}
\oc{m}{M}{}{1.5}{0}{0}{\et}
\oc{n}{N}{}{3}{0}{0}{\ep}
\oc{m2}{M}{}{4.5}{0}{0}{\et}

  \begin{scope}[on background layer]
\blp{({n}b)--({m}t)}
\blp{ ({m2}b)--({y}t)}
\alp{({x}b)--({m}b)}
\alp{({n}t)--({m2}t)}
\end{scope}

\end{tikzpicture}
};

\node[draw] (A2) at (0, 0){\begin{tikzpicture}
\ocalt{y}{}{Y}{6}{0}{0}{\en}
\ocalt{x}{}{X}{0}{0}{0}{\en}
\oc{m}{M}{}{1.5}{0}{0}{\et}
  \begin{scope}[on background layer]
\blp{ ({m}b)--({y}t)}
\alp{({x}b)--({m}b)}
\end{scope}
\end{tikzpicture}
};
\draw[ ->](A3)--(A1)node [midway , fill=white] {$\eta\odot \id$};
\draw[ ->](A1)--(A2)node [midway , fill=white] {$\id\odot \epsilon$};
\end{tikzpicture}}

        \caption{Triangle identities assert these two composites are the identity map}
    \end{subfigure}%
\caption{Circuit diagrams for dualizable 1-cells}\label{fig:dualizable_1_cell}
\end{figure}
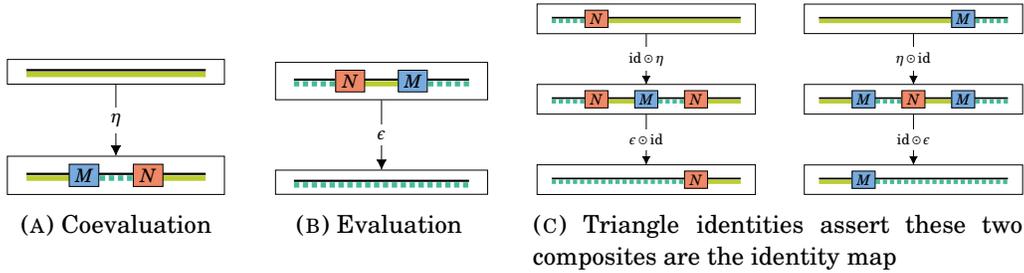

\end{example}

\begin{example}\label{ex:define_Gamma}
In a symmetric monoidal bicategory (see \cref{sec:formal_def}) for each pair of 0-cells $A, B$, there is a 1-cell $\Gamma$ from  $A\otimes B$ to  $B\otimes A$.  We represent this graphically as the following exchange of lines. 
\begin{equation}\label{fig:symmetry}
\begin{tikzpicture}

\ocalt{t1}{}{\coo{A}_1}{1}{5}{4}{\et}

\ocalt{ex}{}{\evo{A}_x}{4}{5}{4}{\ep}
\gc{g1}{2.5}{5}{4}

  \begin{scope}[on background layer]

\alp{ ({t1}b)\gpb{g1}({ex}t)}
\blp{({t1}t)\gpt{g1}({ex}b)}
\end{scope}
\end{tikzpicture}
\end{equation}
The 1-cell $\Gamma$ is part of an adjoint equivalence.  An adjoint equivalence is a dual pair where the coevaluation and evaluation 2-cells are isomorphisms. (In \cite{campbell_ponto}, motived by the applications of the paper, we called this a Morita equivalence.)  In particular, we have maps as in \cref{fig:dualizable_1_cell_symmetry_1} and \cref{fig:dualizable_1_cell_symmetry_2} and the composites in \cref{fig:dualizable_1_cell_symmetry_3} are the identity.

\begin{figure}
    \begin{subfigure}[t]{0.2\textwidth}
        \centering
\resizebox{\textwidth}{!}{\begin{tikzpicture}

\node[draw] (A1) at (0, 3){\begin{tikzpicture}
\ocalt{y}{}{Y}{4.5}{1}{0}{\en}
\ocalt{x}{}{X}{0}{1}{0}{\en}

  \begin{scope}[on background layer]
\alp{ ({x}t)--({y}t)}
\blp{ ({x}b)--({y}b)}
\end{scope}
\end{tikzpicture}
};
\node[draw] (A2) at (0, 0){\begin{tikzpicture}
\ocalt{y}{}{Y}{4.5}{1}{0}{\en}
\ocalt{x}{}{X}{0}{1}{0}{\en}
\gc{g1}{1.5}{1}{0}
\gc{g2}{3}{1}{0}
  \begin{scope}[on background layer]
\alp{ 
(3*\d-\w,0*\h)--(3*\d+\w,1*\h)--({y}t)}
\blp{ ({x}b)--(1.5*\d-\w,0*\h)--(1.5*\d+\w,1*\h)--(3*\d-\w,1*\h)--(3*\d+\w,0*\h)--({y}b)}
\alp{ ({x}t)--(1.5*\d-\w,1*\h)--(1.5*\d+\w,0*\h)--(3*\d-\w,0*\h)
}
\end{scope}

\end{tikzpicture}
};

\draw[ ->](A1)--(A2)node [midway , fill=white] {$\eta$};
\end{tikzpicture}}

        \caption{Coevaluation}\label{fig:dualizable_1_cell_symmetry_1}
    \end{subfigure}%
\hspace{.5cm}
    \begin{subfigure}[t]{0.2\textwidth}
        \centering
\resizebox{\textwidth}{!}{
\begin{tikzpicture}

\node[draw] (A1) at (0, 3){\begin{tikzpicture}
\ocalt{y}{}{Y}{4.5}{1}{0}{\en}
\ocalt{x}{}{X}{0}{1}{0}{\en}

  \begin{scope}[on background layer]
\alp{ 
(3*\d-\w,0*\h)--(3*\d+\w,1*\h)--({y}t)}
\blp{ ({x}b)--(1.5*\d-\w,0*\h)--(1.5*\d+\w,1*\h)--(3*\d-\w,1*\h)--(3*\d+\w,0*\h)--({y}b)}
\alp{ ({x}t)--(1.5*\d-\w,1*\h)--(1.5*\d+\w,0*\h)--(3*\d-\w,0*\h)
}
\end{scope}
\end{tikzpicture}
};

\node[draw] (A2) at (0, 0){\begin{tikzpicture}
\ocalt{y}{}{Y}{4.5}{1}{0}{\en}
\ocalt{x}{}{X}{0}{1}{0}{\en}
  \begin{scope}[on background layer]

\alp{ ({x}t)--({y}t)}
\blp{ ({x}b)--({y}b)}
\end{scope}
\end{tikzpicture}
};

\draw[ ->](A1)--(A2)node [midway , fill=white] {$\epsilon$};
\end{tikzpicture}}

        \caption{Evaluation}\label{fig:dualizable_1_cell_symmetry_2}
    \end{subfigure}%
 \hspace{.5cm}
    \begin{subfigure}[t]{0.45\textwidth}
        \centering
\resizebox{\textwidth}{!}{
\begin{tikzpicture}

\node[draw] (A3) at (0, 6){\begin{tikzpicture}
\ocalt{y}{}{Y}{6}{1}{0}{\en}
\ocalt{x}{}{X}{0}{1}{0}{\en}

  \begin{scope}[on background layer]
\alp{({x}b)--(1.5*\d-\w,0*\h)--(1.5*\d+\w,1*\h)--({y}t)}
\blp{({x}t)--(1.5*\d-\w,1*\h)--(1.5*\d+\w,0*\h)--({y}b)}
\end{scope}
\end{tikzpicture}
};

\node[draw] (A1) at (0, 3){\begin{tikzpicture}
\ocalt{y}{}{Y}{6}{1}{0}{\en}
\ocalt{x}{}{X}{0}{1}{0}{\en}
  \begin{scope}[on background layer]
\alp{({x}b)--(1.5*\d-\w,0*\h)--(1.5*\d+\w,1*\h)--(3*\d-\w,1*\h)
}
\blp{({x}t)--(1.5*\d-\w,1*\h)--(1.5*\d+\w,0*\h)--(3*\d-\w,0*\h)
}

\blp{
(3*\d-\w,0*\h)--(3*\d+\w,1*\h)--(4.5*\d-\w,1*\h)
}
\alp{
(3*\d-\w,1*\h)--(3*\d+\w,0*\h)--(4.5*\d-\w,0*\h)
}

\alp{
(4.5*\d-\w,0*\h)--(4.5*\d+\w,1*\h)--({y}t)}
\blp{
(4.5*\d-\w,1*\h)--(4.5*\d+\w,0*\h)--({y}b)}
\end{scope}
\end{tikzpicture}
};

\node[draw] (A2) at (0, 0){\begin{tikzpicture}
\ocalt{y}{}{Y}{6}{1}{0}{\en}
\ocalt{x}{}{X}{0}{1}{0}{\en}
  \begin{scope}[on background layer]
\alp{({x}b)--(4.5*\d-\w,0*\h)--(4.5*\d+\w,1*\h)--({y}t)}
\blp{({x}t)--(4.5*\d-\w,1*\h)--(4.5*\d+\w,0*\h)--({y}b)}
\end{scope}
\end{tikzpicture}
};
\draw[ ->](A3)--(A1)node [midway , fill=white] {$\id\odot \eta$};
\draw[ ->](A1)--(A2)node [midway , fill=white] {$\epsilon\odot \id$};
\end{tikzpicture}
\hspace{1cm}
\begin{tikzpicture}

\node[draw] (A3) at (0, 6){\begin{tikzpicture}
\ocalt{y}{}{Y}{6}{1}{0}{\en}
\ocalt{x}{}{X}{0}{1}{0}{\en}
  \begin{scope}[on background layer]
\alp{({x}b)--(4.5*\d-\w,0*\h)--(4.5*\d+\w,1*\h)--({y}t)}
\blp{({x}t)--(4.5*\d-\w,1*\h)--(4.5*\d+\w,0*\h)--({y}b)}
\end{scope}
\end{tikzpicture}
};

\node[draw] (A1) at (0, 3){\begin{tikzpicture}
\ocalt{y}{}{Y}{6}{1}{0}{\en}
\ocalt{x}{}{X}{0}{1}{0}{\en}
  \begin{scope}[on background layer]
\alp{({x}b)--(1.5*\d-\w,0*\h)--(1.5*\d+\w,1*\h)--(3*\d-\w,1*\h)
}
\blp{({x}t)--(1.5*\d-\w,1*\h)--(1.5*\d+\w,0*\h)--(3*\d-\w,0*\h)
}
\blp{
(3*\d-\w,0*\h)--(3*\d+\w,1*\h)--(4.5*\d-\w,1*\h)
}
\alp{
(3*\d-\w,1*\h)--(3*\d+\w,0*\h)--(4.5*\d-\w,0*\h)
}
\alp{
(4.5*\d-\w,0*\h)--(4.5*\d+\w,1*\h)--({y}t)}

\blp{
(4.5*\d-\w,1*\h)--(4.5*\d+\w,0*\h)--({y}b)}
\end{scope}

\end{tikzpicture}
};

\node[draw] (A2) at (0, 0){\begin{tikzpicture}
\ocalt{y}{}{Y}{6}{1}{0}{\en}
\ocalt{x}{}{X}{0}{1}{0}{\en}
  \begin{scope}[on background layer]
\alp{({x}b)--(1.5*\d-\w,0*\h)--(1.5*\d+\w,1*\h)--({y}t)}
\blp{({x}t)--(1.5*\d-\w,1*\h)--(1.5*\d+\w,0*\h)--({y}b)}
\end{scope}
\end{tikzpicture}
};
\draw[ ->](A3)--(A1)node [midway , fill=white] {$\eta\odot \id$};
\draw[ ->](A1)--(A2)node [midway , fill=white] {$\id\odot \epsilon$};
\end{tikzpicture}}

        \caption{Triangle identities assert these two composites are the identity map}\label{fig:dualizable_1_cell_symmetry_3}
    \end{subfigure}%
\caption{Circuit diagrams demonstrating dualizability for symmetry}\label{fig:dualizable_1_cell_symmetry}
\end{figure}
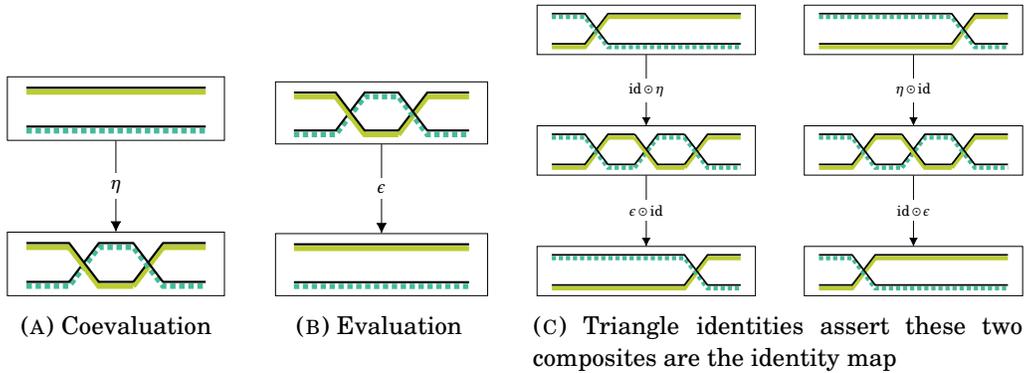
\end{example}

\begin{rmk}
For the graphical representation of the 1-cell $\Gamma$  in  \eqref{fig:symmetry} the data of which line crosses over the other is an artifact of the representation that does not reflect the underlying mathematics. So the order of crossing should be disregarded.  When read from left to right we will always have the top line cross over the bottom, but this is just to minimize distraction.
\end{rmk}

\begin{example}
Another illuminating example of the translation between circuit diagrams and more conventional expressions can be found in \cref{defn:1_dualizable}.  The map in \eqref{eq:trictop} is written graphically in \cref{fig:dualizable_0_cell} and the same for \eqref{eq:trietop} and \cref{fig:dualizable_0_cell_a}.  These have both vertical stacking and horizontal composition and 1-cells have sides with no edges and sides with multiple edges.   We find the graphical descriptions in  \cref{fig:dualizable_0_cell,fig:dualizable_0_cell_a} significantly more clear than the descriptions in \eqref{eq:trictop} and \eqref{eq:trietop} and so we will default to the graphical representations. 
\end{example}

\cref{fig:toroidal_framing} is an interpretation of \cref{fig:toroidal_no_framing} that is consistent with these graphical conventions.
\begin{figure}
\begin{subfigure}[b]{0.20\textwidth}
\resizebox{\textwidth}{!}{\begin{tikzpicture}

\newcommand\vshift{2}
\newcommand\edgeanglein{80}
\newcommand\edgeangleout{-100}

\coordinate (a1) at (-3,1*\vshift);
\coordinate (a4) at (3,1*\vshift);

\draw[linestyle,fuzzlefta]
 (a1) to  [looseness=.4,out=-90, in=-90] 
(a4) to  [looseness=.4,out=90, in=90] 
(a1)
;

\coordinate (b1) at (-3,0);
\coordinate (b2) at (-1.6,.5);
\coordinate (b3) at (-1.5,-.5);
\coordinate (b4) at (1.6,.5);
\coordinate (b5) at (1.5,-.5);
\coordinate (b6) at (3,0);

\draw[linestyle,fuzzleft]
(b3)to  [looseness=1,out=-10, in=-170] (b5);

\draw[linestyle,fuzzlefta]
	 (b5)to  [looseness=1,out=10, in=-100]
	(b6)to  [looseness=1,out=80, in=0] 
	(b4);

\draw[linestyle,fuzzleft]
	(b4)to  [looseness=1,out=170, in=10] 	(b2);

\draw[linestyle,fuzzlefta]
	(b2)to  [looseness=1,out=-160, in=80] 
	(b1)to  [looseness=1,out=-100, in=170] 
	(b3);

\coordinate (c1) at (-3,-1*\vshift);
\coordinate (c2) at (-2.25,-1*\vshift);
\coordinate (c3) at (-1,-1*\vshift);
\coordinate (c4) at (1,-1*\vshift);
\coordinate (c5) at (1.75,-1*\vshift);
\coordinate (c6) at (3,-1*\vshift);

\draw[linestyle,fuzzlefta]
(c2)  to [looseness=1.6,out=135, in=\edgeanglein] 
(c1) to  [looseness=1.6,out=\edgeangleout, in=-135] 
(c2)
;
\draw[linestyle,fuzzleft]
(c2) to [looseness=1.6,out=35, in=125]  
(c3) to [looseness=1.6,out=-55, in=-145]  
(c4)to  [looseness=1.6,out=35, in=125] (c5) 
;

\draw[linestyle,fuzzlefta]
(c5) to [looseness=1.6,out=-45, in=\edgeangleout]  
(c6)  to [looseness=1.6,out=\edgeanglein, in=45]  
(c5) 
;

\draw[linestyle,fuzzleft]
(c5) to [looseness=1.6,out=-145, in=-55] 
 (c4) 
to  [looseness=1.6,out=125, in=35] 
(c3)
to  [looseness=1.6,out=-145, in=-55] (c2)
;

\coordinate (ca1) at (-3,-2*\vshift);
\coordinate (ca2) at (-2.25,-2*\vshift);
\coordinate (ca3) at (-1,-2*\vshift);
\coordinate (ca4) at (1,-2*\vshift);
\coordinate (ca5) at (1.75,-2*\vshift);
\coordinate (ca6) at (3,-2*\vshift);

\draw[linestyle,fuzzlefta]
(ca2)  to [looseness=1.6,out=135, in=\edgeanglein] 
(ca1) to  [looseness=1.6,out=\edgeangleout, in=-135] 
(ca2) 
;
\draw[linestyle,fuzzleft]
(ca2) to [looseness=1.6,out=45, in=\edgeanglein]  
(ca3) to [looseness=1.6,out=\edgeangleout, in=-45]  
(ca2) 
;

\draw[linestyle,fuzzlefta]
(ca5) to [looseness=1.6,out=-45, in=\edgeangleout]  
(ca6)  to [looseness=1.6,out=\edgeanglein, in=45]  
(ca5) 
;

\draw[linestyle,fuzzleft]
(ca5) to [looseness=1.6,out=-135, in=\edgeangleout] 
 (ca4) 
to  [looseness=1.6,out=\edgeanglein, in=135] 
(ca5)
;

\coordinate (d1) at (-3,-3.5*\vshift);
\coordinate (d2) at (-2.25,-3.5*\vshift);
\coordinate (d3) at (-1,-3.5*\vshift);
\coordinate (d4) at (1,-3.5*\vshift);
\coordinate (d5) at (1.75,-3.5*\vshift);
\coordinate (d6) at (3,-3.5*\vshift);

\draw[linestyle,fuzzright]
(d2)  to [looseness=1.6,out=135, in=\edgeanglein] 
(d1) to  [looseness=1.6,out=\edgeangleout, in=-135] 
(d2) 
;
\draw[linestyle,fuzzrighta]
(d2) to [looseness=1.6,out=45, in=\edgeanglein]  
(d3) to [looseness=1.6,out=\edgeangleout, in=-45]  
(d2) 
;

\draw[linestyle,fuzzright]
(d5) to [looseness=1.6,out=-45, in=\edgeangleout]  
(d6)  to [looseness=1.6,out=\edgeanglein, in=45]  
(d5) 
;

\draw[linestyle,fuzzrighta]
(d5) to [looseness=1.6,out=-135, in=\edgeangleout] 
 (d4) 
to  [looseness=1.6,out=\edgeanglein, in=135] 
(d5)
;

\coordinate (e1) at (-3,-4.5*\vshift);
\coordinate (e2) at (-2.25,-4.5*\vshift);
\coordinate (e3) at (-1,-4.5*\vshift);
\coordinate (e4) at (1,-4.5*\vshift);
\coordinate (e5) at (1.75,-4.5*\vshift);
\coordinate (e6) at (3,-4.5*\vshift);

\draw[linestyle,fuzzleft]
(e2)  to [looseness=1.6,out=135, in=\edgeanglein] 
(e1) to  [looseness=1.6,out=\edgeangleout, in=-135] 
(e2) 
;
\draw[linestyle,fuzzlefta]
(e2) to [looseness=1.6,out=35, in=125]  
(e3) to [looseness=1.6,out=-55, in=-145]  
(e4)to  [looseness=1.6,out=35, in=125] (e5) 
;

\draw[linestyle,fuzzleft]
(e5) to [looseness=1.6,out=-45, in=\edgeangleout]  
(e6)  to [looseness=1.6,out=\edgeanglein, in=45]  
(e5) 
;

\draw[linestyle,fuzzlefta]
(e5) to [looseness=1.6,out=-145, in=-55] 
 (e4) 
to  [looseness=1.6,out=125, in=35] 
(e3)
to  [looseness=1.6,out=-145, in=-55] (e2)
;

\coordinate (f1) at (-3,-5.5*\vshift);
\coordinate (f2) at (-1.6,-5.5*\vshift+.5);
\coordinate (f3) at (-1.5,-5.5*\vshift+-.5);
\coordinate (f4) at (1.6,-5.5*\vshift+.5);
\coordinate (f5) at (1.5,-5.5*\vshift+-.5);
\coordinate (f6) at (3,-5.5*\vshift);

\draw[linestyle,fuzzlefta]
(f3)to  [looseness=1,out=-10, in=-170] (f5);

\draw[linestyle,fuzzleft]
	 (f5)to  [looseness=1,out=10, in=-100]
	(f6)to  [looseness=1,out=80, in=0] 
	(f4);

\draw[linestyle,fuzzlefta]
	(f4)to  [looseness=1,out=170, in=10] 	(f2);

\draw[linestyle,fuzzleft]
	(f2)to  [looseness=1,out=-160, in=80] 
	(f1)to  [looseness=1,out=-100, in=170] 
	(f3);

\coordinate (g3) at (3,-6.5*\vshift);
\coordinate (g4) at (-3,-6.5*\vshift);

\draw[linestyle,fuzzright]
 (g4) to  [looseness=.4,out=\edgeangleout, in=\edgeangleout] 
(g3) to  [looseness=.4,out=\edgeanglein, in=\edgeanglein] 
(g4)
;

\draw[dotted]
(a1)--(b1)--(c1)--(ca1)to [out =-90, in =90, looseness =1](d4);

\draw[dotted]
(a1)to [out =90, in =90, looseness =1.4 ] (a4)--(b6)--(c6)--(ca6)to [out =-90, in =90, looseness =1] (d3)to [out =-90, in =-90, looseness =1] (d4);

\draw[dotted]
(ca3)to [out =-90, in =90, looseness =1](d6)--(f6)--(g3) to [out =-90, in =-90, looseness =1] (g4)--(f1)--
(d1)to [out =90, in =-90, looseness =1](ca4) to [out =90, in =90, looseness =1] (ca3);

\end{tikzpicture}}
\caption{}
\end{subfigure}
\hspace{.1\textwidth}
\begin{subfigure}[b]{0.20\textwidth}
\resizebox{\textwidth}{!}{\begin{tikzpicture}

\newcommand\vshift{2}
\newcommand\edgeanglein{80}
\newcommand\edgeangleout{-100}

\coordinate (a1) at (-3,1*\vshift);
\coordinate (a4) at (3,1*\vshift);

\draw[linestyle,fuzzlefta]
 (a1) to  [looseness=.4,out=-90, in=-90] 
(a4) to  [looseness=.4,out=90, in=90] 
(a1)
;

\coordinate (b1) at (-3,0);
\coordinate (b2) at (-2.25,0);
\coordinate (b3) at (-1,0);
\coordinate (b4) at (1,0);
\coordinate (b5) at (1.75,0);
\coordinate (b6) at (3,0);

\draw[linestyle,fuzzlefta]
(b2) to [looseness=1.6,out=45, in=\edgeanglein]  
(b3) to [looseness=1.6,out=\edgeangleout, in=-45]  
(b2)  to [looseness=1.6,out=135, in=\edgeanglein] 
 (b1) to  [looseness=1.6,out=\edgeangleout, in=-135] 
(b2)
;

\draw[linestyle,fuzzrighta]
(b5) to [looseness=1.6,out=45, in=\edgeanglein]  
(b6) to [looseness=1.6,out=\edgeangleout, in=-45]  
(b5)  to [looseness=1.6,out=135, in=\edgeanglein] 
 (b4) 
to  [looseness=1.6,out=\edgeangleout, in=-135] 
(b5)
;

\coordinate (c1) at (-3,-1*\vshift);
\coordinate (c2) at (-2.25,-1*\vshift);
\coordinate (c3) at (-1,-1*\vshift);
\coordinate (c4) at (1,-1*\vshift);
\coordinate (c5) at (1.75,-1*\vshift);
\coordinate (c6) at (3,-1*\vshift);

\draw[linestyle,fuzzlefta]
(c2)  to [looseness=1.6,out=135, in=\edgeanglein] 
(c1) to  [looseness=1.6,out=\edgeangleout, in=-135] 
(c2) to [looseness=1.6,out=45, in=\edgeanglein]  
(c3)
;
\draw[linestyle,fuzzleft]
(c3) to [looseness=1.6,out=\edgeangleout, in=-45]  
(c2) 
;

\draw[linestyle,fuzzleft]
(c5) to [looseness=1.6,out=-45, in=\edgeangleout]  
(c6) 
;

\draw[linestyle,fuzzlefta]
(c6) to [looseness=1.6,out=\edgeanglein, in=45]  
(c5)  to [looseness=1.6,out=-135, in=\edgeangleout] 
 (c4) 
to  [looseness=1.6,out=\edgeanglein, in=135] 
(c5)
;

\coordinate (d1) at (-3,-2*\vshift);
\coordinate (d2) at (-2.25,-2*\vshift);
\coordinate (d3) at (-1,-2*\vshift);
\coordinate (d4) at (1,-2*\vshift);
\coordinate (d5) at (1.75,-2*\vshift);
\coordinate (d6) at (3,-2*\vshift);

\draw[linestyle,fuzzleft]
(d2)  to [looseness=1.6,out=135, in=\edgeanglein] 
(d1) to  [looseness=1.6,out=\edgeangleout, in=-135] 
(d2) to [looseness=1.6,out=45, in=\edgeanglein]  
(d3) 
;
\draw[linestyle,fuzzlefta]
(d3) to [looseness=1.6,out=\edgeangleout, in=-45]  
(d2) 
;

\draw[linestyle,fuzzlefta]
(d5) to [looseness=1.6,out=-45, in=\edgeangleout]  
(d6) 
;

\draw[linestyle,fuzzleft]
(d6) to [looseness=1.6,out=\edgeanglein, in=45]  
(d5)  to [looseness=1.6,out=-135, in=\edgeangleout] 
 (d4) 
to  [looseness=1.6,out=\edgeanglein, in=135] 
(d5)
;

\coordinate (e1) at (-3,-3*\vshift);
\coordinate (e2) at (-2.25,-3*\vshift);
\coordinate (e3) at (-1,-3*\vshift);
\coordinate (e4) at (1,-3*\vshift);
\coordinate (e5) at (1.75,-3*\vshift);
\coordinate (e6) at (3,-3*\vshift);

\draw[linestyle,fuzzleft]
(e2) to [looseness=1.6,out=45, in=\edgeanglein]  
(e3) to [looseness=1.6,out=\edgeangleout, in=-45]  
(e2)  to [looseness=1.6,out=135, in=\edgeanglein] 
 (e1) to  [looseness=1.6,out=\edgeangleout, in=-135] 
(e2)
;

\draw[linestyle,fuzzright]
(e5) to [looseness=1.6,out=45, in=\edgeanglein]  
(e6) to [looseness=1.6,out=\edgeangleout, in=-45]  
(e5)  to [looseness=1.6,out=135, in=\edgeanglein] 
 (e4) 
to  [looseness=1.6,out=\edgeangleout, in=-135] 
(e5)
;

\coordinate (f1) at (1,-4.5*\vshift);
\coordinate (f2) at (1.75,-4.5*\vshift);
\coordinate (f3) at (3,-4.5*\vshift);
\coordinate (f4) at (-3,-4.5*\vshift);
\coordinate (f5) at (-2.25,-4.5*\vshift);
\coordinate (f6) at (-1,-4.5*\vshift);

\draw[linestyle,fuzzleft]
(f2) to [looseness=1.6,out=135, in=\edgeanglein]  
(f1) to [looseness=1.6,out=\edgeangleout, in=-135]  
(f2)  to [looseness=1.6,out=45, in=\edgeanglein] 
 (f3) to  [looseness=1.6,out=\edgeangleout, in=-45] 
(f2)
;

\draw[linestyle,fuzzright]
(f5) to [looseness=1.6,out=135, in=80]  
(f4) to [looseness=1.6,out=-100, in=-135]  
(f5)  to [looseness=1.6,out=45, in=80] 
 (f6) 
to  [looseness=1.6,out=-100, in=-45] 
(f5)
;

\coordinate (g3) at (3,-5.5*\vshift);
\coordinate (g4) at (-3,-5.5*\vshift);

\draw[linestyle,fuzzright]
 (g4) to  [looseness=.4,out=\edgeangleout, in=\edgeangleout] 
(g3) to  [looseness=.4,out=\edgeanglein, in=\edgeanglein] 
(g4)
;

\draw[dotted]
(a1)--(b1)--(c1)--(d1)--(e1)to [out =-90, in =90, looseness =1](f1);

\draw[dotted]
(a1)to [out =90, in =90, looseness =1.4 ] (a4)--(b6)--(c6)--(d6)--(e6)to [out =-90, in =90, looseness =1] (f6) to [out =-90, in =-90, looseness =1](f1);

\draw[dotted]
(b3)--(c3)--(d3)--(e3)to [out =-90, in =90, looseness =1](f3)--(g3) to [out =-90, in =-90, looseness =1] (g4)--(f4)to [out =90, in =-90, looseness =1](e4)--(d4)--(c4)--(b4) to [out =90, in =90, looseness =1] (b3);

\end{tikzpicture}}
\caption{}
\end{subfigure}
\caption{Toroidal traces with framing}\label{fig:toroidal_framing}
\end{figure}
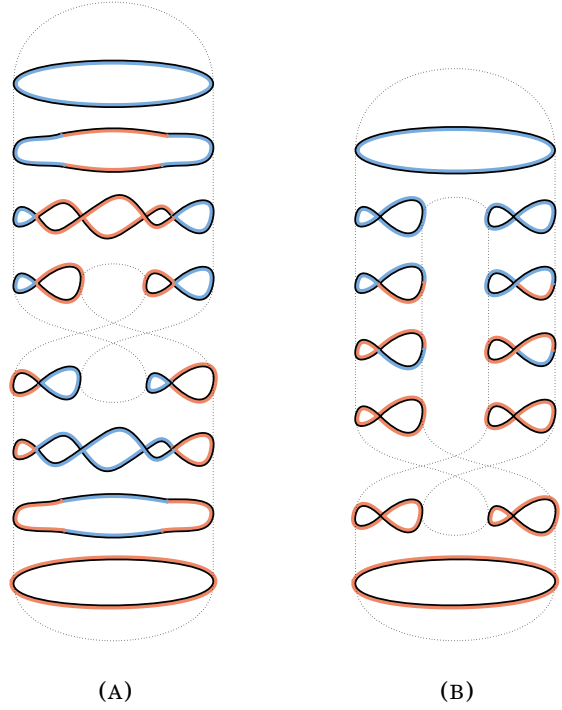

\subsection{Formal definitions}\label{sec:formal_def}
For the definition of a monoidal bicategory we follow \cite{stay} and briefly recall the relevant definitions here.  

\begin{defn}\cite{stay} A {\bf monoidal bicategory} consists of:
\begin{itemize}
\item  a bicategory $\sB$.
\item  a  functor  $\otimes \colon  \sB \times \sB \to \sB$ and with an invertible  2-morphism 
\[( M \otimes N) \odot  ( M ' \otimes  N') \Rightarrow ( M \odot M ') \otimes (N \odot N').\] 
\begin{itemize}
\item An adjoint equivalence 
\[a: (A\otimes B)\otimes C \to  A\otimes (B\otimes C)\] that is pseudonatural in $A, B, C$.
\item  An invertible modification $\pi$ relating the two different ways of moving parentheses from being clustered at the left of four objects to being clustered at the right. 
\item An equation of modifications relating the various ways of getting from the parentheses clustered at the left of five objects to clustered at the right.
\end{itemize}
\item  a 0-cell $I$
\begin{itemize}
\item  Adjoint equivalence 1-cells $L_A\colon I\otimes A \to A$ and  $R_A\colon A\otimes I \to  A$ that are pseudonatural in A.
\item  Invertible modifications $\lambda\colon l\otimes B\Rightarrow l\circ a$, $\mu\colon r\otimes B\Rightarrow (A\otimes l)\circ a$, and $\rho\colon r \Rightarrow (A\otimes r)\circ a$. 
\item Four equations of modifications relating the unit modifications. 
\end{itemize}
\end{itemize}
\end{defn}

\begin{defn}\cite{stay}\label{def:smb} 
A {\bf symmetric monoidal bicategory} consists of the following:
\begin{itemize}
	\item A monoidal bicategory $\mc{B}$.
	\item An adjoint equivalence 
			\[\Gamma: A\otimes B \to B\otimes A\]
		pseudonatural in $A$ and  $B$.
	\item Invertible modifications $R$ and $S$ filling the hexagons
			\[\xymatrix{A\otimes (B\otimes C)\ar[r]^\Gamma&(B\otimes C)\otimes A\ar[d]^a
				\\
				(A\otimes B)\otimes C\ar[u]^a\ar[d]_{\Gamma\otimes \id_C}&B\otimes (C\otimes A)
				\\
				(B\otimes A)\otimes C\ar[r]^a&B\otimes (A\otimes C)\ar[u]_{\id_B\otimes \Gamma}} 
	\hspace{1cm}
			\xymatrix{(A\otimes B)\otimes C\ar[r]^\Gamma&C\otimes (A\otimes B)\ar[d]_a
				\\
				A\otimes (B\otimes C)\ar[u]^a\ar[d]_{\id_A\otimes \Gamma}&(C\otimes A)\otimes B
				\\
				A\otimes (C\otimes B)\ar[r]^a&(A\otimes C)\otimes B\ar[u]^{\Gamma\otimes \id_B}}\]
	\begin{itemize}
		\item Two equations shuffling one object into three objects.
		\item An equation shuffling two objects into two objects 
		\item An equation relating multiple applications of $\Gamma$.
	\end{itemize}
	\item An invertible modification 
		\[\nu\colon \Gamma\to \Gamma\]
		satisfying two equations relating $\nu$ and the modifications $R$ and $S$ above and an equation relating $\nu$ applied to $A\otimes B$ and $B\otimes A$.
\end{itemize}
\end{defn}

\subsection{Pseudonaturality of $\Gamma$}\label{rmk:little_gamma}
We will make extensive use of the pseudonaturality of $\Gamma$ to rearrange 1-cells to allow for the applications of 2-cells.  (See, for example, \cref{fig:ec_e_and_c}.) The pseudonaturality isomorphism is graphically represented as the 
following 2-cell.
\begin{equation}\label{ex:pseduonaturality_gamma}
\resizebox{.5\textwidth}{!}{
\begin{tikzpicture}

\node[draw] (Z2) at (6, 4){\begin{tikzpicture}

\oc{x}{M}{}{2}{3}{3}{\en}
\oc{y}{N}{}{2}{2}{2}{\en}

  \begin{scope}[on background layer]
\alp{(0*\d-\w,2*\h)--(1*\d-\w,2*\h)--(1*\d+\w,3*\h)--({x}b)}
\blp{({x}b)--(3*\d+\w,3*\h)}
\clp{(0*\d-\w,3*\h)--(1*\d-\w,3*\h)--(1*\d+\w,2*\h)--({y}b)}
\dlp{({y}b)--(3*\d+\w,2*\h)}
\end{scope}
\end{tikzpicture}};

\node[draw] (Z5) at (0, 4){\begin{tikzpicture}

\oc{x}{M}{}{1}{2}{2}{\en}
\oc{y}{N}{}{1}{3}{3}{\en}

  \begin{scope}[on background layer]
\alp{(0*\d-\w,2*\h)--({x}b)}
\blp{({x}b)--(2*\d-\w,2*\h)--(2*\d+\w,3*\h)--(3*\d+\w,3*\h)}
\clp{(0*\d-\w,3*\h)--({y}b)}
\dlp{({y}b)--(2*\d-\w,3*\h)--(2*\d+\w,2*\h)--(3*\d+\w,2*\h)}
\end{scope}
\end{tikzpicture}};

\draw[<-](Z2)--(Z5);
\end{tikzpicture}}
\end{equation}
We will very rarely use such a simple form of this isomorphism, instead we will compose multiple instances or apply it to monoidal or bicategorical composites.  For an example of the first, the map $\gammao$ in \cref{gamma_list_1} is the composite 
\begin{center}
\resizebox{.8\textwidth}{!}{
\begin{tikzpicture}

\node[draw] (Z2) at (0, 4){\begin{tikzpicture}

\oc{t1}{\coo{A}}{_1}{1}{5}{4}{\et}
\oc{x}{P}{}{2}{4}{4}{\en}
\oc{y}{Q}{}{3}{2}{2}{\en}
\oc{t5}{\coo{B}}{_5}{2}{3}{2}{\et\dk}
\oc{e6}{\evo{B}}{_6}{3}{4}{3}{\ep\dk}

\oc{ex}{\evo{A}}{_x}{5}{5}{2}{\ep}
\gc{g1}{4}{5}{2}

  \begin{scope}[on background layer]
\dblp{({t5}t)--({e6}b)}

\alp{({t1}b)--({x}b)}
\blp{({x}b)--({e6}t)}
\blp{ ({t5}b)--({y}b)}
\alp{({y}b)\gpb{g1}({ex}t)}
\dlp{({t1}t)\gpt{g1}({ex}b)}
\end{scope}
\end{tikzpicture}};

\node[draw] (Z5) at (6, 4){\begin{tikzpicture}

\oc{t1}{\coo{A}}{_1}{-2}{5}{2}{\et}
\oc{x}{P}{}{-1}{2}{2}{\en}
\oc{y}{Q}{}{2}{3}{3}{\en}
\oc{t5}{\coo{B}}{_5}{1}{4}{3}{\et\dk}
\oc{e6}{\evo{B}}{_6}{2}{5}{4}{\ep\dk}

\oc{ex}{\evo{A}}{_x}{3}{3}{2}{\ep}
\gc{g1}{4}{3}{2}

  \begin{scope}[on background layer]

\dblp{({t5}t)--({e6}b)}

\alp{({t1}b)--({x}b)}
\blp{({x}b)--(0*\d-\w,2*\h)--(0*\d+\w,5*\h)--
({e6}t)}
\blp{ ({t5}b)--({y}b)}
\alp{({y}b)--({ex}t)}
\dlp{({t1}t)--
(0*\d-\w,5*\h)--(0*\d+\w,2*\h)--
({ex}b)}
\end{scope}
\end{tikzpicture}};

\node[draw] (Y3) at (12, 4){\begin{tikzpicture}

\oc{t1}{\coo{A}}{_1}{2}{3}{2}{\et}
\oc{x}{P}{}{3}{2}{2}{\en}
\oc{y}{Q}{}{2}{4}{4}{\en}
\oc{t5}{\coo{B}}{_5}{1}{5}{4}{\et\dk}
\oc{e6}{\evo{B}}{_6}{5}{5}{2}{\ep\dk}

\oc{ex}{\evo{A}}{_x}{3}{4}{3}{\ep}
\gc{g1}{4}{5}{2}
  \begin{scope}[on background layer]
\dlp{ ({t1}t)--({ex}b)}

\alp{ ({t1}b)--({x}b)}
\blp{({x}b)\gpb{g1}({e6}t)}
\blp{ ({t5}b)--({y}b)}
\alp{({y}b)--({ex}t)}
\dblp{({t5}t)\gpt{g1}({e6}b)}
\end{scope}
\end{tikzpicture}};

\draw[->](Z2)--(Z5);
\draw[->](Z5)--(Y3);
\end{tikzpicture}}
\end{center}
where the first map is the pseudonaturality isomorphism where the role of $M$ is played by the composite of $\evo{B}$, $\coo{B}$, and $Q$ and $N$ is an identity 1-cell.  The second map is  the inverse of the pseudonaturality isomorphism where the role of $N$ is played by the composite of $\coo{B}$, $Q$, and $\evo{A}$.  In this case $M$ is an identity 1-cell.
For an example of applying \eqref{ex:pseduonaturality_gamma} to a monoidal composite see $\gammaf$ and $\gammafp$ in \cref{gamma_list_2}.

To ensure that all such diagrams are related by unique 2-cells, we repeatedly invoke the following coherence theorem of Gurski--Osorno.

\begin{thm}\cite[Theorem 1.25]{gurski_osorno}
Let $B$ be a bicategory that is free on an underlying 1-globular set. Then in the free symmetric monoidal bicategory on $B$, every diagram of 2-cells commutes. 
\end{thm}

This is the ``many objects and 1-morphisms'' version of \cite[Thm. 1.23]{gurski_osorno}. In the case of interest for us, we consider the free symmetric monoidal bicategory generated by the 1-cells $M$ and $N$ in \eqref{ex:pseduonaturality_gamma}.

The composites of  \eqref{ex:pseduonaturality_gamma} we use in this paper are displayed in \cref{gamma_list_1,gamma_list_2,gamma_list_3,gamma_list_5} and they are organized by where they are used.  The maps in \cref{gamma_list_1} appear throughout this paper while those in \cref{gamma_list_2,gamma_list_3,gamma_list_5} appear only in very specific diagrams.  There is a very valid argument that is strange to treat the maps in  \cref{gamma_list_1,gamma_list_2,gamma_list_3,gamma_list_5} as distinct since they are all manifestations of  \eqref{ex:pseduonaturality_gamma}.  We choose to do so since identifying this list simplifies verifying the commutativity in later diagrams.  For example, in \cref{proof:euler_char_gamma,fig:ec_e_and_c,fig:ec_e_and_c_2} it makes it more clear which diagrams commute because of naturality. 

In addition to the maps in \cref{gamma_list_1,gamma_list_2,gamma_list_3,gamma_list_5} there are shifts that only involve identity 0-cells. Examples of these include the maps in \cref{gamma_list_4} from \cref{fig:ec_e_and_c}.  These occur less commonly and so we will use $\gammas$ to refer to all instances of \eqref{ex:pseduonaturality_gamma} involving only the monoidal until 0-cell except those for two separate components like $\gammaDo$. 

Together the maps in \cref{gamma_list_1,gamma_list_2,gamma_list_3,gamma_list_4,gamma_list_5} are all the ways we reorder 1-cells so  we can regard each of these maps as a fixed composite of the map in \eqref{ex:pseduonaturality_gamma} that we leave unchanged through the paper.  Alternatively, we can cite \cite[Theorem 1.25]{gurski_osorno} again to see that a particular choice of composite is not necessary. 

In the diagrams in \cref{fig:comp_1_duals_trace,proof:euler_char_gamma,fig:ec_e_and_c,fig:ec_e_and_c_2} there are small regions entirely consisting of the maps in  \cref{gamma_list_1,gamma_list_2,gamma_list_3,gamma_list_4,gamma_list_5}.  Again we can invoke  \cite[Theorem 1.25]{gurski_osorno} to assert that all these regions commute. 

\input{gamma_list}
\input{gamma_list_2}
\begin{figure}
\begin{tabular}{ c}

\resizebox{.48\textwidth}{!}
{
\begin{tikzpicture}[
    mystyle/.style={%
    },
   my style/.style={%
   },
  ]
 
\def\scale{1.15}

\node[draw,my style] (D2) at (\scale*-4, 0){
\begin{tikzpicture}
\oc{t8}{\evo{B}}{_8}{-7}{1}{0}{\ep}
\oc{m}{M}{}{-9}{0}{0}{\en}

\oc{e3}{\coo{B}}{_3}{-12}{1}{-2}{\et}

\oc{t1}{\evo{\zdual{B}}}{_1}{-9}{-1}{-2}{\ep\dk}
\oc{n}{N}{}{-10}{-1}{-1}{\en}

\oc{e2}{\coo{\zdual{A}}}{_2}{-11}{0}{-1}{\et\dk}

\begin{scope}[on background layer]
\dblp{({n}t)--
({t1}t)}
\blp{({m}t)--(-8*\d-\w,0*\h)--(-8*\d+\w,1*\h)--
({t8}t)}

\dlp{({e2}b)--({n}t)}

\alp{({e2}t)--
({m}b)}

\blp{({e3}b)--({t1}b)}
\dblp{({e3}t)--(-8*\d-\w,1*\h)--(-8*\d+\w,0*\h)--({t8}b)}
\end{scope}
\end{tikzpicture}
};

\node[draw,my style] (D3) at (\scale*-10, 0){
\begin{tikzpicture}
\oc{t8}{\evo{B}}{_8}{-8}{1}{0}{\ep}
\oc{m}{M}{}{-9}{1}{1}{\en}

\oc{e3}{\coo{B}}{_3}{-9}{0}{-1}{\et}

\oc{t1}{\evo{\zdual{B}}}{_1}{-7}{-1}{-2}{\ep\dk}
\oc{n}{N}{}{-10}{-2}{-2}{\en}

\oc{e2}{\coo{\zdual{A}}}{_2}{-11}{1}{-2}{\et\dk}

\begin{scope}[on background layer]
\dblp{({n}t)--(-8*\d-\w,-2*\h)--(-8*\d+\w,-1*\h)--
({t1}t)}
\blp{({m}t)--
({t8}t)}

\dlp{({e2}b)--({n}t)}

\alp{({e2}t)--
({m}b)}

\blp{({e3}b)--(-8*\d-\w,-1*\h)--(-8*\d+\w,-2*\h)--({t1}b)}
\dblp{({e3}t)--
({t8}b)}
\end{scope}
\end{tikzpicture}
};

\draw[->](D3)--(D2)node [midway , fill=white] {$\gammael$};
\end{tikzpicture}
}

\end{tabular}
\caption{}\label{gamma_list_5}
\end{figure}

\begin{figure}
\begin{tabular}{ c c}
\resizebox{.48\textwidth}{!}
{
\begin{tikzpicture}[
    mystyle/.style={%
    },
   my style/.style={%
   },
  ]

\def\x{1.1*3.5}
\def\y{1.1*5.25}

\node[draw,my style] (X2) at (-6*\x,6*\y){\begin{tikzpicture}
\oc{t1}{\coo{A}}{_1}{-2}{5}{4}{\et}
\gc{g1}{3}{1}{0}
\gc{g2}{4}{1}{0}
\oc{t2}{\rdual{\coo{A}}}{_2}{1}{1}{0}{\etr}
\oc{e3}{\rdual{\evo{A}}}{_3}{-1}{2}{1}{\epr\lt}
\oc{t4}{\coo{A}}{_4}{-2}{3}{0}{\et\lt}

\oc{t5}{\rdual{\coo{A}}}{_5}{1}{5}{2}{\etr\lt}
\oc{e6}{\evo{A}}{_6}{0}{4}{3}{\ep\lt}
\begin{scope}[on background layer]
\dlp{({e3}b)--({t2}t)}
\alp{({t4}b)--({t2}b)}

\alp{({t1}b)--
({e6}t)}
\dlp{({t1}t)--
({t5}t)}
\dlp{({t4}t)--
({e6}b)}
\alp{({e3}t)--
({t5}b)}
\end{scope}
\end{tikzpicture}
};

\node[draw,my style] (D2) at (-4.5*\x, 6*\y){\begin{tikzpicture}
\oc{t1}{\coo{A}}{_1}{-3}{5}{4}{\et}
\gc{g1}{3}{1}{0}
\gc{g2}{4}{1}{0}
\oc{t4}{\coo{A}}{_4}{-3}{3}{2}{\et\lt}
\oc{e6}{\evo{A}}{_6}{-2}{4}{3}{\ep\lt}
\oc{t2}{\rdual{\coo{A}}}{_2}{0}{3}{2}{\etr}
\oc{e7}{\rdual{\evo{A}}}{_7}{-1}{4}{3}{\epr\lt}
\oc{t8}{\rdual{\coo{A}}}{_8}{0}{5}{4}{\etr\lt}
\begin{scope}[on background layer]

\alp{({t4}b)--({t2}b)}
\dlp{({e7}b)--({t2}t)}
\alp{({t1}b)--({e6}t)}
\alp{({e7}t)--({t8}b)}
\dlp{({t4}t)--({e6}b)}
\dlp{({t1}t)--({t8}t)}
\end{scope}
\end{tikzpicture}
};

\draw[<-](D2)to node [midway , fill=white] {$\gammas$}(X2);
\end{tikzpicture}
}
&

\resizebox{.48\textwidth}{!}
{
\begin{tikzpicture}[
    mystyle/.style={%
    },
   my style/.style={%
   },
  ]

\def\x{1.1*3.5}
\def\y{1.1*5.25}

\node[draw,my style] (B4a) at (-.5*\x,0*\y){\begin{tikzpicture}
\gc{g1}{3}{1}{0}
\gc{g2}{4}{1}{0}
\oc{t4}{\coo{A}}{_4}{-2}{3}{2}{\et\lt}
\oc{e6}{\evo{A}}{_6}{0}{3}{2}{\ep\lt}
\oc{e7}{\rdual{\evo{A}}}{_7}{-2}{5}{4}{\epr\lt}
\oc{t8}{\rdual{\coo{A}}}{_8}{0}{5}{4}{\etr\lt}
\begin{scope}[on background layer]

\alp{({t4}b)--(-1*\d-\w,2*\h)--(-1*\d+\w,3*\h)--({e6}t)}

\dlp{({e7}b)--(-1*\d-\w,4*\h)--(-1*\d+\w,5*\h)--({t8}t)}
\alp{({e7}t)--(-1*\d-\w,5*\h)--(-1*\d+\w,4*\h)--({t8}b)}
\dlp{({t4}t)--(-1*\d-\w,3*\h)--(-1*\d+\w,2*\h)--({e6}b)}
\end{scope}
\end{tikzpicture}
};

\node[draw,my style] (B4) at (\x,0*\y){\begin{tikzpicture}
\gc{g1}{3}{1}{0}
\gc{g2}{4}{1}{0}
\oc{t4}{\coo{A}}{_4}{-2}{3}{2}{\et\lt}
\oc{e6}{\evo{A}}{_6}{0}{3}{2}{\ep\lt}
\oc{e7}{\rdual{\evo{A}}}{_7}{1}{3}{2}{\epr\lt}
\oc{t8}{\rdual{\coo{A}}}{_8}{3}{3}{2}{\etr\lt}
\begin{scope}[on background layer]

\alp{({t4}b)--(-1*\d-\w,2*\h)--(-1*\d+\w,3*\h)--({e6}t)}

\dlp{({e7}b)--(2*\d-\w,2*\h)--(2*\d+\w,3*\h)--({t8}t)}
\alp{({e7}t)--(2*\d-\w,3*\h)--(2*\d+\w,2*\h)--({t8}b)}
\dlp{({t4}t)--(-1*\d-\w,3*\h)--(-1*\d+\w,2*\h)--({e6}b)}
\end{scope}
\end{tikzpicture}
};

\draw[->](B4a)--(B4)node [midway , fill=white] {$\gammaDo$};

\end{tikzpicture}
}
\end{tabular}
\caption{}\label{gamma_list_4}
\end{figure}

\section{1-Dualizability}\label{sec:1_dualizable}
With the addition of a monoidal structure on a bicategory we gain more notions of duality.
The first of these is duality for 0-cells. This duality and the
duality in \cref{defn:dualizable} are distinct generalizations of
dualizability for objects in a monoidal category \cite{dold_puppe}. In
addition to its inherent interest, a crucial consequence is
\cref{prop:shadow} below. 

In a monoidal 1-category, a 0-cell (a.k.a. object) is {\bf dualizable} $A$ if
there is a 0-cell $\zdual{A}$ and maps $I \to A \otimes \zdual{A}$ and $\zdual{A} \otimes A \to I$
such that the two maps
\[
\begin{tikzcd}
A \cong I \otimes A \ar{r} & A \otimes \zdual{A} \otimes A \ar{r} & A
\otimes I \cong A 
\end{tikzcd}
\]
and
\[
\begin{tikzcd}
\zdual{A} \cong \zdual{A} \otimes I \ar{r} & \zdual{A} \otimes A \otimes \zdual{A}
\ar{r}&  I \otimes \zdual{A} \cong \zdual{A}
\end{tikzcd}
\]
are the identity.  This generalizes the discussion for vector spaces in \cref{sec:duality_bicat}.

 In a bicategory we cannot say that two 1-morphisms
``are'' identity maps, instead we must supply a 2-cell witnessing that
statement. This accounts for the extra complexity of the definition
below.

\begin{defn}\label{defn:1_dualizable}
A 0-cell $A$ in a monoidal bicategory $\sB$ is {\bf 1-dualizable} if there is
\begin{itemize}
\item a zero-cell $\zdual{A}$, 
\item  1-cells $\coo{A}\in \sB(I, A\otimes \zdual{A})$ and $\evo{A}\in \sB(\zdual{A}\otimes A, I)$ and 
\item invertible 2-cells (\cref{fig:dualizable_0_cell,fig:dualizable_0_cell_a})
\begin{equation}\label{eq:trietop}U_A\xto{\trictop} {L_A}^{-1} \odot
	(\coo{A}\otimes U_A)\odot
	(U_A\otimes \evo{A}) \odot
	{R_A} \end{equation}
\begin{equation}\label{eq:trictop}U_{\zdual{A}}\xto{\trietop} {R_{\zdual{A}}^{-1}}\odot
	(U_{\zdual{A}}\otimes \coo{A})\odot
	(\evo{A}\otimes U_{\zdual{A}})\odot
	L_{\zdual{A}}\end{equation}
so that the diagrams in \cref{fig:add_dualizable_0_cell,fig:add_dualizable_0_cell_a} commute. 
\end{itemize}
\end{defn}

\begin{figure}[ht]
\hspace{1cm}
\begin{subfigure}{.35\textwidth}
\resizebox{\textwidth}{!}
{
\begin{tikzpicture}

\node[draw] (A2) at (0, 0){\begin{tikzpicture}
\ocalt{ta}{}{X}{2}{-1}{-2}{\en}
\ocalt{t2}{}{Y}{5}{-1}{-2}{\en}

  \begin{scope}[on background layer]
\alp{({ta}t)--({t2}t)}
\end{scope}
\end{tikzpicture}
};

\node[draw] (A3) at (4, 0){\begin{tikzpicture}
\ocalt{ta}{}{X}{1}{1}{-1}{\en}
\ocalt{t2}{}{Y}{4}{1}{-1}{\en}
\oc{t3}{\coo{A}}{_3}{2}{0}{-1}{\et}
\oc{e4}{\evo{A}}{_4}{3}{1}{0}{\ep}
  \begin{scope}[on background layer]
\dlp{({t3}t)--({e4}b)}
\alp{({t3}b)--({t2}b)}
\alp{({ta}t)--({e4}t)}
\end{scope}
\end{tikzpicture}
};

\draw[->](A2)--(A3)node [midway , fill=white] {$\trietop$};
\end{tikzpicture}
}
\caption{$\trietop$}\label{fig:dualizable_0_cell}
\end{subfigure}
\hfill
\begin{subfigure}{.35\textwidth}
\resizebox{\textwidth}{!}
{
\begin{tikzpicture}

\node[draw] (A2) at (0, 0){\begin{tikzpicture}
\ocalt{ta}{}{X}{2}{-1}{-2}{\en}
\ocalt{t2}{}{Y}{5}{-1}{-2}{\en}
  \begin{scope}[on background layer]
\dlp{({ta}t)--({t2}t)}
\end{scope}
\end{tikzpicture}
};

\node[draw] (A3) at (4, 0){\begin{tikzpicture}
\ocalt{ta}{}{X}{1}{1}{-1}{\en}
\ocalt{t2}{}{Y}{4}{1}{-1}{\en}
\oc{t3}{\coo{A}}{_3}{2}{1}{0}{\et}
\oc{e4}{\evo{A}}{_4}{3}{0}{-1}{\ep}
  \begin{scope}[on background layer]
\alp{ ({t3}b)--({e4}t)}
\dlp{({t3}t)--({t2}t)}
\dlp{({ta}b)--({e4}b)}
\end{scope}
\end{tikzpicture}
};

\draw[->](A2)--(A3)node [midway , fill=white] {$\trictop$};
\end{tikzpicture}
}
\caption{$\trictop$}\label{fig:dualizable_0_cell_a}
\end{subfigure}
\hspace{1cm}
{\hspace{1cm}
\begin{subfigure}{.4\textwidth}
\resizebox{\textwidth}{!}
{
\begin{tikzpicture}[
    mystyle/.style={%
    },
   my style/.style={%
   },
  ]

\node[draw,my style] (A9) at (0, 0){\begin{tikzpicture}
\oc{t1}{\coo{A}}{_1}{-3}{5}{4}{\et}
\gc{g1}{3}{1}{0}
\gc{g2}{4}{1}{0}
\ocalt{x2}{}{X_2}{-1}{5}{4}{\en}

\begin{scope}[on background layer]

\dlp{({t1}t)--({x2}t)}
\alp{({t1}b)--({x2}b)}
\end{scope}
\end{tikzpicture}
};

\node[draw,my style] (A1) at (3, -3){\begin{tikzpicture}
\oc{t1}{\coo{A}}{_1}{-3}{5}{4}{\et}
\gc{g1}{3}{1}{0}
\gc{g2}{4}{1}{0}
\ocalt{x2}{}{X_2}{-1}{5}{2}{\en}
\oc{t4}{\coo{A}}{_4}{-3}{3}{2}{\et\lt}

\oc{e6}{\evo{A}}{_6}{-2}{4}{3}{\ep\lt}

\begin{scope}[on background layer]

\alp{({t4}b)--({x2}b)}
\dlp{({t1}t)--({x2}t)}
\alp{({t1}b)--({e6}t)}
\dlp{({t4}t)--({e6}b)}
\end{scope}
\end{tikzpicture}
};

\node[draw,my style] (B9) at (6, 0){\begin{tikzpicture}
\gc{g1}{3}{1}{0}
\gc{g2}{4}{1}{0}
\ocalt{x2}{}{X_2}{-1}{5}{4}{\en}
\oc{t4}{\coo{A}}{_4}{-3}{5}{4}{\et\lt}

\begin{scope}[on background layer]

\alp{({t4}b)--({x2}b)}
\dlp{({t4}t)--({x2}t)}
\end{scope}
\end{tikzpicture}
};

\draw[->](B9)--(A1)node [midway , fill=white] {$\tricto{16}$};
\draw[->](A9)--(A1)node [midway , fill=white] {$\trieto{46}$};

\draw[->](A9)--(B9)node [midway , fill=white] {$\id$};
\end{tikzpicture}
}
\caption{}\label{fig:add_dualizable_0_cell}
\end{subfigure}
\hfill
\begin{subfigure}{.4\textwidth}
\resizebox{\textwidth}{!}
{
\begin{tikzpicture}[
    mystyle/.style={%
    },
   my style/.style={%
   },
  ]

\node[draw,my style] (A1) at (3, -3){\begin{tikzpicture}
\oc{t1}{\coo{A}}{_1}{-3}{5}{4}{\et}
\ocalt{x4}{}{X_4}{-4}{6}{3}{\en}

\oc{e6}{\evo{A}}{_6}{-2}{4}{3}{\ep\lt}

\oc{ec}{\evo{A}}{_c}{-2}{6}{5}{\ep\dk}
\begin{scope}[on background layer]

\dlp{({t1}t)--({ec}b)}
\alp{({t1}b)--({e6}t)}
\alp{({x4}t)--({ec}t)}
\dlp{({x4}b)--({e6}b)}
\end{scope}
\end{tikzpicture}
};

\node[draw,my style] (B2) at (6, 0){\begin{tikzpicture}
\ocalt{x4}{}{X_4}{-4}{4}{3}{\en}

\oc{e6}{\evo{A}}{_6}{-2}{4}{3}{\ep\lt}

\begin{scope}[on background layer]

\alp{({x4}t)--({e6}t)}

\dlp{({x4}b)--({e6}b)}
\end{scope}
\end{tikzpicture}
};

\node[draw,my style] (B9) at (0, 0){\begin{tikzpicture}
\ocalt{x4}{}{X_4}{-4}{6}{5}{\en}

\oc{ec}{\evo{A}}{_c}{-2}{6}{5}{\ep\dk}
\begin{scope}[on background layer]

\dlp{({x4}b)--({ec}b)}
\alp{({x4}t)--({ec}t)}

\end{scope}
\end{tikzpicture}
};

\draw[->](B9)--(A1)node [midway , fill=white] {$\tricto{16}$};
\draw[<-](A1)--(B2)node [midway , fill=white] {$\trieto{1c}$};

\draw[->](B9)--(B2)node [midway , fill=white] {$\id$};
\end{tikzpicture}
}\caption{}\label{fig:add_dualizable_0_cell_a}
\end{subfigure}
\hspace{1cm}}
\caption{2-cells and conditions for a 1-dualizable 0-cell $A$}
\end{figure}
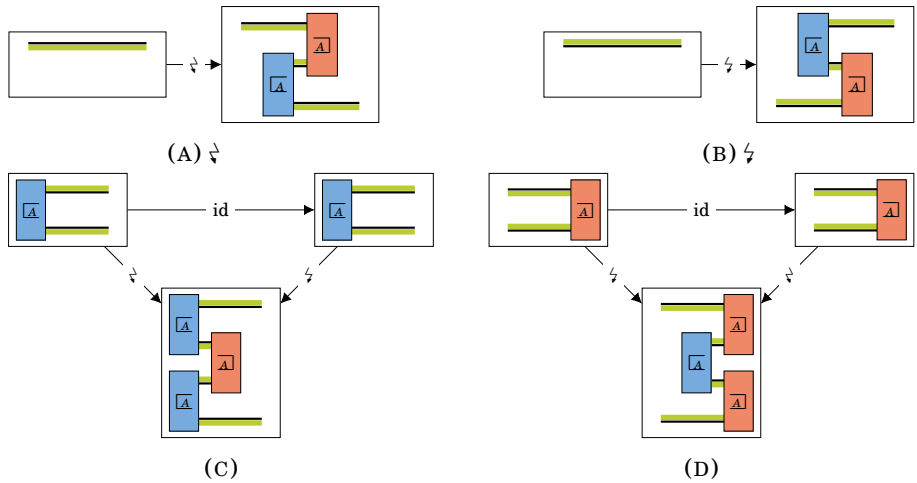

\begin{rmk}\label{rmk:cp_1_dual_change}
\cref{defn:1_dualizable} is  \cite[Def. 6.4]{cp2} with one substantive change and two notational changes.  

In \cite[Def. 6.4]{cp2} the maps $\trictop$ and $\trietop$ were both denoted $\triangle$.  The 1-cell $\coo{A}$ was denoted $C$ and $\evo{A}$ was denoted $E$.  Unlike \cite{cp2}, the clarity provided by giving these distinct maps distinct names is necessary here.  

The substantive difference is the condition that  \cref{fig:add_dualizable_0_cell,fig:add_dualizable_0_cell_a} commute.  This was not required for any of the results in \cite{cp2}, but will be required in \cref{lem:compatiblity_1-witnesses_if_2-dual,lem:euler_char_gamma} (and \cref{lem:ec_e_and_c,thm:main_pairing,thm:pairing_var_1}).
\end{rmk}

\begin{example}
\hfill
\begin{enumerate}
\item 
A ring $A$ is 1-dualizable and $\zdual{A}$ is $A^{\mathrm{op}}$.  The 1-cells $\coo{A}$ and $\evo{A}$ are $A$ regarded as a $(\bZ,A\otimes A^{\mathrm{op}})$-bimodule and 
$(A^{\mathrm{op}}\otimes A,\bZ)$-bimodule.  The maps $\trictop$ and $\trietop$ are nearly tautological in this case. 

\item 
  If $A$ is a category then $A$ is 1-dualizable with dual $A^{\mathrm{op}}$.  The 1-cells $\coo{A}$ and $\evo{A}$ are the functors $\ast\times (A\times A^{\mathrm{op}})\to \mathrm{Set}$ and $(A^{\mathrm{op}}\times A)\times \ast\to \mathrm{Set}$ given by the hom sets in the category $A$. 

  In this case the maps $\trictop$ and $\trietop$ are slightly more complicated. Writing the objects in terms of standard tensors over categories,    $\trictop$ is the isomorphism
  \[
  A(a, b)  \cong  A(a,-)\odot_A A(-,-) \odot_A A(-,-) \odot_A A(-,b).
  \]
and $\trietop$ is the same map for $A^{\mathrm{op}}$.

  \item If $\mc{C}$ is a dg-category or spectral category, entirely analogous statements hold. 
\end{enumerate}
\end{example}

A symmetric monoidal bicategory where all  0-cells are 1-dualizable endows the bicategory with a shadow.  This shadow is the familiar one in many examples, including those in \cref{ex:all_the_ex}.

\begin{prop}[\cref{goal:smc}, {\cite{ben_zvi_nadler_1}}, {\cite[Prop. 6.11]{cp2}}]\label{prop:shadow} If $\mc{B}$ is a symmetric monoidal bicategory and all 0-cells of $\sB$ are 1-dualizable, the composition in \cref{fig:shadow} defines a shadow on $\sB$  that takes values in the (symmetric monoidal)
category $\sB(I,I)$.
\end{prop}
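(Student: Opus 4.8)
The plan is to construct the shadow explicitly from the 1-dualizability data and then verify the axioms of \cref{defn_shadow} by coherence. Fix for each 0-cell $R$ its dual $\zdual{R}$ and the structure 1-cells $\coo{R}\in\sB(I,R\otimes\zdual{R})$ and $\evo{R}\in\sB(\zdual{R}\otimes R,I)$ of \cref{defn:1_dualizable}. On objects and 2-cells of $\sB(R,R)$ define
\[
\sh{M}\;:=\;\coo{R}\odot(M\otimes U_{\zdual{R}})\odot b_{R,\zdual{R}}\odot\evo{R}\in\sB(I,I),
\qquad \sh{f}:=\id\odot(f\otimes\id)\odot\id\odot\id,
\]
where $b$ is the symmetry 1-cell of \cref{sec:monoidal_bicat}. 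This is a functor because it is the composite of $M\mapsto M\otimes U_{\zdual{R}}$ with whiskering by the fixed 1-cells $\coo{R}$, $b_{R,\zdual{R}}$, $\evo{R}$, each of which is functorial; its target is the symmetric monoidal category $\sB(I,I)$, as \cref{goal:smc} demands.

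Second, I would build the cyclicity isomorphism $\theta\colon\sh{M\odot N}\to\sh{N\odot M}$ for $M\in\sB(R,S)$ and $N\in\sB(S,R)$. Since $\sh{M\odot N}$ is assembled from the data of $R$ while $\sh{N\odot M}$ is assembled from the data of $S$, the map must transport the trace loop from $R$ to $S$. I would first apply the monoidal interchange 2-morphism $(M\otimes N)\odot(M'\otimes N')\Rightarrow(M\odot M')\otimes(N\odot N')$ to split $(M\odot N)\otimes U_{\zdual{R}}\cong(M\otimes U_{\zdual{R}})\odot(N\otimes U_{\zdual{R}})$, and likewise on the target. Reading the circuit picture (cf.\ \cref{fig:toroidal_framing}), the two sides then differ only in where the trace circle is cut --- between $N$ and $M$ near $R$, or between $M$ and $N$ near $S$ --- so $\theta$ is the rotation of that cut, implemented as a composite of the invertible structure 2-cells $\trictop$, $\trietop$ for $R$ and for $S$ together with the pseudonaturality cells of $b$ and the associativity/unit adjoint equivalences. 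Naturality of $\theta$ in $M$ and $N$ follows because each constituent is natural (resp.\ pseudonatural).

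Finally, I would check the two defining diagrams of a shadow. The unit triangle unwinds, after identifying $\sh{U_R\odot M}$ and $\sh{M\odot U_R}$, to the compatibility of $b$ with the unitors and the $\trictop$, $\trietop$ cells, i.e.\ to a unit coherence in the symmetric monoidal bicategory. The hexagon is the main obstacle: it says that rotating the cut across $M\odot N$ and then past $P$ agrees, modulo the associator cells $\sh{a}$, with rotating past $M$, then $N$, then $P$. I expect this to reduce to a (long but mechanical) diagram chase organized so that each face is an instance of pseudonaturality of $b$, a triangle identity for a dual pair, or one of the symmetric monoidal coherence modifications ($\pi$, $\lambda$, $\mu$, $\rho$, and the two hexagonators). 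Because this is the 1-dualizable case, \cref{rmk:cp_1_dual_change} shows the additional commuting conditions of \cref{fig:add_dualizable_0_cell,fig:add_dualizable_0_cell_a} play no role, so the argument is exactly that of \cite[Prop.~6.9]{cp2} (see also \cite{ben_zvi_nadler_1}); the substance is the bookkeeping of coherence cells rather than a new idea.
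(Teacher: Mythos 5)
Your proposal matches the paper's construction: the paper defines $\sh{M}$ as exactly the composite $\coo{A}\odot(M\otimes U_{\zdual{A}})\odot\Gamma\odot\evo{A}$ (its \cref{fig:shadow}), builds the cyclicity isomorphism $\theta$ by inserting the witnessing pair for $S$ via $\trietop$, applying the symmetry $\gamma$, and removing the pair for $R$ via $\trietop^{-1}$ (its \cref{fig:shadow_isomorphism}), and, like you, defers the coherence verification of the shadow axioms to \cite[Prop.~6.9]{cp2}. Your observation that the added conditions of \cref{fig:add_dualizable_0_cell,fig:add_dualizable_0_cell_a} are not needed here is also consistent with \cref{rmk:cp_1_dual_change}.
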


If $M\in \sB(A,A)$ and $A$ is 1-dualizable, the shadow of $M$ is the  bicategorical composition in \cref{fig:shadow}.
The shadow isomorphism is defined in \cref{fig:shadow_isomorphism}.

\begin{figure}[ht]
\begin{subfigure}{.2\textwidth}
\resizebox{\textwidth}{!}{
\begin{tikzpicture}

\oc{t1}{\coo{A}}{_1}{0}{5}{4}{\et}
\oc{x}{M}{}{1.5}{4}{4}{\en}

\oc{ex}{\evo{A}}{_x}{4}{5}{4}{\ep}
\gc{g1}{3}{5}{4}

  \begin{scope}[on background layer]

\alp{ ({t1}b)--({x}b)\gpb{g1}({ex}t)}
\dlp{({t1}t)\gpt{g1}({ex}b)}
\end{scope}
\end{tikzpicture}}
\caption{Shadow}\label{fig:shadow}
\end{subfigure}

\begin{subfigure}{.9\textwidth}
\resizebox{\textwidth}{!}
{\begin{tikzpicture}

\node[draw] (Z2) at (0, 4){\begin{tikzpicture}

\oc{t1}{\coo{A}}{_1}{0}{5}{4}{\et}
\oc{x}{M}{}{1}{4}{4}{\en}
\oc{y}{N}{}{2}{4}{4}{\en}

\oc{ex}{\evo{A}}{_x}{4}{5}{4}{\ep}
\gc{g1}{3}{5}{4}

  \begin{scope}[on background layer]
\alp{ ({t1}b)--({x}b)}
\blp{({x}b)--({y}b)}
\alp{({y}b)\gpb{g1}({ex}t)}
\dlp{({t1}t)\gpt{g1}({ex}b)}

\end{scope}
\end{tikzpicture}};

\node[draw] (Z3) at (6, 4){\begin{tikzpicture}

\oc{t1}{\coo{A}}{_1}{1}{5}{4}{\et}
\oc{x}{M}{}{2}{4}{4}{\en}
\oc{y}{N}{}{3}{2}{2}{\en}
\oc{t5}{\coo{B}}{_5}{2}{3}{2}{\et\dk}
\oc{e6}{\evo{B}}{_6}{3}{4}{3}{\ep\dk}

\oc{ex}{\evo{A}}{_x}{5}{5}{2}{\ep}
\gc{g1}{4}{5}{2}

  \begin{scope}[on background layer]
\dblp{({t5}t)--({e6}b)}

\alp{({t1}b)--({x}b)}
\blp{({x}b)--({e6}t)}
\blp{ ({t5}b)--({y}b)}
\alp{({y}b)\gpb{g1}({ex}t)}
\dlp{({t1}t)\gpt{g1}({ex}b)}
\end{scope}
\end{tikzpicture}};

\node[draw] (X5) at (18, 4){\begin{tikzpicture}

\oc{x}{M}{}{3}{4}{4}{\en}
\oc{y}{N}{}{2}{4}{4}{\en}
\oc{t5}{\coo{B}}{_5}{1}{5}{4}{\et\dk}
\oc{e6}{\evo{B}}{_6}{5}{5}{4}{\ep\dk}
\gc{g1}{4}{5}{4}
  \begin{scope}[on background layer]
\blp{({t5}b)--({y}b)}
\alp{({y}b)--({x}b)}
\blp{({x}b)\gpb{g1}({e6}t)}
\dblp{({t5}t)\gpt{g1}({e6}b)}
\end{scope}
\end{tikzpicture}};

\node[draw] (Y3) at (12, 4){\begin{tikzpicture}

\oc{t1}{\coo{A}}{_1}{2}{3}{2}{\et}
\oc{x}{M}{}{3}{2}{2}{\en}
\oc{y}{N}{}{2}{4}{4}{\en}
\oc{t5}{\coo{B}}{_5}{1}{5}{4}{\et\dk}
\oc{e6}{\evo{B}}{_6}{5}{5}{2}{\ep\dk}

\oc{ex}{\evo{A}}{_x}{3}{4}{3}{\ep}
\gc{g1}{4}{5}{2}
  \begin{scope}[on background layer]
\dlp{ ({t1}t)--({ex}b)}

\alp{ ({t1}b)--({x}b)}
\blp{({x}b)\gpb{g1}({e6}t)}
\blp{ ({t5}b)--({y}b)}
\alp{({y}b)--({ex}t)}
\dblp{({t5}t)\gpt{g1}({e6}b)}
\end{scope}
\end{tikzpicture}};

\draw[->](Z2)--(Z3)node [midway , fill=white] {$\trietop$};
\draw[->](Z3)--(Y3)node [midway , fill=white] {$\gammao$};
\draw[->](Y3)--(X5)node [midway , fill=white] {$\trietop^{-1}$};
\end{tikzpicture}}
\caption{The shadow isomorphism}\label{fig:shadow_isomorphism}
\end{subfigure}
\caption{Shadows and shadow isomorphisms in symmetric monoidal bicategories with 1-dualizable 0-cells.  (\cref{prop:shadow})}
\end{figure}
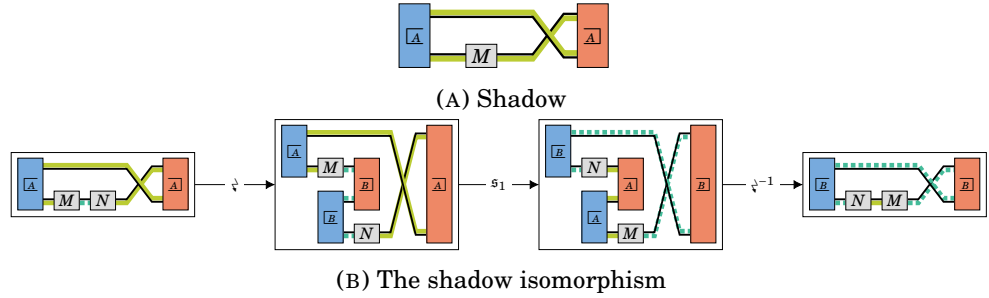

If $(\coo{A},\evo{A})$ and $(\coo{A}',\evo{A}')$ are witnessing 1-cells for a 1-dualizable 0-cell $A$  the maps $\trictop$ and $\trietop$ for each pair define isomorphisms 
\[\coo{A}\to \coo{A}'\quad \text{and} \quad \evo{A}\to \evo{A}'\]
These isomorphisms also show that the shadows defined by $(\coo{A},\evo{A})$ and $(\coo{A}',\evo{A}')$ are isomorphic and the isomorphisms compatible with the trace construction.

\begin{prop}\label{lem:comp_1_duals_shad_trace}
If $(\coo{A},\evo{A})$ and $(\coo{A}',\evo{A}')$ are witnessing 1-cells for a 1-dualizable 0-cell $A$, and $Q\in \sB(A,A)$, there is an  isomorphism 
\begin{equation}\label{eq:comp_1_duals_shad_trace}
\phi\colon \sh{Q}\to \sh{Q}'
\end{equation}
natural with respect to 2-cells in $\sB(A,A)$.  

If $M$ is right dualizable and  $f\colon Q\odot M\to M\odot P$, the diagram 
\begin{equation}\label{small-fig:comp_1_duals_trace}
\xymatrix{\sh{Q}\ar[r]^-\phi\ar[d]^{\tr(f)}&\sh{Q}'\ar[d]^{\tr'(f)}
\\
\sh{P}\ar[r]^-\phi&\sh{P}'}
\end{equation}
commutes. 
\end{prop}

\begin{proof}
The isomorphism $\phi$ is a special case of the shadow isomorphism in \cref{fig:shadow_isomorphism}.
See \cref{fig:comp_1_duals_shad} 
for explicit definition.

To see the diagram in \eqref{small-fig:comp_1_duals_trace} commutes, we expand this diagram to that in \cref{fig:comp_1_duals_trace}.  The top and bottom composites are the map in \cref{fig:comp_1_duals_shad}  and the left and right are the traces where the shadow isomorphisms have been expanded using the definition in \cref{fig:shadow_isomorphism}.
All small regions in  \cref{fig:comp_1_duals_trace} notated with Nat. commute by naturality.  This is the vast majority of the squares.  The exceptions are the squares labeled with (1) and (2).  These commute by the coherence theorem for symmetric monoidal bicategories \cite[Thm.~1.25]{gurski_osorno}.
\end{proof}

\begin{figure}
\begin{subfigure}{.8\textwidth}
\resizebox{\textwidth}{!}{\begin{tikzpicture}

\node[draw] (Z2) at (0, 4){\begin{tikzpicture}

\oc{t1}{\coo{A}}{_1}{1}{5}{4}{\et}
\oc{x}{M}{}{2}{4}{4}{\en}

\oc{ex}{\evo{A}}{_x}{4}{5}{4}{\ep}
\gc{g1}{3}{5}{4}

  \begin{scope}[on background layer]

\alp{ ({t1}b)--({x}b)}
\alp{({x}b)\gpb{g1}({ex}t)}
\dlp{({t1}t)\gpt{g1}({ex}b)}
\end{scope}
\end{tikzpicture}};

\node[draw] (Z3) at (5, 4){\begin{tikzpicture}

\oc{t1}{\coo{A}}{_1}{1}{5}{4}{\et}
\oc{x}{M}{}{2}{4}{4}{\en}
\oc{t5}{\coo{A}'}{_5}{2}{3}{2}{\et\dk}
\oc{e6}{\evo{A}'}{_6}{3}{4}{3}{\ep\dk}

\oc{ex}{\evo{A}}{_x}{5}{5}{2}{\ep}
\gc{g1}{4}{5}{2}

  \begin{scope}[on background layer]
\dlp{({t5}t)--({e6}b)}

\alp{({t1}b)--({x}b)}
\alp{({x}b)--({e6}t)}
\alp{ ({t5}b)\gpb{g1}({ex}t)}
\dlp{({t1}t)\gpt{g1}({ex}b)}
\end{scope}
\end{tikzpicture}};

\node[draw] (X5) at (15, 4){\begin{tikzpicture}

\oc{x}{M}{}{3}{4}{4}{\en}
\oc{t5}{\coo{A}'}{_5}{2}{5}{4}{\et\dk}
\oc{e6}{\evo{A}'}{_6}{5}{5}{4}{\ep\dk}
\gc{g1}{4}{5}{4}
  \begin{scope}[on background layer]
\alp{({t5}b)--({x}b)}
\alp{({x}b)\gpb{g1}({e6}t)}
\dlp{({t5}t)\gpt{g1}({e6}b)}
\end{scope}
\end{tikzpicture}};

\node[draw] (Y3) at (10, 4){\begin{tikzpicture}

\oc{t1}{\coo{A}}{_1}{2}{3}{2}{\et}
\oc{x}{M}{}{3}{2}{2}{\en}
\oc{t5}{\coo{A}'}{_5}{2}{5}{4}{\et\dk}
\oc{e6}{\evo{A}'}{_6}{5}{5}{2}{\ep\dk}

\oc{ex}{\evo{A}}{_x}{3}{4}{3}{\ep}
\gc{g1}{4}{5}{2}
  \begin{scope}[on background layer]
\dlp{ ({t1}t)--({ex}b)}

\alp{ ({t1}b)--({x}b)}
\alp{({x}b)\gpb{g1}({e6}t)}
\dlp{({t5}t)\gpt{g1}({e6}b)}
\alp{ ({t5}b)--({ex}t)}
\end{scope}
\end{tikzpicture}};

\draw[->](Z2)--(Z3)node [midway , fill=white] {$\trietop$};
\draw[->](Z3)--(Y3)node [midway , fill=white] {$\gammao$};
\draw[->](Y3)--(X5)node [midway , fill=white] {$\trietop^{-1}$};
\end{tikzpicture}}
\caption{Isomorphisms between shadows for pairs of witnessing 1-cells}\label{fig:comp_1_duals_shad}
\end{subfigure}
\begin{subfigure}{\textwidth}
\resizebox{\textwidth}{!}{\input{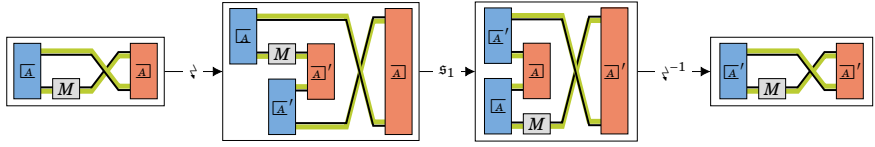}
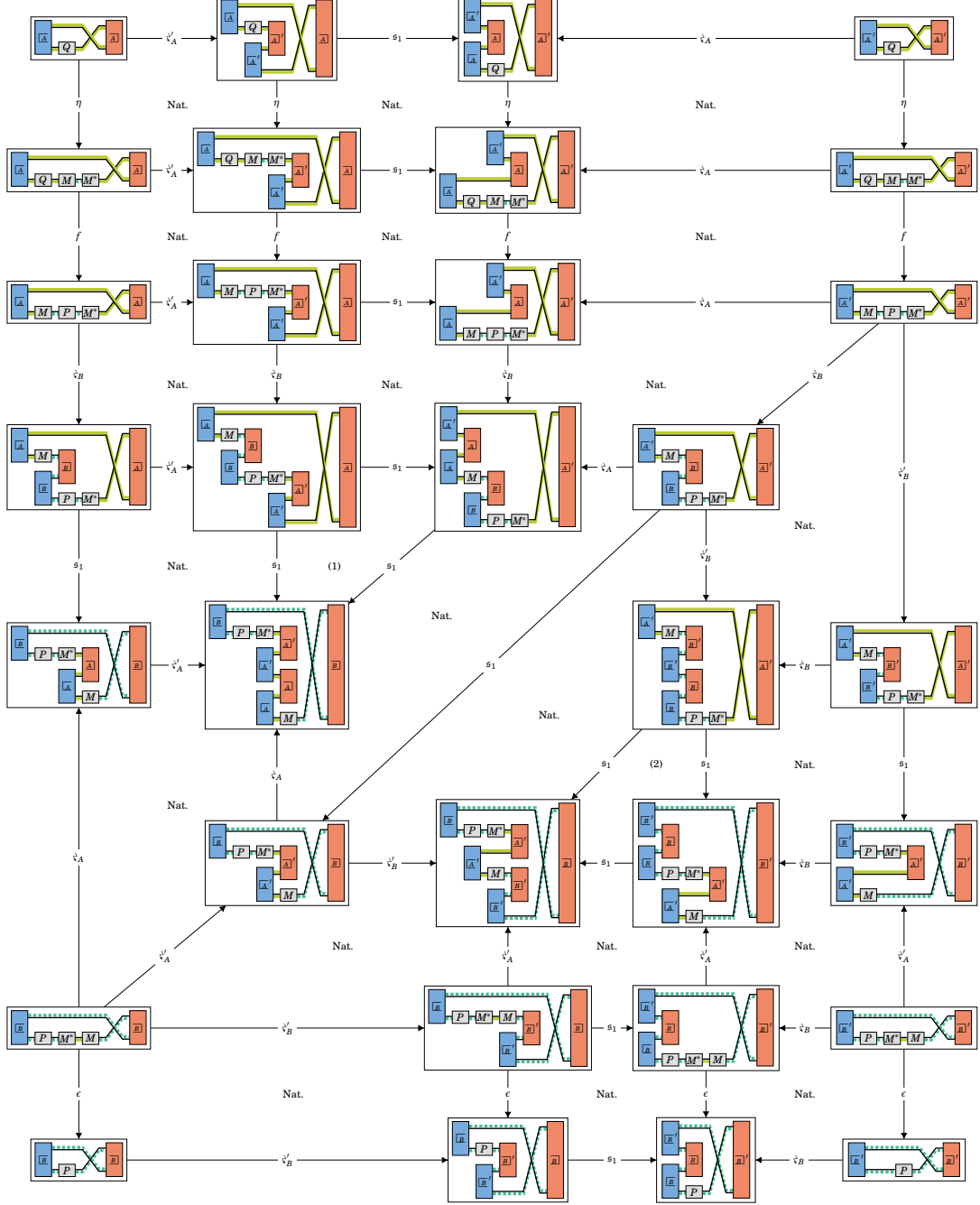}
\caption{Compatibility of traces for pairs of witnessing 1-cells}\label{fig:comp_1_duals_trace}
\end{subfigure}
\caption{Compatibility of shadows and traces with different pairs of witnessing 1-cells.  (\cref{lem:comp_1_duals_shad_trace})}
\end{figure}

Since there will not be a unique choice of witnessing 1-cells for a 1-dualizable 0-cell we will avoid introducing these isomorphisms in what follows by choosing a fixed but arbitrary dual and  witnessing 1-cells for each 1-dualizable 0-cell.  An exception to this is \cref{lem:ec_e_and_c}, where the choice of dual and 1-cells is essential to the way we have set up the proof.

\subsection{Monoidal product and dualizablity}
\cref{lem:duals_compose,thm:composite}  set up an expectation for compatibility between monoidal structures and duality.  In this section we describe how 1-dualizability interacts with the monoidal product in a symmetric monoidal bicategory. The results here exactly parallel \cref{lem:duals_compose,thm:composite} in their statements and proofs.  

\begin{lem}[Compare to  \cref{lem:duals_compose}]\label{lem:one_dual_composites}
If $A$ and $B$ are 1-dualizable then $A\otimes B$ is 1-dualizable.   
\end{lem}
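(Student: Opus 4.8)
The plan is to transport the construction behind \cref{lem:duals_compose} up one categorical level, building the witnesses for $A\otimes B$ by \emph{nesting} the witnesses for $A$ and $B$. First I would declare the candidate dual to be the reversed tensor
\[
\zdual{A\otimes B}\coloneqq \zdual{B}\otimes \zdual{A}.
\]
The point of the reversed convention is that the pairings then interleave in a nested rather than a crossed pattern, so the symmetry 1-cells never enter and the argument runs exactly parallel to \cref{lem:duals_compose,thm:composite}. (In the symmetric setting one could equally take $\zdual{A}\otimes\zdual{B}$ at the cost of inserting the symmetry $\Gamma$, but the nested choice is cleaner.)

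Concretely, I would set
\[
\coo{A\otimes B}\colon I\xrightarrow{\coo{A}} A\otimes \zdual{A}\cong A\otimes I\otimes \zdual{A}\xrightarrow{U_A\otimes \coo{B}\otimes U_{\zdual A}} A\otimes B\otimes \zdual{B}\otimes \zdual{A}
\]
and, dually,
\[
\evo{A\otimes B}\colon \zdual{B}\otimes \zdual{A}\otimes A\otimes B\xrightarrow{U_{\zdual B}\otimes \evo{A}\otimes U_B}\zdual{B}\otimes I\otimes B\cong \zdual{B}\otimes B\xrightarrow{\evo{B}} I,
\]
using only $\otimes$, the interchange 2-cell, and the unit equivalences $L,R$. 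In the circuit calculus of \cref{sec:circuit_diagram} this is precisely the picture proving \cref{lem:duals_compose}: the diagram for $B$ is slotted inside the diagram for $A$. The required 2-cells $\trictop_{A\otimes B}$ and $\trietop_{A\otimes B}$ of \cref{defn:1_dualizable} I would then assemble from $\trictop_A,\trictop_B$ (respectively $\trietop_A,\trietop_B$) by applying the inner witness inside the region carved out by the outer one, starting from the identifications $U_{A\otimes B}\cong U_A\otimes U_B$ and $U_{\zdual B\otimes\zdual A}\cong U_{\zdual B}\otimes U_{\zdual A}$ coming from pseudofunctoriality of $\otimes$. Invertibility is automatic, since every constituent 2-cell, together with the unitors, associators, and interchangers, is invertible.

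The remaining task is to check that \cref{fig:add_dualizable_0_cell,fig:add_dualizable_0_cell_a} commute for $A\otimes B$, and this is where I expect the real work to lie. After expanding the nested definitions above, each of the two composites to be compared decomposes into the corresponding composite for $A$ and the one for $B$ run in parallel, conjugated by interchangers and the coherence modifications of the monoidal bicategory. The identity should then reduce to the already-assumed commutativity of \cref{fig:add_dualizable_0_cell,fig:add_dualizable_0_cell_a} for $A$ and for $B$ separately, together with naturality of the interchange 2-cell and the pentagon/hexagon coherences.

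The main obstacle is therefore not conceptual but bookkeeping: one must track the interchangers and structural cells carefully enough to confirm that they cancel in matched pairs, leaving exactly the two inner conditions. Because these are genuine invertible 2-cells (not mere isomorphisms identified by fiat), the safest route is to carry out the verification graphically in the circuit-diagram calculus, where the nesting makes the cancellations visible, rather than symbolically; this mirrors the remark that the string-diagram proofs here are almost identical to those of \cref{lem:duals_compose,thm:composite}.
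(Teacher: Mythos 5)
Your proposal matches the paper's proof: the paper also takes the reversed tensor of duals, defines $\coo{A\otimes B}$ and $\evo{A\otimes B}$ by nesting the witnesses for $B$ inside those for $A$ (\cref{fig:composite_coeval,fig:composite_eval}), builds $\trictop_{A\otimes B}$ and $\trietop_{A\otimes B}$ as composites of the constituent 2-cells (\cref{fig:one_dual_invertible_2_cell_e,fig:one_dual_invertible_2_cell_c}), and checks the triangle conditions graphically by sliding the disconnected $A$- and $B$-cells past each other via the interchange, reducing to the separate conditions for $A$ and $B$ (\cref{fig:one_dual_composites}). Your plan, including the remark that this is the "rotated" version of the proof of \cref{lem:duals_compose}, is exactly the argument the paper gives.
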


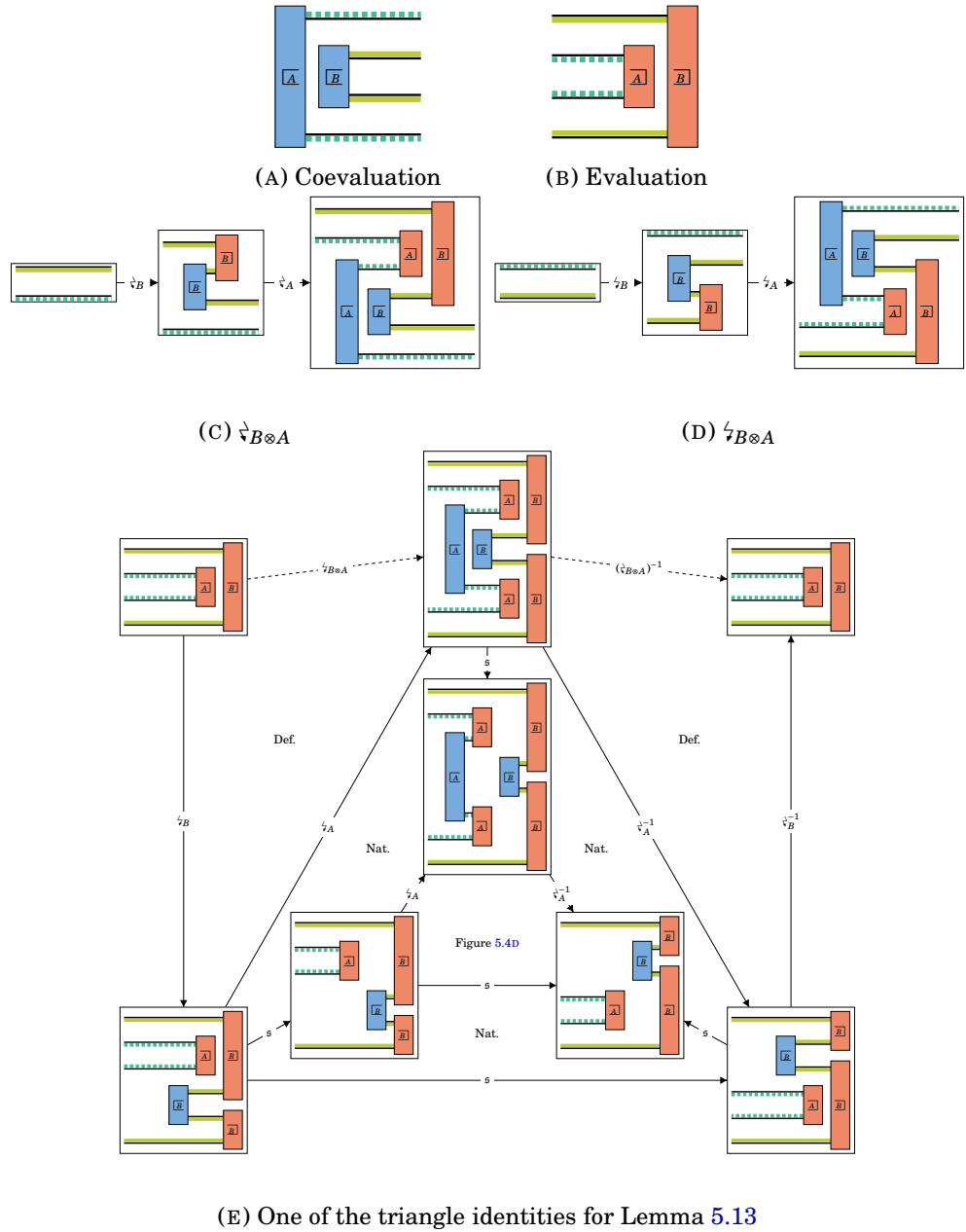
\begin{figure}[h!]
\begin{subfigure}{.18\textwidth}
\centering
\resizebox{.8\textwidth}{!}{
\begin{tikzpicture}

\oc{e1}{\coo{B}}{_1}{-1}{-2}{-3}{\et\lt}
\oc{t2}{\coo{A}}{_2}{-2}{-1}{-4}{\et}

\begin{scope}[on background layer]

\alp{({e1}b)--(1*\d,-3*\h)}
\dlp{({e1}t)--(1*\d,-2*\h)}
\blp{({t2}b)--(1*\d,-4*\h)}
\dblp{({t2}t)--(1*\d,-1*\h)}
\end{scope}
\end{tikzpicture}}
\caption{Coevaluation}\label{fig:composite_coeval}
\end{subfigure}
\hspace{1cm}
\begin{subfigure}{.18\textwidth}
\centering
\resizebox{.8\textwidth}{!}{
\begin{tikzpicture}

\oc{t7}{\evo{B}}{_7}{1}{1}{-2}{\ep\lt}
\oc{e8}{\evo{A}}{_8}{0}{0}{-1}{\ep}

\begin{scope}[on background layer]

\alp{(-2*\d,1*\h)--({t7}t)}
\dlp{(-2*\d,-2*\h)--({t7}b)}
\blp{(-2*\d,0)--({e8}t)}
\dblp{(-2*\d,-1*\h)--({e8}b)}
\end{scope}
\end{tikzpicture}}
\caption{Evaluation}\label{fig:composite_eval}
\end{subfigure}

\begin{subfigure}{.45\textwidth}
\centering
\resizebox{\textwidth}{!}
{
\begin{tikzpicture}[
    mystyle/.style={%
    },
   my style/.style={%
   },
  ]
 
\def\scale{.95}

\node[draw,my style] (A3) at (\scale*10, 0){\begin{tikzpicture}

\oc{e1}{\coo{B}}{_1}{-1}{-2}{-3}{\et\lt}
\oc{t2}{\coo{A}}{_2}{-2}{-1}{-4}{\et}

\oc{t7}{\evo{B}}{_7}{1}{1}{-2}{\ep\lt}
\oc{e8}{\evo{A}}{_8}{0}{0}{-1}{\ep}

\begin{scope}[on background layer]

\alp{(-3*\d,1*\h)--({t7}t)}
\alp{({e1}b)--(2*\d,-3*\h)}
\dlp{({e1}t)--({t7}b)}
\blp{(-3*\d,0)--({e8}t)}
\blp{({t2}b)--(2*\d,-4*\h)}
\dblp{({t2}t)--({e8}b)}
\end{scope}
\end{tikzpicture}
};

\node[draw,my style] (A4) at (\scale*1, 0){\begin{tikzpicture}

\begin{scope}[on background layer]

\alp{(-2*\d,-3*\h)--(1*\d,-3*\h)}
\blp{(-2*\d,-4*\h)--(1*\d,-4*\h)}
\end{scope}
\end{tikzpicture}
};

\node[draw,my style] (B3) at (\scale*5, 0){\begin{tikzpicture}

\oc{e1}{\coo{B}}{_1}{-1}{-2}{-3}{\et\lt}

\oc{t7}{\evo{B}}{_7}{0}{-1}{-2}{\ep\lt}

\begin{scope}[on background layer]

\alp{(-2*\d,-1*\h)--({t7}t)}
\alp{({e1}b)--(1*\d,-3*\h)}
\dlp{({e1}t)--({t7}b)}
\blp{(-2*\d,-4*\h)--(1*\d,-4*\h)}
\end{scope}
\end{tikzpicture}
};

\draw[->](A4)--(B3)node [midway , fill=white] {$\trietop_B$};
\draw[->](B3)--(A3)node [midway , fill=white] {$\trietop_A$};

\end{tikzpicture}
}
\caption{$\trietop_{B\otimes A}$}
\label{fig:one_dual_invertible_2_cell_e}
\end{subfigure} 
\begin{subfigure}{.45\textwidth}
\centering
\resizebox{\textwidth}{!}
{
\begin{tikzpicture}[
    mystyle/.style={%
    },
   my style/.style={%
   },
  ]
 
\def\scale{.95}
\node[draw,my style] (A2) at (1*\scale, 0){\begin{tikzpicture}
\begin{scope}[on background layer]

\dlp{(-2*\d,-2*\h)--(1*\d,-2*\h)}
\dblp{(-2*\d,-1*\h)--(1*\d,-1*\h)}
\end{scope}
\end{tikzpicture}
};

\node[draw,my style] (A3) at (\scale*10, 0){\begin{tikzpicture}

\oc{e1}{\coo{B}}{_1}{-1}{-2}{-3}{\et\lt}
\oc{t2}{\coo{A}}{_2}{-2}{-1}{-4}{\et}

\oc{t3}{\evo{B}}{_3}{1}{-3}{-6}{\ep\lt}
\oc{e4}{\evo{A}}{_4}{0}{-4}{-5}{\ep}

\begin{scope}[on background layer]

\alp{({e1}b)--({t3}t)}
\dlp{({t3}b)--(-3*\d,-6*\h)}
\dlp{({e1}t)--(2*\d,-2*\h)}
\blp{({t2}b)--({e4}t)}
\dblp{({e4}b)--(-3*\d,-5*\h)}
\dblp{({t2}t)--(2*\d,-1*\h)}
\end{scope}
\end{tikzpicture}
};

\node[draw,my style] (B2) at (\scale*5,0){\begin{tikzpicture}

\oc{e1}{\coo{B}}{_1}{0}{-2}{-3}{\et\lt}

\oc{t3}{\evo{B}}{_3}{1}{-3}{-4}{\ep\lt}

\begin{scope}[on background layer]

\alp{({e1}b)--({t3}t)}
\dlp{({t3}b)--(-1*\d,-4*\h)}
\dlp{({e1}t)--(2*\d,-2*\h)}
\dblp{(-1*\d,-1*\h)--(2*\d,-1*\h)}
\end{scope}
\end{tikzpicture}
};

\draw[->](A2)--(B2)node [midway , fill=white] {$\trictop_B$};
\draw[->](B2)--(A3)node [midway , fill=white] {$\trictop_A$};

\end{tikzpicture}
}
\caption{$\trictop_{B\otimes A}$}
\label{fig:one_dual_invertible_2_cell_c}
\end{subfigure} 
\begin{subfigure}{.70\textwidth}
\resizebox{\textwidth}{!}
{
\begin{tikzpicture}[
    mystyle/.style={%
    },
   my style/.style={%
   },
  ]
 
\def\scale{1.15}

\node[draw,my style] (A2) at (0, 0){\begin{tikzpicture}

\oc{t7}{\evo{B}}{_7}{1}{1}{-2}{\ep\lt}
\oc{e8}{\evo{A}}{_8}{0}{0}{-1}{\ep}

\begin{scope}[on background layer]

\alp{(-3*\d,1*\h)--({t7}t)}
\dlp{(-3*\d,-2*\h)--({t7}b)}
\blp{(-3*\d,0)--({e8}t)}
\dblp{(-3*\d,-1*\h)--({e8}b)}
\end{scope}
\end{tikzpicture}
};

\node[draw,my style] (A3) at (\scale*8, \scale*1){\begin{tikzpicture}

\oc{e1}{\coo{B}}{_1}{-1}{-2}{-3}{\et\lt}
\oc{t2}{\coo{A}}{_2}{-2}{-1}{-4}{\et}

\oc{t7}{\evo{B}}{_7}{1}{1}{-2}{\ep\lt}
\oc{e8}{\evo{A}}{_8}{0}{0}{-1}{\ep}

\oc{t3}{\evo{B}}{_3}{1}{-3}{-6}{\ep\lt}
\oc{e4}{\evo{A}}{_4}{0}{-4}{-5}{\ep}

\begin{scope}[on background layer]

\alp{(-3*\d,1*\h)--({t7}t)}
\alp{({e1}b)--({t3}t)}
\dlp{({t3}b)--(-3*\d,-6*\h)}
\dlp{({e1}t)--({t7}b)}
\blp{(-3*\d,0)--({e8}t)}
\blp{({t2}b)--({e4}t)}
\dblp{({e4}b)--(-3*\d,-5*\h)}
\dblp{({t2}t)--({e8}b)}
\end{scope}
\end{tikzpicture}
};

\node[draw,my style] (A3a) at (\scale*8, \scale*-5){\begin{tikzpicture}

\oc{e1}{\coo{B}}{_1}{0}{-2}{-3}{\et\lt}
\oc{t2}{\coo{A}}{_2}{-2}{-1}{-4}{\et}

\oc{t7}{\evo{B}}{_7}{1}{1}{-2}{\ep\lt}
\oc{e8}{\evo{A}}{_8}{-1}{0}{-1}{\ep}

\oc{t3}{\evo{B}}{_3}{1}{-3}{-6}{\ep\lt}
\oc{e4}{\evo{A}}{_4}{-1}{-4}{-5}{\ep}

\begin{scope}[on background layer]

\alp{(-3*\d,1*\h)--({t7}t)}
\alp{({e1}b)--({t3}t)}
\dlp{({t3}b)--(-3*\d,-6*\h)}
\dlp{({e1}t)--({t7}b)}
\blp{(-3*\d,0)--({e8}t)}
\blp{({t2}b)--({e4}t)}
\dblp{({e4}b)--(-3*\d,-5*\h)}
\dblp{({t2}t)--({e8}b)}
\end{scope}
\end{tikzpicture}
};

\node[draw,my style] (B2a) at (\scale*4.5, \scale*-10.5){\begin{tikzpicture}

\oc{e1}{\coo{B}}{_1}{0}{-2}{-3}{\et\lt}

\oc{t7}{\evo{B}}{_7}{1}{1}{-2}{\ep\lt}
\oc{e8}{\evo{A}}{_8}{-1}{0}{-1}{\ep}

\oc{t3}{\evo{B}}{_3}{1}{-3}{-4}{\ep\lt}

\begin{scope}[on background layer]

\alp{(-3*\d,1*\h)--({t7}t)}
\alp{({e1}b)--({t3}t)}
\dlp{({t3}b)--(-3*\d,-4*\h)}
\dlp{({e1}t)--({t7}b)}
\blp{(-3*\d,0)--({e8}t)}
\dblp{({e8}b)--(-3*\d,-1*\h)}
\end{scope}
\end{tikzpicture}
};

\node[draw,my style] (B3a) at (\scale*11.5, \scale*-10.5){\begin{tikzpicture}

\oc{e1}{\coo{B}}{_1}{0}{-2}{-3}{\et\lt}

\oc{t7}{\evo{B}}{_7}{1}{-1}{-2}{\ep\lt}

\oc{t3}{\evo{B}}{_3}{1}{-3}{-6}{\ep\lt}
\oc{e4}{\evo{A}}{_4}{-1}{-4}{-5}{\ep}

\begin{scope}[on background layer]

\alp{(-3*\d,-1*\h)--({t7}t)}
\alp{({e1}b)--({t3}t)}
\dlp{({t3}b)--(-3*\d,-6*\h)}
\dlp{({e1}t)--({t7}b)}
\blp{(-3*\d,-4*\h)--({e4}t)}
\dblp{({e4}b)--(-3*\d,-5*\h)}
\end{scope}
\end{tikzpicture}
};

\node[draw,my style] (A4) at (\scale*16, 0){\begin{tikzpicture}

\oc{t3}{\evo{B}}{_3}{1}{1}{-2}{\ep\lt}
\oc{e4}{\evo{A}}{_4}{0}{0}{-1}{\ep}

\begin{scope}[on background layer]

\alp{(-3*\d,1*\h)--({t3}t)}
\dlp{(-3*\d,-2*\h)--({t3}b)}
\blp{(-3*\d,0)--({e4}t)}
\dblp{(-3*\d,-1*\h)--({e4}b)}
\end{scope}
\end{tikzpicture}
};

\node[draw,my style] (B2) at (\scale*0,\scale*-13){\begin{tikzpicture}

\oc{e1}{\coo{B}}{_1}{-1}{-2}{-3}{\et\lt}

\oc{t7}{\evo{B}}{_7}{1}{1}{-2}{\ep\lt}
\oc{e8}{\evo{A}}{_8}{0}{0}{-1}{\ep}

\oc{t3}{\evo{B}}{_3}{1}{-3}{-4}{\ep\lt}

\begin{scope}[on background layer]

\alp{(-3*\d,1*\h)--({t7}t)}
\alp{({e1}b)--({t3}t)}
\dlp{({t3}b)--(-3*\d,-4*\h)}
\dlp{({e1}t)--({t7}b)}
\blp{(-3*\d,0)--({e8}t)}
\dblp{(-3*\d,-1*\h)--({e8}b)}
\end{scope}
\end{tikzpicture}
};

\node[draw,my style] (B3) at (\scale*16,\scale* -13){\begin{tikzpicture}

\oc{e1}{\coo{B}}{_1}{-1}{-2}{-3}{\et\lt}

\oc{t7}{\evo{B}}{_7}{1}{-1}{-2}{\ep\lt}

\oc{t3}{\evo{B}}{_3}{1}{-3}{-6}{\ep\lt}
\oc{e4}{\evo{A}}{_4}{0}{-4}{-5}{\ep}

\begin{scope}[on background layer]

\alp{(-3*\d,-1*\h)--({t7}t)}
\alp{({e1}b)--({t3}t)}
\dlp{({t3}b)--(-3*\d,-6*\h)}
\dlp{({e1}t)--({t7}b)}
\blp{(-3*\d,-4*\h)--({e4}t)}
\dblp{({e4}b)--(-3*\d,-5*\h)}
\end{scope}
\end{tikzpicture}
};

\draw[dashed,->](A2)--(A3)node [midway , fill=white] {$\trictop_{B\otimes A}$};
\draw[dashed,->](A3)--(A4)node [midway , fill=white] {$(\trietop_{B\otimes A})^{-1}$};

\draw[->](B2)--(B3)node [midway , fill=white] {$\gammas$};
\draw[->](A3)--(A3a)node [midway , fill=white] {$\gammas$};
\draw[->](B2)--(B2a)node [midway , fill=white] {$\gammas$};
\draw[->](B3)--(B3a)node [midway , fill=white] {$\gammas$};
\draw[->](B2a)--(B3a)node [midway , fill=white] {$\gammas$};
\draw[->](A2)--(B2)node [midway , fill=white] {$\trictop_B$};
\draw[->](A3)--(B3)node [midway , fill=white] {$\trietop^{-1}_A$};
\draw[->](A3a)--(B3a)node [midway , fill=white] {$\trietop^{-1}_A$};
\draw[->](B2)--(A3)node [midway , fill=white] {$\trictop_A$};
\draw[->](B2a)--(A3a)node [midway , fill=white] {$\trictop_A$};
\draw[->](B3)--(A4)node [midway , fill=white] {$\trietop^{-1}_B$};

\node at  (barycentric cs:B2a=1,B2=1,A3a=1,A3=1){Nat.};
\node at  (barycentric cs:B3a=1,B3=1,A3a=1,A3=1){Nat.};
\node at  (barycentric cs:B2a=1,B2=1,B3a=1,B3=1){Nat.};
\node at  (barycentric cs:B2a=1,B3a=1,A3a=.5){\cref{fig:add_dualizable_0_cell_a}};
\node at  (barycentric cs:B2=1,A2=1,A3=1){Def.};
\node at  (barycentric cs:A4=1,B3=1,A3=1){Def.};
\end{tikzpicture}
}
\caption{One of the triangle identities for \cref{lem:one_dual_composites}}\label{fig:one_dual_composites}
\end{subfigure}
\caption{Compatibility of the monoidal product and 1-dualizability (\cref{lem:one_dual_composites})}
\end{figure}

\begin{proof}
Using string diagram calculus for symmetric monoidal categories or bicategories, \cref{lem:duals_compose} is shown by ``nesting'' the evaluation and coevaluation maps for $A$ and $B$.  The proof of \cref{lem:one_dual_composites} is given by rotating that picture.
(If we had chosen a different orientation for our circuit diagrams the proofs would be more similar.)
  
The coevaluation and evaluation are in \cref{fig:composite_coeval,fig:composite_eval}.  Note that the order is reversed between the coevaluation and evaluation.
The invertible 2-cells are in \cref{fig:one_dual_invertible_2_cell_e,fig:one_dual_invertible_2_cell_c}.  

The triangle diagrams from \cref{fig:add_dualizable_0_cell} is in \cref{fig:one_dual_composites}.  The top two regions of that diagram commute by definition of $(\trietop_{B\otimes A})^{-1}$ and $\trictop_{B\otimes A}$.  The regions labeled with Nat. commute by naturality.  The remaining region is an example of \cref{fig:add_dualizable_0_cell_a}.  The left, bottom and right composite is the identity since it is also an example of \cref{fig:add_dualizable_0_cell_a}.

The other triangle diagram is essentially a reflection of this diagram.
\end{proof}

\begin{prop}[Compare to  \cref{thm:composite}]\label{lem:trace_otimes_compatible}
If $A,B,C$ and $D$ are dualizable 0-cells, $M\in \sB(A,B)$ and $N\in \sB(C,D)$ are dualizable 1-cells, and $f\colon Q_1\odot M\to M\odot P_1$ and $g\colon Q_2\odot N\to N\odot P_1$ are 2-cells, the diagram in \cref{fig:trace_otimes_compatible} commutes.

\end{prop}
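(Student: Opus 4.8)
The plan is to mirror the proof of \cref{thm:composite}, replacing the separating composition $\odot$ by the monoidal product $\otimes$; as the surrounding text anticipates, the circuit-diagram argument is essentially the same picture with the roles of the two operations exchanged. First I would record the dualizability data that makes $\tr(f\otimes g)$ defined. By \cref{lem:one_dual_composites} the $0$-cells $A\otimes C$ and $B\otimes D$ are $1$-dualizable, with witnessing $1$-cells the nested composites of \cref{fig:composite_coeval,fig:composite_eval}; and since $\otimes$ is a functor carrying the interchange $2$-cell $(M\otimes N)\odot(\rdual{M}\otimes \rdual{N})\Rightarrow (M\odot \rdual{M})\otimes(N\odot \rdual{N})$, the $1$-cell $M\otimes N$ is right dualizable with right dual $\rdual{M}\otimes \rdual{N}$ and with coevaluation and evaluation obtained from those of $M$ and $N$ by composing with the interchanger. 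Thus $\tr(f\otimes g)$ is the map $\sh{Q_1\otimes Q_2}\to \sh{P_1\otimes P_2}$ assembled from $\eta_{M\otimes N}$, $f\otimes g$, the shadow isomorphism, and $\epsilon_{M\otimes N}$ exactly as in \cref{defn:twisted_trace}.

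Next I would draw the circuit diagram for this composite and observe that it splits. Because every piece of witnessing data for $A\otimes C$ and $B\otimes D$ is a vertical stacking of the nested evaluations and coevaluations, and because the monoidal shadow of \cref{prop:shadow} identifies $\sh{Q_1\otimes Q_2}$ with the vertically stacked diagram built from the shadow bands of $Q_1$ and $Q_2$, the whole diagram for $\tr(f\otimes g)$ separates into two horizontal bands: the band carrying the colors of $A,B$ realizes precisely the diagram for $\tr(f)$ (\cref{fig:shadow,fig:shadow_isomorphism} with $Q=Q_1$), and the band carrying the colors of $C,D$ realizes the diagram for $\tr(g)$. The two bands never share an edge, so the composite is the $\otimes$-product of the two. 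The vertical legs of the square in \cref{fig:trace_otimes_compatible} are exactly the monoidality isomorphisms $\sh{Q_1\otimes Q_2}\cong \sh{Q_1}\otimes \sh{Q_2}$ and $\sh{P_1\otimes P_2}\cong \sh{P_1}\otimes \sh{P_2}$ furnished by this identification, so commutativity of the square is the equality $\tr(f\otimes g)=\tr(f)\otimes \tr(g)$.

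The step I expect to carry the real weight is justifying that the bands genuinely separate: that the interchange $2$-cell, the pseudonaturality of the associativity and unit adjoint equivalences, and the shadow isomorphism $\theta$ are compatible with the vertical stacking in the precise sense needed to factor the composite as $\tr(f)\otimes \tr(g)$. Concretely this amounts to checking that the coherence modifications $\pi,\lambda,\mu,\rho$ and the symmetry modification $\nu$ intervene only through their prescribed identities, so that sliding the $A,B$-band past the $C,D$-band introduces no stray $2$-cells. This is the same bookkeeping that underlies \cref{thm:composite} and \cref{lem:one_dual_composites}, now carried out one monoidal level up. Since \cref{lem:one_dual_composites} already verifies the analogous triangle identities (\cref{fig:one_dual_composites}) by exactly the rotation-of-the-picture argument, the verification here reduces to transcribing those manipulations into the trace diagram of \cref{fig:trace_otimes_compatible} and reading off the equality.
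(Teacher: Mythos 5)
Your proposal is correct and takes essentially the same route as the paper: the paper likewise gets the needed dualizability from \cref{lem:one_dual_composites}, expands the two composites into the pasting diagram of \cref{fig:trace_otimes}, and observes that every region commutes by naturality of the symmetry $\gamma$ or by the definition of the shadow isomorphism (\cref{fig:shadow_isomorphism}), finishing with the identification of composition with the monoidal product for endomorphisms of the unit. The ``band-separation bookkeeping'' you flag as the weight-bearing step is precisely what that expanded figure carries out, so your outline matches the paper's argument.
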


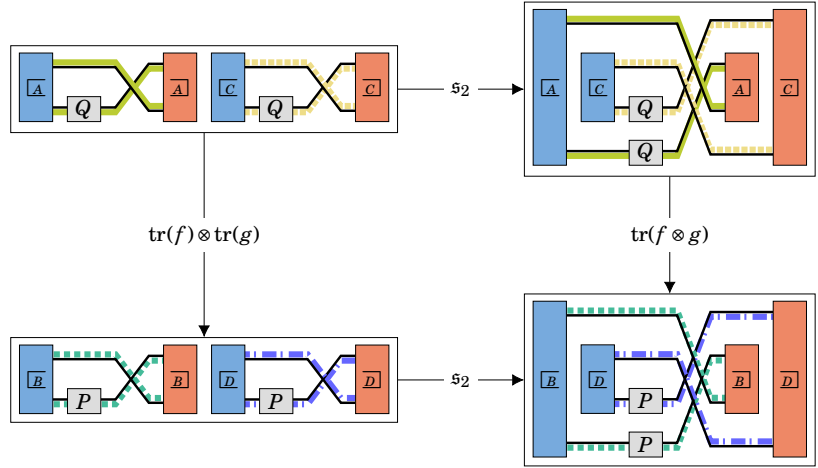
\begin{figure}[h!]
\resizebox{.75\textwidth}{!}
{
\begin{tikzpicture}[
    mystyle/.style={%
    },
   my style/.style={%
   },
  ]

\node[draw,my style] (A1) at (10, -1){\begin{tikzpicture}
	\oc{e3}{\coo{C}}{_3}{1}{2}{1}{\et\lt}
	\oc{t4}{\coo{A}}{_4}{0}{3}{0}{\et\lt}
	\oc{t5}{\evo{C}}{_5}{5}{3}{0}{\ep\lt}
	\oc{e6}{\evo{A}}{_6}{4}{2}{1}{\ep\lt}

	\oc{q2}{Q}{_2}{2}{1}{1}{\en}
	\oc{q1}{Q}{_1}{2}{0}{0}{\en}
	\begin{scope}[on background layer]
	\clp{({e3}b)--({q2}t)--(3*\d-\w,\h)--(3*\d+\w,3*\h)--({t5}t)}
	\alp{({t4}b)--({q1}t)--(3*\d-\w,0*\h)--(3*\d+\w,2*\h)--({e6}t)}
	\dlp{({t4}t)--(3*\d-\w,3*\h)--(3*\d+\w,1*\h)--({e6}b)}
	\dclp{({e3}t)--(3*\d-\w,2*\h)--(3*\d+\w,0*\h)--({t5}b)}
	\end{scope}
	\end{tikzpicture}
	};

\node[draw,my style] (A3) at (10, -6){\begin{tikzpicture}
	\oc{e1}{\coo{D}}{_1}{-6}{0}{-1}{\et}
	\oc{t2}{\coo{B}}{_2}{-7}{1}{-2}{\et}
	\oc{t7}{\evo{D}}{_7}{-2}{1}{-2}{\ep}
	\oc{e8}{\evo{B}}{_8}{-3}{0}{-1}{\ep}
	
	\oc{p2}{P}{_2}{-5}{-1}{-1}{\en}
	\oc{p1}{P}{_1}{-5}{-2}{-2}{\en}
	\begin{scope}[on background layer]
	\elp{({e1}b)--({p2}t)--(-4*\d-\w,-1*\h)--(-4*\d+\w,1*\h)--({t7}t)}
	\blp{({t2}b)--({p1}t)--(-4*\d-\w,-2*\h)--(-4*\d+\w,0*\h)--({e8}t)}
	\dblp{({t2}t)--(-4*\d-\w,1*\h)--(-4*\d+\w,-1*\h)--({e8}b)}
	\delp{({e1}t)--(-4*\d-\w,0*\h)--(-4*\d+\w,-2*\h)--({t7}b)}
	\end{scope}
	\end{tikzpicture}
	};

\node[draw,my style] (B1) at (2,-1){\begin{tikzpicture}
	\oc{e3}{\coo{C}}{_3}{1}{2}{1}{\et\lt}
	\oc{t4}{\coo{A}}{_4}{-3}{2}{1}{\et\lt}
	\oc{t5}{\evo{C}}{_5}{4}{2}{1}{\ep\lt}
	\oc{e6}{\evo{A}}{_6}{0}{2}{1}{\ep\lt}
	\oc{q2}{Q}{_2}{2}{1}{1}{\en}
	\oc{q1}{Q}{_1}{-2}{1}{1}{\en}
	\begin{scope}[on background layer]
	\clp{({e3}b)--({q2}t)--(3*\d-\w,\h)--(3*\d+\w,2*\h)--
	({t5}t)}
	\alp{({t4}b)--({q1}t)--(-1*\d-\w,1*\h)--(-1*\d+\w,2*\h)--
	({e6}t)}
	\dlp{({t4}t)--(-1*\d-\w,2*\h)--(-1*\d+\w,1*\h)--
	({e6}b)}
	\dclp{({e3}t)--(3*\d-\w,2*\h)--(3*\d+\w,1*\h)--
	({t5}b)}
	\end{scope}
	\end{tikzpicture}
	};

\node[draw,my style] (B3) at (2, -6) {\begin{tikzpicture}
	\oc{e1}{\coo{D}}{_1}{-4}{0}{-1}{\et}
	\oc{t2}{\coo{B}}{_2}{-8}{0}{-1}{\et}
	\oc{t7}{\evo{D}}{_7}{-1}{0}{-1}{\ep}
	\oc{e8}{\evo{B}}{_8}{-5}{0}{-1}{\ep}

	\oc{p2}{P}{_2}{-3}{-1}{-1}{\en}
	\oc{p1}{P}{_1}{-7}{-1}{-1}{\en}
	\begin{scope}[on background layer]
	
	\elp{({e1}b)--({p2}t)--(-2*\d-\w,-1*\h)--(-2*\d+\w,0*\h)--({t7}t)}
	\delp{({e1}t)--(-2*\d-\w,0*\h)--(-2*\d+\w,-1*\h)--({t7}b)}
	\blp{({t2}b)--({p1}t)--(-6*\d-\w,-1*\h)--(-6*\d+\w,0*\h)--({e8}t)}
	\dblp{({t2}t)--(-6*\d-\w,0*\h)--(-6*\d+\w,-1*\h)--({e8}b)}
	\end{scope}
	\end{tikzpicture}
	};
\draw[->](B1)--(A1)node [midway , fill=white] {$\gammat$};
\draw[->](B3)--(A3)node [midway , fill=white] {$\gammat$};
\draw[->](B1)--(B3)node [midway , fill=white] {$\tr_{}(f)\otimes \tr_{}(g)$};
\draw[->](A1)--(A3)node [midway , fill=white] {$\tr(f\otimes g)$};

\end{tikzpicture}
}
\caption{Compatibility of the trace and 1-dualizability (\cref{lem:trace_otimes_compatible})} \label{fig:trace_otimes_compatible}
\end{figure}

As with \cref{lem:duals_compose,lem:one_dual_composites}, this proof 
is a rotation of the proof of \cref{thm:composite}.  The string diagram proof of \cref{thm:composite} in either the bicategorical or symmetric monoidal version amounts to sliding disconnected strings past each other.

\begin{proof}
Using the evaluation and coevaluation from  \cref{lem:one_dual_composites} (see  \cref{fig:composite_coeval,fig:composite_eval}), the right vertical composite in  \cref{fig:trace_otimes} is the expansion of  $\tr(f\otimes g)$.  This is also the right map in \cref{fig:trace_otimes_compatible}.  The left composite in \cref{fig:trace_otimes} is the expansion of the left map in \cref{fig:trace_otimes_compatible}.

The square regions in  \cref{fig:trace_otimes_compatible} commute by naturality of the symmetry isomorphism and the regions containing the shadow isomorphism $\theta$ commute by definition (\cref{fig:shadow_isomorphism}).

Finally, for endomorphisms of the unit, composition can be identified with the monoidal product using the Eckman-Hilton argument. 
\end{proof}

\begin{figure}
\resizebox{!}{.8\textheight}{
\input{RR-pairing_euler_2}}
\caption{The diagram that expands \cref{fig:trace_otimes_compatible}. }\label{fig:trace_otimes}
\end{figure}

The following is another simple consequence of \cref{lem:one_dual_composites}.

\begin{lem}\label{lem:euler_char_gamma}
If $A$ and $B$ are 1-dualizable 0-cells, 
the Euler characteristic of $\Gamma\in \sB(A,B)$ (\cref{ex:define_Gamma}) is the isomorphism $\gammath$ from \cref{gamma_list_2}.
\end{lem}

\begin{proof}
In the diagram in  \cref{proof:euler_char_gamma} the Euler characteristic of $\Gamma$ is the composite consisting of the left, bottom, and right maps.  The top is the symmetry map $\gammath$.  Five of the regions in the diagram in  \cref{proof:euler_char_gamma} commute by naturality of the symmetry maps.  These are all labeled by Nat.  Two of the squares commute by the compatibility between $\trieto{}$ and $\tricto{}$ from \cref{fig:add_dualizable_0_cell,fig:add_dualizable_0_cell_a} and are labeled as such.  The remaining two regions, labeled (1) and (2), are examples of \cite[Thm.~1.25]{gurski_osorno}.
\end{proof}
\begin{figure}
\resizebox{.9\textwidth}{!}{
\input{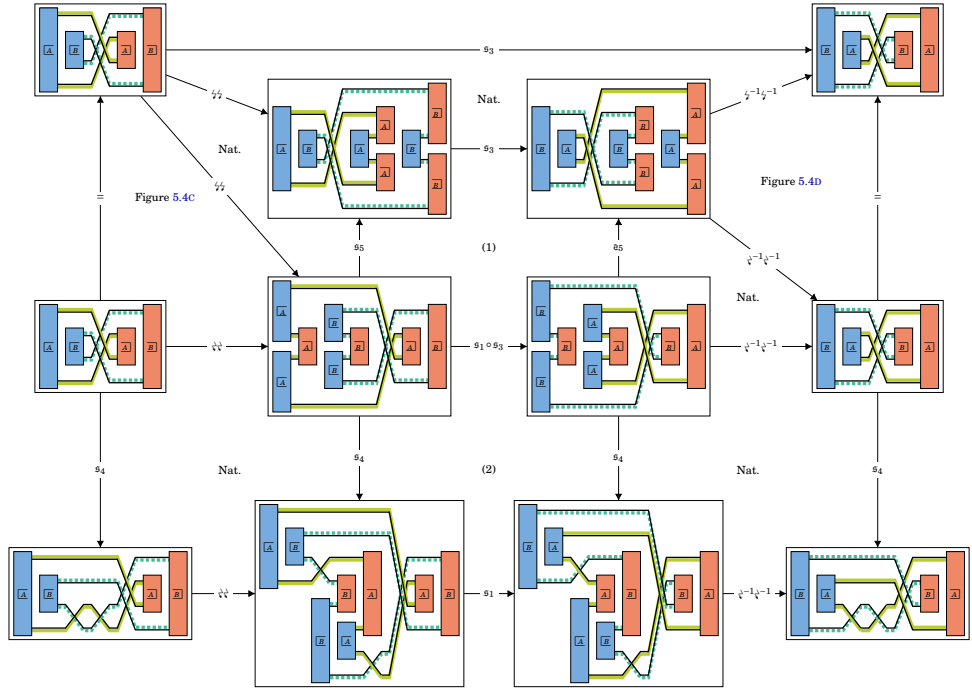}}
\caption{Expanded Euler characteristic comparison for  $\Gamma$ }\label{proof:euler_char_gamma}
\end{figure}

\section{2-Dualizability}\label{sec:2_dualizable}\label{sec:euler_pairing}

As in \cite{cp2} we are interested in examples where $\sh{N}$ is
dualizable for a dualizable 1-cell $N$.  Given our motivating
examples, the most relevant way to achieve this is to impose another
condition on our 0-cells. 

In the definition of 1-dualizability, we have coevaluation and evaluation 1-cells and the cells in \eqref{eq:trictop} and \eqref{eq:trietop} that witness the usual triangle equalities. For 2-dualizability,
the extra input will be that the 1-cells $I \to A \otimes A^\vee$ and
$A^\vee \otimes A \to I$ are \textit{themselves} dualizable in the
ambient bicategory. 

A crucial consequence of 1-dualizability for every 0-cell in a symmetric monoidal
bicategory is the existence of a shadow functor
(\cref{prop:shadow}). Similarly, a crucial consequence of 2-dualizability
for 0-cells is the dualizability of $\sh{U_A}$ in the monoidal
category $\mc{B}(I, I)$ 
(\cref{cor:shadow_dual}) giving the pairing and ``copairing'' maps.  In this section we also prove \cref{thm:goal} as  \cref{thm:main_pairing}.   The only differences between these statements are in the notation used.

\begin{defn}\cite[6.15, 6.16]{cp2}\label{defn:2_dualizable}
A 1-dualizable $0$-cell $A$ is {\bf 2-dualizable} if the witnessing 1-cells $(\coo{A},\evo{A})$ satisfy any of the following equivalent conditions:
\begin{enumerate}
\item $\coo{A}$ and $\evo{A}$ are left dualizable
\item $\coo{A}$ is left and right dualizable
\item $\coo{A}$ and $\evo{A}$ are right dualizable 
\item $\evo{A}$ is left and right dualizable
\end{enumerate} 
\end{defn}
In addition to the 1-cells $\coo{A}$ and $\evo{A}$ and 2-cells $\trictop$ and $\trietop$, a 2-dualizable 0-cell has 1-cells $\rdual{\coo{A}}$ and $\rdual{\evo{A}}$, the right duals of  $\coo{A}$ and $\evo{A}$, and four 2-cells illustrated in  \cref{fig:coeval_1_cell_coev,fig:coeval_1_cell_ev,fig:coeval_1_cell_coev_flip,fig:coeval_1_cell_ev_flip}.  These 2-cells also have to satisfy the triangle identities.  In this case,  the four composites in \cref{fig:coeval_1_cell_triangles,fig:coeval_1_cell_2_triangles} are identity 2-cells.

This is essentially a special case of \emph{fully dualizable} objects in an $n$-category \cite{schommer-pries,lurie_cobordism}.

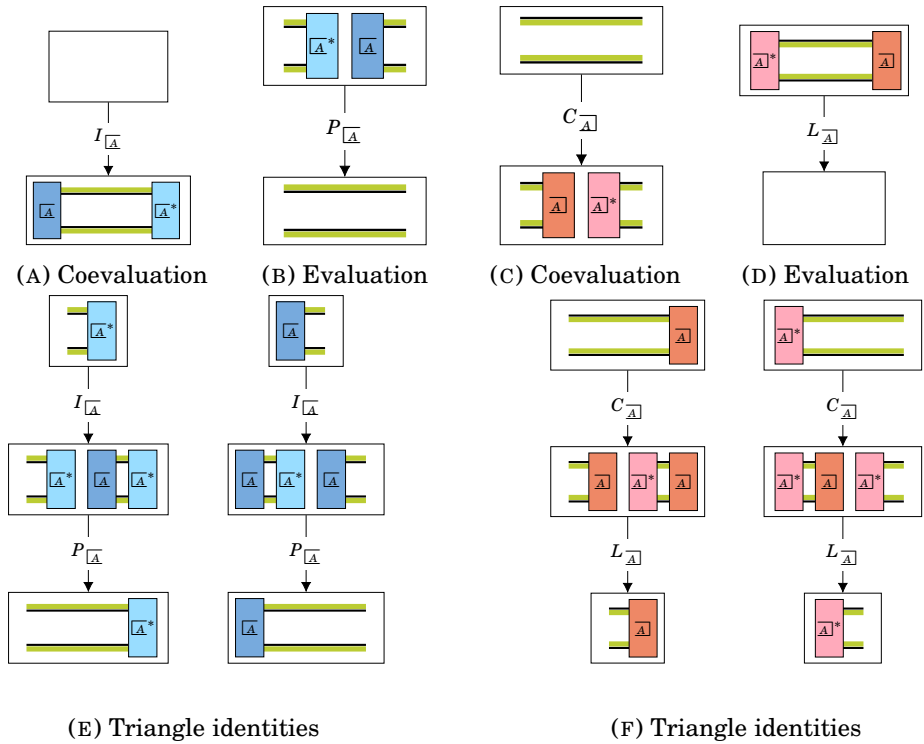
\begin{figure}
\hspace{1cm}
    \begin{subfigure}[t]{0.2\textwidth}
\centering
\resizebox{.8\textwidth}{!}{
		\begin{tikzpicture}

		\node[draw] (A1) at (0, 3){\begin{tikzpicture}
		\node (ta) at (2,-1){};
		\node (ta) at (4,-2){};
		\end{tikzpicture}
		};

		\node[draw] (A2) at (0, 0){\begin{tikzpicture}
		\oc{ta}{\coo{A}}{_a}{2}{-1}{-2}{\et}
		\oc{t2}{\rdual{\coo{A}}}{_2}{5}{-1}{-2}{\etr}
		  \begin{scope}[on background layer]
		\dlp{ ({ta}t)--({t2}t)}
		\alp{({ta}b)--({t2}b)}
		\end{scope}
\end{tikzpicture}
};

\draw[->](A1)--(A2)node [midway , fill=white] {$I_{\coo{A}}$};
\end{tikzpicture}
}
        \caption{Coevaluation}\label{fig:coeval_1_cell_coev}
    \end{subfigure}%
\hfill 
    \begin{subfigure}[t]{0.2\textwidth}
\centering
\resizebox{.8\textwidth}{!}{
\begin{tikzpicture}

\node[draw] (A1) at (0, 3){\begin{tikzpicture}
\oc{t1}{\coo{A}}{_1}{6}{-1}{-2}{\et}
\ocalt{y}{}{Y}{7}{-1}{-2}{\en}
\ocalt{x}{}{X}{4}{-1}{-2}{\en}

\oc{t2}{\rdual{\coo{A}}}{_2}{5}{-1}{-2}{\etr}
  \begin{scope}[on background layer]
\dlp{({x}t)--({t2}t)}
\dlp{({t1}t)--({y}t)}
\alp{({t1}b)--({y}b)}

\alp{({x}b)--({t2}b)}
\end{scope}
\end{tikzpicture}
};

\node[draw] (A2) at (0, 0){\begin{tikzpicture}
\ocalt{y}{}{Y}{7}{-1}{-2}{\en}
\ocalt{x}{}{X}{4}{-1}{-2}{\en}
  \begin{scope}[on background layer]
\dlp{({x}t)--({y}t)}
\alp{({x}b)--({y}b)}
\end{scope}
\end{tikzpicture}
};

\draw[->](A1)--(A2)node [midway , fill=white] {$P_{\coo{A}}$};
\end{tikzpicture}
}
     
        \caption{Evaluation}\label{fig:coeval_1_cell_ev}
    \end{subfigure}
\hfill
    \begin{subfigure}[t]{0.2\textwidth}
\centering
        \resizebox{.8\textwidth}{!}{
\begin{tikzpicture}

\node[draw] (A1) at (0, -3){\begin{tikzpicture}
\oc{e1}{\rdual{\evo{A}}}{_1}{6}{-1}{-2}{\epr}
\ocalt{y}{}{Y}{7}{-1}{-2}{\en}
\ocalt{x}{}{X}{4}{-1}{-2}{\en}

\oc{e2}{{\evo{A}}}{_2}{5}{-1}{-2}{\ep}
  \begin{scope}[on background layer]
\dlp{({x}b)--({t2}b)}
\dlp{ ({t1}b)--({y}b)}
\alp{ ({t1}t)--({y}t)}
\alp{({x}t)--({t2}t)}
\end{scope}
\end{tikzpicture}
};

\node[draw] (A2) at (0, 0){\begin{tikzpicture}
\ocalt{y}{}{Y}{7}{-1}{-2}{\en}
\ocalt{x}{}{X}{4}{-1}{-2}{\en}
  \begin{scope}[on background layer]
\dlp{({x}b)--({y}b)}
\alp{({x}t)--({y}t)}
\end{scope}
\end{tikzpicture}
};

\draw[->](A2)--(A1)node [midway , fill=white] {$C_{\evo{A}}$};
\end{tikzpicture}
 }
       \caption{Coevaluation}\label{fig:coeval_1_cell_coev_flip}
    \end{subfigure}
\hfill 
    \begin{subfigure}[t]{0.2\textwidth}
\centering
\resizebox{.8\textwidth}{!}{
        \centering
		\begin{tikzpicture}

		\node[draw] (A1) at (0, -3){\begin{tikzpicture}
		\node (ta) at (2,-1){};
		\node (ta) at (4,-2){};
		\end{tikzpicture}
		};

		\node[draw] (A2) at (0, 0){\begin{tikzpicture}
		\oc{ea}{\rdual{\evo{A}}}{_a}{2}{-1}{-2}{\epr}
		\oc{e2}{{\evo{A}}}{_2}{5}{-1}{-2}{\ep}
	  \begin{scope}[on background layer]
		\dlp{ ({ea}b)--({e2}b)}
		\alp{({ea}t)--({e2}t)}
	\end{scope}
\end{tikzpicture}
};

\draw[->](A2)--(A1)node [midway , fill=white] {$L_{\evo{A}}$};
\end{tikzpicture}}
       \caption{Evaluation}\label{fig:coeval_1_cell_ev_flip}
    \end{subfigure}%
\hspace{1cm}
\hspace{1cm}
\begin{subfigure}[t]{0.35\textwidth}
       \resizebox{\textwidth}{!}{
\begin{tikzpicture}

\node[draw] (A1) at (0, 6){\begin{tikzpicture}
\ocalt{x}{}{X}{4}{-1}{-2}{\en}

\oc{t2}{\rdual{\coo{A}}}{_2}{5}{-1}{-2}{\etr}
  \begin{scope}[on background layer]
\dlp{({x}t)--({t2}t)}
\alp{({x}b)--({t2}b)}
\end{scope}
\end{tikzpicture}
};

\node[draw] (A2) at (0, 3){\begin{tikzpicture}
\oc{t1}{\coo{A}}{_1}{6}{-1}{-2}{\et}
\oc{t3}{\rdual{\coo{A}}}{_3}{7}{-1}{-2}{\etr\lt}
\ocalt{x}{}{X}{4}{-1}{-2}{\en}

\oc{t2}{\rdual{\coo{A}}}{_2}{5}{-1}{-2}{\etr}
  \begin{scope}[on background layer]
\dlp{ ({x}t)--({t2}t)}
\dlp{ ({t1}t)--({t3}t)}
\alp{({t1}b)--({t3}b)}
\alp{({x}b)--({t2}b)}
\end{scope}
\end{tikzpicture}
};

\node[draw] (A3) at (0, 0){\begin{tikzpicture}
\oc{t3}{\rdual{\coo{A}}}{_3}{7}{-1}{-2}{\etr\lt}
\ocalt{x}{}{X}{4}{-1}{-2}{\en}
  \begin{scope}[on background layer]
\dlp{({x}t)--({t3}t)}
\alp{({x}b)--({t3}b)}
\end{scope}
\end{tikzpicture}
};

\draw[->](A1)--(A2)node [midway , fill=white] {$I_{\coo{A}}$};
\draw[->](A2)--(A3)node [midway , fill=white] {$P_{\coo{A}}$};
\end{tikzpicture}
\hspace{1cm}
\begin{tikzpicture}

\node[draw] (A1) at (0, 6){\begin{tikzpicture}
\oc{t1}{\coo{A}}{_1}{6}{-1}{-2}{\et}
\ocalt{y}{}{Y}{7}{-1}{-2}{\en}
  \begin{scope}[on background layer]
\dlp{({t1}t)--({y}t)}
\alp{({t1}b)--({y}b)}
\end{scope}
\end{tikzpicture}
};

\node[draw] (A2) at (0, 3){\begin{tikzpicture}
\oc{t1}{\coo{A}}{_1}{6}{-1}{-2}{\et}
\ocalt{y}{}{Y}{7}{-1}{-2}{\en}
\oc{t3}{\coo{A}}{_3}{4}{-1}{-2}{\et\lt}

\oc{t2}{\rdual{\coo{A}}}{_2}{5}{-1}{-2}{\etr}
  \begin{scope}[on background layer]
\dlp{ ({t3}t)--({t2}t)}
\dlp{({t1}t)--({y}t)}
\alp{({t1}b)--({y}b)}
\alp{({t3}b)--({t2}b)}
\end{scope}
\end{tikzpicture}
};

\node[draw] (A3) at (0, 0){\begin{tikzpicture}
\ocalt{y}{}{Y}{7}{-1}{-2}{\en}
\oc{t3}{\coo{A}}{_3}{4}{-1}{-2}{\et\lt}
  \begin{scope}[on background layer]
\dlp{ ({t3}t)--({y}t)}
\alp{({t3}b)--({y}b)}
\end{scope}
\end{tikzpicture}
};

\draw[->](A1)--(A2)node [midway , fill=white] {$I_{\coo{A}}$};
\draw[->](A2)--(A3)node [midway , fill=white] {$P_{\coo{A}}$};
\end{tikzpicture}}
        \caption{Triangle identities}\label{fig:coeval_1_cell_triangles}
    \end{subfigure}
\hfill
\begin{subfigure}[t]{0.35\textwidth}
\resizebox{\textwidth}{!}{
\begin{tikzpicture}
\node[draw] (A2) at (0, 0){\begin{tikzpicture}
\oc{e3}{\evo{A}}{_3}{7}{-1}{-2}{\ep\lt}
\ocalt{x}{}{X}{4}{-1}{-2}{\en}
  \begin{scope}[on background layer]
\dlp{({x}b)--({e3}b)}
\alp{({x}t)--({e3}t)}
\end{scope}
\end{tikzpicture}
};

\node[draw] (A1) at (0, -3){\begin{tikzpicture}
\oc{e1}{\rdual{\evo{A}}}{_1}{6}{-1}{-2}{\epr}
\oc{e3}{\evo{A}}{_3}{7}{-1}{-2}{\ep\lt}
\ocalt{x}{}{X}{4}{-1}{-2}{\en}

\oc{e2}{{\evo{A}}}{_2}{5}{-1}{-2}{\ep}
  \begin{scope}[on background layer]
\dlp{({x}b)--({t2}b)}
\dlp{ ({t1}b)--({e3}b)}
\alp{ ({t1}t)--({e3}t)}
\alp{({x}t)--({t2}t)}
\end{scope}
\end{tikzpicture}
};

\node[draw] (A3) at (0, -6){\begin{tikzpicture}
\ocalt{x}{}{X}{4}{-1}{-2}{\en}
\oc{e2}{{\evo{A}}}{_2}{5}{-1}{-2}{\ep}

  \begin{scope}[on background layer]
\dlp{({x}b)--({t2}b)}
\alp{({x}t)--({t2}t)}
\end{scope}
\end{tikzpicture}
};

\draw[->](A2)--(A1)node [midway , fill=white] {$C_{\evo{A}}$};
\draw[->](A1)--(A3)node [midway , fill=white] {$L_{\evo{A}}$};
\end{tikzpicture}
\hspace{1cm}
\begin{tikzpicture}
\node[draw] (A2) at (0, 0){\begin{tikzpicture}
\ocalt{y}{}{Y}{7}{-1}{-2}{\en}
\oc{e3}{\rdual{\evo{A}}}{_3}{4}{-1}{-2}{\epr\lt}

  \begin{scope}[on background layer]
\dlp{({e3}b)--({y}b)}
\alp{({e3}t)--({y}t)}
\end{scope}
\end{tikzpicture}
};

\node[draw] (A1) at (0, -3){\begin{tikzpicture}
\oc{e1}{\rdual{\evo{A}}}{_1}{6}{-1}{-2}{\epr}
\ocalt{y}{}{Y}{7}{-1}{-2}{\en}
\oc{e3}{\rdual{\evo{A}}}{_3}{4}{-1}{-2}{\epr\lt}

\oc{e2}{{\evo{A}}}{_2}{5}{-1}{-2}{\ep}

  \begin{scope}[on background layer]
\dlp{({e3}b)--({t2}b)}
\dlp{({t1}b)--({y}b)}
\alp{({t1}t)--({y}t)}
\alp{({e3}t)--({t2}t)}
\end{scope}
\end{tikzpicture}
};

\node[draw] (A3) at (0, -6){\begin{tikzpicture}
\oc{e1}{\rdual{\evo{A}}}{_1}{6}{-1}{-2}{\epr}
\ocalt{y}{}{Y}{7}{-1}{-2}{\en}

  \begin{scope}[on background layer]
\dlp{({t1}b)--({y}b)}
\alp{({t1}t)--({y}t)}
\end{scope}
\end{tikzpicture}
};

\draw[->](A2)--(A1)node [midway , fill=white] {$C_{\evo{A}}$};
\draw[->](A1)--(A3)node [midway , fill=white] {$L_{\evo{A}}$};
\end{tikzpicture}
}
        \caption{Triangle identities}\label{fig:coeval_1_cell_2_triangles}
    \end{subfigure}
\hspace{1cm}
\caption{Coevaluation, evaluation and triangle identities for right dualizability of 1-cells 
in $\sB(1, A\otimes B)$ and $\sB(A\otimes B,1)$.  Compare to \cref{fig:dualizable_1_cell}.}
\label{fig:coeval_1_cell_2}\label{fig:coeval_1_cell}
\end{figure}

\begin{example}
\begin{enumerate}
\item 
A dg-$k$-algebra $B$ is {\bf proper} if $B$ is perfect as an object of $D(k)$ (i.e. $B$ is perfect when regarded as a chain complex of $k$ modules).
It is {\bf smooth} if $B$ is perfect as an object of $D(B \otimes^{\mathbf{L}}_k B^\op)$ \cite[2.3]{toen}.

\item  If $\mc{A}$ is a dg-category (respectively spectral category), the unit 1-cell $U_\mc{A}$ can be regarded as a $(k,\mc{A}\otimes \mc{A})$-bimodule (respectively $(\bS,\mc{A}\otimes \mc{A})$-bimodule).  Denote this module $\overrightarrow{U}_\mc{A}$.

A  a dg-category (respectively spectral category) 
 $\mc{A}$ is {\bf proper} if $\overrightarrow{U}_\mc{A}$ is left dualizable.  It is {\bf smooth} if $\overrightarrow{U}_\mc{A}$ is right dualizable. 
For spectral or dg-categories, $\mc{A}$ is 2-dualizable if and only if it is smooth and proper.  See \cite[Thm.~5.8]{cisinski_tabuada_dg}.
\end{enumerate}
\end{example}

The following result is a consequence of \cref{lem:duals_compose} and the diagrams in \cref{fig:dualizable_1_cell_symmetry}.

\begin{cor}[{\cref{goal:dualizable}, \cite[5.2 and 6.18]{cp2}}]\label{cor:shadow_dual}
If $A$ is a 2-dualizable 0-cell  in a monoidal bicategory $\sB$ 
then $\sh{U_A}$ is dualizable  in the monoidal category $\sB(I,I)$
\end{cor}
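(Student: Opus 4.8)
The plan is to recognize $\sh{U_A}$ as a composite of three right dualizable $1$-cells and then invoke \cref{lem:duals_compose}. By \cref{fig:shadow}, the shadow of $U_A$ is the bicategorical composite
\[
\coo{A}\odot \Gamma \odot \evo{A},
\]
a $1$-cell from $I$ to $I$; here $\coo{A}\in\sB(I,A\otimes\zdual{A})$, then $\Gamma\in\sB(A\otimes\zdual{A},\zdual{A}\otimes A)$ is the symmetry $1$-cell, and finally $\evo{A}\in\sB(\zdual{A}\otimes A,I)$, so that the composable chain $I\to A\otimes\zdual{A}\to\zdual{A}\otimes A\to I$ indeed lands in $\sB(I,I)$. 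Any intervening unitors and associators are invertible $2$-cells and so do not affect dualizability.

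First I would check that each factor is right dualizable. Since $A$ is $2$-dualizable, the third condition of \cref{defn:2_dualizable} gives that $\coo{A}$ and $\evo{A}$ are right dualizable. The symmetry $\Gamma$ is part of an adjoint equivalence, and the circuit diagrams of \cref{fig:dualizable_1_cell_symmetry} exhibit precisely the coevaluation, evaluation, and triangle identities witnessing that $\Gamma$ is right dualizable, with its inverse equivalence serving as right dual. Applying \cref{lem:duals_compose} twice then shows that $\coo{A}\odot\Gamma\odot\evo{A}=\sh{U_A}$ is right dualizable as a $1$-cell from $I$ to $I$.

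It remains to translate this into dualizability in the monoidal category $\sB(I,I)$. Right dualizability of a $1$-cell $I\to I$ with respect to $\odot$ is, by definition, dualizability in $(\sB(I,I),\odot)$; and for endomorphisms of the monoidal unit the composition $\odot$ agrees with the monoidal product $\otimes$, the Eckmann--Hilton identification already invoked at the end of the proof of \cref{lem:trace_otimes_compatible}. Hence $\sh{U_A}$ is dualizable in $(\sB(I,I),\otimes)$, which is the assertion. The only step requiring genuine care is this final identification of the two monoidal structures together with the elision of left/right handedness --- harmless because $\sB(I,I)$ is symmetric monoidal --- but it is entirely formal, which is why the corollary follows immediately from \cref{lem:duals_compose} and the dualizability of the symmetry in \cref{fig:dualizable_1_cell_symmetry}.
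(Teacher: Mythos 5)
Your proposal is correct and matches the paper's own argument: the paper likewise obtains the result as an immediate consequence of \cref{lem:duals_compose} applied to the decomposition of $\sh{U_A}$ into the right dualizable factors $\coo{A}$, the symmetry (via \cref{fig:dualizable_1_cell_symmetry}), and $\evo{A}$, with the resulting coevaluation and evaluation being exactly the copairing and pairing of \cref{fig:copairing_pairing_defn_fig}. Your explicit Eckmann--Hilton remark identifying $\odot$ with $\otimes$ on $\sB(I,I)$ is a point the paper leaves implicit, but it is the same route.
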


\begin{proof}
By definition, $\sh{U_A}$ is the composite of $\coo{A}$, $U_A$, $\Gamma$, and $\evo{A}$ in \cref{fig:shadow}. In any bicategory, $U_A$ is dualizable with dual $U_A$ and the coevaluation and evaluation are unit isomorphisms.  If $A$ is 2-dualizable $\coo{A}$ and $\evo{A}$ are dualizable by definition.  The 1-cell $\Gamma$ is dualizable by \cref{fig:dualizable_1_cell_symmetry}.  Then the result follows from  \cref{lem:duals_compose}.
\end{proof}

We can describe the coevaluation and evaluation for $\sh{U_A}$ in terms of those for each of the pieces $\coo{A}$, $U_A$, $\Gamma$, and $\evo{A}$.  
The coevaluation is the composite in 
\cref{fig:copairing_defn_fig} and the evaluation is the composite in 
\cref{fig:pairing_defn_fig}.  Note that we can omit the coevaluation and evaluation for $U_A$ since it is the unit isomorphism.
  Following the discussion in \cref{sec:path} we will call the coevaluation a ``copairing'' and the evaluation a ``pairing''. 

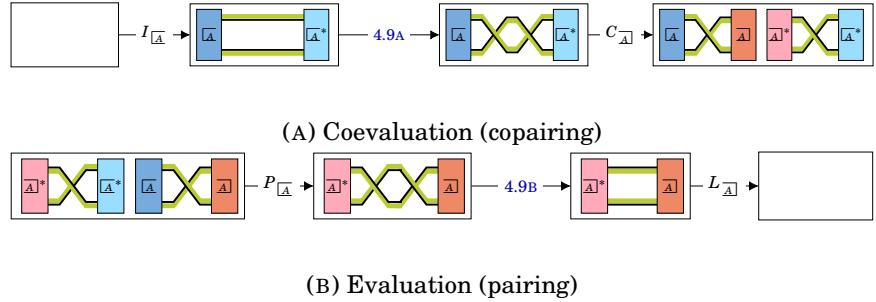
\begin{figure}[ht]
\begin{subfigure}{.8\textwidth}
\resizebox{\textwidth}{!}{
\begin{tikzpicture}

\def\scale{1.15}

\node[draw] (Z1) at (\scale*14, 0){\begin{tikzpicture}
\oc{t1}{\coo{A}}{_1}{1}{1}{0}{\et}
\oc{e2}{\evo{A}}{_2}{3}{1}{0}{\ep}
\gc{g1}{2}{1}{0}
\begin{scope}[on background layer]
\alp{ ({t1}b)\gpb{g1}({e2}t)}
\dlp{({t1}t)\gpt{g1}({e2}b)}
\end{scope}
\oc{e3}{\rdual{\evo{A}}}{_3}{4}{1}{0}{\epr}
\oc{t4}{\rdual{\coo{A}}}{_4}{6}{1}{0}{\etr}
\gc{g2}{5}{1}{0}
\begin{scope}[on background layer]
\dlp{({e3}b)\gpb{g2}({t4}t)}
\alp{ ({e3}t)\gpt{g2}({t4}b)}
\end{scope}
\end{tikzpicture}
};

\node[draw] (Z2) at (\scale*9, 0){\begin{tikzpicture}
\oc{t1}{\coo{A}}{_1}{1}{1}{0}{\et}
\gc{g1}{2}{1}{0}
\oc{t4}{\rdual{\coo{A}}}{_4}{4}{1}{0}{\etr}
\gc{g2}{3}{1}{0}
\begin{scope}[on background layer]

\alp{ ({t1}b)\gpb{g1}(2.5*\d,\h)
}
\dlp{({t1}t)\gpt{g1}(2.5*\d,0) --(2.5*\d,0)\gpb{g2}({t4}t)}
\alp{
(2.5*\d,\h)\gpt{g2}({t4}b)}
\end{scope}
\end{tikzpicture}
};

\node[draw] (Z3) at (\scale*4, 0){\begin{tikzpicture}
\oc{t1}{\coo{A}}{_1}{1}{1}{0}{\et}
\oc{t4}{\rdual{\coo{A}}}{_4}{4}{1}{0}{\etr}
\begin{scope}[on background layer]
\alp{ ({t1}b)--({t4}b)}
\dlp{({t1}t)--({t4}t)}
\end{scope}
\end{tikzpicture}
};

\node[draw] (Z4) at (\scale*0, 0){\begin{tikzpicture}
		\node (ta) at (2,-1){};
		\node (ta) at (4,-2){};
		\end{tikzpicture}};

\draw[<-](Z1)--(Z2)node [midway , fill=white] {$C_{\evo{A}}$};
\draw[<-](Z2)--(Z3)node [midway , fill=white] {\ref{fig:dualizable_1_cell_symmetry_1}};

\draw[<-](Z3)--(Z4)node [midway , fill=white] {$I_{\coo{A}}$};
\end{tikzpicture}
}
\caption{Coevaluation  (copairing)}\label{fig:copairing_defn_fig}
\end{subfigure}
\begin{subfigure}{.8\textwidth}
\resizebox{\textwidth}{!}{
\begin{tikzpicture}

\def\scale{1.15}

\node[draw] (A1) at (\scale*-5, \scale*1){\begin{tikzpicture}
\oc{t1}{\coo{A}}{_1}{1}{1}{0}{\et}
\oc{e2}{\evo{A}}{_2}{3}{1}{0}{\ep}
\gc{g1}{2}{1}{0}
\begin{scope}[on background layer]
\alp{ ({t1}b)\gpb{g1}({e2}t)}
\dlp{({t1}t)\gpt{g1}({e2}b)}
\end{scope}
\oc{e3}{\rdual{\evo{A}}}{_3}{-2}{1}{0}{\epr}
\oc{t4}{\rdual{\coo{A}}}{_4}{0}{1}{0}{\etr}
\gc{g2}{-1}{1}{0}
\begin{scope}[on background layer]
\dlp{({e3}b)\gpb{g2}({t4}t)}
\alp{ ({e3}t)\gpt{g2}({t4}b)}
\end{scope}
\end{tikzpicture}
};

\node[draw] (A2) at (\scale*0, \scale*1){\begin{tikzpicture}
\oc{e2}{\evo{A}}{_2}{1}{1}{0}{\ep}
\gc{g1}{0}{1}{0}

\oc{e3}{\rdual{\evo{A}}}{_3}{-2}{1}{0}{\epr}
\gc{g2}{-1}{1}{0}
\begin{scope}[on background layer]
\dlp{ ({e3}b)\gpb{g2}(-.25,\h)
}

\alp{ ({e3}t)\gpt{g2}(-.25,0)\gpb{g1}({e2}t)}
\dlp{
(-.25,\h)\gpt{g1}({e2}b)}
\end{scope}
\end{tikzpicture}
};

\node[draw] (A3) at (\scale*4.5, \scale*1){\begin{tikzpicture}
\oc{e2}{\evo{A}}{_2}{0}{1}{0}{\ep}

\oc{e3}{\rdual{\evo{A}}}{_3}{-2}{1}{0}{\epr}
\begin{scope}[on background layer]
\dlp{({e3}b)--({e2}b)}
\alp{ ({e3}t)--({e2}t)}
\end{scope}
\end{tikzpicture}
};

\node[draw] (A4) at (\scale*8, \scale*1){\begin{tikzpicture}
		\node (ta) at (2,-1){};
		\node (ta) at (4,-2){};
		\end{tikzpicture}};

\draw[->](A1)--(A2)node [midway , fill=white] {$P_{\coo{A}}$};
\draw[->](A2)--(A3)node [midway , fill=white] {\ref{fig:dualizable_1_cell_symmetry_2}};

\draw[->](A3)--(A4)node [midway , fill=white] {$L_{\evo{A}}$};
\end{tikzpicture}
}
\caption{Evaluation (pairing)}\label{fig:pairing_defn_fig}
\end{subfigure}
\caption{Copairing and pairing maps}\label{fig:copairing_pairing_defn_fig}
\end{figure}

\cref{lem:compatiblity_1-witnesses_if_2-dual} 
 is a fundamental observation from \cite{cp2}.  Most of this result appeared in the proof of \cite[Theorem 6.11]{cp2}, but since  the definition of 1-dualizability is slightly different in this paper (\cref{rmk:cp_1_dual_change}) 
we will formally state and prove the additional conditions required here.

\begin{lem}\label{lem:compatiblity_1-witnesses_if_2-dual}
If $A$ is 2-dualizable then $\zdual{A}$ is 1-dualizable with witnessing 1-cells $(\rdual{\evo{A}},\rdual{\coo{A}})$.
\end{lem}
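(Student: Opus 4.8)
The plan is to supply the witnessing $2$-cells $\trictop_{\zdual{A}}$ and $\trietop_{\zdual{A}}$ for the data $(\zdual{A};\,A;\,\rdual{\evo{A}},\rdual{\coo{A}})$ and then to verify the two requirements of \cref{defn:1_dualizable}: that these $2$-cells are invertible, and that the analogues of \cref{fig:add_dualizable_0_cell,fig:add_dualizable_0_cell_a} commute. First I would record that the types are correct. Since $\coo{A}\in\sB(I,A\otimes \zdual{A})$ and $\evo{A}\in\sB(\zdual{A}\otimes A,I)$, the right duals produced by $2$-dualizability satisfy $\rdual{\evo{A}}\in\sB(I,\zdual{A}\otimes A)$ and $\rdual{\coo{A}}\in\sB(A\otimes\zdual{A},I)$, which are exactly the coevaluation- and evaluation-shaped $1$-cells required to present $\zdual{A}$ as $1$-dualizable with dual $0$-cell $A$ (so $\coo{(\zdual{A})}=\rdual{\evo{A}}$ and $\evo{(\zdual{A})}=\rdual{\coo{A}}$).

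For the construction I would define $\trietop_{\zdual{A}}$ to be the mate of $\trictop_{A}$, and $\trictop_{\zdual{A}}$ to be the mate of $\trietop_{A}$, across the two dual pairs $(\coo{A},\rdual{\coo{A}})$ and $(\evo{A},\rdual{\evo{A}})$, using the coevaluation and evaluation $2$-cells $I_{\coo{A}},P_{\coo{A}},C_{\evo{A}},L_{\evo{A}}$ of \cref{fig:coeval_1_cell_coev,fig:coeval_1_cell_ev,fig:coeval_1_cell_coev_flip,fig:coeval_1_cell_ev_flip} together with the unitor coherences. Graphically this is the $180^{\circ}$ rotation of the cup--cap diagram defining $\trictop_{A}$, bending the $\coo{A}$-leg into an $\rdual{\coo{A}}$-leg and the $\evo{A}$-leg into an $\rdual{\evo{A}}$-leg; this is the same manipulation already used, for instance, in the proof of \cref{lem:euler_char_gamma}. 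Since $\trictop_{A}$ and $\trietop_{A}$ are invertible, I would exhibit the inverses of $\trictop_{\zdual{A}}$ and $\trietop_{\zdual{A}}$ as the mates of $\trietop_{A}^{-1}$ and $\trictop_{A}^{-1}$; that these are two-sided inverses then follows from the triangle identities of \cref{fig:coeval_1_cell_triangles,fig:coeval_1_cell_2_triangles}, which let the inserted coevaluation--evaluation pairs cancel. This portion of the argument is essentially that of \cite[Thm.~6.18]{cp2}.

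The genuinely new content --- and the step I expect to be the main obstacle --- is checking that the two compatibility diagrams of \cref{fig:add_dualizable_0_cell,fig:add_dualizable_0_cell_a} hold for $\trictop_{\zdual{A}},\trietop_{\zdual{A}}$, since by \cref{rmk:cp_1_dual_change} this condition was never needed in \cite{cp2}. I would expand each such square by substituting the mate definitions, so that every edge is rewritten in terms of $\trictop_{A},\trietop_{A}$ and the four $1$-cell-duality $2$-cells. The plan is then a diagram chase in two movements: first use the triangle identities of \cref{fig:coeval_1_cell_triangles,fig:coeval_1_cell_2_triangles} to cancel the adjacent $\rdual{\coo{A}}\odot\coo{A}$ and $\rdual{\evo{A}}\odot\evo{A}$ (equivalently $\coo{A}\odot\rdual{\coo{A}}$ and $\evo{A}\odot\rdual{\evo{A}}$) pairs introduced by the rotations, collapsing the dualized square onto one phrased purely in $\trictop_{A}$ and $\trietop_{A}$; and then invoke the already-assumed commutativity of \cref{fig:add_dualizable_0_cell,fig:add_dualizable_0_cell_a} for $A$ itself to close it. The delicate part is purely organizational: one must route the rotations and cancellations in a compatible order and track the unitor and associator coherences, but no principle beyond the triangle identities for the $1$-cell duals and the compatibility diagrams for $A$ is required.
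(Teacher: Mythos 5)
Your proposal is correct and follows essentially the same route as the paper: the paper defines the new witnessing 2-cells as exactly these mates (composites of $I_{\coo{A}}$, $C_{\evo{A}}$ with $\trietop^{-1}$, $\trictop^{-1}$, and of $P_{\coo{A}}$, $L_{\evo{A}}$ with $\trietop$, $\trictop$, in \cref{fig:dual_triangle_a,fig:dual_triangle_b,fig:dual_triangle_1a,fig:dual_triangle_1b}), citing \cite[Figure 6.24]{cp2} for the two-sided-inverse check via the triangle identities. Its verification of the compatibility condition (\cref{fig:dual_triangle_2}) is precisely your expand-and-cancel chase, including the step you flagged as the genuinely new content: one region of that diagram is an instance of \cref{fig:add_dualizable_0_cell_a} for $A$ itself, so the added compatibility hypothesis on $A$ is indeed what closes the argument.
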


\begin{proof}
The required 2-cells are defined in \cref{fig:dual_triangle_a,fig:dual_triangle_b} and their inverses are defined in \cref{fig:dual_triangle_1a,fig:dual_triangle_1b}.  See \cite[Figure 18]{cp2} for the proof that these are the inverses.  

The diagram in \cref{fig:dual_triangle_2} is an expansion of  \cref{fig:add_dualizable_0_cell} where $\trictop{}$ and $\trietop{}$ are replaced by $\tridetop{}$ and $\tridctop{}$ respectively.  The regions labeled by Def. commute by definition (\cref{fig:dual_triangle_a,fig:dual_triangle_1b}).  The nine regions labeled by Nat. commute by naturality.   The remaining two regions are triangle identities for the dualizability of $\coo{A}$ and $\evo{A}$.

The diagram showing \cref{fig:add_dualizable_0_cell_a} holds is similar.
\end{proof}

\begin{figure}
\hspace{1cm}
\begin{subfigure}{.35\textwidth}
\resizebox{\textwidth}{!}
{\begin{tikzpicture}
\def\yscale{1}
\def\xscale{1.2}

\node[draw] (A1) at (\xscale*5, \yscale*3.5){\begin{tikzpicture}
\ocalt{y}{}{Y}{6}{0}{-2}{\en}
\ocalt{x}{}{X}{1}{0}{-2}{\en}

  \begin{scope}[on background layer]
\alp{ ({x}t)--({y}t)}
\end{scope}
\end{tikzpicture}
};

\node[draw] (B1) at (\xscale*10.5, \yscale*3.5){\begin{tikzpicture}
\ocalt{y}{}{Y}{6}{0}{-2}{\en}
\ocalt{x}{}{X}{1}{0}{-2}{\en}

\oc{ta}{\coo{A}}{_a}{2}{-1}{-2}{\et}
\oc{t2}{\rdual{\coo{A}}}{_2}{5}{-1}{-2}{\etr}

  \begin{scope}[on background layer]
\dlp{({ta}t)--({t2}t)}
\alp{({x}t)--({y}t)}

\alp{({ta}b)--({t2}b)}
\end{scope}
\end{tikzpicture}
};

\node[draw] (B2) at (\xscale*10.5, \yscale*0){\begin{tikzpicture}
\ocalt{y}{}{Y}{6}{0}{-2}{\en}
\ocalt{x}{}{X}{1}{0}{-2}{\en}

\oc{ta}{\coo{A}}{_a}{2}{-1}{-2}{\et}
\oc{t2}{\rdual{\coo{A}}}{_2}{5}{-1}{-2}{\etr}

\oc{e4}{\rdual{\evo{A}}}{_4}{4}{0}{-1}{\epr}
\oc{e3}{\evo{A}}{_3}{3}{0}{-1}{\ep}

  \begin{scope}[on background layer]
\dlp{({ta}t)--({e3}b)}
\dlp{({e4}b)--({t2}t)}
\alp{({x}t)--({e3}t)}
\alp{ ({e4}t)--({y}t)}

\alp{({ta}b)--({t2}b)}
\end{scope}
\end{tikzpicture}
};

\node[draw] (B3) at (\xscale*5, \yscale*0){\begin{tikzpicture}
\ocalt{y}{}{Y}{6}{0}{-2}{\en}
\ocalt{x}{}{X}{1}{0}{-2}{\en}
\oc{t2}{\rdual{\coo{A}}}{_2}{5}{-1}{-2}{\etr}
\oc{e4}{\rdual{\evo{A}}}{_4}{4}{0}{-1}{\epr}

  \begin{scope}[on background layer]
\dlp{({e4}b)--({t2}t)}
\alp{({x}b)--({t2}b)}
\alp{({e4}t)--({y}t)}
\end{scope}
\end{tikzpicture}
};

\draw[->](A1)--(B1)node [midway , fill=white] {$I_{\coo{A}}$};
\draw[->](B1)--(B2)node [midway , fill=white] {$C_{\evo{A}}$};

\draw[->](B2)--(B3)node [midway , fill=white] {$\trietop^{-1}$};
\draw[dashed, ->](A1)--(B3)node [midway , fill=white] {$\tridetop$};

\end{tikzpicture}}
\caption{$\tridetop$}\label{fig:dual_triangle_a}
\end{subfigure}
\hfill 
\begin{subfigure}{.35\textwidth}
\resizebox{\textwidth}{!}
{\begin{tikzpicture}

\def\xscale{1.2}

\node[draw] (A2) at (\xscale*0, 3.5){\begin{tikzpicture}
\oc{t1}{\rdual{\coo{A}}}{_1}{5}{-1}{-2}{\etr}
\ocalt{x}{}{X}{3}{0}{-2}{\en}
\ocalt{y}{}{Y}{8}{0}{-2}{\en}
\oc{e3}{\rdual{\evo{A}}}{_3}{4}{0}{-1}{\epr}

  \begin{scope}[on background layer]
\dlp{ ({t1}t)--({e3}b)}
\alp{({t1}b)--({x}b)}
\alp{ ({y}t)--({e3}t)}
\end{scope}
\end{tikzpicture}
};

\node[draw] (A3) at (\xscale*0, 0){\begin{tikzpicture}
\ocalt{x}{}{X}{3}{0}{-2}{\en}
\ocalt{y}{}{Y}{8}{0}{-2}{\en}

  \begin{scope}[on background layer]
\alp{ ({y}b)--({x}b)}
\end{scope}
\end{tikzpicture}
};

\node[draw] (B1) at (\xscale*5.5, 3.5){\begin{tikzpicture}
\oc{t1}{\rdual{\coo{A}}}{_1}{5}{-1}{-2}{\etr}
\ocalt{x}{}{X}{3}{0}{-2}{\en}
\ocalt{y}{}{Y}{8}{0}{-2}{\en}
\oc{e3}{\rdual{\evo{A}}}{_3}{4}{0}{-1}{\epr}

\oc{e5}{\evo{A}}{_5}{7}{0}{-1}{\ep}
\oc{t6}{\coo{A}}{_6}{6}{-1}{-2}{\et}

  \begin{scope}[on background layer]
\dlp{ ({t1}t)--({e3}b)}
\dlp{({t6}t)--({e5}b)}
\alp{({t1}b)--({x}b)}
\alp{({y}b)--({t6}b)}
\alp{ ({e5}t)--({e3}t)}
\end{scope}
\end{tikzpicture}
};

\node[draw] (B2) at (\xscale*5.5,0){\begin{tikzpicture}
\ocalt{x}{}{X}{3}{0}{-2}{\en}
\ocalt{y}{}{Y}{8}{0}{-2}{\en}
\oc{e3}{\rdual{\evo{A}}}{_3}{4}{0}{-1}{\epr}
\oc{e5}{\evo{A}}{_5}{7}{0}{-1}{\ep}

  \begin{scope}[on background layer]
\dlp{({e3}b)--({e5}b)}
\alp{({y}b)--({x}b)}
\alp{ ({e5}t)--({e3}t)}
\end{scope}
\end{tikzpicture}
};

\draw[->](A2)--(B1)node [midway , fill=white] {$\trietop$};
\draw[->](B1)--(B2)node [midway , fill=white] {$P_{\coo{A}}$};

\draw[dashed, ->](A2)--(A3)node [midway , fill=white] {${\tridetop}^{-1}$};

\draw[->](B2)--(A3)node [midway , fill=white] {$L_{\evo{A}}$}; 
\end{tikzpicture}}
\caption{$\tridetop^{-1}$}\label{fig:dual_triangle_1a}
\end{subfigure}
\hspace{1cm}
\vspace{.25cm}

\hspace{1cm}
\begin{subfigure}{.35\textwidth}
\resizebox{\textwidth}{!}
{\begin{tikzpicture}
\def\xscale{1.2}

\node[draw] (A1) at (\xscale*5, 3.5){\begin{tikzpicture}
\ocalt{y}{}{Y}{6}{0}{-2}{\en}
\ocalt{x}{}{X}{1}{0}{-2}{\en}

  \begin{scope}[on background layer]
\dlp{ ({x}b)--({y}b)}
\end{scope}
\end{tikzpicture}
};

\node[draw] (B1) at (\xscale*10.5, 3.5){\begin{tikzpicture}
\ocalt{y}{}{Y}{6}{0}{-2}{\en}
\ocalt{x}{}{X}{1}{0}{-2}{\en}

\oc{ta}{\coo{A}}{_a}{2}{0}{-1}{\et}
\oc{t2}{\rdual{\coo{A}}}{_2}{5}{0}{-1}{\etr}

  \begin{scope}[on background layer]
\alp{({ta}b)--({t2}b)}
\dlp{({x}b)--({y}b)}

\dlp{({ta}t)--({t2}t)}
\end{scope}
\end{tikzpicture}
};

\node[draw] (B2) at (\xscale*10.5, 0){\begin{tikzpicture}
\ocalt{y}{}{Y}{6}{0}{-2}{\en}
\ocalt{x}{}{X}{1}{0}{-2}{\en}

\oc{ta}{\coo{A}}{_a}{2}{0}{-1}{\et}
\oc{t2}{\rdual{\coo{A}}}{_2}{5}{0}{-1}{\etr}

\oc{e4}{\rdual{\evo{A}}}{_4}{4}{-1}{-2}{\epr}
\oc{e3}{\evo{A}}{_3}{3}{-1}{-2}{\ep}

  \begin{scope}[on background layer]
\alp{({ta}b)--({e3}t)}
\alp{({e4}t)--({t2}b)}
\dlp{({x}b)--({e3}b)}
\dlp{ ({e4}b)--({y}b)}

\dlp{({ta}t)--({t2}t)}
\end{scope}
\end{tikzpicture}
};

\node[draw] (B3) at (\xscale*5, 0){\begin{tikzpicture}
\ocalt{y}{}{Y}{6}{0}{-2}{\en}
\ocalt{x}{}{X}{1}{0}{-2}{\en}
\oc{t2}{\rdual{\coo{A}}}{_2}{5}{0}{-1}{\etr}
\oc{e4}{\rdual{\evo{A}}}{_4}{4}{-1}{-2}{\epr}

  \begin{scope}[on background layer]
\alp{({e4}t)--({t2}b)}
\dlp{({x}t)--({t2}t)}
\dlp{({e4}b)--({y}b)}
\end{scope}
\end{tikzpicture}
};

\draw[->](A1)--(B1)node [midway , fill=white] {$I_{\coo{A}}$};
\draw[->](B1)--(B2)node [midway , fill=white] {$C_{\evo{A}}$};

\draw[->](B2)--(B3)node [midway , fill=white] {$\trictop^{-1}$};
\draw[dashed, ->](A1)--(B3)node [midway , fill=white] {$\tridctop$};

\end{tikzpicture}}
\caption{$\tridctop$}\label{fig:dual_triangle_b}
\end{subfigure}
\hfill 
\begin{subfigure}{.35\textwidth}
\resizebox{\textwidth}{!}
{\begin{tikzpicture}

\def\xscale{1.2}

\node[draw] (A2) at (\xscale*0, 3.5){\begin{tikzpicture}
\oc{t1}{\rdual{\coo{A}}}{_1}{5}{0}{-1}{\etr}
\ocalt{x}{}{X}{3}{0}{-2}{\en}
\ocalt{y}{}{Y}{8}{0}{-2}{\en}
\oc{e3}{\rdual{\evo{A}}}{_3}{4}{-1}{-2}{\epr}

  \begin{scope}[on background layer]
\alp{ ({t1}b)--({e3}t)}
\dlp{({t1}t)--({x}t)}
\dlp{ ({y}b)--({e3}b)}
\end{scope}
\end{tikzpicture}
};

\node[draw] (A3) at (\xscale*0, 0){\begin{tikzpicture}
\ocalt{x}{}{X}{3}{0}{-2}{\en}
\ocalt{y}{}{Y}{8}{0}{-2}{\en}

  \begin{scope}[on background layer]
\dlp{ ({y}t)--({x}t)}
\end{scope}
\end{tikzpicture}
};

\node[draw] (B1) at (\xscale*5.5, 3.5){\begin{tikzpicture}
\oc{t1}{\rdual{\coo{A}}}{_1}{5}{0}{-1}{\etr}
\ocalt{x}{}{X}{3}{0}{-2}{\en}
\ocalt{y}{}{Y}{8}{0}{-2}{\en}
\oc{e3}{\rdual{\evo{A}}}{_3}{4}{-1}{-2}{\epr}

\oc{e5}{\evo{A}}{_5}{7}{-1}{-2}{\ep}
\oc{t6}{\coo{A}}{_6}{6}{0}{-1}{\et}

  \begin{scope}[on background layer]
\alp{ ({t1}b)--({e3}t)}
\alp{({t6}b)--({e5}t)}
\dlp{({t1}t)--({x}t)}
\dlp{({y}t)--({t6}t)}
\dlp{ ({e5}b)--({e3}b)}
\end{scope}
\end{tikzpicture}
};

\node[draw] (B2) at (\xscale*5.5,0){\begin{tikzpicture}
\ocalt{x}{}{X}{3}{1}{-1}{\en}
\ocalt{y}{}{Y}{8}{1}{-1}{\en}
\oc{e3}{\rdual{\evo{A}}}{_3}{4}{0}{-1}{\epr}
\oc{e5}{\evo{A}}{_5}{7}{0}{-1}{\ep}

  \begin{scope}[on background layer]
\alp{({e3}t)--({e5}t)}
\dlp{({y}t)--({x}t)}
\dlp{ ({e5}b)--({e3}b)}
\end{scope}
\end{tikzpicture}
};

\draw[->](A2)--(B1)node [midway , fill=white] {$\trictop$};
\draw[->](B1)--(B2)node [midway , fill=white] {$P_{\coo{A}}$};

\draw[dashed, ->](A2)--(A3)node [midway , fill=white] {${\tridctop}^{-1}$};

\draw[->](B2)--(A3)node [midway , fill=white] {$L_{\evo{A}}$}; 
\end{tikzpicture}}
\caption{$\tridctop^{-1}$}\label{fig:dual_triangle_1b}
\end{subfigure}
\hspace{1cm}
\vspace{.25cm}

\begin{subfigure}{.8\textwidth}
\resizebox{\textwidth}{!}
{\begin{tikzpicture}[
    mystyle/.style={%
    },
   my style/.style={%
   },
  ]

\def\scale{1.15}
\node[draw, my style] (X1) at (\scale*9, \scale*20){\begin{tikzpicture}
\oc{tc}{\rdual{\coo{A}}}{_c}{6}{0}{-1}{\etr\lt}
\ocalt{x}{}{X}{3}{0}{-1}{\en}

  \begin{scope}[on background layer]
\alp{ ({x}b)--({tc}b)}
\dlp{ ({x}t)--({tc}t)}
\end{scope}
\end{tikzpicture}
};

\node[draw, my style ] (Y1) at (\scale*11, \scale*14.5){\begin{tikzpicture}

\oc{tc}{\rdual{\coo{A}}}{_c}{5}{1}{0}{\etr\lt}
\ocalt{x}{}{X}{2}{1}{0}{\en}

\oc{ta}{\coo{A}}{_a}{2}{-1}{-2}{\et}
\oc{t2}{\rdual{\coo{A}}}{_2}{5}{-1}{-2}{\etr}

  \begin{scope}[on background layer]
\dlp{({ta}t)--({t2}t)}
\dlp{({x}t)--({tc}t)}
\alp{({x}b)--({tc}b)}

\alp{({ta}b)--({t2}b)}
\end{scope}
\end{tikzpicture}
};

\node[draw, my style] (Y2) at (\scale*11, \scale*10){\begin{tikzpicture}

\oc{tc}{\rdual{\coo{A}}}{_c}{5}{1}{0}{\etr\lt}
\ocalt{x}{}{X}{2}{1}{0}{\en}

\oc{ta}{\coo{A}}{_a}{2}{-1}{-2}{\et}
\oc{t2}{\rdual{\coo{A}}}{_2}{5}{-1}{-2}{\etr}

\oc{e4}{\rdual{\evo{A}}}{_4}{4}{0}{-1}{\epr}
\oc{e3}{\evo{A}}{_3}{3}{0}{-1}{\ep}

  \begin{scope}[on background layer]

\dlp{ ({x}t)--({tc}t)}
\dlp{({ta}t)--({e3}b)}
\dlp{({e4}b)--({t2}t)}
\alp{({x}b)--({e3}t)}
\alp{ ({e4}t)--({tc}b)}

\alp{({ta}b)--({t2}b)}
\end{scope}
\end{tikzpicture}
};

\node[draw, my style] (Z1) at (\scale*16, \scale*14.5){\begin{tikzpicture}

\oc{tc}{\rdual{\coo{A}}}{_c}{6}{1}{0}{\etr\lt}
\ocalt{x}{}{X}{4}{1}{0}{\en}

\oc{ta}{\coo{A}}{_a}{4}{-1}{-2}{\et}
\oc{t2}{\rdual{\coo{A}}}{_2}{9}{1}{-2}{\etr}

\oc{t5}{\coo{A}}{_5}{7}{1}{0}{\et\dk}
\oc{e6}{\evo{A}}{_6}{8}{0}{-1}{\ep\dk}

  \begin{scope}[on background layer]

\dlp{ ({x}t)--({tc}t)}
\dlp{({ta}t)--({e6}b)}
\dlp{({t5}t)--({t2}t)}
\alp{({t5}b)--({e6}t)}
\alp{({x}b)--({tc}b)}

\alp{({ta}b)--({t2}b)}
\end{scope}
\end{tikzpicture}
};

\node[draw, my style] (Z0) at (\scale*16, \scale*18.5){\begin{tikzpicture}

\oc{tc}{\rdual{\coo{A}}}{_c}{6}{1}{0}{\etr\lt}
\ocalt{x}{}{X}{4}{1}{0}{\en}

\oc{t2}{\rdual{\coo{A}}}{_2}{9}{1}{0}{\etr}

\oc{t5}{\coo{A}}{_5}{7}{1}{0}{\et\dk}

  \begin{scope}[on background layer]

\dlp{ ({x}t)--({tc}t)}
\dlp{({t5}t)--({t2}t)}
\alp{({t5}b)--({t2}b)}
\alp{({x}b)--({tc}b)}

\end{scope}
\end{tikzpicture}
};

\node[draw, my style] (A1) at (\scale*29.5,\scale*20){\begin{tikzpicture}

\ocalt{x}{}{X}{3}{1}{0}{\en}

\oc{t2}{\rdual{\coo{A}}}{_2}{5}{1}{0}{\etr}

  \begin{scope}[on background layer]

\dlp{ ({x}t)--({t2}t)}
\alp{({x}b)--({t2}b)}

\end{scope}
\end{tikzpicture}
};

\node[draw, my style] (A2) at (\scale*27,\scale* 14.5){\begin{tikzpicture}
\ocalt{x}{}{X}{5}{1}{0}{\en}

\oc{ta}{\coo{A}}{_a}{5}{-1}{-2}{\et}
\oc{t2}{\rdual{\coo{A}}}{_2}{9}{1}{-2}{\etr}

\oc{e6}{\evo{A}}{_6}{8}{0}{-1}{\ep\dk}

  \begin{scope}[on background layer]

\dlp{ ({x}t)--({t2}t)}
\dlp{({ta}t)--({e6}b)}
\alp{({x}b)--({e6}t)}

\alp{({ta}b)--({t2}b)}
\end{scope}
\end{tikzpicture}
};

\node[draw, my style] (Z2) at (\scale*16, \scale*10){\begin{tikzpicture}

\oc{tc}{\rdual{\coo{A}}}{_c}{5}{1}{0}{\etr\lt}
\ocalt{x}{}{X}{2}{1}{0}{\en}

\oc{ta}{\coo{A}}{_a}{2}{-1}{-2}{\et}
\oc{t2}{\rdual{\coo{A}}}{_2}{8}{1}{-2}{\etr}

\oc{e4}{\rdual{\evo{A}}}{_4}{4}{0}{-1}{\epr}
\oc{e3}{\evo{A}}{_3}{3}{0}{-1}{\ep}

\oc{t5}{\coo{A}}{_5}{6}{1}{0}{\et\dk}
\oc{e6}{\evo{A}}{_6}{7}{0}{-1}{\ep\dk}

  \begin{scope}[on background layer]

\dlp{ ({x}t)--({tc}t)}
\dlp{({ta}t)--({e3}b)}
\dlp{({e4}b)--({e6}b)}
\dlp{({t5}t)--({t2}t)}
\alp{({t5}b)--({e6}t)}
\alp{({x}b)--({e3}t)}
\alp{ ({e4}t)--({tc}b)}

\alp{({ta}b)--({t2}b)}
\end{scope}
\end{tikzpicture}
};

\node[draw, my style] (Z3) at (\scale*22, \scale*10){\begin{tikzpicture}

\ocalt{x}{}{X}{2}{1}{0}{\en}

\oc{ta}{\coo{A}}{_a}{2}{-1}{-2}{\et}
\oc{t2}{\rdual{\coo{A}}}{_2}{7}{1}{-2}{\etr}

\oc{e4}{\rdual{\evo{A}}}{_4}{4}{0}{-1}{\epr}
\oc{e3}{\evo{A}}{_3}{3}{0}{-1}{\ep}

\oc{e6}{\evo{A}}{_6}{6}{0}{-1}{\ep\dk}

  \begin{scope}[on background layer]

\dlp{ ({x}t)--({t2}t)}
\dlp{({ta}t)--({e3}b)}
\dlp{({e4}b)--({e6}b)}
\alp{({x}b)--({e3}t)}
\alp{ ({e4}t)--({e6}t)}

\alp{({ta}b)--({t2}b)}
\end{scope}
\end{tikzpicture}
};

\node[draw, my style] (Z4) at (\scale*27, \scale*10){\begin{tikzpicture}

\ocalt{x}{}{X}{2}{1}{0}{\en}

\oc{ta}{\coo{A}}{_a}{2}{-1}{-2}{\et}
\oc{t2}{\rdual{\coo{A}}}{_2}{5}{1}{-2}{\etr}
\oc{e3}{\evo{A}}{_3}{3}{0}{-1}{\ep}

  \begin{scope}[on background layer]

\dlp{ ({x}t)--({t2}t)}
\dlp{({ta}t)--({e3}b)}
\alp{({x}b)--({e3}t)}

\alp{({ta}b)--({t2}b)}
\end{scope}
\end{tikzpicture}
};

\node[draw, my style] (Y3) at (\scale*9, \scale*3.5){\begin{tikzpicture}

\oc{tc}{\rdual{\coo{A}}}{_c}{5}{1}{0}{\etr\lt}
\ocalt{x}{}{X}{3}{1}{-2}{\en}
\oc{t2}{\rdual{\coo{A}}}{_2}{5}{-1}{-2}{\etr}
\oc{e4}{\rdual{\evo{A}}}{_4}{4}{0}{-1}{\epr}

  \begin{scope}[on background layer]

\dlp{ ({x}t)--({tc}t)}
\dlp{({e4}b)--({t2}t)}
\alp{({x}b)--({t2}b)}
\alp{({e4}t)--({tc}b)}
\end{scope}
\end{tikzpicture}
};

\node[draw, my style] (Y4) at (\scale*16, \scale*5.5){\begin{tikzpicture}

\oc{tc}{\rdual{\coo{A}}}{_c}{5}{1}{0}{\etr\lt}
\ocalt{x}{}{X}{3}{1}{-2}{\en}
\oc{t2}{\rdual{\coo{A}}}{_2}{8}{1}{-2}{\etr}
\oc{e4}{\rdual{\evo{A}}}{_4}{4}{0}{-1}{\epr}

\oc{t5}{\coo{A}}{_5}{6}{1}{0}{\et\dk}
\oc{e6}{\evo{A}}{_6}{7}{0}{-1}{\ep\dk}
  \begin{scope}[on background layer]

\dlp{ ({x}t)--({tc}t)}
\dlp{({e4}b)--({e6}b)}
\dlp{({t5}t)--({t2}t)}
\alp{({t5}b)--({e6}t)}
\alp{({x}b)--({t2}b)}
\alp{({e4}t)--({tc}b)}
\end{scope}
\end{tikzpicture}
};

\node[draw, my style] (Y5) at (\scale*22, \scale*5.5){\begin{tikzpicture}

\ocalt{x}{}{X}{3}{1}{-2}{\en}
\oc{t2}{\rdual{\coo{A}}}{_2}{7}{1}{-2}{\etr}
\oc{e4}{\rdual{\evo{A}}}{_4}{4}{0}{-1}{\epr}

\oc{e6}{\evo{A}}{_6}{6}{0}{-1}{\ep\dk}
  \begin{scope}[on background layer]

\dlp{ ({x}t)--({t2}t)}
\dlp{({e4}b)--({e6}b)}
\alp{({x}b)--({t2}b)}
\alp{({e4}t)--({e6}t)}
\end{scope}
\end{tikzpicture}
};

\node[draw, my style] (Y6) at (\scale*29.5, \scale*3.5){\begin{tikzpicture}

\ocalt{x}{}{X}{3}{1}{-2}{\en}
\oc{t2}{\rdual{\coo{A}}}{_2}{8}{1}{-2}{\etr}
  \begin{scope}[on background layer]

\dlp{ ({x}t)--({t2}t)}
\alp{({x}b)--({t2}b)}
\end{scope}
\end{tikzpicture}
};


\draw[->](X1)--(Z0)node [midway , fill=white] {$I_{\coo{A}}$};
\draw[->](X1)--(Y1)node [midway , fill=white] {$I_{\coo{A}}$};
\draw[->](Y1)--(Y2)node [midway , fill=white] {$C_{\evo{A}}$};
\draw[->](Z1)--(Z2)node [midway , fill=white] {$C_{\evo{A}}$};
\draw[->](A2)--(Z3)node [midway , fill=white] {$C_{\evo{A}}$};

\draw[->](Y2)--(Y3)node [midway , fill=white] {$\trieto{3a}^{-1}$};
\draw[dashed, ->](X1)--(Y3)node [midway , fill=white] {$\trideto{24}$};
\draw[dashed, ->](Y3)--(Y6)node [midway , fill=white] {$\tridcto{c4}^{-1}$};

\draw[->](Z2)--(Y4)node [midway , fill=white] {$\trieto{3a}^{-1}$};
\draw[->](Z3)--(Y5)node [midway , fill=white] {$\trieto{3a}^{-1}$};

\draw[->](Y1)--(Z1)node [midway , fill=white] {$\tricto{56}$};
\draw[->](Y2)--(Z2)node [midway , fill=white] {$\tricto{56}$};
\draw[->](Y3)--(Y4)node [midway , fill=white] {$\tricto{56}$};

\draw[->](A1)--(A2)node [midway , fill=white] {$\tricto{a6}$};
\draw[->](Z0)--(Z1)node [midway , fill=white] {$\tricto{a6}$};
\draw[->](Y4)--(Y5)node [midway , fill=white] {$P_{\coo{A}}$};
\draw[->](Y5)--(Y6)node [midway , fill=white] {$L_{\evo{A}}$};
\draw[->](Z2)--(Z3)node [midway , fill=white] {$P_{\coo{A}}$};
\draw[->](Z1)--(A2)node [midway , fill=white] {$P_{\coo{A}}$};
\draw[->](Z0)--(A1)node [midway , fill=white] {$P_{\coo{A}}$};
\draw[->](Z3)--(Z4)node [midway , fill=white] {$L_{\evo{A}}$};
\draw[->](Z4)--(Y6)node [midway , fill=white] {$\trieto{3a}^{-1}$};

\draw[->](Y1)--(Z0)node [midway , fill=white] {$\gammaDo$};
\draw[->](A2)--(Z4)node [midway , fill=white] {$\id$};
\draw[->](X1)--(A1)node [midway , fill=white] {$\id$};
\draw[->](A1)--(Y6)node [midway , fill=white] {$\id$};

\node at  (barycentric cs:X1=1,Y1=1,Z0=1){Nat.};

\node at  (barycentric cs:Z0=2,Y1=1,Z1=1){Nat.}; 
\node at  (barycentric cs:Z0=1,X1=1,A1=1){\cref{fig:coeval_1_cell_triangles}};
\node at  (barycentric cs:Z0=1,Z1=1,A1=1,A2=1){Nat.};
\node at  (barycentric cs:Y1=1,Z1=1,Y2=1,Z2=1){Nat.};
\node at  (barycentric cs:A2=1,Z1=1,Z3=1,Z2=1){Nat.};
\node at  (barycentric cs:A2=2,Z3=1,Z4=1){\cref{fig:coeval_1_cell_2_triangles}};
\node at  (barycentric cs:A1=1,A2=1,Z4=1,Y6=1){Nat.};
\node at  (barycentric cs:Z3=1,Z4=1,Y5=1,Y6=1){Nat.};
\node at  (barycentric cs:Z3=1,Z2=1,Y5=1,Y4=1){Nat.};
\node at  (barycentric cs:Y2=1,Y3=1,Z2=1,Y4=1){Nat.};
\node at  (barycentric cs:X1=1,Y1=1,Y2=1,Y3=1){Def.};
\node at  (barycentric cs:Y4=1,Y5=1,Y6=1,Y3=1){Def.};

\end{tikzpicture}}
\caption{Verifying the  condition in \cref{fig:add_dualizable_0_cell_a}}
\end{subfigure}
\caption{1-dualizablity of $\zdual{A}$ (\cref{lem:compatiblity_1-witnesses_if_2-dual})}
\label{fig:dual_triangle_2}
\end{figure}

\cref{lem:compatiblity_1-witnesses_if_2-dual,lem:one_dual_composites} give a preferred pair of 1-cells recognizing the 1-dualizability of $\zdual{A}\otimes A$ if $A$ is 1-dualizable.  See \cref{fig:2dual_1_cell}.  Whenever $A$ is 2-dualizable we will use these 1-cells to define the shadow for a 1-cell in 
\[\sB(\zdual{A}\otimes A,\zdual{A}\otimes A).\]

\begin{figure}
\begin{subfigure}{.12\textwidth}
\resizebox{\textwidth}{!}{
\begin{tikzpicture}
	\oc{e3}{\rdual{\evo{A}}}{_3}{2}{2}{1}{\epr\lt}
	\oc{t4}{\coo{A}}{_4}{1}{3}{0}{\et\lt}
	\begin{scope}[on background layer]
	\dlp{({e3}b)--(4*\d-\w,\h)}
	\alp{({t4}b)--(4*\d-\w,0*\h)}
	
	\dlp{({t4}t)--(4*\d-\w,3*\h)}
	\alp{({e3}t)--(4*\d-\w,2*\h)}
	\end{scope}
	\end{tikzpicture}}
	\caption{}\label{fig:2dual_C_1_cell}
\end{subfigure}
\hspace{1cm}
\begin{subfigure}{.12\textwidth}
\resizebox{\textwidth}{!}{
\begin{tikzpicture}
	\oc{t5}{\rdual{\coo{A}}}{_5}{5}{3}{0}{\etr\lt}
	\oc{e6}{\evo{A}}{_6}{4}{2}{1}{\ep\lt}
	\begin{scope}[on background layer]
	\dlp{(2*\d+\w,3*\h)--({t5}t)}
	\alp{(2*\d+\w,2*\h)--({e6}t)}
	
	\dlp{(2*\d+\w,1*\h)--({e6}b)}
	\alp{(2*\d+\w,0*\h)--({t5}b)}
	\end{scope}
	\end{tikzpicture}}
	\caption{}\label{fig:2dual_E_1_cell}
\end{subfigure}
\caption{1-cells recognizing the 1-dualizability of $\zdual{A}\otimes A$}\label{fig:2dual_1_cell}
\end{figure}
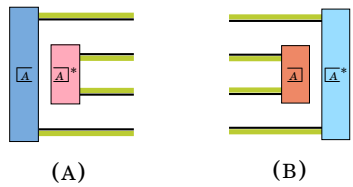

\begin{prop}\label{lem:ec_e_and_c}
If $A$ is 2-dualizable then the Euler characteristics of $\coo{A}$ and $\evo{A}$ are defined and  the triangles in \cref{fig:ec_e_and_c_statement} commute.
\end{prop}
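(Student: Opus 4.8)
The plan is to handle the proposition in two stages: first confirm that both Euler characteristics are defined, and then unwind each one into a circuit diagram and match it against the (co)pairing composites of \cref{fig:copairing_pairing_defn_fig}.

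For definedness, recall from \cref{defn:twisted_trace} that the Euler characteristic of a $1$-cell $M\in\sB(X,Y)$ exists as soon as $M$ is right dualizable (so that the trace of $\id_M$ is defined) and $X$, $Y$ are $1$-dualizable (so that $\sh{U_X}$ and $\sh{U_Y}$ are defined via \cref{prop:shadow}). For $\coo{A}\in\sB(I,A\otimes\zdual{A})$ and $\evo{A}\in\sB(\zdual{A}\otimes A,I)$ all of these hold: by \cref{defn:2_dualizable}, $2$-dualizability makes $\coo{A}$ and $\evo{A}$ right dualizable; $I$ is trivially $1$-dualizable; and $A\otimes\zdual{A}$, $\zdual{A}\otimes A$ are $1$-dualizable because $A$ is $1$-dualizable by hypothesis, $\zdual{A}$ is $1$-dualizable with witnessing $1$-cells $(\rdual{\evo{A}},\rdual{\coo{A}})$ by \cref{lem:compatiblity_1-witnesses_if_2-dual}, and $1$-dualizability is closed under $\otimes$ by \cref{lem:one_dual_composites}.

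For the triangles, I would fix the witnessing $1$-cells for $A\otimes\zdual{A}$ and $\zdual{A}\otimes A$ to be precisely the nested ones produced by \cref{lem:one_dual_composites} out of the chosen witnesses for $A$ and the witnesses $(\rdual{\evo{A}},\rdual{\coo{A}})$ for $\zdual{A}$; this is exactly the point at which the choice of dual and $1$-cells is essential. With this choice, \cref{fig:shadow} unwinds $\sh{U_{A\otimes\zdual{A}}}$ into a composite of the four cells $\coo{A}$, $\rdual{\evo{A}}$, $\evo{A}$, $\rdual{\coo{A}}$, and this composite is canonically identified with the copairing target $\sh{U_A}\otimes\sh{U_{\zdual{A}}}$ by the monoidality-of-the-shadow isomorphism $\kappa$ (assembled from the nesting of \cref{fig:composite_coeval,fig:composite_eval}); $\kappa$, together with $\sh{U_I}\cong I$, is the third edge of each triangle. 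I would then write $\chi(\coo{A})$ out as the trace $\sh{U_I}\xrightarrow{I_{\coo{A}}}\sh{\coo{A}\odot\rdual{\coo{A}}}\xrightarrow{\theta}\sh{\rdual{\coo{A}}\odot\coo{A}}\xrightarrow{P_{\coo{A}}}\sh{U_{A\otimes\zdual{A}}}$ and push it through $\kappa$, comparing it edge by edge with the copairing of \cref{fig:copairing_defn_fig}, which is assembled from the same coevaluation $I_{\coo{A}}$, a symmetry $\gamma$, and the coevaluation $C_{\evo{A}}$ for $\evo{A}$. The symmetric unwinding handles $\chi(\evo{A})$ against the pairing of \cref{fig:pairing_defn_fig} using $P_{\coo{A}}$ and $L_{\evo{A}}$.

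The comparison is then a string-diagram chase: sliding the disjoint strands past one another using naturality of $\theta$ and $\gamma$ together with the triangle identities for the two nested duals shows the composites agree, so each triangle in \cref{fig:ec_e_and_c_statement} commutes. I expect the main obstacle to be not any single move but the coherence bookkeeping needed to reconcile the shadow isomorphism $\theta$ appearing in the trace (built from $\trietop$, $\gamma$, $\trietop^{-1}$ as in \cref{fig:shadow_isomorphism}) with the bare symmetry $\gamma$ used in the copairing; this is precisely where the compatibility conditions of \cref{fig:add_dualizable_0_cell,fig:add_dualizable_0_cell_a}---the substantive addition to $1$-dualizability recorded in \cref{rmk:cp_1_dual_change}---must be invoked, and where matching the nested witnesses of \cref{lem:one_dual_composites} against the separate duals $\rdual{\coo{A}}$, $\rdual{\evo{A}}$ occurring in the (co)pairing forces careful use of the associator and unitor modifications.
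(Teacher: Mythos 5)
Your proposal is correct and follows essentially the same route as the paper: both arguments fix the nested witnessing $1$-cells from \cref{lem:compatiblity_1-witnesses_if_2-dual,lem:one_dual_composites} (the essential choice), unwind $\chi(\coo{A})$ as $P_{\coo{A}}\circ\theta\circ I_{\coo{A}}$ against the copairing $C_{\evo{A}}\circ\gamma\circ I_{\coo{A}}$, and close the resulting diagram using naturality, the triangle identities of \cref{fig:coeval_1_cell_triangles}, and the added $1$-dualizability conditions of \cref{fig:add_dualizable_0_cell,fig:add_dualizable_0_cell_a}, with the pairing case obtained by rotation. The paper's \cref{fig:ec_e_and_c} executes exactly this chase, its one organizational device being a ``symmetrized'' middle node that inserts an auxiliary pair $(\coo{A},\evo{A})$ so the order of $P_{\coo{A}}$ and $C_{\evo{A}}$ can be interchanged, which is the concrete form of the coherence bookkeeping you anticipated.
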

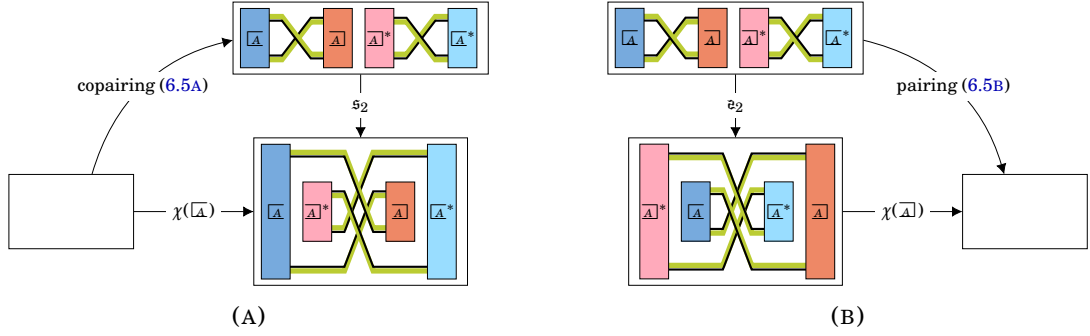
\begin{figure}[h!]
  \begin{subfigure}[b]{0.45\textwidth}
\resizebox{\textwidth}{!}
{
\begin{tikzpicture}[
    mystyle/.style={%
    },
   my style/.style={%
   },
  ]
 
\def\scale{1.15}
\node[draw,my style] (A0) at (0,  \scale*-4){\begin{tikzpicture}
		\node (ta) at (2,-1){};
		\node (ta) at (4,-2){};
		\end{tikzpicture}};

\node[draw,my style] (A1) at ( \scale*5, \scale* -4){\begin{tikzpicture}
	\oc{e3}{\rdual{\evo{A}}}{_3}{2}{2}{1}{\epr\lt}
	\oc{t4}{\coo{A}}{_4}{1}{3}{0}{\et\lt}
	\oc{t5}{\rdual{\coo{A}}}{_5}{5}{3}{0}{\etr\lt}
	\oc{e6}{\evo{A}}{_6}{4}{2}{1}{\ep\lt}
	\begin{scope}[on background layer]
	\dlp{({e3}b)--(3*\d-\w,\h)--(3*\d+\w,3*\h)--({t5}t)}
	\alp{({t4}b)--(3*\d-\w,0*\h)--(3*\d+\w,2*\h)--({e6}t)}	
	\dlp{({t4}t)--(3*\d-\w,3*\h)--(3*\d+\w,1*\h)--({e6}b)}
	\alp{({e3}t)--(3*\d-\w,2*\h)--(3*\d+\w,0*\h)--({t5}b)}
	\end{scope}
	\end{tikzpicture}
	};

\node[draw,my style] (B1) at ( \scale*5, \scale*-1){\begin{tikzpicture}
	\oc{e3}{\rdual{\evo{A}}}{_3}{1}{2}{1}{\epr\lt}
	\oc{t4}{\coo{A}}{_4}{-2}{2}{1}{\et\lt}
	
	\oc{t5}{\rdual{\coo{A}}}{_5}{3}{2}{1}{\etr\lt}
	\oc{e6}{\evo{A}}{_6}{0}{2}{1}{\ep\lt}
	\begin{scope}[on background layer]
	\dlp{({e3}b)--(2*\d-\w,\h)--(2*\d+\w,2*\h)--
	({t5}t)}
	\alp{({t4}b)--(-1*\d-\w,1*\h)--(-1*\d+\w,2*\h)--
	({e6}t)}
	\dlp{({t4}t)--(-1*\d-\w,2*\h)--(-1*\d+\w,1*\h)--
	({e6}b)}
	\alp{({e3}t)--(2*\d-\w,2*\h)--(2*\d+\w,1*\h)--
	({t5}b)}
	\end{scope}
	\end{tikzpicture}
	};
\draw[->](B1)--(A1)node [midway , fill=white] {$\gammat$};

\draw[->](A0)--(A1)node [midway , fill=white] {$\chi(\coo{A})$};

\draw[->](A0)edge[bend left=30]node [midway , fill=white] {copairing \eqref{fig:copairing_defn_fig}}(B1) ;

\end{tikzpicture}
}
\caption{}\label{fig:ec_e_and_c_a}
\end{subfigure}
\hfill
\begin{subfigure}[b]{0.45\textwidth}
\resizebox{\textwidth}{!}
{
\begin{tikzpicture}[
    mystyle/.style={%
    },
   my style/.style={%
   },
  ]
 
\def\scale{1.15}
\node[draw,my style] (A4) at ( \scale*15,  \scale*-4){\begin{tikzpicture}
		\node (ta) at (2,-1){};
		\node (ta) at (4,-2){};
		\end{tikzpicture}};

\node[draw,my style] (A3) at ( \scale*10,  \scale*-4){\begin{tikzpicture}
	\oc{e1}{\rdual{\evo{A}}}{_1}{-6}{1}{-2}{\epr}
	\oc{t2}{\coo{A}}{_2}{-5}{0}{-1}{\et}
	\oc{t7}{\rdual{\coo{A}}}{_7}{-3}{0}{-1}{\etr}
	\oc{e8}{\evo{A}}{_8}{-2}{1}{-2}{\ep}
	\begin{scope}[on background layer]
	\dlp{({e1}b)--(-4*\d-\w,-2*\h)--(-4*\d+\w,0*\h)--({t7}t)}
	\alp{({t2}b)--(-4*\d-\w,-1*\h)--(-4*\d+\w,1*\h)--({e8}t)}
	\dlp{({t2}t)--(-4*\d-\w,0*\h)--(-4*\d+\w,-2*\h)--({e8}b)}
	\alp{({e1}t)--(-4*\d-\w,1*\h)--(-4*\d+\w,-1*\h)--({t7}b)}
	\end{scope}
	\end{tikzpicture}
	};

\node[draw,my style] (B3) at ( \scale*10,  \scale*-1){\begin{tikzpicture}
	\oc{e1}{\rdual{\evo{A}}}{_1}{-4}{0}{-1}{\epr}
	\oc{t2}{\coo{A}}{_2}{-7}{0}{-1}{\et}
	\oc{t7}{\rdual{\coo{A}}}{_7}{-2}{0}{-1}{\etr}
	\oc{e8}{\evo{A}}{_8}{-5}{0}{-1}{\ep}
	\begin{scope}[on background layer]
	\dlp{({e1}b)--(-3*\d-\w,-1*\h)--(-3*\d+\w,0*\h)--({t7}t)}
	\alp{({t2}b)--(-6*\d-\w,-1*\h)--(-6*\d+\w,0*\h)--({e8}t)}
	\alp{({e1}t)--(-3*\d-\w,0*\h)--(-3*\d+\w,-1*\h)--({t7}b)}
	\dlp{({t2}t)--(-6*\d-\w,0*\h)--(-6*\d+\w,-1*\h)--({e8}b)}
	\end{scope}
	\end{tikzpicture}
	};
\draw[->](B3)--(A3)node [midway , fill=white] {$\gammattw$};

\draw[->](A3)--(A4)node [midway , fill=white] {$\chi(\evo{A})$};
\draw[->](B3) edge[bend left=30]node [midway , fill=white] {pairing \eqref{fig:pairing_defn_fig}} (A4);

\end{tikzpicture}
}
\caption{}\label{fig:ec_e_and_c_b}
\end{subfigure}
\caption{Compatibility between pairing/copairing and Euler characteristics (\cref{lem:ec_e_and_c})}\label{fig:ec_e_and_c_statement}
\end{figure}

\begin{proof}   Using the 2-cells in \cref{fig:2dual_1_cell}, 
 the Euler characteristic of $\coo{A}$ is the left composite in  \cref{fig:ec_e_and_c}.  This may feel a little less complicated than the usual shadow isomorphism since there are unit 0-cells.  The right composite is the copairing.  

Note that the left composite contains the map $P_{\coo{A}}$ and the right composite contains the map $C_{\evo{A}}$.  The primary goal of this diagram is to allow these maps to slide past each other.  To allow these maps to be applied simultaneously we introduce a new pair of $\coo{A}$ and $\evo{A}$ using $\trietop{}$ and rearrange the 1-cells to obtain the diagram in the node in the middle of the diagram with the thicker boarder.  To the right of this node is a commutative square where the orders of  $P_{\coo{A}}$ and $C_{\evo{A}}$ are exchanged.  This square is on a upper left to lower right diagonal of five naturality squares that pulls the map $C_{\evo{A}}$ across the diagram.  It is also on a lower left to upper right diagonal of four naturality that pulls $P_{\coo{A}}$ across the diagram.  These maps both vanish when they encounter the map $I_{\coo{A}}$.  The map $C_{\evo{A}}$ can be replaced by instances of $\trietop$ and $\tridctop$ when it encounters $I_{\coo{A}}$.  The maps $P_{\coo{A}}$ and  $I_{\coo{A}}$ are related by the triangle diagrams for $\coo{A}$ and so the composite is the identity map.

Aside from many more naturality squares, there are  five squares labeled (1)--(5) that commute by \cite[Thm.~1.25]{gurski_osorno}.
The top commuting triangle labeled with $=$ is included to simplify the formating of the triangle directly below it.  The vertical maps are the same.  The remaining three regions labeled by Figures and Lemma commute by the relevant result.

The proof for the pairing is Eckman-Hilton dual to that for the pairing and the relevant diagram is a 180 degree rotation.  Since it is pretty elaborate we include it in its entirety.  See \cref{fig:ec_e_and_c_2}.
\end{proof}
\begin{figure}[p]
\resizebox{\textwidth}{!}{
{\input{RR-june_copairing_euler_3_july}}
}
\caption{The expanded version of \cref{fig:ec_e_and_c_a}. The right composite is the copairing and the left composite is the Euler characteristic of $C$ (\cref{lem:ec_e_and_c})}\label{fig:ec_e_and_c}
\end{figure}

\begin{figure}[p]
\resizebox{\textwidth}{!}{
\input{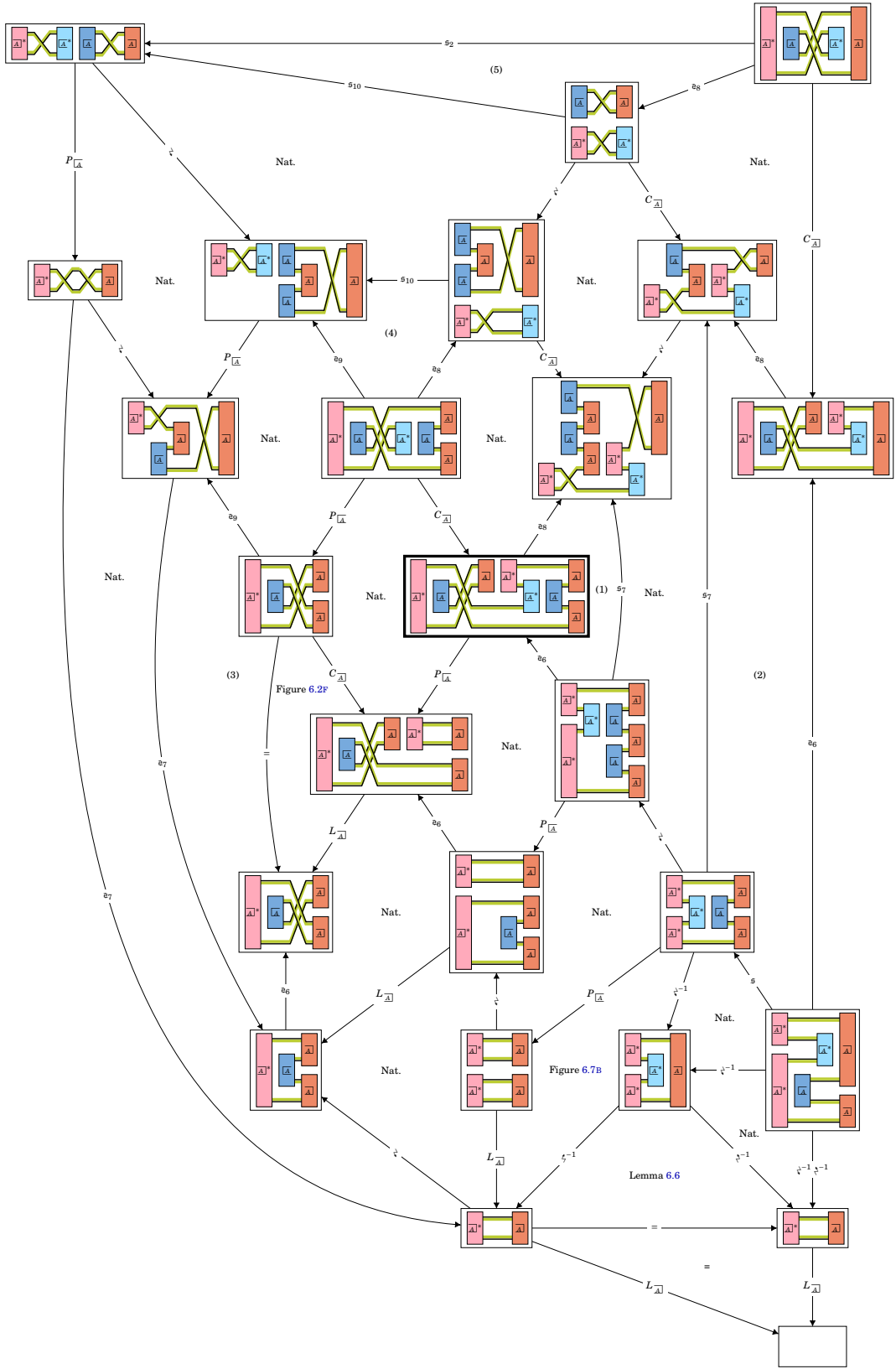}
}
\caption{The expanded version of \cref{fig:ec_e_and_c_b}.  The left map is the pairing and the right map is the Euler characteristic of $E$ (\cref{lem:ec_e_and_c})}\label{fig:ec_e_and_c_2}
\end{figure}

For 2-cells $f\colon M\to M$ and $g\colon N\to N$ 
consider  the endomorphism of 
\begin{center}
\resizebox{.15\textwidth}{!}{
\begin{tikzpicture}
\oc{t2}{\coo{A}}{_2}{-8}{0}{-1}{\et}
\oc{m}{M}{}{-7}{-1}{-1}{\en}
\oc{n}{N}{}{-7}{0}{0}{\en}
\oc{e8}{\evo{B}}{_8}{-5}{0}{-1}{\ep}

\begin{scope}[on background layer]

\alp{({t2}b)--({m}t)}
\blp{({m}t)--(-6*\d-\w,-1*\h)--(-6*\d+\w,0*\h)--({e8}t)}

\dlp{({t2}t)--({n}t)}
\dblp{({n}t)--(-6*\d-\w,0*\h)--(-6*\d+\w,-1*\h)--({e8}b)}

\end{scope}
\end{tikzpicture}}
\end{center}
induced by $f$ and $g$ (and the identity maps of $\coo{A}$, $\evo{B}$, and $\Gamma$).   This endomorphism will be written 
\[\id_{\coo{A}}\odot (f\otimes g)\odot \id_{\Gamma}\odot \id_{\evo{B}}\]
The following is the generalization of \cref{intro:shklyarov}.  

\begin{thm}
\label{thm:main_pairing} Suppose $A$ and $B$ are 2-dualizable 0-cells and $M\in\sB(A,B)$ and $N\in \sB(\zdual{A},\zdual{B})$ are dualizable 1-cells.  If $f\colon M\to M$ and $g\colon N\to N$ are 2-cells the  diagram \ref{fig:main_pairing} 
commutes. 
\end{thm}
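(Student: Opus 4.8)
The plan is to compute $\tr(f\odot g)$, which is the Euler characteristic of the endomorphism $\phi = \id_{\coo{A}}\odot(f\otimes g)\odot\id_{\Gamma}\odot\id_{\evo{B}}$ of the $1$-cell $X = \coo{A}\odot(M\otimes N)\odot\Gamma\odot\evo{B}\in\sB(I,I)$, by breaking it into its four horizontal factors. First I would record that $X$ is right dualizable, so that $\tr(\phi)$ is even defined: $\coo{A}$ and $\evo{B}$ are dualizable because $A$ and $B$ are $2$-dualizable (\cref{defn:2_dualizable}), $M\otimes N$ is dualizable since $M$ and $N$ are and the monoidal product respects dualizability, $\Gamma$ is one half of the symmetry adjoint equivalence, and the $\odot$-composite of right dualizable $1$-cells is right dualizable by \cref{lem:duals_compose}.

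The main structural step is to apply \cref{thm:composite} repeatedly, once for each splitting of $X$ between consecutive factors. Since $\phi$ is the $\odot$-composite of the four endomorphisms $\id_{\coo A}$, $f\otimes g$, $\id_\Gamma$, $\id_{\evo B}$, each carrying only the identity (unit) twist between composable pieces, this exhibits $\tr(\phi)$ as the vertical composite
\[
\sh{U_I}\xrightarrow{\chi(\coo A)}\sh{U_{A\otimes\zdual A}}\xrightarrow{\tr(f\otimes g)}\sh{U_{B\otimes\zdual B}}\xrightarrow{\chi(\Gamma)}\sh{U_{\zdual B\otimes B}}\xrightarrow{\chi(\evo B)}\sh{U_I},
\]
in which $\sh{U_I}=I$. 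It then remains to identify each factor. By \cref{lem:trace_otimes_compatible}, conjugating $\tr(f\otimes g)$ by the shadow isomorphisms $\gamma$ that split $\sh{U_{A\otimes\zdual A}}\cong\sh{U_A}\otimes\sh{U_{\zdual A}}$ and $\sh{U_{B\otimes\zdual B}}\cong\sh{U_B}\otimes\sh{U_{\zdual B}}$ produces $\tr(f)\otimes\tr(g)$. By \cref{lem:ec_e_and_c}, $\chi(\coo A)$ followed by the appropriate $\gamma$ is the copairing for $(\sh{U_A},\sh{U_{\zdual A}})$, and $\chi(\evo B)$ precomposed with $\gamma$ is the pairing for $(\sh{U_B},\sh{U_{\zdual B}})$. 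Finally, by \cref{lem:euler_char_gamma} the factor $\chi(\Gamma)$ is exactly the symmetry isomorphism, which is precisely what reconciles the ordering $\sh{U_B}\otimes\sh{U_{\zdual B}}$ emitted by the traces with the ordering consumed by the pairing; this is the category-theoretic incarnation of the ``destroyed symmetry'' between the two toroidal circles noted after \cref{fig:toroidal_no_framing}.

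Assembling these identifications, the inserted copies of $\gamma$ cancel in adjacent pairs and the composite collapses to the top-then-right leg $\text{pairing}_B\circ(\tr(f)\otimes\tr(g))\circ\text{copairing}_A$ of the square; hence the diagram of \cref{fig:main_pairing} commutes. The real content is carried entirely by the three cited lemmas, so I expect the obstacle to be organizational rather than conceptual: the difficulty is verifying that the many copies of $\gamma$ together with the symmetry supplied by $\chi(\Gamma)$ patch together coherently, i.e.\ that every unlabeled region of \cref{fig:main_pairing} commutes by naturality of $\gamma$ and of the symmetry. I would manage this exactly as in the proofs of \cref{lem:ec_e_and_c} and \cref{lem:trace_otimes_compatible}, navigating the expanded diagram outward from a central symmetrized node so that the order in which the pairing and copairing data are applied may be freely interchanged.
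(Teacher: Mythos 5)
Your proposal is correct and follows essentially the same route as the paper: the paper's proof expands \cref{fig:main_pairing} into \cref{fig:main_pairing_2}, whose regions are justified exactly by \cref{thm:composite} (splitting the trace of the four-fold $\odot$-composite), \cref{lem:trace_otimes_compatible} (identifying $\tr(f\otimes g)$ with $\tr(f)\otimes\tr(g)$ across $\gamma$), \cref{lem:ec_e_and_c} (identifying $\chi(\coo{A})$ and $\chi(\evo{B})$ with the copairing and pairing), and \cref{lem:euler_char_gamma} (identifying $\chi(\Gamma)$ with the symmetry), with the remaining regions commuting by naturality.
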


\begin{equation}\label{fig:main_pairing}
\resizebox{.7\textwidth}{!}{\begin{tikzpicture}[
    mystyle/.style={%
    },
   my style/.style={%
   },
  ]

\node[draw,my style] (A0) at (5, -4){\begin{tikzpicture}
		\node (ta) at (2,-1){};
		\node (ta) at (4,-2){};
		\end{tikzpicture}};

\node[draw,my style] (A4) at (13, -4){\begin{tikzpicture}
		\node (ta) at (2,-1){};
		\node (ta) at (4,-2){};
		\end{tikzpicture}};

\node[draw,my style] (B1) at (5,-1){\begin{tikzpicture}
	\oc{e3}{\rdual{\evo{A}}}{_3}{1}{2}{1}{\epr\lt}
	\oc{t4}{\coo{A}}{_4}{-2}{2}{1}{\et\lt}

	\oc{t5}{\rdual{\coo{A}}}{_5}{3}{2}{1}{\etr\lt}
	\oc{e6}{\evo{A}}{_6}{0}{2}{1}{\ep\lt}
	\begin{scope}[on background layer]

	\dlp{({e3}b)--(2*\d-\w,\h)--(2*\d+\w,2*\h)--
	({t5}t)}
	\alp{({t4}b)--(-1*\d-\w,1*\h)--(-1*\d+\w,2*\h)--
	({e6}t)}
	
	\dlp{({t4}t)--(-1*\d-\w,2*\h)--(-1*\d+\w,1*\h)--
	({e6}b)}
	\alp{({e3}t)--(2*\d-\w,2*\h)--(2*\d+\w,1*\h)--
	({t5}b)}
	\end{scope}
	\end{tikzpicture}
	};

\node[draw,my style] (B3) at (13, -1){\begin{tikzpicture}
	\oc{e1}{\rdual{\evo{B}}}{_1}{-4}{0}{-1}{\epr}
	\oc{t2}{\coo{B}}{_2}{-7}{0}{-1}{\et}
	
	\oc{t7}{\rdual{\coo{B}}}{_7}{-2}{0}{-1}{\etr}
	\oc{e8}{\evo{B}}{_8}{-5}{0}{-1}{\ep}

	\begin{scope}[on background layer]
	
	\dblp{({e1}b)--(-3*\d-\w,-1*\h)--(-3*\d+\w,0*\h)--({t7}t)}
	\blp{({e1}t)--(-3*\d-\w,0*\h)--(-3*\d+\w,-1*\h)--({t7}b)}
	\blp{({t2}b)--(-6*\d-\w,-1*\h)--(-6*\d+\w,0*\h)--({e8}t)}
	
	\dblp{({t2}t)--(-6*\d-\w,0*\h)--(-6*\d+\w,-1*\h)--({e8}b)}
	
	\end{scope}
	\end{tikzpicture}
	};

\draw[->](B1)--(B3)node [midway , fill=white] {$\tr
(f)\otimes \tr
(g)$};
\draw[->](A0)-- node [midway , fill=white] {copairing}(B1) ;
\draw[->](B3) --node [midway , fill=white] {pairing} (A4);
\draw[->](A0)--node [midway , fill=white] {$\tr\left(\id_{\coo{A}}\odot (f\otimes g)\odot \id_\Gamma\odot \id_{\evo{B}}
	\right)$} (A4);

\end{tikzpicture}}
\end{equation}

\begin{figure}
\resizebox{\textwidth}{!}
{
\resizebox{.95\textwidth}{!}
{
\begin{tikzpicture}[
    mystyle/.style={%
    },
   my style/.style={%
   },
  ]

\node[draw,my style] (A0) at (-1, -7.5){\begin{tikzpicture}
		\node (ta) at (2,-1){};
		\node (ta) at (4,-2){};
		\end{tikzpicture}};

\node[draw,my style] (A4) at (20, -7.5){\begin{tikzpicture}
		\node (ta) at (2,-1){};
		\node (ta) at (4,-2){};
		\end{tikzpicture}};

\node[draw,my style] (A1) at (2, -4){\begin{tikzpicture}
\oc{e3}{\rdual{\evo{A}}}{_3}{2}{2}{1}{\epr\lt}
\oc{t4}{\coo{A}}{_4}{1}{3}{0}{\et\lt}

\oc{t5}{\rdual{\coo{A}}}{_5}{5}{3}{0}{\etr\lt}
\oc{e6}{\evo{A}}{_6}{4}{2}{1}{\ep\lt}
\begin{scope}[on background layer]

\dlp{({e3}b)--(3*\d-\w,\h)--(3*\d+\w,3*\h)--({t5}t)}
\alp{({t4}b)--(3*\d-\w,0*\h)--(3*\d+\w,2*\h)--({e6}t)}

\dlp{({t4}t)--(3*\d-\w,3*\h)--(3*\d+\w,1*\h)--({e6}b)}
\alp{({e3}t)--(3*\d-\w,2*\h)--(3*\d+\w,0*\h)--({t5}b)}
\end{scope}
\end{tikzpicture}
};

\node[draw,my style] (A3) at (8.5, -4){\begin{tikzpicture}
\oc{e1}{\rdual{\evo{B}}}{_1}{-5}{0}{-1}{\epr}
\oc{t2}{\coo{B}}{_2}{-6}{1}{-2}{\et}

\oc{t7}{\rdual{\coo{B}}}{_7}{-2}{1}{-2}{\etr}
\oc{e8}{\evo{B}}{_8}{-3}{0}{-1}{\ep}

\begin{scope}[on background layer]

\dblp{({e1}b)--(-4*\d-\w,-1*\h)--(-4*\d+\w,1*\h)--({t7}t)}
\blp{({t2}b)--(-4*\d-\w,-2*\h)--(-4*\d+\w,0*\h)--({e8}t)}

\blp{({e1}t)--(-4*\d-\w,0*\h)--(-4*\d+\w,-2*\h)--({t7}b)}
\dblp{({t2}t)--(-4*\d-\w,1*\h)--(-4*\d+\w,-1*\h)--({e8}b)}

\end{scope}
\end{tikzpicture}
};

\node[draw,my style] (A3a) at (17, -4){\begin{tikzpicture}
\oc{e1}{\rdual{\evo{B}}}{_1}{-6}{1}{-2}{\epr}
\oc{t2}{\coo{B}}{_2}{-5}{0}{-1}{\et}

\oc{t7}{\rdual{\coo{B}}}{_7}{-3}{0}{-1}{\etr}
\oc{e8}{\evo{B}}{_8}{-2}{1}{-2}{\ep}

\begin{scope}[on background layer]

\dblp{({e1}b)--(-4*\d-\w,-2*\h)--(-4*\d+\w,0*\h)--({t7}t)}
\blp{({t2}b)--(-4*\d-\w,-1*\h)--(-4*\d+\w,1*\h)--({e8}t)}

\blp{({e1}t)--(-4*\d-\w,1*\h)--(-4*\d+\w,-1*\h)--({t7}b)}
\dblp{({t2}t)--(-4*\d-\w,0*\h)--(-4*\d+\w,-2*\h)--({e8}b)}

\end{scope}
\end{tikzpicture}
};

\node[draw,my style] (B1) at (-1,-.5){\begin{tikzpicture}
\oc{e3}{\rdual{\evo{A}}}{_3}{1}{2}{1}{\epr\lt}
\oc{t4}{\coo{A}}{_4}{-2}{2}{1}{\et\lt}

\oc{t5}{\rdual{\coo{A}}}{_5}{3}{2}{1}{\etr\lt}
\oc{e6}{\evo{A}}{_6}{0}{2}{1}{\ep\lt}
\begin{scope}[on background layer]

\dlp{({e3}b)--(2*\d-\w,\h)--(2*\d+\w,2*\h)--
({t5}t)}
\alp{({t4}b)--(-1*\d-\w,1*\h)--(-1*\d+\w,2*\h)--
({e6}t)}

\dlp{({t4}t)--(-1*\d-\w,2*\h)--(-1*\d+\w,1*\h)--
({e6}b)}
\alp{({e3}t)--(2*\d-\w,2*\h)--(2*\d+\w,1*\h)--
({t5}b)}
\end{scope}
\end{tikzpicture}
};

\node[draw,my style] (B3) at (20, -.5){\begin{tikzpicture}
\oc{e1}{\rdual{\evo{B}}}{_1}{-4}{0}{-1}{\epr}
\oc{t2}{\coo{B}}{_2}{-7}{0}{-1}{\et}

\oc{t7}{\rdual{\coo{B}}}{_7}{-2}{0}{-1}{\etr}
\oc{e8}{\evo{B}}{_8}{-5}{0}{-1}{\ep}

\begin{scope}[on background layer]

\dblp{({e1}b)--(-3*\d-\w,-1*\h)--(-3*\d+\w,0*\h)--({t7}t)}
\blp{({e1}t)--(-3*\d-\w,0*\h)--(-3*\d+\w,-1*\h)--({t7}b)}
\blp{({t2}b)--(-6*\d-\w,-1*\h)--(-6*\d+\w,0*\h)--({e8}t)}

\dblp{({t2}t)--(-6*\d-\w,0*\h)--(-6*\d+\w,-1*\h)--({e8}b)}

\end{scope}
\end{tikzpicture}
};
\draw[->](B1)--(A1)node [midway , fill=white] {$\gammat$};
\draw[->](B3)edge [bend right=10]node [midway , fill=white] {$\gammat$}(A3);
\draw[->](B3)--(A3a)node [midway , fill=white] {$\gammattw$};

\draw[->](B1)--(B3)node [midway , fill=white] {$\tr
(f)\otimes \tr
(g)$};
\draw[->](A1)--(A3)node [midway , fill=white] {$\tr(f\otimes g)$};
\draw[->](A0)--(A1)node [midway , fill=white] {$\chi(\coo{A})$};
\draw[->](A3a)--(A4)node [midway , fill=white] {$\chi(\evo{B})$};
\draw[->](A3)edge[bend left =20]node [midway , fill=white] {$\gammath$}(A3a);
\draw[->](A3)edge[bend right =20]node [midway , fill=white] {$\chi(\Gamma)$}(A3a);

\draw[->](A0)edge[bend left=50]node [midway , fill=white] {copairing}(B1) ;
\draw[->](B3) edge[bend left=50]node [midway , fill=white] {pairing} (A4);

\draw[->](A0) edge node [midway , fill=white] {$\tr\left(\id_{\coo{A}}\odot (f\otimes g)\odot \id_\Gamma\odot \id_{\evo{B}}
	\right)$} (A4);

\node at  (barycentric cs:A1=.8,B1=1,B3=.5,A3=.4){\cref{lem:trace_otimes_compatible}};

\node at  (barycentric cs:A1=-.2,B1=1,A0=.7){\cref{fig:ec_e_and_c_a}};

\node at  (barycentric cs:A4=.7,B3=1,A3a=-.2){\cref{fig:ec_e_and_c_b}};

\node at  (barycentric cs:A3=.5,B3=1,A3a=.3){(1)};
\node at  (barycentric cs:A3=1,A3a=1){\cref{lem:euler_char_gamma}};

\node at  (barycentric cs:A3=1,A0=1,A4=1){\cref{thm:composite}};
thm:composite
\end{tikzpicture}
}
}
\caption{Expansion of \cref{fig:main_pairing} to verify the compatibility between trace, pairing and copairing (\cref{thm:main_pairing})}\label{fig:main_pairing_2}
\end{figure}
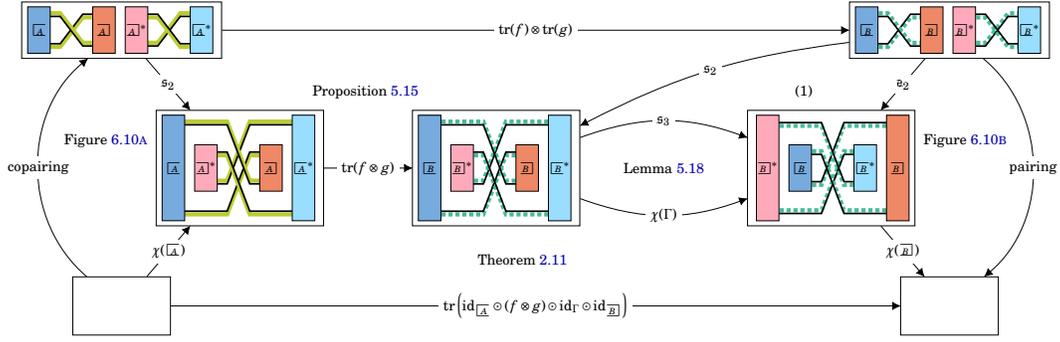

\begin{proof}
This theorem is the culmination of many of the preceding results.  The first relevant result is \cref{thm:composite} which we use to compute the trace of 
\[\id_{\coo{A}}\odot (f\otimes g)\odot \id_{\Gamma}\odot \id_{\evo{B}}\] in terms of the trace of each piece.  This is the bottom region in the 
 expansion of  the diagram in \eqref{fig:main_pairing} to the diagram in \cref{fig:main_pairing_2}.  The next result we use is \cref{lem:trace_otimes_compatible} which relates the trace of $f\otimes g$ and the trace of $f$ and $g$.  This is the top left region in \cref{fig:main_pairing_2}.  This is closely related to the oval region of \cref{fig:main_pairing_2} that recognizes the Euler characteristic of $\Gamma$ as a symmetry map (\cref{lem:euler_char_gamma}).  Note that the region here differs from the diagram in \cref{lem:euler_char_gamma} since $\coo{\zdual{A}}$ and $\evo{\zdual{A}}$ are replaced by $\rdual{\evo{A}}$ and $\rdual{\coo{A}}$. 
  This is permissible since  \cref{lem:compatiblity_1-witnesses_if_2-dual} shows $\rdual{\evo{A}}$ is a choice of $\coo{\zdual{A}}$.

The left and right regions are the conclusions of \cref{lem:ec_e_and_c} that verifies the compatibility between the Euler characteristic of $\coo{A}$ and the copairing and the Euler characteristic of $\evo{A}$ and the pairing.
The remaining region, labeled with (1), commutes by \cite[Thm.~1.25]{gurski_osorno}.
\end{proof}

\section{Serre duality and Shklyarov's theorem}\label{sec:serre_duality}

It should now be clear that there there are a considerable number of categorical operations in symmetric monoidal bicategories given by geometric  operations. Thinking in a ``cobordism hypothesis'' way, this makes perfect sense. One more geometric move we can make in the presence of 1-dualizability is ``putting a kink'' in a bimodule, or turning a $(A, B)$-bimodule into a $(B^\vee, A^\vee)$-bimodule.  More precisely, suppose $A$ and $B$ are 1-dualizable 0-cells. 
For a 1-cell $M\in \sB(A,B)$, the Serre dual of $M$, denoted $\serredual{M}{A}{B}\in \sB(\zdual{B}, \zdual{A})$, is the composite in \cref{fig:serre_dual_one_cell}.  For a 2-cell $f\colon M\to N$, let $\serredual{f}{A}{B}$ be the similar composite using the identity maps of ${\coo{A}}$ and $\evo{B}$. 

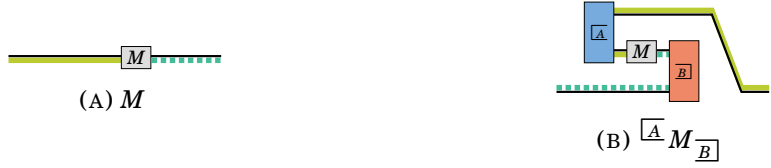
\begin{figure}[h!]
\hspace{2cm}
\begin{subfigure}{.2\textwidth}
\resizebox{\textwidth}{!}{
\begin{tikzpicture}
\oc{m}{M}{}{-8}{0}{0}{\en}
\begin{scope}[on background layer]
\alp{(-11*\d,0)--({m}t)}
\blp{({m}t)--(-6*\d,0*\h)}
\end{scope}
\end{tikzpicture}}
\caption{$M$}\label{fig:serre_dual_one_cell_input}
\end{subfigure}
\hfill
\begin{subfigure}{.2\textwidth}

\resizebox{\textwidth}{!}{
\begin{tikzpicture}
\oc{t8}{\evo{B}}{_8}{-8}{1}{0}{\ep}
\oc{m}{M}{}{-9}{1}{1}{\en}

\oc{e3}{{\coo{A}}}{_3}{-10}{2}{1}{\et}
\gc{g1}{-9}{2}{1}
\gc{g2}{-8.5}{1}{0}
\gc{g3}{-8}{0}{-1}

\begin{scope}[on background layer]
\dlp{({e3}t)--(-7*\d-\w,2*\h)--(-7*\d+\w,0*\h)--(-6*\d,0*\h)}
\alp{({e3}b)--({m}t)}
\blp{({m}t)--
({t8}t)}

\dblp{(-11*\d,0)--
({t8}b)}
\end{scope}
\end{tikzpicture}}
\caption{$\serredual{M}{A}{B}$}\label{fig:serre_dual_one_cell}
\end{subfigure}
\hspace{2cm}
\caption{Serre dual}\label{fig:define_serre_dual}
\end{figure}

This Serre dual gives us a way of realizing one of our original goals: 

\begin{lem}[\cref{goal:dualizable_functor}]\label{lem:serre_dual_dualizable}
If $A$ and $B$ are 2-dualizable and $M$ is right dualizable then 
$\serredual{M}{A}{B}$ is right dualizable.
\end{lem}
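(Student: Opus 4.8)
```latex
\textbf{Plan.} The statement to prove is that the Serre dual $\serredual{M}{A}{B}$ of a right dualizable $1$-cell $M\in\sB(A,B)$ is again right dualizable, assuming $A$ and $B$ are $2$-dualizable. The strategy is to recognize $\serredual{M}{A}{B}$ as a composite of $1$-cells each of which is right dualizable, and then invoke \cref{lem:duals_compose}. Recall from \cref{fig:serre_dual_one_cell} that $\serredual{M}{A}{B}$ is built by precomposing with $\coo{A}$ (reading it into the left boundary), tensoring with $M$, and postcomposing with $\evo{B}$. So the first step is to write $\serredual{M}{A}{B}$ honestly as an iterated bicategorical composite $\odot$ of the three pieces $\coo{A}$, $M$ (suitably whiskered by identities and the symmetry $\Gamma$), and $\evo{B}$, unwinding the graphical definition into category-theoretic notation.

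\textbf{Key steps.} First I would record that each constituent is right dualizable. Since $A$ is $2$-dualizable, by \cref{defn:2_dualizable} (condition (3)) both $\coo{A}$ and $\evo{A}$ are right dualizable; the same for $B$ gives right dualizability of $\coo{B}$ and $\evo{B}$. The hypothesis gives that $M$ is right dualizable, and the symmetry $1$-cells $\Gamma$ are right dualizable by \cref{fig:dualizable_1_cell_symmetry}. Second, I would use \cref{lem:duals_compose} to conclude that any $\odot$-composite of right dualizable $1$-cells is right dualizable, applied to the decomposition of $\serredual{M}{A}{B}$ from the first step. The mild subtlety is that the pieces are whiskered by monoidal products with identity $1$-cells and rearranged by symmetries, so I also need that right dualizability is preserved under $\otimes$ with a dualizable (in particular, a unit) $1$-cell and under composition with the symmetry; the former follows from \cref{lem:trace_otimes_compatible}'s underlying duality statement (monoidal product of dual pairs is a dual pair, nesting evaluations as in \cref{lem:one_dual_composites}), and the latter from \cref{fig:dualizable_1_cell_symmetry}.

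\textbf{Expected obstacle.} The main work is bookkeeping rather than conceptual: the Serre dual as drawn lives in a $1$-cell category $\sB(\zdual{B},\zdual{A})$ obtained after applying the symmetry $\Gamma$ and the witnessing $1$-cells $\coo{A},\evo{B}$, so I must be careful that the whiskered identities and associativity/unit coherences of the monoidal bicategory do not obstruct the application of \cref{lem:duals_compose}. Concretely, the hard part is verifying that the ``whiskering by $\otimes U$'' operations genuinely produce right dualizable $1$-cells in the ambient bicategory, i.e.\ assembling the evaluation and coevaluation for the composite from those of the factors, nesting and sliding them past the symmetry exactly as in the proof of \cref{lem:one_dual_composites}. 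Once the decomposition is written down correctly, the proof is a direct appeal to \cref{lem:duals_compose} together with the right dualizability of $\coo{A}$, $\evo{B}$, $M$, and $\Gamma$, and I would present it graphically by exhibiting the evaluation and coevaluation $2$-cells as the evident nested composites, in parallel with \cref{fig:composite_coeval,fig:composite_eval}.
```
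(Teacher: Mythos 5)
Your proposal is correct and takes essentially the same route as the paper: the paper's proof exhibits the coevaluation and evaluation for $\serredual{M}{A}{B}$ as precisely the nested composites of the structure maps of the constituent dual pairs (that of $M$ together with those of the witnessing 1-cells supplied by 2-dualizability of $A$ and $B$), checks one triangle identity graphically, and then explicitly remarks that the lemma is also a consequence of a composite of dual pairs via \cref{lem:duals_compose} since those factors are all right dualizable. The whiskering and symmetry bookkeeping you flag as the expected obstacle is exactly what the paper's circuit diagrams absorb, so there is no substantive difference in approach.
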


\begin{proof}
By assumption $M$ is right dualizable.  The 1-cells  $\coo{A}$ and $\coo{B}$ are right dualizable since $A$ and $B$ are 2-dualizable.  Then \cref{lem:duals_compose} implies $\serredual{M}{A}{B}$ is right dualizable.
\end{proof}

The following result is only used in the motivating result in \cref{sec:motivation:Shklyarov}, but it is an expected question following the lemma above so we address it here.

\begin{cor}\label{lem:serre_dual_trace}
If $M\in \sB(A,B)$ is right dualizable, 
the trace of the Serre dual of $f\colon M\to M$ is 
the  composite 
\[\sh{U_{\zdual{B}}} \xto{\id\otimes \mathrm{copairing}} \sh{U_{\zdual{B}}}\otimes \sh{U_A}\otimes \sh{U_{\zdual{A}}}  \xto{\id\otimes \tr(f)\otimes \id} \sh{U_{\zdual{B}}}\otimes \sh{U_B}\otimes \sh{U_{\zdual{A}}} 
\xto{\mathrm{pairing}\otimes \id}  \sh{U_{\zdual{A}}} 
\]
\end{cor}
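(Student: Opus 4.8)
The plan is to realize the trace of the Serre dual $\serredual{f}{A}{B}\colon \serredual{M}{A}{B}\to \serredual{M}{A}{B}$ as a bicategorical twisted trace and then decompose that trace using the structural results already established. Recall that $\serredual{M}{A}{B}\in\sB(\zdual{B},\zdual{A})$ is the composite $\rdual{\coo{B}}\odot (\text{terms involving } M)\odot\text{(terms involving }\coo{A},\evo{B})$ displayed in \cref{fig:serre_dual_one_cell}, and by \cref{lem:serre_dual_dualizable} it is right dualizable with the coevaluation and evaluation in \cref{fig:serre_dual_coeval,fig:serre_dual_eval}. Since $\serredual{f}{A}{B}$ is built from $f$ together with identity maps of the witnessing 1-cells $\coo{A},\evo{B},\coo{B}$, the trace $\tr(\serredual{f}{A}{B})$ is a self-map of $\sh{U_{\zdual{B}}}\cong\sh{U_{\zdual{A}}}$; the claim identifies it with the three-step composite through the copairing for $A$, the twisted trace $\tr(f)$, and the pairing for $B$.

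First I would expand $\tr(\serredual{f}{A}{B})$ literally, inserting the coevaluation of \cref{fig:serre_dual_coeval} and the evaluation of \cref{fig:serre_dual_eval} into \cref{defn:twisted_trace}. The coevaluation already factors as $I_{\coo{B}}$, then $\eta$ (the coevaluation of the dual pair $(M,\rdual{M})$), then $C_{\evo{A}}$; dually the evaluation factors through $P_{\coo{B}}$, $\epsilon$, and $L_{\evo{A}}$. The key observation is that the pieces $I_{\coo{B}},P_{\coo{B}},C_{\evo{A}},L_{\evo{A}}$ are \emph{exactly} the 2-cells that assemble the pairing and copairing maps in \cref{fig:copairing_pairing_defn_fig}, while the $\eta$, $f$, $\epsilon$ fragment is precisely the data computing the twisted trace $\tr(f)$ of the original endomorphism. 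So the strategy is to chop the expanded string-diagram composite along the locus where the $\coo{B},\evo{A}$ strands close off, separating it into an ``$f$-block'' and a ``duality-witness block.''

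The main technical engine is \cref{lem:ec_e_and_c}, which already says that the Euler characteristics $\chi(\coo{A})$ and $\chi(\evo{B})$ agree (after the shadow isomorphism $\gamma$) with the copairing and pairing respectively. I would use that to replace the $C_{\evo{A}}/I_{\coo{B}}$-and $P_{\coo{B}}/L_{\evo{A}}$-wrapped portions by the copairing for $\zdual{A}$ (equivalently $A$) and the pairing for $\zdual{B}$ (equivalently $B$), landing in the tensor factors $\sh{U_{\zdual{B}}}\otimes\sh{U_A}\otimes\sh{U_{\zdual{A}}}$ and $\sh{U_{\zdual{B}}}\otimes\sh{U_B}\otimes\sh{U_{\zdual{A}}}$ named in the statement. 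The middle factor then carries the twisted trace of $f$ by \cref{defn:twisted_trace}, with the outer factors $\sh{U_{\zdual{B}}}$ and $\sh{U_{\zdual{A}}}$ passing through by identity and naturality of $\gamma$. Concretely this is a diagram chase in which the squares commute by naturality of the shadow isomorphism $\theta$ (\cref{defn_shadow}) and by \cref{thm:composite}, exactly as in the proof of \cref{thm:main_pairing} via \cref{fig:main_pairing_2}.

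The step I expect to be the genuine obstacle is the bookkeeping of handedness and of the shadow/symmetry isomorphisms $\gamma$: one must verify that the particular order in which $\coo{A},\evo{B},\coo{B}$ and their right duals appear in \cref{fig:serre_dual_one_cell} matches the order required by the copairing (\cref{fig:copairing_defn_fig}) and pairing (\cref{fig:pairing_defn_fig}) after applying \cref{lem:ec_e_and_c}, with no hidden re-association. This is where the commuting conditions of \cref{fig:add_dualizable_0_cell,fig:add_dualizable_0_cell_a} (the substantive addition flagged in \cref{rmk:cp_1_dual_change}) get used, since they guarantee the witness-block rearrangements are coherent. Once that compatibility is pinned down, the rest is routine: the proof reduces to overlaying the expansion of $\tr(\serredual{f}{A}{B})$ on the two established compatibilities of \cref{lem:ec_e_and_c} and reading off the three-fold composite, so I would present it as a single large commuting diagram analogous to \cref{fig:ec_e_and_c,fig:ec_e_and_c_2} rather than a term-by-term calculation.
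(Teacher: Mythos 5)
Your toolkit is the right one---\cref{lem:ec_e_and_c}, \cref{thm:composite}, and the splitting of the nested coevaluation/evaluation of $\serredual{M}{A}{B}$ from \cref{lem:serre_dual_dualizable} into witness pieces and an $\eta,f,\epsilon$ piece---but the grouping you propose is transposed, and as stated the key step fails. You group the two \emph{coevaluation} pieces together ($I_{\coo{B}}$ with $C_{\evo{A}}$, to become the copairing) and the two \emph{evaluation} pieces together ($P_{\coo{B}}$ with $L_{\evo{A}}$, to become the pairing). But no copairing in this paper is built from witness cells of two different 0-cells: the copairing of \cref{fig:copairing_defn_fig} is $C_{\evo{A}}\circ\gamma\circ I_{\coo{A}}$, with both pieces attached to a single 0-cell, and what \cref{lem:ec_e_and_c} identifies with it is not a coevaluation at all but the \emph{Euler characteristic} $\chi(\coo{A})$---a closed loop assembled from the coevaluation $I_{\coo{A}}$ \emph{and} the evaluation $P_{\coo{A}}$. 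So \cref{lem:ec_e_and_c} simply does not apply to your ``$C_{\evo{A}}/I_{\coo{B}}$-wrapped portion'': that portion is a coevaluation rather than a trace loop, it mixes the two 0-cells, and half of the strands it creates carry the wrong label for any single copairing. Trading an evaluation-type cell ($P$) for a coevaluation-type cell ($C$) on the \emph{same} 0-cell is exactly the nontrivial content that \cref{lem:ec_e_and_c} exists to supply; it cannot be invoked to repair a mismatch of 0-cells.

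The correct grouping---and the paper's actual first move---is block-wise, not coevaluation-versus-evaluation. Write $\serredual{f}{A}{B}$ as the $\odot$-composite of three 2-cells (the identity of the $\coo{}$-block, $\id\otimes f\otimes\id$ on the $M$-block, and the identity of the $\evo{}$-block) and apply \cref{thm:composite} \emph{first}: the trace factors as $\chi(\evo{}\text{-block})\circ\tr(\id\otimes f\otimes\id)\circ\chi(\coo{}\text{-block})$. Each outer factor is now an Euler characteristic that pairs one piece of the nested coevaluation with the matching piece of the nested evaluation ($I_{\coo{B}}$ with $P_{\coo{B}}$, and $C_{\evo{A}}$ with $L_{\evo{A}}$), and only in this form does \cref{lem:ec_e_and_c} convert the outer factors into the copairing and pairing. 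A secondary point: stripping the identity tensor factors, i.e.\ $\chi(U\otimes\coo{})=\id\otimes\chi(\coo{})$ and $\tr(\id\otimes f\otimes\id)=\id\otimes\tr(f)\otimes\id$, is not ``naturality of $\gamma$''; it is \cref{lem:trace_otimes_compatible}, which the paper invokes four times in \cref{fig:serre_dual_maps_big}, and your write-up never cites it.
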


\begin{figure}
\begin{subfigure}{.58\textwidth}
\resizebox{\textwidth}{!}
{
\begin{tikzpicture}[
    mystyle/.style={%
    },
   my style/.style={%
   },
  ]

\node[draw,my style] (A3) at (0, 8){
\begin{tikzpicture}
\oc{e2}{\coo{B}}{_2}{-10}{0}{-1}{\et}

\oc{m}{M}{}{-9}{-1}{-1}{\en}
\oc{e3}{\evo{A}}{_3}{-8}{-1}{-2}{\ep}

\begin{scope}[on background layer]

\alp{({e2}b)--({m}b)}
\blp{({m}b)--({e3}t)}

\dlp{({e2}t)--(-7*\d,0*\h)}
\dblp{(-11*\d,-2*\h)--({e3}b)}

\end{scope}
\end{tikzpicture}
};

\node[draw,my style] (A4) at (6, 8){
\begin{tikzpicture}
\oc{e2}{\coo{B}}{_2}{-10}{0}{-1}{\et}

\oc{m}{M}{}{-9}{-1}{-1}{\en}
\oc{e3}{\evo{A}}{_3}{-8}{-1}{-2}{\ep}

\begin{scope}[on background layer]

\alp{({e2}b)--({m}b)}
\blp{({m}b)--({e3}t)}

\dlp{({e2}t)--(-7*\d,0*\h)}
\dblp{(-11*\d,-2*\h)--({e3}b)}

\end{scope}
\end{tikzpicture}
};
\draw[->](A3)--(A4);
\end{tikzpicture}

}
\caption{}\label{fig:serre_dual_fix_notation}
\end{subfigure}
\hfill
\begin{subfigure}{.22\textwidth}
\resizebox{\textwidth}{!}
{
\begin{tikzpicture}

\node[draw] (A3) at (0, 8){
\begin{tikzpicture}
\oc{e2}{\coo{B}}{_2}{-10}{0}{-1}{\et}

\begin{scope}[on background layer]

\alp{({e2}b)--(-7*\d,-1*\h)}
\dlp{({e2}t)--(-7*\d,0*\h)}
\dblp{(-11*\d,-2*\h)--(-7*\d,-2*\h)}

\end{scope}
\end{tikzpicture}
};

\end{tikzpicture}

},
\caption{}\label{fig:serre_dual_map_1}
\end{subfigure}

\begin{subfigure}{.58\textwidth}
\resizebox{\textwidth}{!}
{
\begin{tikzpicture}[
    mystyle/.style={%
      5label={right:\pgfkeysvalueof{/pgf/minimum width}},
    },
   my style/.style={%
   },
  ]

\node[draw,my style] (A3) at (0, 8){
\begin{tikzpicture}
\oc{m}{M}{}{-9}{-1}{-1}{\en}

\begin{scope}[on background layer]

\alp{(-11*\d,-1*\h)--({m}b)}
\blp{({m}b)--(-7*\d,-1*\h)}

\dlp{(-11*\d,0*\h)--(-7*\d,0*\h)}
\dblp{(-11*\d,-2*\h)--(-7*\d,-2*\h)}

\end{scope}
\end{tikzpicture}
};

\node[draw,my style] (A4) at (6, 8){
\begin{tikzpicture}
\oc{m}{M}{}{-9}{-1}{-1}{\en}

\begin{scope}[on background layer]

\alp{(-11*\d,-1*\h)--({m}b)}
\blp{({m}b)--(-7*\d,-1*\h)}

\dlp{(-11*\d,0*\h)--(-7*\d,0*\h)}
\dblp{(-11*\d,-2*\h)--(-7*\d,-2*\h)}

\end{scope}
\end{tikzpicture}
};
\draw[->](A3)--(A4)node [midway , fill=white] {$\id\otimes f\otimes \id$};
\end{tikzpicture}

}
\caption{}\label{fig:serre_dual_map_2}
\end{subfigure}
\hfill
\begin{subfigure}{.22\textwidth}
\resizebox{\textwidth}{!}
{
\begin{tikzpicture}
  ]

\node[draw] (A3) at (0, 8){
\begin{tikzpicture}
\oc{e3}{\evo{A}}{_3}{-8}{-1}{-2}{\ep}

\begin{scope}[on background layer]

\blp{(-11*\d,-1*\h)--({e3}t)}

\dlp{(-11*\d,0*\h)--(-7*\d,0*\h)}
\dblp{(-11*\d,-2*\h)--({e3}b)}

\end{scope}
\end{tikzpicture}
};

\end{tikzpicture}

}
\caption{}\label{fig:serre_dual_map_3}
\end{subfigure}
\caption{Serre dual of a map $f\colon M\to M$}
\end{figure}

\begin{proof}
The Serre dual of $f$ is a map of the  form in \cref{fig:serre_dual_fix_notation}.  Using \cref{thm:composite}, its trace is the composite of three traces.  The first is the trace of identity map of the 
1-cell in \cref{fig:serre_dual_map_1}.  The second is the trace of the  map $f\colon M\to M$ and identity maps as in \cref{fig:serre_dual_map_2}.  The third is the trace of the identity map of the 1-cell in  \cref{fig:serre_dual_map_3}.  This identifies the trace of the Serre dual of $f$ with the top composite in  \ref{fig:serre_dual_maps_big}.  

\cref{lem:trace_otimes_compatible} allows us to expand each of these traces into a monoidal product of two traces.  The trace of identity map of the 
1-cell in \cref{fig:serre_dual_map_1} becomes the traces of the identity maps of each of the 1-cells in the digram in   \cref{fig:serre_dual_map_1}.  The same simplification holds for the 1-cells in  \cref{fig:serre_dual_map_3}.  For the trace of the map in \cref{fig:serre_dual_map_2} we divide this into two traces, one for the identity map and $f$ and one for an identity map of the unit 1-cell.  Note that the trace of the identity map of the unit 1-cell is the identity.  We repeat this simplification for the bottom center region in \ref{fig:serre_dual_maps_big} to obtain the bottom arrow of the diagram.  Both regions labeled by \cref{lem:ec_e_and_c} are immediate consequences of that result and the observation that the Euler characteristic of a unit 1-cell is an identity map.

Finally, the region labeled (1) is another application of \cite[Thm.~1.25]{gurski_osorno}.
\end{proof}
\begin{figure}
\resizebox{\textwidth}{!}
{
\begin{tikzpicture}[
    mystyle/.style={%
    },
   my style/.style={%
   },
  ]

\node[draw,my style] (C0) at (0, 0){
\begin{tikzpicture}

\oc{e5}{\rdual{\coo{{A}}}}{_5}{-3}{-1}{-2}{\etr\lt}

\oc{e6}{\rdual{\evo{{A}}}}{_6}{-5}{-1}{-2}{\epr\lt}
\gc{g1}{-4}{-1}{-2}

\begin{scope}[on background layer]

\dblp{({e6}b)\gpb{g1}({e5}t)}
\blp{({e6}t)\gpt{g1}({e5}b)}

\end{scope}
\end{tikzpicture}
};

\node[draw,my style] (C4) at (32, 0){
\begin{tikzpicture}

\oc{e2}{\rdual{\coo{{A}}}}{_2}{-3}{-1}{-2}{\etr\lt}

\oc{e4}{\rdual{\evo{{A}}}}{_4}{-5}{-1}{-2}{\epr\lt}
\gc{g1}{-4}{-1}{-2}

\begin{scope}[on background layer]

\dblp{({e4}b)\gpb{g1}({e2}t)}
\blp{({e4}t)\gpt{g1}({e2}b)}

\end{scope}
\end{tikzpicture}
};

\node[draw,my style] (C1) at (7, 0){
\begin{tikzpicture}

\oc{e1}{\coo{{B}}}{_1}{-3}{1}{-2}{\et\lt}

\oc{e2}{\rdual{\coo{{B}}}}{_2}{2}{2}{-3}{\etr\lt}

\oc{e3}{\evo{{B}}}{_3}{1}{1}{-2}{\ep\lt}

\oc{e4}{\rdual{\evo{{B}}}}{_4}{-2}{0}{-1}{\epr\lt}

\oc{e5}{\rdual{\coo{{A}}}}{_5}{0}{0}{-1}{\etr\lt}

\oc{e6}{\rdual{\evo{{A}}}}{_6}{-4}{2}{-3}{\epr\lt}
\gc{g1}{-4}{-1}{-2}

\begin{scope}[on background layer]

\alp{({e1}b)--(-1*\d-\w,-2*\h)--(-1*\d+\w,1*\h)--({e3}t)}
\dlp{({e4}b)--(-1*\d-\w,-1*\h)--(-1*\d+\w,2*\h)--({e2}t)}

\dblp{({e6}b)--(-1*\d-\w,-3*\h)--(-1*\d+\w,0*\h)--({e5}t)}
\blp{({e6}t)--(-1*\d-\w,2*\h)--(-1*\d+\w,-1*\h)--({e5}b)}
\alp{({e4}t)--(-1*\d-\w,0*\h)--(-1*\d+\w,-3*\h)--({e2}b)}
\dlp{({e1}t)--(-1*\d-\w,1*\h)--(-1*\d+\w,-2*\h)--({e3}b)}

\end{scope}
\end{tikzpicture}
};

\node[draw,my style] (C2) at (25, 0){
\begin{tikzpicture}

\oc{e1}{\coo{{A}}}{_1}{-3}{1}{-2}{\et\lt}

\oc{e2}{\rdual{\coo{{B}}}}{_2}{2}{2}{-3}{\etr\lt}

\oc{e3}{\evo{{A}}}{_3}{1}{1}{-2}{\ep\lt}

\oc{e4}{\rdual{\evo{{B}}}}{_4}{-2}{0}{-1}{\epr\lt}

\oc{e5}{\rdual{\coo{{A}}}}{_5}{0}{0}{-1}{\etr\lt}

\oc{e6}{\rdual{\evo{{A}}}}{_6}{-4}{2}{-3}{\epr\lt}
\gc{g1}{-4}{-1}{-2}

\begin{scope}[on background layer]

\blp{({e1}b)--(-1*\d-\w,-2*\h)--(-1*\d+\w,1*\h)--({e3}t)}
\dlp{({e4}b)--(-1*\d-\w,-1*\h)--(-1*\d+\w,2*\h)--({e2}t)}

\dblp{({e6}b)--(-1*\d-\w,-3*\h)--(-1*\d+\w,0*\h)--({e5}t)}
\blp{({e6}t)--(-1*\d-\w,2*\h)--(-1*\d+\w,-1*\h)--({e5}b)}
\alp{({e4}t)--(-1*\d-\w,0*\h)--(-1*\d+\w,-3*\h)--({e2}b)}
\dblp{({e1}t)--(-1*\d-\w,1*\h)--(-1*\d+\w,-2*\h)--({e3}b)}

\end{scope}
\end{tikzpicture}
};

\node[draw,my style] (D1a) at (15, -6){
\begin{tikzpicture}

\oc{e1}{\coo{{B}}}{_1}{-3}{0}{-1}{\et\lt}

\oc{e2}{\rdual{\coo{{B}}}}{_2}{3}{0}{-1}{\etr\lt}

\oc{e3}{\evo{{B}}}{_3}{0}{1}{-2}{\ep\lt}

\oc{e4}{\rdual{\evo{{B}}}}{_4}{1}{0}{-1}{\epr\lt}

\oc{e5}{\rdual{\coo{{A}}}}{_5}{-1}{0}{-1}{\etr\lt}

\oc{e6}{\rdual{\evo{{A}}}}{_6}{-4}{1}{-2}{\epr\lt}
\gc{g1}{-4}{-1}{-2}

\begin{scope}[on background layer]

\alp{({e1}b)--(-2*\d-\w,-1*\h)--(-2*\d+\w,1*\h)--({e3}t)}
\dlp{({e4}b)--(2*\d-\w,-1*\h)--(2*\d+\w,0*\h)--({e2}t)}

\dblp{({e6}b)--(-2*\d-\w,-2*\h)--(-2*\d+\w,0*\h)--({e5}t)}
\blp{({e6}t)--(-2*\d-\w,1*\h)--(-2*\d+\w,-1*\h)--({e5}b)}
\alp{({e4}t)--(2*\d-\w,0*\h)--(2*\d+\w,-1*\h)--({e2}b)}
\dlp{({e1}t)--(-2*\d-\w,0*\h)--(-2*\d+\w,-2*\h)--({e3}b)}

\end{scope}
\end{tikzpicture}
};
\node[draw,my style] (D2) at (25, -6){
\begin{tikzpicture}

\oc{e1}{\coo{{A}}}{_1}{-3}{0}{-1}{\et\lt}

\oc{e2}{\rdual{\coo{{B}}}}{_2}{3}{0}{-1}{\etr\lt}

\oc{e3}{\evo{{A}}}{_3}{0}{1}{-2}{\ep\lt}

\oc{e4}{\rdual{\evo{{B}}}}{_4}{1}{0}{-1}{\epr\lt}

\oc{e5}{\rdual{\coo{{A}}}}{_5}{-1}{0}{-1}{\etr\lt}

\oc{e6}{\rdual{\evo{{A}}}}{_6}{-4}{1}{-2}{\epr\lt}
\gc{g1}{-4}{-1}{-2}

\begin{scope}[on background layer]

\blp{({e1}b)--(-2*\d-\w,-1*\h)--(-2*\d+\w,1*\h)--({e3}t)}
\dlp{({e4}b)--(2*\d-\w,-1*\h)--(2*\d+\w,0*\h)--({e2}t)}

\dblp{({e6}b)--(-2*\d-\w,-2*\h)--(-2*\d+\w,0*\h)--({e5}t)}
\blp{({e6}t)--(-2*\d-\w,1*\h)--(-2*\d+\w,-1*\h)--({e5}b)}
\alp{({e4}t)--(2*\d-\w,0*\h)--(2*\d+\w,-1*\h)--({e2}b)}
\dblp{({e1}t)--(-2*\d-\w,0*\h)--(-2*\d+\w,-2*\h)--({e3}b)}

\end{scope}
\end{tikzpicture}
};

\node[draw,my style] (D1) at (7, -6){
\begin{tikzpicture}

\oc{e1}{\coo{{B}}}{_1}{-2}{0}{-3}{\et\lt}

\oc{e2}{\rdual{\coo{{B}}}}{_2}{2}{0}{-3}{\etr\lt}

\oc{e3}{\evo{{B}}}{_3}{1}{-1}{-2}{\ep\lt}

\oc{e4}{\rdual{\evo{{B}}}}{_4}{-1}{-1}{-2}{\epr\lt}

\oc{e5}{\rdual{\coo{{A}}}}{_5}{-3}{-1}{-2}{\etr\lt}

\oc{e6}{\rdual{\evo{{A}}}}{_6}{-5}{-1}{-2}{\epr\lt}
\gc{g1}{-4}{-1}{-2}

\begin{scope}[on background layer]

\alp{({e1}b)--(0*\d-\w,-3*\h)--(0*\d+\w,-1*\h)--({e3}t)}
\dlp{({e4}b)--(0*\d-\w,-2*\h)--(0*\d+\w,0*\h)--({e2}t)}

\dblp{({e6}b)\gpb{g1}({e5}t)}
\blp{({e6}t)\gpt{g1}({e5}b)}
\alp{({e4}t)--(0*\d-\w,-1*\h)--(0*\d+\w,-3*\h)--({e2}b)}
\dlp{({e1}t)--(0*\d-\w,0*\h)--(0*\d+\w,-2*\h)--({e3}b)}

\end{scope}
\end{tikzpicture}
};

\node[draw,my style] (E1) at (7, -11){
\begin{tikzpicture}

\oc{e1}{\coo{{B}}}{_1}{-2}{-1}{-2}{\et\lt}

\oc{e2}{\rdual{\coo{{B}}}}{_2}{3}{-1}{-2}{\etr\lt}

\oc{e3}{\evo{{B}}}{_3}{0}{-1}{-2}{\ep\lt}

\oc{e4}{\rdual{\evo{{B}}}}{_4}{1}{-1}{-2}{\epr\lt}

\oc{e5}{\rdual{\coo{{A}}}}{_5}{-3}{-1}{-2}{\etr\lt}

\oc{e6}{\rdual{\evo{{A}}}}{_6}{-5}{-1}{-2}{\epr\lt}
\gc{g1}{-4}{-1}{-2}

\begin{scope}[on background layer]

\alp{({e1}b)--(-1*\d-\w,-2*\h)--(-1*\d+\w,-1*\h)--({e3}t)}
\dlp{({e4}b)--(2*\d-\w,-2*\h)--(2*\d+\w,-1*\h)--({e2}t)}

\dblp{({e6}b)\gpb{g1}({e5}t)}
\blp{({e6}t)\gpt{g1}({e5}b)}
\alp{({e4}t)--(2*\d-\w,-1*\h)--(2*\d+\w,-2*\h)--({e2}b)}
\dlp{({e1}t)--(-1*\d-\w,-1*\h)--(-1*\d+\w,-2*\h)--({e3}b)}

\end{scope}
\end{tikzpicture}
};

\node[draw,my style] (E2) at (25, -11){
\begin{tikzpicture}

\oc{e1}{\coo{{A}}}{_1}{-2}{-1}{-2}{\et\lt}

\oc{e2}{\rdual{\coo{{B}}}}{_2}{3}{-1}{-2}{\etr\lt}

\oc{e3}{\evo{{A}}}{_3}{0}{-1}{-2}{\ep\lt}

\oc{e4}{\rdual{\evo{{B}}}}{_4}{1}{-1}{-2}{\epr\lt}

\oc{e5}{\rdual{\coo{{A}}}}{_5}{-3}{-1}{-2}{\etr\lt}

\oc{e6}{\rdual{\evo{{A}}}}{_6}{-5}{-1}{-2}{\epr\lt}
\gc{g1}{-4}{-1}{-2}

\begin{scope}[on background layer]

\blp{({e1}b)--(-1*\d-\w,-2*\h)--(-1*\d+\w,-1*\h)--({e3}t)}
\dlp{({e4}b)--(2*\d-\w,-2*\h)--(2*\d+\w,-1*\h)--({e2}t)}

\dblp{({e6}b)\gpb{g1}({e5}t)}
\blp{({e6}t)\gpt{g1}({e5}b)}
\alp{({e4}t)--(2*\d-\w,-1*\h)--(2*\d+\w,-2*\h)--({e2}b)}
\dblp{({e1}t)--(-1*\d-\w,-1*\h)--(-1*\d+\w,-2*\h)--({e3}b)}

\end{scope}
\end{tikzpicture}
};

\draw[->](C0)to [out = 50, in =130,looseness=0.5]node [midway , fill=white] {$\tr(\serredual{f}{A}{B} )$}(C4);
\draw[->](C0)--(C1)node [midway , fill=white] {$\chi\eqref{fig:serre_dual_map_1}$};
\draw[->](C0)to [out = -70, in =150] node [midway , fill=white] {$\chi(U_A)\otimes \chi(\coo{B})$}(D1);
\draw[->](C0)to [out = -90, in =150]node [midway , fill=white] {$\id\otimes \mathrm{copairing}$}(E1);

\draw[->](C2)--(C4)node [midway , fill=white] {$\chi\eqref{fig:serre_dual_map_3}$};
\draw[->](D2)to [out = 20, in =-110] node [midway , fill=white] {$\chi(\evo{A})\otimes \chi(U_B)$}(C4);
\draw[->](E2)to [out = 40, in =-90]node [midway , fill=white] {$ \mathrm{pairing}\otimes\id$}(C4);

\draw[->](C1)--(D1)node [midway , fill=white] {$\gammat$};
\draw[->](C2)--(D2)node [midway , fill=white] {$\gammat$};
\draw[->](C1)--(D1a)node [midway , fill=white] {$\gammat$};
\draw[->](D1)--(E1)node [midway , fill=white] {$\gammat$};
\draw[->](D2)--(E2)node [midway , fill=white] {$\gammat$};
\draw[->](D1a)--(E1)node [midway , fill=white] {$\gammat$};

\draw[->](C1)--(C2)node [midway , fill=white] {$\tr\eqref{fig:serre_dual_map_1}$};
\draw[->](D1a)--(D2)node [midway , fill=white] {$\tr(\id\otimes f)\otimes \id$};
\draw[->](E1)--(E2)node [midway , fill=white] {$\id\otimes \tr(f)\otimes \id$};

\node at  (barycentric cs:E1=1,C0=.3){\cref{lem:ec_e_and_c}};
\node at  (barycentric cs:E2=1,C4=.3){\cref{lem:ec_e_and_c}};

\node at  (barycentric cs:C0=.6,D1=1,C1=.2){\cref{lem:trace_otimes_compatible}};
\node at  (barycentric cs:C2=.2,C4=.6,D2=1){\cref{lem:trace_otimes_compatible}};
\node at  (barycentric cs:C1=1,C2=1,D1a=1,D2=1){\cref{lem:trace_otimes_compatible}};
\node at  (barycentric cs:E1=1,E2=1,D1a=1,D2=1){\cref{lem:trace_otimes_compatible}};
\node at  (barycentric cs:D1a=1,D1=1){(1)};
\node (Cshift) at (16, 6){};
\node at  (barycentric cs:C1=1,C2=1,Cshift=1){\cref{thm:composite}};

\end{tikzpicture}

}
\caption{Trace of Serre dual}\label{fig:serre_dual_maps_big}
\end{figure}
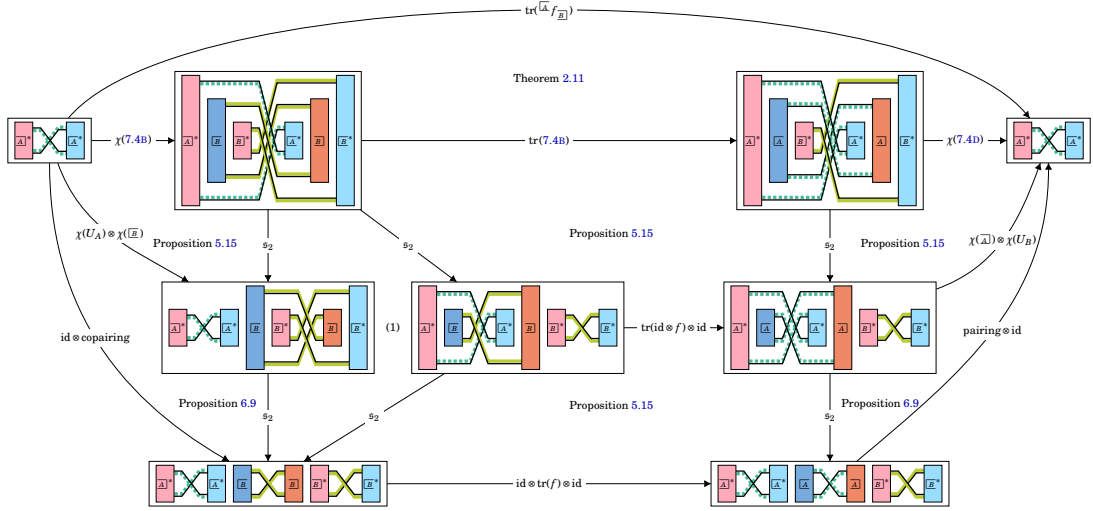

In this form, Serre duals arise naturally within the shadow isomorphisms. 

\begin{lem}\label{prop:serre_dual_odot_shadow}Suppose $A$ and $B$ are 1-dualizable.  \hfill
\begin{enumerate}
\item If $M$ is a 1-cell in $\sB(A,B)$ and $N$ is a 1-cell in $\sB(B,A)$, 
there is a natural isomorphism 
\begin{equation}\label{eq:serre_dual_odot_shadow}\sh{M\odot N}
	\cong \coo{A}\odot \left(M\otimes  \serredual{N}{B}{A} \right)\odot\Gamma\odot  \evo{B}.
\end{equation}

\item  If $M$  is a 1-cell in $\sB(A,B)$ and $N$ is a 1-cell in $\sB(\zdual{A},\zdual{B})$, 
there is a natural isomorphism
\begin{equation}\label{eq:serre_dual_odot_shadow_2}
\sh{M\odot\serredual{N}{\zdual{A}}{\zdual{B}}}
\cong
\coo{\zdual{A}}\odot \left(N\otimes M\right)\odot \Gamma\odot \evo{\zdual{B}}.
\end{equation}
\end{enumerate}
\end{lem}

\begin{proof}
The  composite in \cref{fig:serre_dual_odot_shadow} of the map $\trietop$ in \eqref{eq:trietop} and one of the pseduonaturality maps for $\Gamma$ defines the maps in  \eqref{eq:serre_dual_odot_shadow}.  
Since both maps in the composite are natural by assumption, the composite is natural.

The map in \eqref{eq:serre_dual_odot_shadow_2} is the composite in \cref{fig:serre_dual_odot_shadow_2} of $\trietop$ in \eqref{eq:trietop} and one of the  pseduonaturality maps for $\Gamma$.  It is also natural by hypothesis.
\end{proof}
\begin{figure}[h!]

\begin{subfigure}{.8\textwidth}
\resizebox{\textwidth}{!}{
\resizebox{.9\textwidth}{!}
{
\begin{tikzpicture}[
    mystyle/.style={%
    },
   my style/.style={%
   },
  ]
 
\def\scale{1.15}

\node[draw,my style] (D2) at (\scale*-5, 0){
\begin{tikzpicture}
\oc{t8}{\evo{A}}{_8}{-7}{0}{-3}{\ep}
\oc{m}{N}{}{-9}{-3}{-3}{\en}

\oc{e3}{\coo{B}}{_3}{-10}{-2}{-3}{\et}

\oc{t1}{\evo{B}}{_1}{-9}{-1}{-2}{\ep\dk}
\oc{n}{M}{}{-10}{-1}{-1}{\en}

\oc{e2}{\coo{A}}{_2}{-11}{0}{-1}{\et\dk}

\begin{scope}[on background layer]
\dlp{({e3}t)--
({t1}b)}
\alp{({n}t)--
({t1}t)}
\clp{({e2}b)--({n}t)}

\alp{({e3}b)--({m}t)}
\clp{({m}t)--(-8*\d-\w,-3*\h)--(-8*\d+\w,0*\h)--
({t8}t)}

\dclp{({e2}t)--(-8*\d-\w,0*\h)--(-8*\d+\w,-3*\h)--
({t8}b)}
\end{scope}
\end{tikzpicture}
};

\node[draw,my style] (D3) at (\scale*-10, 0){
\begin{tikzpicture}
\oc{t8}{\evo{A}}{_8}{-7}{0}{-1}{\ep}
\oc{m}{N}{}{-9}{-1}{-1}{\en}

\oc{n}{M}{}{-10}{-1}{-1}{\en}

\oc{e2}{\coo{A}}{_2}{-11}{0}{-1}{\et\dk}

\begin{scope}[on background layer]
\alp{({n}t)--({m}t)}
\clp{({e2}b)--({n}t)}

\clp{({m}t)--(-8*\d-\w,-1*\h)--(-8*\d+\w,0*\h)--
({t8}t)}

\dclp{({e2}t)--(-8*\d-\w,0*\h)--(-8*\d+\w,-1*\h)--
({t8}b)}
\end{scope}
\end{tikzpicture}
};

\node[draw,my style] (D1) at (1, 0){
\begin{tikzpicture}
\oc{t8}{\evo{A}}{_8}{-8}{1}{0}{\ep}
\oc{m}{N}{}{-9}{1}{1}{\en}

\oc{e3}{\coo{B}}{_3}{-10}{2}{1}{\et}

\oc{t1}{\evo{B}}{_1}{-6}{2}{-1}{\ep\dk}
\oc{n}{M}{}{-9}{-1}{-1}{\en}

\oc{e2}{\coo{A}}{_2}{-11}{0}{-1}{\et\dk}

\begin{scope}[on background layer]
\alp{({n}t)--(-7*\d-\w,-1*\h)--(-7*\d+\w,2*\h)--({t1}t)}
\clp{({e2}b)--({n}t)}

\alp{({e3}b)--({m}t)}
\clp{({m}t)--
({t8}t)}

\dclp{({e2}t)--
({t8}b)}
\dlp{({e3}t)--(-7*\d-\w,2*\h)--(-7*\d+\w,-1*\h)--({t1}b)}
\end{scope}
\end{tikzpicture}
};

\draw[<-](D1)--(D2)node [midway , fill=white] {$\gammao$};
\draw[<-](D2)--(D3)node [midway , fill=white] {$\trietop$};
\end{tikzpicture}
}}
\caption{The isomorphism in \eqref{eq:serre_dual_odot_shadow}}\label{fig:serre_dual_odot_shadow}
\end{subfigure}

\begin{subfigure}{.8\textwidth}
\resizebox{\textwidth}{!}{
\resizebox{.9\textwidth}{!}
{
\begin{tikzpicture}[
    mystyle/.style={%
    },
   my style/.style={%
   },
  ]
 
\def\scale{1.15}

\node[draw,my style] (D2) at (\scale*-4, 0){
\begin{tikzpicture}
\oc{t8}{\evo{A}}{_8}{-7}{1}{0}{\ep}
\oc{n}{N}{}{-9}{-1}{-1}{\en}

\oc{e3}{\coo{A}}{_3}{-12}{1}{-2}{\et}

\oc{t1}{\evo{\zdual{B}}}{_1}{-8}{-1}{-2}{\ep\dk}
\oc{m}{M}{}{-11}{-2}{-2}{\en}

\oc{e2}{\coo{\zdual{A}}}{_2}{-10}{0}{-1}{\et\dk}

\begin{scope}[on background layer]
\dblp{({n}t)--
({t1}t)}
\alp{({e2}t)--(-8*\d-\w,0*\h)--(-8*\d+\w,1*\h)--
({t8}t)}

\dlp{({e2}b)--({n}t)}

\blp{({m}t)--({t1}b)}

\alp{({e3}b)--({m}b)}
\dlp{({e3}t)--(-8*\d-\w,1*\h)--(-8*\d+\w,0*\h)--({t8}b)}
\end{scope}
\end{tikzpicture}
};

\node[draw,my style] (D3) at (\scale*-10, 0){
\begin{tikzpicture}
\oc{t8}{\evo{A}}{_8}{-10}{1}{0}{\ep}
\oc{n}{N}{}{-10}{-2}{-2}{\en}

\oc{e3}{\coo{A}}{_3}{-11}{0}{-1}{\et}

\oc{t1}{\evo{\zdual{B}}}{_1}{-8}{-1}{-2}{\ep\dk}
\oc{m}{M}{}{-10}{-1}{-1}{\en}

\oc{e2}{\coo{\zdual{A}}}{_2}{-12}{1}{-2}{\et\dk}

\begin{scope}[on background layer]
\dblp{({n}t)--(-9*\d-\w,-2*\h)--(-9*\d+\w,-1*\h)--
({t1}t)}
\alp{({e2}t)--%
({t8}t)}

\dlp{({e2}b)--({n}t)}

\blp{({m}t)--(-9*\d-\w,-1*\h)--(-9*\d+\w,-2*\h)--({t1}b)}

\alp{({e3}b)--({m}b)}
\dlp{({e3}t)--%
({t8}b)}
\end{scope}
\end{tikzpicture}
};

\node[draw,my style] (D4) at (\scale*-15, 0){
\begin{tikzpicture}
\oc{n}{N}{}{-10}{-2}{-2}{\en}

\oc{t1}{\evo{\zdual{B}}}{_1}{-8}{-1}{-2}{\ep\dk}
\oc{m}{M}{}{-10}{-1}{-1}{\en}

\oc{e2}{\coo{\zdual{A}}}{_2}{-11}{-1}{-2}{\et\dk}

\begin{scope}[on background layer]
\dblp{({n}t)--(-9*\d-\w,-2*\h)--(-9*\d+\w,-1*\h)--
({t1}t)}
\alp{({e2}t)--({m}b)}

\dlp{({e2}b)--({n}t)}

\blp{({m}t)--(-9*\d-\w,-1*\h)--(-9*\d+\w,-2*\h)--({t1}b)}

\end{scope}
\end{tikzpicture}
};

\draw[<-](D3)--(D4)node [midway , fill=white] {$\trietop$};
\draw[->](D3)--(D2)node [midway , fill=white] {$\gammael$};
\end{tikzpicture}
}}
\caption{The isomorphism in \eqref{eq:serre_dual_odot_shadow_2}}\label{fig:serre_dual_odot_shadow_2}
\end{subfigure}
\caption{The isomorphisms relating compositions in \cref{prop:serre_dual_odot_shadow}. }\label{fig:serredual_shadow}
\end{figure}
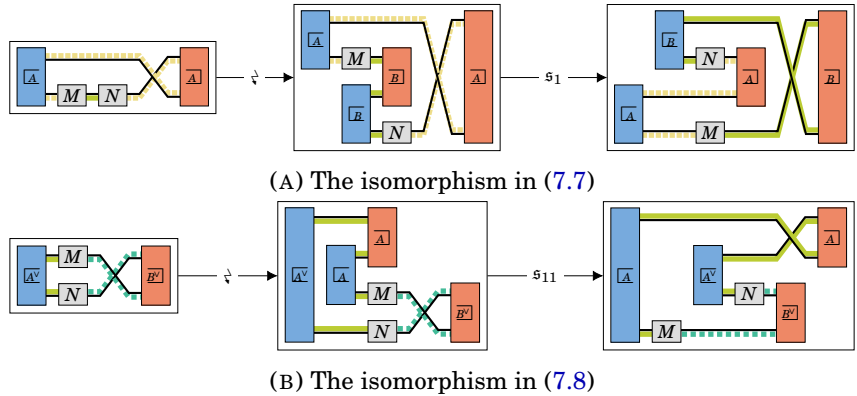

Making some of the naturality statements in \cref{prop:serre_dual_odot_shadow} explicit, if $M\in \sB(A,B)$ and $N\in \sB(B,A)$  and $f\colon M\to M$ and $g\colon N\to N$
the following diagram commutes.
\begin{equation}\label{eq:serre_duality_shadow_nat}
	\xymatrix{
	\sh{M\odot N}\ar[r]^-{\eqref{eq:serre_dual_odot_shadow}}\ar[d]^{\sh{f\odot g}}
	&{\coo{A}\odot \left(M\otimes  \serredual{N}{B}{A} \right)\odot\Gamma\odot  \evo{B}}
		\ar[d]^{\id_{\coo{A}}\odot \left(f\otimes \serredual{g}{B}{A} \right)\odot \id_\Gamma \odot \id_{\evo{B}}}
\\
\sh{M\odot N}\ar[r]^-{\eqref{eq:serre_dual_odot_shadow}}
	&{\coo{A}\odot \left(M\otimes  \serredual{N}{B}{A} \right)\odot\Gamma\odot  \evo{B}}
}\end{equation}
If $M\in \sB(A,B)$,  $N\in \sB(\zdual{A},\zdual{B})$,
$f\colon M\to M$ and $g\colon N\to N$ the following diagram commutes.
\begin{equation}\label{eq:serre_duality_shadow_nat_2}
\xymatrix{\sh{M\odot \serredual{N}{\zdual{A}}{\zdual{B}} }
		\ar[d]^{\sh{f\odot \serredual{g}{\zdual{A}}{\zdual{B}} }}
			\ar[r]^-{\eqref{eq:serre_dual_odot_shadow_2}}
	&
	{\coo{\zdual{A}}}\odot (N\otimes M)\odot {\Gamma}\odot{\evo{\zdual{B}}}
		\ar[d]^-{\id_{\coo{\zdual{A}}}\odot (g\otimes f)\odot\id_{\Gamma}\odot\id_{\evo{\zdual{B}}}}
\\
	\sh{ M\odot \serredual{N}{\zdual{A}}{\zdual{B}}}
		\ar[r]^-{\eqref{eq:serre_dual_odot_shadow_2}}
	&
{\coo{\zdual{A}}}\odot (N\otimes M)\odot{\Gamma}\odot{\evo{\zdual{B}}}}
\end{equation}
These two commuting squares allow us to compare the traces of the vertical maps.

\begin{cor}\label{prop:serre_dual_odot_trace}Suppose  $A$ and $B$ are 2-dualizable 0-cells.\hfill
\begin{enumerate}
\item\label{item:serre_dual_odot_trace} Let   $M \in \sB(A,B)$ and $N\in \sB(B,A)$ be right dualizable 1-cells.   
If $f\colon M\to M$ and $g\colon N\to N$, then  
\begin{align*}\tr(\sh{f\odot g})
	&=\tr\left(\id_{\coo{A}}\odot \left(f\otimes \serredual{g}{B}{A} \right)\odot \id_\Gamma \odot \id_{\evo{B}}\right)
\end{align*}
\item\label{item:serre_dual_odot_trace_2}  Let  $M\in \sB(A,B)$ and  $N\in \sB(\zdual{A},\zdual{B})$ be right dualizable 1-cells. 
If $f\colon M\to M$ and $g\colon N\to N$, then 
\[\tr(\sh{f\odot \serredual{g}{\zdual{A}}{\zdual{B}} })
=\tr\left(\id_{\coo{\zdual{A}}}\odot (g\otimes f)\odot\id_{\Gamma}\odot\id_{\evo{\zdual{B}}}\right)\]
\end{enumerate}
\end{cor}

\begin{proof}Note that the traces in the statement of this result are symmetric monoidal traces rather than bicategorical.  

We first make an elementary observation about symmetric monoidal traces.
Suppose $X$ and $Y$ are dualizable objects in a symmetric monoidal category, $h\colon X\to Y$ is an isomorphism, and the following diagram commutes.
\[\xymatrix{X\ar[r]^h\ar[d]^f&Y\ar[d]^g
\\X\ar[r]^h&Y}\] 
Then 
\[\tr(f)=\tr(h^{-1}hf)=\tr(h^{-1}gh)=\tr(ghh^{-1})=\tr(g)\]
The first and last equality replace $h^{-1}h$ and $hh^{-1}$ with identity maps.  The second equality from the left uses the commutativity of the square.  The remaining equality is the invariance of trace under cyclic permutation.

Then  
the commutative diagram in \eqref{eq:serre_duality_shadow_nat}  implies the traces of the vertical maps in \eqref{eq:serre_duality_shadow_nat}  are the same and proves \ref{item:serre_dual_odot_trace}.  The identifications in \ref{item:serre_dual_odot_trace_2}  follow from \eqref{eq:serre_duality_shadow_nat_2} in the same way.
\end{proof}

We have now reached one of the main results of this paper.  It  is the generalization of \cref{intro:shklyarov} and the manifestation of \cref{thm:goal}. 

\begin{thm}\label{thm:pairing_var_1} \label{thm:pairing_var_2}
 Suppose $A$ and $B$ are 2-dualizable 0-cells.
\begin{enumerate}
\item\label{item:pairing_var_1} If $M\in \sB(A,B)$ and $N\in \sB(B,A)$ are right dualizable  1-cells  and $f\colon M\to M$ and $g\colon N\to N$ then 
the diagram in \cref{fig:pairing_var_1} commutes.
\item \label{item:pairing_var_2}  If $M\in \sB(A,B)$ and $N\in \sB(\zdual{A},\zdual{B})$ are right dualizable 1-cells and $f\colon M\to M$ and $g\colon N\to N$  then the diagram in \cref{fig:pairing_var_2} commutes.
\end{enumerate}
\end{thm}

\begin{figure}[ht]
\begin{subfigure}{.48\textwidth}
\resizebox{\textwidth}{!}
{
\begin{tikzpicture}[
    mystyle/.style={%
    },
   my style/.style={%
   },
  ]
 
\def\scale{1.15}

\node[draw,my style] (A0) at (\scale*5,\scale* -4){\begin{tikzpicture}
		\node (ta) at (2,-1){};
		\node (ta) at (4,-2){};
		\end{tikzpicture}};

\node[draw,my style] (A4) at (\scale*13,\scale* -4){\begin{tikzpicture}
		\node (ta) at (2,-1){};
		\node (ta) at (4,-2){};
		\end{tikzpicture}};

\node[draw,my style] (B1) at (\scale*5,\scale*1){\begin{tikzpicture}
\oc{e3}{\rdual{\evo{A}}}{_3}{1}{2}{1}{\epr\lt}
\oc{t4}{\coo{A}}{_4}{-2}{2}{1}{\et\lt}

\oc{t5}{\rdual{\coo{A}}}{_5}{3}{2}{1}{\etr\lt}
\oc{e6}{\evo{A}}{_6}{0}{2}{1}{\ep\lt}
\begin{scope}[on background layer]

\dlp{({e3}b)--(2*\d-\w,\h)--(2*\d+\w,2*\h)--
({t5}t)}
\alp{({t4}b)--(-1*\d-\w,1*\h)--(-1*\d+\w,2*\h)--
({e6}t)}

\dlp{({t4}t)--(-1*\d-\w,2*\h)--(-1*\d+\w,1*\h)--
({e6}b)}
\alp{({e3}t)--(2*\d-\w,2*\h)--(2*\d+\w,1*\h)--
({t5}b)}
\end{scope}
\end{tikzpicture}
};

\node[draw,my style] (B3) at (\scale*13, \scale*1){\begin{tikzpicture}
\oc{e1}{\rdual{\evo{B}}}{_1}{-4}{0}{-1}{\epr}
\oc{t2}{\coo{B}}{_2}{-7}{0}{-1}{\et}

\oc{t7}{\rdual{\coo{B}}}{_7}{-2}{0}{-1}{\etr}
\oc{e8}{\evo{B}}{_8}{-5}{0}{-1}{\ep}

\begin{scope}[on background layer]

\dblp{({e1}b)--(-3*\d-\w,-1*\h)--(-3*\d+\w,0*\h)--({t7}t)}
\blp{({e1}t)--(-3*\d-\w,0*\h)--(-3*\d+\w,-1*\h)--({t7}b)}
\blp{({t2}b)--(-6*\d-\w,-1*\h)--(-6*\d+\w,0*\h)--({e8}t)}

\dblp{({t2}t)--(-6*\d-\w,0*\h)--(-6*\d+\w,-1*\h)--({e8}b)}

\end{scope}
\end{tikzpicture}
};

\draw[->](B1)edge node [midway , fill=white] {$\tr
(f)\otimes \tr
\left(\serredual{g}{B}{A}\right)$}(B3);

\draw[->](A0)edge node [midway , fill=white] {copairing}(B1) ;
\draw[->](B3) edge node [midway , fill=white] {pairing} (A4);

\draw[->](A0) edge node [midway , fill=white] {$\tr\left(\sh{f\odot g}
	\right)$} (A4);

\node at  (barycentric cs:A0=.5,B1=1,B3=1,A4=.5){\cref{thm:main_pairing}};
\node at  (barycentric cs:A0=1,B1=.25,B3=.25,A4=1){\cref{prop:serre_dual_odot_trace}};
\draw[->](A0) edge[bend left=50]node [midway , fill=white] {$\tr\left(\id_{\coo{A}}\odot ({f\otimes \serredual{g}{B}{A}})\odot \id_{\Gamma}\odot \id_{\evo{B}}
	\right)$} (A4);
\end{tikzpicture}
}
\caption{}\label{fig:pairing_var_1}
\end{subfigure}
\begin{subfigure}{.48\textwidth}
\resizebox{\textwidth}{!}
{
\begin{tikzpicture}[
    mystyle/.style={%
    },
   my style/.style={%
   },
  ]

\def\scale{1.15}

\node[draw,my style] (A0) at (\scale*5, \scale*-4){\begin{tikzpicture}
		\node (ta) at (2,-1){};
		\node (ta) at (4,-2){};
		\end{tikzpicture}};

\node[draw,my style] (A4) at (\scale*13,\scale* -4){\begin{tikzpicture}
		\node (ta) at (2,-1){};
		\node (ta) at (4,-2){};
		\end{tikzpicture}};

\node[draw,my style] (B1) at (\scale*5,\scale*1){\begin{tikzpicture}
\oc{e3}{\rdual{\evo{\zdual{A}}}}{_3}{1}{2}{1}{\epr\lt}
\oc{t4}{\coo{\zdual{A}}}{_4}{-2}{2}{1}{\et\lt}

\oc{t5}{\rdual{\coo{\zdual{A}}}}{_5}{3}{2}{1}{\etr\lt}
\oc{e6}{\evo{\zdual{A}}}{_6}{0}{2}{1}{\ep\lt}
\begin{scope}[on background layer]

\dlp{({e3}b)--(2*\d-\w,\h)--(2*\d+\w,2*\h)--
({t5}t)}
\alp{({t4}b)--(-1*\d-\w,1*\h)--(-1*\d+\w,2*\h)--
({e6}t)}

\dlp{({t4}t)--(-1*\d-\w,2*\h)--(-1*\d+\w,1*\h)--
({e6}b)}
\alp{({e3}t)--(2*\d-\w,2*\h)--(2*\d+\w,1*\h)--
({t5}b)}
\end{scope}
\end{tikzpicture}
};

\node[draw,my style] (B3) at (\scale*13, \scale*1){\begin{tikzpicture}
\oc{e1}{\rdual{\evo{\zdual{B}}}}{_1}{-4}{0}{-1}{\epr}
\oc{t2}{\coo{\zdual{B}}}{_2}{-7}{0}{-1}{\et}

\oc{t7}{\rdual{\coo{\zdual{B}}}}{_7}{-2}{0}{-1}{\etr}
\oc{e8}{\evo{\zdual{B}}}{_8}{-5}{0}{-1}{\ep}

\begin{scope}[on background layer]

\dblp{({e1}b)--(-3*\d-\w,-1*\h)--(-3*\d+\w,0*\h)--({t7}t)}
\blp{({e1}t)--(-3*\d-\w,0*\h)--(-3*\d+\w,-1*\h)--({t7}b)}
\blp{({t2}b)--(-6*\d-\w,-1*\h)--(-6*\d+\w,0*\h)--({e8}t)}

\dblp{({t2}t)--(-6*\d-\w,0*\h)--(-6*\d+\w,-1*\h)--({e8}b)}

\end{scope}
\end{tikzpicture}
};

\draw[->](B1)--(B3)node [midway , fill=white] {$\tr
(g)\otimes \tr
(f)$};

\draw[->](A0)edge node [midway , fill=white] {copairing}(B1) ;
\draw[->](B3) edge node [midway , fill=white] {pairing} (A4);

\draw[->](A0) edge node [midway , fill=white] {$\tr\left(\sh{f\odot  \serredual{g}{\zdual{A}}{\zdual{B}}}
	\right)$} (A4);

\node at  (barycentric cs:A0=.5,B1=1,B3=1,A4=.5){\cref{thm:main_pairing}};
\node at  (barycentric cs:A0=1,B1=.25,B3=.25,A4=1){\cref{prop:serre_dual_odot_trace}};

\draw[->](A0) edge[bend left=50]node [midway , fill=white] {$\tr\left(\id_{\coo{\zdual{A}}}\odot ({g\otimes f})\odot \id_{\Gamma}\odot \id_{\evo{\zdual{B}}}
	\right)$} (A4);

\end{tikzpicture}
}
\caption{}\label{fig:pairing_var_2}
\end{subfigure}
\caption{Compatibilities between trace, pairing, and copairing (\cref{thm:pairing_var_1})}
\end{figure}

\begin{proof}
This theorem  is an immediate consequence of \cref{prop:serre_dual_odot_trace,thm:main_pairing}.  \cref{prop:serre_dual_odot_trace}  provides the equality between 
\[\tr(\sh{f\odot g})\quad\text{ and }\tr\left(\id_{\coo{A}}\odot (f\otimes \serredual{g}{B}{A})\odot \id_\Gamma\odot \id_{\evo{B}}\right).\]
\cref{thm:main_pairing} replaces this second trace by the composite of $\tr(f)$ and $\tr\left(\serredual{g}{B}{A}\right)$ with the copairing and pairing maps.  The proof for \ref{item:pairing_var_2}  is similar.
\end{proof}

\cref{thm:pairing_var_1} deserves to be amplified and stated in more specific terms. 
\begin{example}
We work in the familiar bicategory of rings and bimodules. 
In this category the shadow is Hochschild homology. 
For an $(B, A)$-bimodule $N$, the Serre dual in $(A^\vee, B^\vee)$-bimodules is denoted $\serredual{N}{B}{A}$ as above  (assuming $A, B$ are suitably dualizable). 

Let $A$ be a $k$-algebra for some base ring $k$. The pairings defined in \cref{fig:copairing_pairing_defn_fig} become maps
\[
\hh(A) \otimes \hh(A^\op) \to \hh(k)  \qquad \hh(B) \otimes \hh(B^\op) \to \hh(k) 
\]
If we are given $M \in \mc{B}(A, B)$ and $N \in \mc{B}(B, A)$, we furthermore have traces induced by the identity maps of $M$ and $\serredual{N}{B}{A}$:
\[
\hh(A) \xto{\chi(M)} \hh(B) \qquad \hh(A^\vee) \xto{\chi\left(\serredual{N}{B}{A}\right)} \hh(B^\vee) 
\]
which we can tensor together to a map
\[
\hh(A) \otimes \hh(A^\vee) \xto{\chi(M)\otimes \chi\left(\serredual{N}{B}{A}\right)} \hh(B) \otimes \hh(B^\vee).
\]
In the presence of 2-duality, both $\hh(A)$ and $\hh(B)$ themselves become dualizable and we can compose with the copairing and pairing maps to obtain
\[
\hh(k) \to \hh(A) \otimes \hh(A^\vee) \xto{\chi(M)\otimes \chi\left(\serredual{N}{B}{A}\right)} \hh(B) \otimes \hh(B^\vee) \to \hh(k). 
\]
The theorem says that this can be computed in one step: consider $M \odot_B N$ and take $\hh(A; M \odot_B N)$. This, itself, is dualizable as a $k$-algebra and so we can compute
\[
\hh(k) \to \hh(A; M \odot_B N) \otimes \hh(A; M \odot_B N)^\vee \to \hh(k). 
\]
This corresponds to the toroidal trace diagrams in the introduction.
\end{example}

\begin{rmk}
This also illustrates quite clearly why Serre duality needs to rear its head: if we have $M$ an $(A, B)$-bimodule and $N$ a $(B, A)$-bimodule, we can combine them by either looking at $M \odot_B N$ or $N \odot_A M$. In either case, in order to obtain pairings from their respective traces, we have to obtain $(A^\vee, B^\vee)$-bimodules. 
\end{rmk}

\section{The Motivating Results}\label{sec:motivation}

In this section we use our abstract categorical results to prove the results that motivated this work originally, Shklyarov's Hirzebruch-Riemann-Roch theorem for DG-algebras \cite{shklyarov} and Petit's Riemann-Roch theorem \cite{petit}. Both theorems essentially follow from \cref{thm:pairing_var_1}, however some work is needed to identify pairings defined in each paper with our pairing.

\subsection{Shklyarov's Hirzebruch-Riemann-Roch theorem for DG-algebras}
\label{sec:motivation:Shklyarov}

In this subsection we prove 
Shklyarov's Hirzebruch-Riemann-Roch theorem. It is an easy consequence of our theorems, but in order to state the theorem in his form, we  restate his  pairing and invariants. All of these are defined in terms of familiar  properties of Hochschild homology and DG-categories: the shuffle isomorphism and the Morita invariance of Hochschild homology.

To define a pairing as Shklyarov does, we recall the following basic, but powerful fact about Hochschild homology: If $A$ is a dg-$k$-algebra there is a canonical isomorphism
\begin{equation}\label{shk:perf_to_A}
\HH(\Perf A) \simeq\HH(A) .
\end{equation}
given by Morita equivalence. The map $\HH(A) \to \HH(\Perf A)$ is induced by the inclusion. See \cite{campbell_ponto} for (much) more detail. 

Shklyarov defines a pairing $\cup\colon \HH(A)\otimes \HH(A^\op)\to \HH(k)$ as follows.

\begin{defn}\cite[(3.2)]{shklyarov} \label{pairing:shklyarov}
  We define
   \[
  \cup \colon \HH(A) \otimes \HH(A^\op) \to \HH(k)
  \]
  as the composite 
  \[
\HH(A) \otimes \HH(A^\op) \xrightarrow{\mathrm{sh}} \hh(A \otimes A^\op) \xrightarrow{\cong} \hh(\Perf_{A\otimes A^\op}) \xrightarrow{-\otimes_{A\otimes A^\op} A_k} \HH(\Perf k) \xrightarrow{\cong} \HH(k)
  \]
where the first map is the shuffle map.  The maps denoted with $\cong$ are the isomorphism (or its inverse) in \eqref{shk:perf_to_A}.  The map denoted $-\otimes_{A\otimes A^\op} A_k$ tensors a module or homomorphism in $\Perf_{A\otimes A^\op}$ with $_{A\otimes A^\op} A_k$ over ${A\otimes A^\op}$.
\end{defn}

Our work in \cite{campbell_ponto} (in particular the Hochschild homology version of \cite[Ex. 5.12]{campbell_ponto}) immediately implies that our pairing agrees with Shklyarov's.

\begin{lem}\label{lem:shk_pairing}
The pairing from  \cref{fig:copairing_pairing_defn_fig} agrees with the pairing in \cref{pairing:shklyarov}.
\end{lem}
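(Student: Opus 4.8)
The plan is to verify that the two pairings agree by tracing each through the Morita-equivalence identification and reducing everything to the general statement about traces and copairings that we have already established. The essential point is that both maps are built from the same ingredients---the shuffle map on Hochschild homology, the tensor with $A$ regarded as an $(A\otimes A^\op,k)$-bimodule, and the Morita isomorphism \eqref{shk:perf_to_A}---so the content is really the claim that our abstractly-defined pairing of \cref{fig:pairing_defn_fig} unwinds, in the bicategory of rings and bimodules, to exactly Shklyarov's composite in \cref{pairing:shklyarov}.

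First I would identify the pieces of our pairing in this concrete setting. Recall our pairing is the composite $P_{\coo{A}}$, then $\gamma$, then $L_{\evo{A}}$ (\cref{fig:pairing_defn_fig}), which is the evaluation exhibiting $\sh{U_{\zdual A}}=\hh(A^\op)$ as dual to $\sh{U_A}=\hh(A)$ in $\sB(I,I)=\hh(k)\text{-modules}$. In the bicategory of rings and bimodules, $\coo{A}$ and $\evo{A}$ are $A$ regarded as a $(k,A\otimes A^\op)$- and $(A^\op\otimes A,k)$-bimodule, and the evaluation map $L_{\evo{A}}$ (\cref{fig:coeval_1_cell_ev_flip}) is precisely the structure map that tensors with $A$ over $A\otimes A^\op$. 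The key observation is that the map $\otimes_{A\otimes A^\op}A_k$ appearing in \cref{pairing:shklyarov} is exactly this evaluation, once one accounts for the Morita identification $\hh(A\otimes A^\op)\xrightarrow{\cong}\hh(\Perf_{A\otimes A^\op})$. So the step-by-step plan is: (i) recognize the shuffle map $\mathrm{sh}\colon\hh(A)\otimes\hh(A^\op)\to\hh(A\otimes A^\op)$ as the $\gamma$/shadow-isomorphism part of our composite; (ii) recognize the Morita isomorphism \eqref{shk:perf_to_A} as implicit in how the witnessing 1-cells $\coo{A},\evo{A}$ present the shadow; and (iii) recognize $\otimes_{A\otimes A^\op}A_k$ as the evaluation 2-cell $L_{\evo{A}}$ together with $P_{\coo{A}}$.

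The main leverage---and the reason the lemma is stated as an immediate consequence---is \cite[Ex. 5.12]{campbell_ponto}, in its Hochschild homology incarnation. That example already establishes the compatibility between the bicategorical evaluation/coevaluation structure and the explicit module-theoretic maps (shuffle composed with tensoring against the diagonal bimodule) under Morita invariance. Thus the proof amounts to citing that example and checking that the identifications in (i)--(iii) match up on the nose; no new diagram chase is required beyond verifying that the orientation conventions and the roles of $A$ as a $(k,A\otimes A^\op)$- versus $(A^\op\otimes A,k)$-bimodule are consistent with the directions of $\coo{A}$ and $\evo{A}$.

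The step I expect to be the main obstacle is (iii), specifically pinning down the precise sense in which our evaluation---which is assembled from $P_{\coo A}$, the symmetry $\gamma$, and $L_{\evo A}$, and which \emph{a priori} lands in $\sB(I,I)$ via a composite of several 2-cells---is literally the single map $\otimes_{A\otimes A^\op}A_k$ that Shklyarov writes. This requires being careful that the Morita isomorphism \eqref{shk:perf_to_A} is inserted in the correct place and that the 2-dualizability data $(\rdual{\coo A},\rdual{\evo A})$ used to build the pairing reduce, in the rings-and-bimodules example, to the expected finitely-generated-projective resolutions. I would handle this by invoking the explicit description of the dualizability witnesses for smooth and proper $A$ and checking that, after applying \eqref{shk:perf_to_A}, the evaluation of the dual pair $(\coo A,\rdual{\evo A})$ is exactly tensoring with the diagonal $A$; the cited work in \cite{campbell_ponto} supplies precisely this identification, so the remaining verification is bookkeeping rather than genuinely new content.
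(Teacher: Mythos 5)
Your overall strategy---reduce Shklyarov's composite to the bicategorical pairing by matching ingredients and citing Morita invariance from \cite{campbell_ponto}---is close to the paper's, but there is a genuine gap at exactly the step you flag as the main obstacle, and the fix is not where you look for it. The two composites do not match piece by piece: in the pairing of \cref{fig:pairing_defn_fig} the evaluation $P_{\coo{A}}$ comes \emph{first}, then the symmetry $\gamma$, then $L_{\evo{A}}$, whereas in \cref{pairing:shklyarov} the shuffle map comes first and everything else is packaged into the single map $\otimes_{A\otimes A^\op}A_k$. Commuting the evaluations past the symmetry, so that the pairing becomes ``$\mathrm{sh}$ followed by the Euler characteristic $\chi(\evo{A})$,'' is precisely \cref{lem:ec_e_and_c} (see \cref{fig:ec_e_and_c_b}); that proposition is proved in this paper by a substantial diagram chase and needs the extra coherence axioms of \cref{fig:add_dualizable_0_cell,fig:add_dualizable_0_cell_a}, which were added to the definition of 1-dualizability exactly for this purpose (\cref{rmk:cp_1_dual_change}). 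It is not ``bookkeeping,'' and it cannot be supplied by \cite{campbell_ponto}: the pairing is defined only in the present paper, so no statement there can perform this rearrangement.

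What \cite{campbell_ponto} does supply (Lemma 5.8 there, which is what the paper's proof actually invokes) is the other half: the commuting square identifying the bicategorical Euler characteristic $\chi(\evo{A})\colon \HH(A\otimes A^\op)\to \HH(k)$ with Shklyarov's $\otimes_{A\otimes A^\op}A_k$ under the Morita isomorphisms \eqref{shk:perf_to_A}. So the correct decomposition is: (a) the pairing equals $\mathrm{sh}$ followed by $\chi(\evo{A})$, by \cref{lem:ec_e_and_c}; and (b) $\chi(\evo{A})$ corresponds to $\otimes_{A\otimes A^\op}A_k$ under Morita equivalence, by \cite{campbell_ponto}. Your proposal collapses (a) into (b) and attributes both to \cite{campbell_ponto}, leaving the actual rearrangement of the pairing unproven.
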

\begin{proof}
  The proof is a diagram chase:
  \[
  \xymatrix@C=70pt{
     & \HH(\Perf_{A \otimes A^\op})\ar[r]^{\otimes_{A \otimes A^\op} A}  & \HH(\Perf_k) \\
    \HH(A) \otimes \HH(A^\op) \ar[r]^{\mathrm{sh}}  & \HH(A \otimes A^\op) \ar[r]^{\chi\left(\evo{A}\right)} \ar[u]^{\cong} & \HH(k) \ar[u]^{\cong} 
    }
  \]
  The commutativity of the square is a special case of \cite[Lemma 5.8]{campbell_ponto}. The fact that the composite along the bottom is the pairing from \cref{fig:copairing_pairing_defn_fig} is \cref{lem:ec_e_and_c}. 
\end{proof}

Shklyarov defines invariants $\mathrm{eu}(L)$ and $\mathrm{Eu}(L)$ closely related to our trace invariants, but not \textit{exactly} the same. Here are his definitions. 

\begin{defn}
  For a perfect dg $A$ module $L$ \cite[\S1.3]{shklyarov} defines  $\mathrm{Eu}(L) \in \HH_0(\Perf A)$ to be the image of $L$ in $\HH_0(\Perf A)$ and $\mathrm{eu}(L)$ to be the image of $\mathrm{Eu}(L)$ under the isomorphism in \eqref{shk:perf_to_A}.
  \end{defn}

We show agreement between these invariants and the Euler characteristics above.

\begin{lem}\label{lem:shk_euler}
The image of $1\in k$ under the composite 
\[k\to \HH_0(k)\xto{\chi(L)}\HH(A)\]
is $\eu(L)$.
\end{lem}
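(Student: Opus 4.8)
The plan is to unwind both sides of the claimed equality to the level of $\HH_0(\Perf A)$ and compare them there, where Shklyarov's definition of $\mathrm{Eu}(L)$ is most transparent. First I would recall that for a perfect dg $A$-module $L$, viewed as a $1$-cell $L \in \sB(k, A)$, the Euler characteristic $\chi(L)\colon \sh{U_k} \to \sh{U_A}$ is the trace of the identity map of $L$. Since $L$ is a $(k,A)$-bimodule with $k$ the ground ring, $\sh{U_k} = \HH(k)$ and $\sh{U_A} = \HH(A)$, so the composite $k \to \HH_0(k) \xto{\chi(L)} \HH(A)$ sends $1$ to a distinguished element which I want to identify with $\eu(L)$. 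The key observation is that $\eu(L)$ is \emph{defined} as the image under the Morita isomorphism \eqref{shk:perf_to_A} of $\mathrm{Eu}(L) \in \HH_0(\Perf A)$, and $\mathrm{Eu}(L)$ is itself the image of $L$ regarded as an object of $\Perf A$, i.e. the class of the identity endomorphism of $L$.

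The heart of the argument is therefore to relate our bicategorical Euler characteristic $\chi(L)$ to the ``image of $L$'' construction in $\HH_0(\Perf A)$. I would use the Morita-theoretic machinery from \cite{campbell_ponto}: the inclusion $A \hookrightarrow \Perf A$ induces the Morita equivalence \eqref{shk:perf_to_A}, and under this equivalence a perfect $A$-module $L$ corresponds to a dualizable $1$-cell whose Euler characteristic computes exactly the class of the identity map of $L$ in $\HH_0(\Perf A)$. Concretely, $\chi(L)(1)$ is the trace of $\id_L$, which by the defining formula for the trace (coevaluation followed by symmetry followed by evaluation) is precisely the Hochschild class represented by $L$ as an object of the category $\Perf A$. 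This is the same recipe that produces $\mathrm{Eu}(L)$, so the two agree after transporting along the Morita isomorphism.

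The main step I expect to be the obstacle is making precise the identification of the ``trace of $\id_L$'' with ``the class of $L$ in $\HH_0(\Perf A)$'' — that is, checking that the abstract trace in the bicategory, when specialized to $(k,A)$-bimodules, reproduces Shklyarov's concrete cycle-level representative of $\mathrm{Eu}(L)$. The cleanest way to discharge this is to cite the Hochschild-homology specialization of the relevant example in \cite{campbell_ponto} (the same one invoked just above in \cref{lem:shk_pairing}), which already shows that our Euler characteristics agree with the standard Hochschild classes under Morita invariance; the present statement is then the $\chi(L)$ analogue of that identification. I would organize the proof as a short diagram chase paralleling \cref{lem:shk_pairing}: a commuting square whose top row is the Morita isomorphism $\HH(A) \cong \HH(\Perf A)$, whose bottom row is $\chi(L)$, and whose vertical identifications are the inclusion-induced isomorphisms, with the square commuting by \cite[Lemma 5.8]{campbell_ponto} (or its evident variant). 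Tracing $1 \in k$ around the square then gives $\chi(L)(1) = \eu(L)$ on the nose.
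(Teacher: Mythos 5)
Your proposal matches the paper's proof: the paper establishes the identity by exactly the diagram chase you describe, namely a commutative square relating $\chi(L)$, the Morita isomorphism $\HH_0(A)\cong\HH_0(\Perf_A)$, and the map $k\to\HH_0(\Perf_A)$ sending $1$ to the class of $L$, with commutativity supplied by \cite[Lemma 5.8]{campbell_ponto}. Your arrangement of the square differs only cosmetically from the paper's, so the two arguments are essentially identical.
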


The map $k\to \HH_0(k)$ is the isomorphism that recognizes the quotient on $\HH_0(k)$ is by the zero submodule.

\begin{proof}
Consider the diagram 
\begin{equation}\label{eq:shk_comparisons_euler}\xymatrix{k\ar[r]\ar[d]& \HH_0(\Perf_A)
\\
\HH_0(k)\ar[r]^-{\chi(L)}&\HH_0(A)\ar[u]^\cong 
}\end{equation}
where the top arrow sends $1$ to the class of $L$ in $\HH_0(\Perf_A)$.
The diagram in \eqref{eq:shk_comparisons_euler} commutes by  \cite[Lemma 5.8]{campbell_ponto}.
\end{proof}

Assembling the results establishing the agreement between the various pairings and Euler characteristics, Shklyarov's HRR theorem follows easily from 
\cref{thm:pairing_var_1}, but 
we need to say something about a subtle point.
  Following Shklyarov, we use the functor
  \begin{equation}\label{eq:first_d}
	\Perf_A \to \Perf_{A^{\op}} 
  \end{equation}
  given by $M \mapsto \hom_A (M, A)$. This functor hides some very common identifications. 
\begin{itemize}
\item The functor $M \mapsto \hom_A (M, A)$ takes an $(A, k)$-bimodule $M$ to a $(k, A)$-bimodule: indeed, $A$ acts on the right of $\hom_A (M, A)$. 
\end{itemize}
If one prefers, one could also note that $\hom_A (M, A)$ being a $(k, A)$-bimodule is correct from the viewpoint of bicategorical duality. 
\begin{itemize}
\item 
We then view an $(k, A)$-bimodule as a $(A^\op, k)$-bimodule by declaring that an $A^\op$-action on the left is the same as an $A$-action on the right (in this situation, we ignore $k$ since it is commutative). This gives a functor 
\[\operatorname{Mod}_{(k, A)} \to \operatorname{Mod}_{(A^\op, k)}.\] 
\end{itemize}
This transition is accomplished with a base-change object. Indeed, switching from a $(k, A)$-bimodule to an $(A^\op, k)$-module amounts to tensoring with the base-change object $\ _{A\otimes A^\op} A_k$. So, the switch from $(k, A)$-bimodules to $(A^\op, k)$-bimodules amounts to taking the Serre dual $\serredual{(\hom_A (M, A))}{k}{A}$. 

When we very carefully keep track of sidedness, the functor \eqref{eq:first_d}, which we denote $D$,  is  the composite
  \[
    \Perf_{(A, k)}\xto{\hom_A (-,A)}   \Perf_{(k, A)} \xto{\serredual{(-)}{k}{A}} \Perf_{(A^\op, k)}
  \]
\begin{thm}\label{thm:shklyarov}\cite[Thm. 3]{shklyarov} For any perfect DG $A$-modules $M$ and $N$ define 
\[\chi(M, N) \coloneqq \chi(\hom_{\Perf A} (M, N)).\] 
Then
\[\chi (M,N)  =  \eu(N) \cup \eu(DM) \]
\end{thm}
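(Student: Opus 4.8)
The plan is to deduce \cref{thm:shklyarov} from \cref{thm:pairing_var_1}\ref{item:pairing_var_1} by carefully matching each piece of Shklyarov's formula with the corresponding piece of our categorical statement. The central observation is that both sides of the equation $\chi(M,N) = \eu(N)\cup\eu(DM)$ are instances of the commuting diagram in \cref{fig:pairing_var_1}, once we take the bicategory of dg-algebras and bimodules (\cref{ex:all_the_ex}\ref{it:dg_morita}), set the shadow to be Hochschild homology, and choose the 1-cells appropriately.

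First I would identify the left-hand side $\chi(M,N)$ with the diagonal map $\tr(\sh{f\odot g})$ in \cref{fig:pairing_var_1}. Taking $f = \id_M$ and $g = \id_N$, the trace $\tr(\sh{\id_M \odot \id_N})$ is the Euler characteristic of $M\odot_B N$, which in Hochschild terms is exactly $\chi(\Hom_{\Perf A}(M,N))$ under the Morita identification \eqref{shk:perf_to_A}; here I would invoke \cref{lem:shk_euler} together with the standard fact that $\HH_0(A; M\odot_B N)$ computes $\chi(\Hom_{\Perf A}(M,N))$. Next I would identify the upper route of \cref{fig:pairing_var_1}: the copairing followed by $\tr(\id_M)\otimes\tr(\serredual{(\id_N)}{B}{A})$ followed by the pairing. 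Using \cref{lem:shk_pairing}, the pairing agrees with Shklyarov's $\cup$; using \cref{lem:shk_euler} again, the two tensor factors $\tr(\id_M) = \chi(M)$ and $\tr(\serredual{(\id_N)}{B}{A}) = \chi(\serredual{N}{B}{A})$ applied to the copairing produce precisely $\eu(N)$ and the Euler class of the Serre dual.

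The step I expect to require the most care is reconciling Shklyarov's functor $D$ in \eqref{eq:first_d} with the Serre dual $\serredual{-}{B}{A}$ appearing in our theorem. The excerpt already unwinds $D$ as the composite $\serredual{(\hom_A(-,A))}{k}{A}$, so I would argue that $\eu(DM)$, which is the class of $DM$ in $\HH_0(\Perf A^\op)$, coincides with the contribution $\chi(\serredual{N}{B}{A})$ of \cref{fig:pairing_var_1} when one sets $N$ to be the $(B,A)$-bimodule dual to $M$. Concretely, with $B = k$ this forces $\serredual{N}{B}{A}$ to land in $(A^\vee,k)$-bimodules $= \Perf_{A^\op}$, matching the target of $D$. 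The bookkeeping here amounts to checking that taking $\hom_A(-,A)$ and then the appropriate Serre dual is the same handedness convention used in defining the copairing, which is exactly the base-change identification spelled out before the theorem statement.

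Having matched all four corners and both diagonals of \cref{fig:pairing_var_1} with Shklyarov's data, the theorem follows by reading off the commutativity of that diagram: the diagonal equals the upper composite, i.e. $\chi(M,N) = \eu(N)\cup\eu(DM)$. In summary, the proof is essentially an exercise in translation, with \cref{thm:pairing_var_1} doing all the real work; the only genuine content beyond the earlier lemmas is verifying that Shklyarov's $D$ is the Serre dual in disguise, which the excerpt has already set up so that this verification is routine.
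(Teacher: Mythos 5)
Your proposal is correct and follows the paper's own proof in essentially the same way: both deduce the statement from \cref{thm:pairing_var_1}\ref{item:pairing_var_1} applied with $B=k$ and 1-cells $N\in\sB(A,k)$ and $\hom_A(M,A)\in\sB(k,A)$, use \cref{lem:shk_pairing} and \cref{lem:shk_euler} to match the pairing and Euler classes with Shklyarov's $\cup$, $\eu(N)$, $\eu(DM)$, recognize $D$ as the Serre dual of $\hom_A(-,A)$, and identify the diagonal with $\chi(\hom_A(M,A)\otimes_A N)=\chi(\Hom_{\Perf A}(M,N))$. The only caveat is notational: your opening ``$f=\id_M$, $g=\id_N$'' must be read with the theorem's labels rather than Shklyarov's (since Shklyarov's $M$ and $N$ are both $(A,k)$-bimodules and cannot be composed), which is precisely the dictionary you fix in your third paragraph.
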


\begin{proof}
The follows from 
\cref{thm:pairing_var_1}  with the identifications in \cref{lem:shk_pairing,lem:shk_euler} but we give more detail because of the subtleties noted above.

If $M$ and $N$ are $(A, k)$-bimodules and perfect as $A$-modules, the module $\hom_A (M, A)$ is a $(k, A)$-bimodule and we are 
in the situation of \cref{thm:pairing_var_1}.  That theorem 
gives the commutativity of the following (recalling that $DM$ is the notation for the Serre dual of $\hom_A (M, A)$) 
\[
\xymatrix@C=70pt{
 \HH(k) \otimes \HH(k) \ar[rr]^{\chi(N) \otimes \chi(DM)} & & \HH(A) \otimes \HH(A^\op)  \ar[d]\\
  \hh(k) \ar[u]\ar[rr]_{\chi(\hom_A (M, A) \odot N)} & & \hh(k) 
  }
\]
In our current situation, $\hom_A(M, N) \odot N = \hom_A(M, A) \otimes_A N$ which is $\hom_A (M, N)$ as a $(k, k)$-bimodule. We have already identified the long way around the diagram as the pairing of the Euler classes. 
\end{proof}

\begin{rmk}
In fact, the above only requires that $A$ be proper --- only the right dualizability of $\evo{A}$ is used. The pairing on Hochschild homology may not be non-degenerate in this situation, but the theorem remains agnostic about that. 
\end{rmk}

\begin{rmk}
Shklyarov also identifies $\eu(DM)$ with $\eu(M)^\vee$.  This is the identification in \cref{lem:serre_dual_trace}.
\end{rmk}

\begin{rmk}
In Shklyarov's formula an integral sign also appears.
We viewed this as being rolled into the characters computed by Morita equivalence and so we omit it. 
\end{rmk}

\subsection{Petit's Riemann-Roch theorem for DG algebras}
Like \cref{intro:shklyarov}, the main result in \cite{petit} describes the interaction between a Hochschild class and a pairing map. In \cite[Rmk. 4.11]{petit},  Petit  observes that his Hochschild class $hh_A(M,f)$ agrees with the bicategorical trace of the identity map in \cite{ps:sym,ps:bicat}  so no further identifications will be necessary for the comparison of invariants.  

 Petit gives several descriptions for his pairing.  We will use the description in \cite[5.2]{petit} since it allows the most direct comparison to \cref{fig:pairing_defn_fig}.
Unless explicitly stated,  $\hom$ and $\otimes$ are derived so we will not separately notate this.
Petit's pairing $\cup$ is the following composite:
\begin{align}\label{petit:pairing}
\hom_{(B^\op)^e}&(\omega_{B^\op}^{-1},B^\op) \otimes \hom_{B^e}(\omega^{-1}_{B}, B )
\\
&\xto{(-\otimes \id_{\omega_{B^\op}})\otimes \id}
\hom_{(B^\op)^e}(\omega_{B^\op}^{-1}\otimes_{B^\op}\omega_{B^\op},B^\op\otimes_{B^\op}\omega_{B^\op}) \otimes \hom_{B^e}(\omega^{-1}_{B}, B )\nonumber
\\
&\xto{\hom(f,g)\otimes \id}
\hom_{^eB}(B^\op, \omega_{B^\op}) \otimes \hom_{B^e}(\omega^{-1}_{B}, B )\nonumber
\\
&\xto{\otimes}
\hom_{k}(B^\op\otimes_{B^e}\omega^{-1}_{B}, \omega_{B^\op}\otimes_{B^e}B )\xto{\hom(c,e)}
\hom_{k}(k, k)\cong k\nonumber
\end{align} 
where $B$ is a DGA, $B^e\coloneqq B \otimes B^\op$ and $^eB\coloneqq B^\op \otimes B$ and  the other objects and maps are defined in the left most column of \cref{notation_translation}.  The translations of $\omega^{-1}_B$ and $\omega^{-1}_{B^\op}$ look different in  \cref{notation_translation} because of sidedness considerations.  After converting to actions on the same side (as Petit does) there is no inconsistency.

\begin{table}
\begin{tabular}{c|c|c}
Petit \cite{petit}&This paper&\cref{lem:simp_petit} 
\\\hline
\hline

 $\omega^{-1}_B\coloneqq \hom_{\,^eB}(B^\op,\,^eB)$
&\resizebox{!}{.75cm}{\begin{tikzpicture}
\oc{t1}{\rdual{\coo{A}}}{_1}{-1}{1}{0}{\etr}
\gc{g1}{-2}{1}{0}
\begin{scope}[on background layer]
\dlp{(-3*\d,0*\h)\gpb{g1}({t1}t)}
\alp{ (-3*\d,\h)\gpt{g1}({t1}b)}
\end{scope}
\end{tikzpicture}} 
& $\rdual{J}$
\\\hdashline
&
\resizebox{!}{.75cm}{\begin{tikzpicture}
\oc{t1}{\coo{A}}{_1}{1}{1}{0}{\et}
\gc{g1}{2}{1}{0}
\begin{scope}[on background layer]
\alp{ ({t1}b)\gpb{g1}(3*\d,\h)}
\dlp{({t1}t)\gpt{g1}(3*\d,0*\h)}
\end{scope}
\end{tikzpicture}}&$J$
\\\hline
&\resizebox{!}{.75cm}{\begin{tikzpicture}
\oc{e2}{\evo{A}}{_2}{3}{1}{0}{\ep}
\gc{g1}{2}{1}{0}
\begin{scope}[on background layer]
\alp{ (2*\d,1*\h)--({e2}t)}
\dlp{(2*\d,0*\h)--({e2}b)}
\end{scope}

\end{tikzpicture}}
&$K$
\\\hdashline
&\resizebox{!}{.75cm}{\begin{tikzpicture}
\oc{e2}{\rdual{\evo{A}}}{_2}{-3}{1}{0}{\epr}
\gc{g1}{2}{1}{0}
\begin{scope}[on background layer]
\alp{ (-2*\d,1*\h)--({e2}t)}
\dlp{(-2*\d,0*\h)--({e2}b)}
\end{scope}

\end{tikzpicture}}
&$\rdual{K}$
\\\hline
 $\omega^{-1}_{B^\op}\coloneqq \hom_{\,^eB}(B,\,^eB)$&
\resizebox{!}{.75cm}{\begin{tikzpicture}
\oc{e3}{\rdual{\evo{\zdual{A}}}}{_3}{-2}{1}{0}{\epr}
\gc{g2}{-1}{1}{0}
\begin{scope}[on background layer]
\dlp{({e3}t)--(-1*\d,1*\h)}
\alp{ ({e3}b)--(-1*\d,0*\h)}
\end{scope}
\end{tikzpicture}}
&$N$
\\\hdashline
&
\resizebox{!}{.75cm}{\begin{tikzpicture}
\oc{t4}{{\evo{\zdual{A}}}}{_4}{0}{1}{0}{\ep\lt}
\gc{g2}{-1}{1}{0}
\begin{scope}[on background layer]
\dlp{(-1*\d,\h)--({t4}t)}
\alp{ (-1*\d,0)--({t4}b)}
\end{scope}
\end{tikzpicture}}
&$\rdual{N}$
\\\hline
&\resizebox{!}{.75cm}{\begin{tikzpicture}
\oc{e3}{{\coo{\zdual{A}}}}{_3}{-2}{1}{0}{\et\lt}
\gc{g2}{-1}{1}{0}
\begin{scope}[on background layer]
\dlp{({e3}b)\gpb{g2}(0,\h)}
\alp{ ({e3}t)\gpt{g2}(0,0)}
\end{scope}
\end{tikzpicture}}
&$M$
\\\hline
$\omega_{B^\op}\coloneqq \hom_k(B,k)$&
\resizebox{!}{1.5cm}{\begin{tikzpicture}
\gc{g1}{2}{-1}{-2}

\oc{t4}{{\evo{\zdual{A}}}}{_4}{-1}{0}{-1}{\ep\lt}
\gc{g2}{-4}{0}{-3}

\oc{e5}{\rdual{\evo{A}}}{_5}{-3}{-1}{-2}{\epr}
\gc{g3}{-2}{-1}{-2}
\gc{g4}{-1}{-2}{-3}

\begin{scope}[on background layer]
\alp{(-4*\d,0*\h)--({t4}t)}
\dlp{ (-4*\d,-3*\h)--(0*\d,-3*\h)}

\dlp{ ({e5}b)\gpb{g3}({t4}b)}
\alp{({e5}t)\gpt{g3}(0*\d,-2*\h)}
\end{scope}
\end{tikzpicture}}
&$L$
\\\hline\hline
$c\colon k \to \hom_{\,^eB}(B^\op, B^\op)\cong B^\op\otimes_{B^e}\omega^{-1}_{B}$
&$P_{\coo{A}}$
&$\eta_J$
\\
$e\colon \omega_{B^\op}\otimes_{B^e}B \to k$
&$L_{\evo{A}}$ 
&$\epsilon_K$
\\\hline
$B^\op\otimes_{B^\op} \omega_{B^\op}\to \omega_{B^\op}$
&\cref{petit_g} 
&$g\colon M\otimes_R L\to \rdual{K}$
\\
&\cref{petit_f_prime}
&
$f'\colon \rdual{N}\otimes_kJ\to L$,
\\
$B^\op\to \omega_{B^\op}^{-1}\otimes_{B^\op} \omega_{B^\op}$
&\cref{petit_f}
&$f\colon J\to N\otimes_RL$
\\
\hline
\end{tabular}
\caption{Notation translation}\label{notation_translation}
\end{table}

The main theorem of  \cite{petit} follows from \cref{thm:pairing_var_2}\ref{item:pairing_var_2} after we identify Petit's pairing \eqref{petit:pairing} with ours.  For the first step in the identification of the pairings the particular modules are not relevant, and in fact can be a distraction, so we replace the particular modules in \eqref{petit:pairing}  by the choices in \cref{notation_translation} that have the minimum required compatibility for \eqref{petit:pairing}  to be defined.  

The primary difference between \eqref{petit:pairing}  and our pairing is the presence of the internal hom. The goal of the following result is to replace those internal homs by duals.  In the process we obtain a significant simplification of \eqref{petit:pairing}.  To see this, compare the right and left hand sides of the displayed equation in \cref{lem:simp_petit}.  The right side is Petit's pairing and the left is the composite we will use in \cref{thm:petit}.

\begin{lem}\label{lem:simp_petit}
Let $R$ be a $k$-algebra, $J$, $M$, and $N$ be right $R$-modules, $K$ be a left $R$-module, and $L$ be an $(R,R)$-bimodule.  Suppose $J$, $N$, and $K$ are right dualizable.
For module homomorphisms 
$g\colon M\otimes_R L\to \rdual{K}$ and 
$f'\colon \rdual{N}\otimes_kJ\to L$,
the following diagram commutes.
\begin{equation}\label{eq:simp_petit}\xymatrix{
M\otimes_R\rdual{N}\otimes_kJ\otimes_RK\ar[r]^-\sim\ar[d]^{\id\otimes f'\otimes \id}
&\hom_R(N,M)\otimes \hom_R(\rdual{J},K)\ar[d]
\\
M\otimes_RL\otimes_RK\ar[d]^{g\otimes \id}
&\hom_R(N\otimes _RL,M\otimes_RL)\otimes \hom_R(\rdual{J},K)\ar[d]^{\hom((\id\otimes f')(\eta_N\otimes\id),g)}
\\
\rdual{K}\otimes_RK\ar[dd]^{\epsilon_K}
&\hom_R(J,\rdual{K})\otimes \hom_R(\rdual{J},K)\ar[d]^\otimes
\\
&\hom_k(J\otimes_R \rdual{J},\rdual{K}\otimes_R K)\ar[d]^{\hom(\eta_J,\epsilon_K)}
\\
k\ar[r]
&\hom(k,k)
}\end{equation}
\end{lem}

Note that $\rdual{K}=\hom_k(K,k)$,  $\rdual{J}=\hom_R(J,R)$ and $\rdual{N}=\hom_R(N,R)$.  The map $\eta_N$ is $k\to N\otimes_R \rdual{N}$ is the coevaluation for $N$.  Similarly, $\eta_J$ is the coevaluation for $J$ and $\epsilon_K$ is the evaluation for $K$.  The map $f$ is $(\id\otimes f')(\eta_N\otimes\id)$.

\begin{figure}
\begin{subfigure}[b]{0.6\textwidth}

\resizebox{\textwidth}{!}{
\begin{tikzpicture}[
    mystyle/.style={%
    },
   my style/.style={%
   },
  ]

\node[draw,my style] (A1a) at (-5, 1){\begin{tikzpicture}
\oc{t1}{\coo{A}}{_1}{1}{1}{0}{\et}
\gc{g1}{2}{1}{0}
\begin{scope}[on background layer]
\alp{ ({t1}b)\gpb{g1}(3*\d, 1*\h)}
\dlp{({t1}t)\gpt{g1}(3*\d, 0*\h)}
\end{scope}
\oc{t4}{{\evo{\zdual{A}}}}{_4}{0}{1}{0}{\ep\lt}
\gc{g2}{-1}{1}{0}
\begin{scope}[on background layer]
\dlp{(-1*\d, 1*\h)--({t4}t)}
\alp{ (-1*\d, 0*\h)--({t4}b)}
\end{scope}
\end{tikzpicture}
};

\node[draw,my style] (B1) at (0, 1){\begin{tikzpicture}
\oc{t1}{\coo{A}}{_1}{1}{-2}{-3}{\et}
\gc{g1}{2}{-2}{-3}
\begin{scope}[on background layer]
\alp{ ({t1}b)\gpb{g1}(3*\d, -2*\h)}
\dlp{({t1}t)\gpt{g1}(3*\d, -3*\h)}
\end{scope}
\oc{t4}{{\evo{\zdual{A}}}}{_4}{0}{0}{-1}{\ep\lt}
\gc{g2}{-4}{0}{-3}

\oc{e5}{\rdual{\evo{A}}}{_5}{-1}{-1}{-2}{\epr}
\oc{t6}{\rdual{\coo{A}}}{_6}{0}{-2}{-3}{\etr}
\gc{g3}{-2}{-1}{-2}
\gc{g4}{-1}{-2}{-3}

\begin{scope}[on background layer]
\dlp{(-2*\d, 0*\h)--({t4}t)}
\alp{ (-2*\d, -3*\h)--({t6}b)}

\dlp{ ({e5}b)--({t6}t)}
\alp{({e5}t)--({t4}b)}
\end{scope}
\end{tikzpicture}
};

\node[draw,my style] (C1) at (5, 1){\begin{tikzpicture}
\gc{g1}{2}{-2}{-3}
\oc{t4}{{\evo{\zdual{A}}}}{_4}{2}{0}{-1}{\ep\lt}
\gc{g2}{-4}{0}{-3}

\oc{e5}{\rdual{\evo{A}}}{_5}{1}{-1}{-2}{\epr}
\gc{g3}{-2}{-1}{-2}
\gc{g4}{-1}{-2}{-3}

\begin{scope}[on background layer]
\dlp{(0*\d, 0*\h)--({t4}t)}
\alp{ (0*\d, -3*\h)\gpb{g1}(3*\d, -2*\h)}

\dlp{ ({e5}b)\gpt{g1}(3*\d, -3*\h)}
\alp{({e5}t)--({t4}b)}

\end{scope}
\end{tikzpicture}
};

\draw[->](A1a)--(B1)node [midway , fill=white] {$\tridetop$};

\draw[->](B1)--(C1)node [midway , fill=white]{$P_{\coo{A}}$};

\end{tikzpicture}
}
\caption{$f'$}\label{petit_f_prime}
\end{subfigure}
\begin{subfigure}[b]{0.8\textwidth}
\resizebox{\textwidth}{!}{
\begin{tikzpicture}[
    mystyle/.style={%
    },
   my style/.style={%
   },
  ]

\node[draw,my style] (A1) at (-11, 1){\begin{tikzpicture}
\oc{t1}{\coo{A}}{_1}{1}{1}{0}{\et}
\gc{g1}{2}{1}{0}
\begin{scope}[on background layer]
\alp{ ({t1}b)\gpb{g1}(3*\d, 1*\h)}
\dlp{({t1}t)\gpt{g1}(3*\d, 0*\h)}
\end{scope}

\end{tikzpicture}
};

\node[draw,my style] (A1a) at (-5.5, 1){\begin{tikzpicture}
\oc{t1}{\coo{A}}{_1}{1}{1}{0}{\et}
\gc{g1}{2}{1}{0}
\begin{scope}[on background layer]
\alp{ ({t1}b)\gpb{g1}(3*\d, 1*\h)}
\dlp{({t1}t)\gpt{g1}(3*\d, 0*\h)}
\end{scope}
\oc{e3}{\rdual{\evo{\zdual{A}}}}{_3}{-2}{1}{0}{\epr\lt}
\oc{t4}{{\evo{\zdual{A}}}}{_4}{0}{1}{0}{\ep\lt}
\gc{g2}{-1}{1}{0}
\begin{scope}[on background layer]
\dlp{({e3}t)--({t4}t)}
\alp{ ({e3}b)--({t4}b)}
\end{scope}
\end{tikzpicture}
};

\node[draw,my style] (B1) at (.5, 1){\begin{tikzpicture}
\oc{t1}{\coo{A}}{_1}{1}{-2}{-3}{\et}
\gc{g1}{2}{-2}{-3}
\begin{scope}[on background layer]
\alp{({t1}b)\gpb{g1}(3*\d, -2*\h)}
\dlp{({t1}t)\gpt{g1}(3*\d, -3*\h)}
\end{scope}
\oc{e3}{\rdual{\evo{\zdual{A}}}}{_3}{-2}{0}{-3}{\epr\lt}
\oc{t4}{{\evo{\zdual{A}}}}{_4}{0}{0}{-1}{\ep\lt}
\gc{g2}{-4}{0}{-3}

\oc{e5}{\rdual{\evo{A}}}{_5}{-1}{-1}{-2}{\epr}
\oc{t6}{\rdual{\coo{A}}}{_6}{0}{-2}{-3}{\etr}
\gc{g3}{-2}{-1}{-2}
\gc{g4}{-1}{-2}{-3}

\begin{scope}[on background layer]
\dlp{({e3}t)--({t4}t)}
\alp{ ({e3}b)--({t6}b)}

\dlp{ ({e5}b)--({t6}t)}
\alp{({e5}t)--({t4}b)}
\end{scope}
\end{tikzpicture}
};

\node[draw,my style] (C1) at (6, 1){\begin{tikzpicture}
\gc{g1}{0}{-2}{-3}
\oc{e3}{\rdual{\evo{\zdual{A}}}}{_3}{-2}{0}{-3}{\epr\lt}
\oc{t4}{{\evo{\zdual{A}}}}{_4}{0}{0}{-1}{\ep\lt}
\gc{g2}{-4}{0}{-3}

\oc{e5}{\rdual{\evo{A}}}{_5}{-1}{-1}{-2}{\epr}
\gc{g3}{-2}{-1}{-2}
\gc{g4}{-1}{-2}{-3}

\begin{scope}[on background layer]
\dlp{({e3}t)--({t4}t)}
\alp{ ({e3}b)\gpb{g1}(1*\d, -2*\h)}

\dlp{ ({e5}b)\gpt{g1}(1*\d, -3*\h)}
\alp{({e5}t)--({t4}b)}

\end{scope}
\end{tikzpicture}
};

\draw[->](A1)--(A1a)node [midway , fill=white] {$I_{\coo{\zdual{A}}}$};

\draw[->](A1a)--(B1)node [midway , fill=white] {$\tridetop$};

\draw[->](B1)--(C1)node [midway , fill=white]{$P_{\coo{A}}$};

\end{tikzpicture}
}
\caption{$f$}\label{petit_f}
\end{subfigure}

\begin{subfigure}[b]{0.8\textwidth}
\resizebox{\textwidth}{!}{
\begin{tikzpicture}[
    mystyle/.style={%
    },
   my style/.style={%
   },
  ]

\node[draw,my style] (C2) at (1, -7){\begin{tikzpicture}
\gc{g1}{3}{0}{-3}

\oc{e3}{{\coo{\zdual{A}}}}{_3}{0}{0}{-1}{\et\lt}
\oc{t4}{{\evo{\zdual{A}}}}{_4}{1}{-1}{-2}{\ep\lt}
\gc{g2}{-4}{0}{-1}

\oc{e5}{\rdual{\evo{A}}}{_5}{0}{-2}{-3}{\epr}
\gc{g3}{-2}{-1}{-2}
\gc{g4}{2}{0}{-3}

\begin{scope}[on background layer]
\alp{
(2.5*\d,-3*\h)\gpb{g1}(4*\d,0*\h)}
\dlp{({e3}b)--({t4}t)}
\alp{ ({e5}t)--({t4}b)}
\dlp{({e5}b)\gpb{g4}(2.5*\d,0*\h)\gpt{g1}(4*\d,-3*\h)}
\alp{ ({e3}t)\gpt{g4}(2.5*\d,-3*\h)
}

\end{scope}
\end{tikzpicture}
};

\node[draw,my style] (C1) at (-5, -7){\begin{tikzpicture}
\gc{g1}{1}{-2}{-3}

\oc{e3}{{\coo{\zdual{A}}}}{_3}{-2}{0}{-3}{\et\lt}
\oc{t4}{{\evo{\zdual{A}}}}{_4}{1}{0}{-1}{\ep\lt}
\gc{g2}{-2}{0}{-3}

\oc{e5}{\rdual{\evo{A}}}{_5}{0}{-1}{-2}{\epr}
\gc{g3}{-2}{-1}{-2}
\gc{g4}{-1}{0}{-3}

\begin{scope}[on background layer]
\alp{
(0.5*\d,-3*\h)\gpb{g1}(2*\d,-2*\h)}
\dlp{({e3}b)\gpb{g4}({t4}t)}
\alp{ ({e5}t)--({t4}b)}
\dlp{({e5}b)\gpt{g1}(2*\d,-3*\h)}
\alp{ ({e3}t)\gpt{g4}(0.5*\d,-3*\h)
}

\end{scope}
\end{tikzpicture}
};

\node[draw,my style] (A2) at (6, -7){\begin{tikzpicture}
\gc{g1}{0}{1}{0}

\oc{e3}{\rdual{\evo{A}}}{_3}{-2}{1}{0}{\epr}
\gc{g2}{-1}{1}{0}
\begin{scope}[on background layer]
\alp{
(-.25,0)\gpb{g1}(1*\d,1*\h)}
\dlp{ ({e3}b)\gpb{g2}(-.25,\h)\gpt{g1}(1*\d,0*\h)}
\alp{ ({e3}t)\gpt{g2}(-.25,0)
}

\end{scope}
\end{tikzpicture}
};

\node[draw,my style] (A3) at (10, -7){\begin{tikzpicture}

\oc{e3}{\rdual{\evo{A}}}{_3}{-2}{1}{0}{\epr}
\begin{scope}[on background layer]
\dlp{({e3}b)--(0*\d,0*\h)}
\alp{ ({e3}t)--(0*\d,1*\h)}
\end{scope}
\end{tikzpicture}
};

\draw[->](C1)--(C2)node [midway , fill=white]{\eqref{ex:pseduonaturality_gamma}};
\draw[->](C2)--(A2)node [midway , fill=white]{$\trictop^{-1}$};
\draw[->](A2)--(A3)node [midway , fill=white]{\ref{fig:dualizable_1_cell_symmetry_2}};
\end{tikzpicture}
}
\caption{$g$}\label{petit_g}
\end{subfigure}
\caption{Maps}
\end{figure}

\begin{proof}It is enough to prove the underived version of this result.  So in this proof, all homs and tensors are underived.   Since it seems simplest, we prove this by chasing elements, but this argument could be replaced by one using adjunctions.

The image of the tuple $(m,\delta,j,k)$ under the left composite is 
\[g(m,f'(\delta,j))(k).\]

For $\alpha \colon N\to M$ and $\beta\colon \rdual{J}\to K$, the image under the right composite is the homomorphism 
\begin{align*}k&\xto{\eta_J}J\otimes_R\rdual{J}
\xto {\eta_N\otimes \id\otimes \id} N\otimes_R \rdual{N}\otimes_k J\otimes_R\rdual{J}
\xto{\id\otimes f'\otimes \id} N\otimes_R L\otimes_R\rdual{J}
\\&
\xto{\alpha \otimes \id\otimes \id}M\otimes_R L\otimes_R\rdual{J}
\xto{g\otimes \id}\rdual{K}\otimes_R\rdual{J}
\xto{\id\otimes \beta}\rdual{K}\otimes_R K
\xto{\epsilon_K}k
\end{align*}
We can take $\eta_J$ to be given by linearly extending $1\mapsto \sum_i (j_i,j_i^*)$ and 
$\eta_N$ to be given by linearly extending $1\mapsto \sum_{\ell}(n_\ell,n_\ell^*)$.
Then the image of $1$ under the composite above is 
\begin{align*}
1&\mapsto \sum_i (j_i,j_i^*)\mapsto \sum_{i,\ell}(n_\ell,n_\ell^*, j_i,j_i^*)
\mapsto  \sum_{i,\ell}(n_\ell,f'(n_\ell^*, j_i),j_i^*)
\\&
\mapsto  \sum_{i,\ell}(\alpha(n_\ell),f'(n_\ell^*, j_i),j_i^*)
\mapsto  \sum_{i,\ell}(g(\alpha(n_\ell),f'(n_\ell^*, j_i)),j_i^*)
\\&
\mapsto  \sum_{i,\ell}(g(\alpha(n_\ell),f'(n_\ell^*, j_i)),\beta(j_i^*))
\mapsto  \sum_{i,\ell}g(\alpha(n_\ell),f'(n_\ell^*, j_i))(\beta(j_i^*))
\end{align*}

The image of $(m,\delta,j,k)$ under the top map is $(m\delta(-), j^*(-)k)$, so the image of $(m,\delta,j,k)$ under the top, right, and bottom composite is 
 \[\sum_{i,\ell}g(m\delta(n_\ell),f'(n_\ell^*, j_i))( j^*(j_i^*)k)=
\sum_{i,\ell}g(m,f'(\delta(n_\ell)n_\ell^*, j_i j^*(j_i^*)))(k)=g(m,f'(\delta,j))(k)\]
\end{proof}

The main theorem of \cite{petit} is the following:

\begin{thm}\label{thm:petit}\cite[p. 4] {petit} Let $A$ be a proper, smooth dg algebra, $M \in D_{\Perf}(A)$, $f \in \hom_A(M,M)$ and $N \in D_{\Perf}(A^{\op})$, $g \in \hom_{A^{\op}}(N,N)$.
Then 
\[hh_k(N{\otimes}_A M,g {\otimes}_A  f)=hh_{A^{\op}}(N,g)\cup hh_A(M,f).\]
\end{thm}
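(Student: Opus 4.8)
The plan is to deduce \cref{thm:petit} from \cref{thm:pairing_var_1} exactly as \cref{thm:shklyarov} was deduced, the only genuine work being the bookkeeping that matches Petit's objects, maps, and pairing to our invariants. Since Petit already records in \cite[Rmk. 4.11]{petit} that his Hochschild class $hh_A(M,f)$ is the bicategorical trace of the identity (equivalently our $\tr(f)$ via \cite{ps:sym,ps:bicat}), no comparison of invariants is needed: the left-hand sides $hh_{A^\op}(N,g)$ and $hh_A(M,f)$ are our traces $\tr(g)$ and $\tr(f)$, and $hh_k(N\otimes_A M, g\otimes_A f)$ is $\tr(\sh{g\odot f})$ under \cref{prop:serre_dual_odot_trace}. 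So the theorem becomes the assertion that Petit's $\cup$ composed with these traces equals the trace of $\sh{g\odot f}$, which is precisely \cref{item:pairing_var_1} of \cref{thm:pairing_var_1} once the pairing is identified.

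The main step, therefore, is to identify Petit's pairing $\cup$ (in the form \cite[5.2]{petit}) with our pairing from \cref{fig:pairing_defn_fig}. First I would use \cref{notation_translation} to translate each of Petit's objects into a 1-cell in our calculus: the dualizing bimodules $\omega_{B}^{\pm1}$, $\omega_{B^\op}^{\pm1}$, and $\omega_{B^\op}$ become the composites $J,\rdual{J},K,\rdual{K},N,\rdual{N},M,L$ displayed in the table, and Petit's structural maps $c$, $e$, and the three comparison maps become $\eta_J=P_{\coo{A}}$, $\epsilon_K=L_{\evo{A}}$, and the maps $f,f',g$ built in \cref{petit_f,petit_f_prime,petit_g}. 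The substance here is a lemma (call it \cref{lem:simp_petit}) verifying that Petit's five-step composite, after these substitutions and after the standard sidedness conversions flagged in the paragraph preceding \cref{notation_translation}, simplifies to the composite $P_{\coo{A}}$, $\gamma$, $L_{\evo{A}}$ that defines our evaluation in \cref{fig:pairing_defn_fig}. This is a diagram chase: I would expand each of Petit's arrows into its circuit-diagram form, then cancel the $\tridetop$/$P_{\coo{A}}$ pairs using the triangle identities from \cref{lem:compatiblity_1-witnesses_if_2-dual} and the coevaluation/evaluation relations of \cref{fig:coeval_1_cell_triangles,fig:coeval_1_cell_2_triangles}, absorbing the $I_{\coo{\zdual{A}}}$ and $C_{\evo{A}}$ insertions against their duals.

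With that identification in hand, I would assemble the proof in the same shape as \cref{thm:shklyarov}: apply \cref{thm:pairing_var_1}\cref{item:pairing_var_1} with $A$ replaced throughout by Petit's $B$ (which is smooth and proper, hence 2-dualizable), with the roles of $M$ and $N$ as in Petit's statement, and with the pairing and copairing supplied by \cref{cor:shadow_dual}. The theorem's square then reads that the pairing of $\tr(f)$ against $\tr(\serredual{g}{B}{A})$ equals $\tr(\sh{f\odot g})$; Petit's $\cup$ is the bottom-and-sides composite, and the interior Serre-dual identification (that pairing against $g$ through $\omega$-twisting is pairing against its Serre dual) is exactly \cref{lem:serre_dual_trace}, paralleling the remark that $\eu(DM)=\eu(M)^\vee$ in the Shklyarov case.

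The hard part will be the sidedness and dualizing-object bookkeeping in \cref{lem:simp_petit}: Petit works with $\omega_B$, $\omega_B^{-1}$, $B^e$, and $\,^eB$ and freely converts left actions to right actions on the commutative base $k$, whereas our calculus keeps handedness rigid through $\coo{A}$, $\evo{A}$ and their duals. Matching his $\Hom$-adjunctions against our $P_{\coo{A}}$/$L_{\evo{A}}$ — in particular confirming that the two entries for $\omega_{B^\op}^{-1}$ in \cref{notation_translation} that "look different because of sidedness" really do agree after converting to one-sided actions — is where errors would most naturally creep in, and it is the only place where the argument is not a formal consequence of results already proved. Everything downstream of that lemma is a direct invocation of \cref{thm:pairing_var_1} and \cref{prop:serre_dual_odot_trace}.
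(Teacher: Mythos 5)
Your proposal is correct in substance and its skeleton is the paper's: no comparison of invariants is needed (Petit's \cite[Rmk. 4.11]{petit} identifies his classes with bicategorical traces), the genuine work is identifying Petit's pairing with ours, and the paper does this exactly as you anticipate --- via the translation table \cref{notation_translation}, a lemma (\cref{lem:simp_petit}) replacing Petit's internal Homs by duals, and the circuit-diagram comparison in \cref{fig:petit:final_diagram}. The differences are in the final assembly and in the nature of the key lemma. The paper concludes by citing \cref{thm:pairing_var_2}\ref{item:pairing_var_2}, not part \ref{item:pairing_var_1}: since Petit's $N$ lies in $D_{\Perf}(A^{\op})$, i.e.\ is a 1-cell in $\sB(\zdual{A},k)$, it satisfies the hypotheses of part \ref{item:pairing_var_2} verbatim, the top row of that square is $\tr(f)\otimes\tr(g)$ with no Serre dual appearing, and no sidedness conversion is ever performed. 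Your route through part \ref{item:pairing_var_1} ``exactly as in \cref{thm:shklyarov}'' instead requires first converting $N$ to a $(k,A)$-bimodule $N'$ and then recognizing $\serredual{N'}{k}{A}$ as $N$ again; this can be made to work, but the justification for that conversion is the Serre-dual sidedness discussion (packaged in \cref{prop:serre_dual_odot_trace}), not \cref{lem:serre_dual_trace} --- that corollary computes the trace of a Serre dual as a copairing/pairing composite (the analogue of $\eu(DM)=\eu(M)^\vee$) and is not what the conversion needs --- so part \ref{item:pairing_var_2} buys you a cleaner endgame. Second, the paper's \cref{lem:simp_petit} is not proved by circuit-diagram cancellation: it is an underived, element-level chase in module categories (necessary because Petit's five-step composite is built from internal Homs, which the graphical calculus does not see directly), with the string-diagram content confined to \cref{fig:petit:final_diagram}; your plan to do everything by triangle-identity cancellations would still have to reproduce that algebraic step somewhere.
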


\begin{proof}
First complete the translation of Petit's pairing \eqref{petit:pairing} to a map without internal homs by replacing the arbitrary modules and maps in the left composite in \eqref{eq:simp_petit} with those given by the middle column of \cref{notation_translation} and the maps in \cref{petit_f_prime,petit_g}.  
This composite is the left and bottom composite in \cref{fig:petit:final_diagram}.  

The two regions in \cref{fig:petit:final_diagram}  labeled Nat. commute by naturality.  The remaining region is labeled by Def. and commutes by definition of the map in \eqref{eq:comp_1_duals_shad_trace}.  The top and right composite is the pairing defined in this paper and so this completes the identification of Petit's pairing \eqref{petit:pairing}  and the pairing defined in this paper.

Then \cref{thm:petit} follows from  \cref{thm:pairing_var_2}\ref{item:pairing_var_2}. 
\end{proof}
\begin{figure}
\resizebox{.95\textwidth}{!}{
\begin{tikzpicture}[
    mystyle/.style={%
    },
   my style/.style={%
   },
  ]

\node[draw,my style] (A1) at (10, 2){\begin{tikzpicture}
\oc{t1}{\coo{A}}{_1}{1}{1}{0}{\et}
\oc{e2}{\evo{A}}{_2}{3}{1}{0}{\ep}
\gc{g1}{2}{1}{0}
\begin{scope}[on background layer]
\alp{ ({t1}b)\gpb{g1}({e2}t)}
\dlp{({t1}t)\gpt{g1}({e2}b)}
\end{scope}
\oc{e3}{\rdual{\evo{A}}}{_3}{-2}{1}{0}{\epr}
\oc{t4}{\rdual{\coo{A}}}{_4}{0}{1}{0}{\etr}
\gc{g2}{-1}{1}{0}
\begin{scope}[on background layer]
\dlp{({e3}b)\gpb{g2}({t4}t)}
\alp{ ({e3}t)\gpt{g2}({t4}b)}
\end{scope}
\end{tikzpicture}
};

\node[draw,my style] (A1a) at (-5, 2){\begin{tikzpicture}
\oc{t1}{\coo{A}}{_1}{1}{1}{0}{\et}
\oc{e2}{\evo{A}}{_2}{3}{1}{0}{\ep}
\gc{g1}{2}{1}{0}
\begin{scope}[on background layer]
\alp{ ({t1}b)\gpb{g1}({e2}t)}
\dlp{({t1}t)\gpt{g1}({e2}b)}
\end{scope}
\oc{e3}{{\coo{\zdual{A}}}}{_3}{-2}{1}{0}{\et\lt}
\oc{t4}{{\evo{\zdual{A}}}}{_4}{0}{1}{0}{\ep\lt}
\gc{g2}{-1}{1}{0}
\begin{scope}[on background layer]
\dlp{({e3}b)\gpb{g2}({t4}t)}
\alp{ ({e3}t)\gpt{g2}({t4}b)}
\end{scope}
\end{tikzpicture}
};

\node[draw,my style] (B1) at (-5, -2){\begin{tikzpicture}
\oc{t1}{\coo{A}}{_1}{1}{-2}{-3}{\et}
\oc{e2}{\evo{A}}{_2}{3}{-2}{-3}{\ep}
\gc{g1}{2}{-2}{-3}
\begin{scope}[on background layer]
\alp{ ({t1}b)\gpb{g1}({e2}t)}
\dlp{({t1}t)\gpt{g1}({e2}b)}
\end{scope}
\oc{e3}{{\coo{\zdual{A}}}}{_3}{-3}{0}{-3}{\et\lt}
\oc{t4}{{\evo{\zdual{A}}}}{_4}{0}{0}{-1}{\ep\lt}
\gc{g2}{-2}{0}{-3}

\oc{e5}{\rdual{\evo{A}}}{_5}{-1}{-1}{-2}{\epr}
\oc{t6}{\rdual{\coo{A}}}{_6}{0}{-2}{-3}{\etr}
\gc{g3}{-2}{-1}{-2}
\gc{g4}{-1}{-2}{-3}

\begin{scope}[on background layer]
\dlp{({e3}b)\gpb{g2}({t4}t)}
\alp{ ({e3}t)\gpt{g2}({t6}b)}

\alp{ ({e5}t)--({t4}b)}
\dlp{({e5}b)--({t6}t)}
\end{scope}
\end{tikzpicture}
};

\node[draw,my style] (B2) at (3, -2){\begin{tikzpicture}
\oc{t1}{\coo{A}}{_1}{1}{-2}{-3}{\et}
\oc{e2}{\evo{A}}{_2}{3}{-2}{-3}{\ep}
\gc{g1}{2}{-2}{-3}
\begin{scope}[on background layer]
\alp{ ({t1}b)\gpb{g1}({e2}t)}
\dlp{({t1}t)\gpt{g1}({e2}b)}
\end{scope}
\oc{e3}{{\coo{\zdual{A}}}}{_3}{-3}{0}{-1}{\et\lt}
\oc{t4}{{\evo{\zdual{A}}}}{_4}{-2}{-1}{-2}{\ep\lt}
\gc{g2}{-1}{0}{-3}

\oc{e5}{\rdual{\evo{A}}}{_5}{-3}{-2}{-3}{\epr}
\oc{t6}{\rdual{\coo{A}}}{_6}{0}{-0}{-3}{\etr}
\gc{g3}{-2}{-1}{-2}
\gc{g4}{-1}{-2}{-3}

\begin{scope}[on background layer]
\dlp{({e3}b)--%
({t4}t)}

\alp{ ({e5}t)--({t4}b)}
\dlp{({e5}b)\gpb{g2}({t6}t)}
\alp{ ({e3}t)\gpt{g2}
({t6}b)}
\end{scope}
\end{tikzpicture}
};

\node[draw,my style] (C2) at (3, -7){\begin{tikzpicture}
\oc{e2}{\evo{A}}{_2}{2}{-0}{-3}{\ep}
\gc{g1}{1}{-0}{-3}

\oc{e3}{{\coo{\zdual{A}}}}{_3}{-3}{0}{-1}{\et\lt}
\oc{t4}{{\evo{\zdual{A}}}}{_4}{-2}{-1}{-2}{\ep\lt}
\gc{g2}{-1}{0}{-3}

\oc{e5}{\rdual{\evo{A}}}{_5}{-3}{-2}{-3}{\epr}
\gc{g3}{-2}{-1}{-2}
\gc{g4}{-1}{-2}{-3}

\begin{scope}[on background layer]
\dlp{({e3}b)--%
({t4}t)}

\alp{
(0*\d,-3*\h)\gpb{g1}({e2}t)}

\alp{ ({e5}t)--({t4}b)}
\dlp{({e5}b)\gpb{g2}(0*\d,0*\h)\gpt{g1}({e2}b)}
\alp{ ({e3}t)\gpt{g2}
(0*\d,-3*\h)
}
\end{scope}
\end{tikzpicture}
};

\node[draw,my style] (C1) at (-5, -7){\begin{tikzpicture}
\oc{e2}{\evo{A}}{_2}{1}{-2}{-3}{\ep}
\gc{g1}{0}{-2}{-3}

\oc{e3}{{\coo{\zdual{A}}}}{_3}{-3}{0}{-3}{\et\lt}
\oc{t4}{{\evo{\zdual{A}}}}{_4}{0}{0}{-1}{\ep\lt}
\gc{g2}{-2}{0}{-3}

\oc{e5}{\rdual{\evo{A}}}{_5}{-1}{-1}{-2}{\epr}
\gc{g3}{-2}{-1}{-2}
\gc{g4}{-1}{-2}{-3}

\begin{scope}[on background layer]
\dlp{({e3}b)\gpb{g2}({t4}t)}
\alp{ ({e3}t)\gpt{g2}(-1*\d,-3*\h)\gpb{g1}({e2}t)}

\alp{ ({e5}t)--({t4}b)}
\dlp{({e5}b)\gpt{g1}({e2}b)}
\end{scope}
\end{tikzpicture}
};

\node[draw,my style] (A2) at (10, -7){\begin{tikzpicture}
\oc{e2}{\evo{A}}{_2}{1}{1}{0}{\ep}
\gc{g1}{0}{1}{0}

\oc{e3}{\rdual{\evo{A}}}{_3}{-2}{1}{0}{\epr}
\gc{g2}{-1}{1}{0}
\begin{scope}[on background layer]
\dlp{ ({e3}b)\gpb{g2}(-.25,\h)
}

\alp{ ({e3}t)\gpt{g2}(-.25,0)\gpb{g1}({e2}t)}
\dlp{
(-.25,\h)\gpt{g1}({e2}b)}
\end{scope}
\end{tikzpicture}
};

\node[draw,my style] (A3) at (15, -7){\begin{tikzpicture}
\oc{e2}{\evo{A}}{_2}{0}{1}{0}{\ep}

\oc{e3}{\rdual{\evo{A}}}{_3}{-2}{1}{0}{\epr}
\begin{scope}[on background layer]
\dlp{({e3}b)--({e2}b)}
\alp{ ({e3}t)--({e2}t)}
\end{scope}
\end{tikzpicture}
};

\node[draw,my style] (A4) at (20,-7){\begin{tikzpicture}
		\node (ta) at (2,-1){};
		\node (ta) at (4,-2){};
		\end{tikzpicture}};

\draw[->](A1a)--(B1)node [midway , fill=white] {$\tridetop$};
\draw[->](A1a)--(A1)node [midway , fill=white] {$\text{\eqref{eq:comp_1_duals_shad_trace}}\otimes \id$};

\draw[->](B1)--(C1)node [midway , fill=white]{$P_{\coo{A}}$};
\draw[->](B2)--(C2)node [midway , fill=white]{$P_{\coo{A}}$};

\draw[<-](B1)--(B2)node [midway , fill=white]{\eqref{ex:pseduonaturality_gamma}};

\draw[->](A1)--(B2)node [midway , fill=white]{$\trictop$};
\draw[<-](C1)--(C2)node [midway , fill=white]{\eqref{ex:pseduonaturality_gamma}};
\draw[->](A2)--(C2)node [midway , fill=white]{$\trictop$};

\draw[->](A1)--(A2)node [midway , fill=white] {$P_{\coo{A}}$};
\draw[->](A2)--(A3)node [midway , fill=white] {\ref{fig:dualizable_1_cell_symmetry_2}};

\draw[->](A3)--(A4)node [midway , fill=white] {$L_{\evo{A}}$};

\node at  (barycentric cs:C1=1,C2=1,B1=1,B2=1){Nat.};
\node at  (barycentric cs:A1=1,C2=0,A2=1,B2=1){Nat.};
\node at  (barycentric cs:A1a=1,B2=1){Def.};
\end{tikzpicture}
}
\caption{Comparison of pairings}\label{fig:petit:final_diagram}
\end{figure}
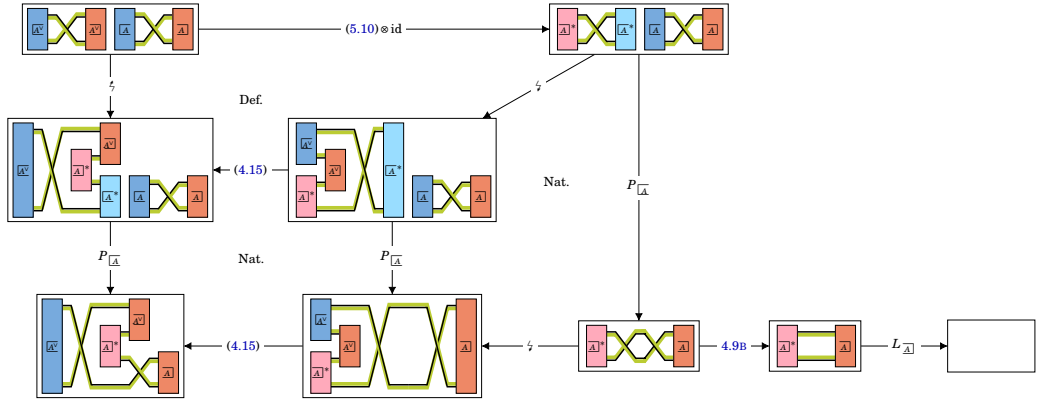

\bibliographystyle{amsalpha2}
\bibliography{fixed_points}
\end{document}